\newtheorem{theorem}{Theorem}[section]
\newtheorem{lemma}[theorem]{Lemma}
\newtheorem{corollary}[theorem]{Corollary}
\newtheorem{fact}[theorem]{Fact}
\newtheorem{problem}[theorem]{Problem}
\newtheorem*{x}{Theorem \ref{<}}
\newtheorem*{abc1}{Theorem \ref{t:CCD}}
\newtheorem*{abc2}{Theorem \ref{t:lower}}
\newtheorem*{y}{Theorem \ref{sierszth}}
\newtheorem*{z}{Theorem \ref{tkak}}
\newtheorem*{w}{Problem \ref{p:Brown}}
\newtheorem*{uu}{Corollary \ref{c:ft}}
\newtheorem*{uuu}{Corollary \ref{c:self}}
\newtheorem*{uuuuu}{Theorem \ref{t:tdecomp}}
\newtheorem*{uuuuuu}{Theorem \ref{t:tHdecomp}}
\newtheorem*{abc3}{Corollary \ref{c:tH<H}}
\theoremstyle{definition}
\newtheorem{example}[theorem]{Example}
\newtheorem{remark}[theorem]{Remark}
\newtheorem{definition}[theorem]{Definition}
\newtheorem{notation}[theorem]{Notation}
\numberwithin{equation}{section}
\DeclareMathOperator{\pr}{pr}
\DeclareMathOperator{\diam}{diam}
\DeclareMathOperator{\inter}{int}
\DeclareMathOperator{\dist}{dist}
\DeclareMathOperator{\Lip}{Lip}
\DeclareMathOperator{\cl}{cl}
\DeclareMathOperator{\ran}{Ran}
\begin{document}

\title{A new fractal dimension: The topological Hausdorff dimension}

\author{Rich\'ard Balka}

\address{Department of Mathematics, University of Washington, Box 354350, Seattle, WA 98195-4350, USA and
Alfr\'ed R\'enyi Institute of Mathematics, Hungarian Academy of Sciences, PO Box 127, 1364 Budapest, Hungary
and Institute of Mathematics and Informatics, Eszterh\'azy K\'aroly College, Le\'anyka u. 4., 3300 Eger, Hungary}

\email{balka@math.washington.edu}

\thanks{The first author was supported by the
Hungarian Scientific Research Fund grant no.~72655.}

\author{Zolt\'an Buczolich}

\address{Institute of Mathematics, E\"otv\"os Lor\'and University,
P\'azm\'any P\'eter s. 1/c, 1117 Budapest, Hungary}

\email{buczo@cs.elte.hu}

\thanks{The second author was supported by the Hungarian
Scientific Research Fund grants no. K075242 and K104178.}

\author{M\'arton Elekes}

\address{Alfr\'ed R\'enyi Institute of Mathematics, Hungarian Academy of Sciences,
PO Box 127, 1364 Budapest, Hungary and Institute of Mathematics, E\"otv\"os Lor\'and
University, P\'azm\'any P\'eter s. 1/c,
1117 Budapest, Hungary}

\email{elekes.marton@renyi.mta.hu}

\thanks{The third author was supported by the Hungarian
Scientific Research Fund grants no.~72655, 61600, and 83726.}

\subjclass[2010]{Primary: 28A78, 28A80; Secondary: 54F45, 60J65, 60K35.}

\keywords{Hausdorff dimension, topological Hausdorff dimension, Brownian motion,
Mandelbrot's fractal percolation, critical probability, level sets,
generic, typical continuous functions, fractals}

\begin{abstract}
We introduce a new concept of dimension for metric spaces, the so-called
\emph{topological Hausdorff dimension}. It is defined by a very natural
combination of the definitions of the topological dimension and the Hausdorff
dimension. The value of the topological Hausdorff dimension is always between
the topological dimension and the Hausdorff dimension, in particular, this new
dimension is a non-trivial lower estimate for the Hausdorff dimension.

We examine the basic properties of this new notion of dimension, compare it to other well-known notions,
determine its value for some classical fractals such as the Sierpinski carpet, the von Koch snowflake curve,
Kakeya sets, the trail of the Brownian motion, etc.

As our first application, we generalize the celebrated result of Chayes,
Chayes and Durrett about the phase transition of the connectedness of the
limit set of Mandelbrot's fractal percolation process. They proved that
certain curves show up in the limit set when passing a critical probability,
and we prove that actually `thick' families of curves show up, where roughly
speaking the word thick means that the curves can be parametrized in a natural
way by a set of large Hausdorff dimension. The proof of this is basically a
lower estimate of the topological Hausdorff dimension of the limit set.
For the sake of completeness, we also give an upper estimate and
conclude that in the non-trivial cases the topological Hausdorff dimension is
almost surely strictly below the Hausdorff dimension.

Finally, as our second application, we show that the topological Hausdorff dimension is precisely the right notion to describe the Hausdorff dimension of the level sets of the generic continuous function (in the sense of Baire category) defined on a compact metric space.
\end{abstract}

\maketitle

\tableofcontents

\section{Introduction}

The term `fractal' was introduced by Mandelbrot in his celebrated book
\cite{Man}. He formally defined a subset of a Euclidean space to be a fractal if its topological
dimension is strictly smaller than its Hausdorff dimension. This is just one example
to illustrate the fundamental role these two notions of dimension play in
the study of fractal sets. To mention another such example, let us recall that
the topological dimension of a metric space $X$ is the infimum of the Hausdorff
dimensions of the metric spaces homeomorphic to $X$, see \cite{HW}.

The main goal of this paper is to introduce a new concept of dimension, the
so-called \emph{topological Hausdorff dimension}, that interpolates the two
above mentioned dimensions in a very natural way. Let us recall the
definition of the (small inductive) topological dimension (see e.g. \cite{Eng,HW}).

\begin{definition} Set $\dim _{t} \emptyset = -1$. The \emph{topological dimension}
of a non-empty metric space $X$ is defined by induction as
\[
\dim_{t} X=\inf\{d: X \textrm{ has a basis } \mathcal{U} \textrm{
such that } \dim_{t} \partial {U} \leq d-1 \textrm{ for every } U\in
\mathcal{U} \}.
\]
\end{definition}

Our new dimension will be defined analogously, however, note that this second
definition will not be inductive, and also that it can attain non-integer values as well.
The Hausdorff dimension of a metric space $X$
is denoted by $\dim_{H} X$, see e.g.~\cite{F} or \cite{Ma}.
In this paper we adopt the convention that $\dim_{H}\emptyset = -1$.

\begin{definition}\label{deftoph}
Set $\dim _{tH} \emptyset=-1$. The \emph{topological Hausdorff dimension}
of a non-empty metric space $X$ is defined as
\[
\dim_{tH} X=\inf\{d:
 X \textrm{ has a basis } \mathcal{U} \textrm{ such that } \dim_{H} \partial {U} \leq d-1 \textrm{ for every } U\in \mathcal{U} \}.
\]
\end{definition}
(Both notions of dimension can attain the value $\infty$ as well, actually we
use the convention $\infty - 1 = \infty$, hence $d = \infty$ is a member of the
above set.)

\bigskip

From now on \emph{generic} is always understood in the sense of Baire category.

\bigskip

It was not this analogy that initiated the study of this new concept.
Over the last $30$ years there has been a large interest in studying dimensions of
various sets related to generic continuous maps. Mauldin and Williams \cite{MW} proved that the Hausdorff dimension of the graph of the generic $f\in C[0,1]$ is $1$, while Humke and Petruska \cite{HP} showed that its packing dimension is $2$. The box dimensions of graphs of generic continuous functions were investigated by Hyde, Laschos, Olsen, Petrykiewicz, and Shaw \cite{HLOPS}.
Balka, Farkas, Fraser and Hyde \cite{BFFH} proved that for the generic continuous map $f\colon K\to \mathbb{R}^n$ we have $\dim_H f(K)=\min\{\dim_t K,n\}$ for all compact metric spaces $K$. Bruckner and Garg \cite{BG} described the level set structure of the generic $f\in C[0,1]$ from the topological point of view. From the metric point of view, it is well-known that each non-empty level set of the generic $f\in C[0,1]$ is of Hausdorff dimension $0$. Kirchheim \cite{BK} considered the level sets of the generic continuous function $f\colon [0,1]^d\to \mathbb{R}$.
He proved that for every $y\in \inter f([0,1]^d)$ we have $\dim_{H}f^{-1}(y)=d-1$, that is, as one would expect, `most' level sets are of Hausdorff dimension $d-1$. The next problem is about generalizations of this result to fractal sets in place of $[0,1]^d$,
this is where our original motivation came from.

\begin{problem}\label{p:Buczo}
Describe the Hausdorff dimension of the level sets of the generic continuous
function defined on a compact metric space.
\end{problem}

It has turned out that the topological Hausdorff dimension is the right
concept to deal with this problem. We will essentially prove that the value $d-1$ in
Kirchheim's result has
to be replaced by $\dim_{tH} K - 1$, see the end of this introduction or Section~\ref{s:application} for
the details.

\bigskip

We would also like to mention another potentially very interesting motivation
of this new concept. Unlike most well-known notions of dimension, such as
packing or box-counting dimensions, the topological Hausdorff dimension is
smaller than the Hausdorff dimension.  As it is often an important and
difficult task to estimate the Hausdorff dimension from below, this gives
another reason to study the topological Hausdorff dimension.

\bigskip

It is also worth mentioning that there is another recent approach
by Urba\'nski \cite{Ur} to combine the topological dimension and the Hausdorff
dimension. However, his new concept, called the transfinite Hausdorff dimension
is quite different in nature from ours, e.g. it takes ordinal numbers as values.

\bigskip

Moreover, the first listed author (using ideas of U. B. Darji and the
third listed author) recently generalized the results of the paper for maps
taking values in $\mathbb{R}^n$ instead of $\mathbb{R}$.
The new concept of dimension needed
to describe the Hausdorff dimension of the fibers of the generic
continuous map is called the $n$th inductive topological Hausdorff
dimension, see \cite{B}.

\bigskip

Next we say a few words about the main results and the organization of the paper.

\bigskip

In Section~\ref{s:equivalent} we discuss some alternative definitions of the
topological Hausdorff dimension yielding the same concept. Recall that the
following classical theorem in fact describes an alternative recursive
definition of the topological dimension.

\begin{uuuuu} If $X$ is a non-empty separable metric space then
\[
\dim_t X = \min \{ d : \exists A \subseteq X \textrm{ such that } \dim_t
A\leq d-1 \textrm{ and } \dim_t (X \setminus A)\leq 0\}.
\]
\end{uuuuu}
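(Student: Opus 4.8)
The plan is to establish the two inequalities $\dim_t X \le D(X)$ and $D(X) \le \dim_t X$, where $D(X)$ denotes the right-hand side minimum. First I would check that the set defining $D(X)$ is non-empty and that the minimum is attained: taking $A = X$ (so that $X \setminus A = \emptyset$ and $\dim_t \emptyset = -1 \le 0$) shows that $d = \dim_t X + 1$ belongs to the set, while the fact that the small inductive dimension is integer-valued (or $\infty$) guarantees a smallest admissible $d$. Throughout I would freely invoke two classical facts from separable dimension theory (see \cite{HW, Eng}): the \emph{addition theorem} $\dim_t(A \cup B) \le \dim_t A + \dim_t B + 1$, and the \emph{decomposition theorem}, which states that a separable metric space with $\dim_t X \le n$ can be written as $X = \bigcup_{i=0}^{n} Z_i$ with $\dim_t Z_i \le 0$ for every $i$.

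For the inequality $\dim_t X \le D(X)$, let $d = D(X)$ and fix a witnessing set $A$ with $\dim_t A \le d-1$ and $\dim_t(X \setminus A) \le 0$. Applying the addition theorem to the partition $X = A \cup (X \setminus A)$ gives
\[
\dim_t X \le \dim_t A + \dim_t(X \setminus A) + 1 \le (d-1) + 0 + 1 = d,
\]
which is precisely $\dim_t X \le D(X)$.

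For the reverse inequality I would set $n = \dim_t X$ and assume $n < \infty$, the case $n = \infty$ being vacuous. Using the decomposition theorem, write $X = \bigcup_{i=0}^{n} Z_i$ with each $\dim_t Z_i \le 0$, and put $A = \bigcup_{i=1}^{n} Z_i$. Iterating the addition theorem $n-1$ times yields $\dim_t A \le n-1$, and since $X \setminus A \subseteq Z_0$, monotonicity of $\dim_t$ on subspaces gives $\dim_t(X \setminus A) \le 0$. Hence $d = n$ lies in the defining set, so $D(X) \le n = \dim_t X$. Combining the two inequalities completes the argument.

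The genuinely hard content lies not in this assembly but in the decomposition theorem itself, which is the deep structural result of classical dimension theory; if it were to be proved from scratch, the construction of a partition of $X$ into $n+1$ zero-dimensional pieces (typically extracted from the covering characterization of $\dim_t$ or from essential maps onto cubes) would be the main obstacle. The only other points requiring care are the bookkeeping around the conventions $\dim_t \emptyset = -1$ and $\infty - 1 = \infty$, and verifying that the expression on the right is a genuine minimum rather than merely an infimum.
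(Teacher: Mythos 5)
You should first be aware that the paper does not prove this statement at all: it is quoted from Engelking \cite[1.5.7.]{Eng} as a known classical theorem, so there is no in-paper proof to compare against. Judged on its own terms, your derivation is step-by-step valid, but it is circular relative to the standard development of dimension theory. Both tools you invoke --- the addition theorem $\dim_t(A\cup B)\le \dim_t A+\dim_t B+1$ and the decomposition of an at most $n$-dimensional separable space into $n+1$ zero-dimensional pieces --- are, in \cite{Eng} and \cite{HW}, obtained as corollaries of the two-piece decomposition theorem you are trying to prove: the $(n+1)$-piece version comes from iterating the two-piece one, and the addition theorem from concatenating two such decompositions. So your argument derives the theorem from its own consequences; every citation is of a true result, but none of the actual content of the theorem gets proved, and the "hard content" you defer to (the $(n+1)$-piece decomposition) is not in fact where the difficulty lives.

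The non-circular proof is precisely the argument the paper does write out for the analogous Theorem \ref{t:tHdecomp}, in Lemmas \ref{l:P=P} and \ref{l:inf}, with $\dim_H$ replaced by $\dim_t$. For "$\dim_t X\ge$ the minimum": take a countable basis $\mathcal{U}$ with $\dim_t\partial U\le d-1$ for all $U\in\mathcal{U}$, set $A=\bigcup_{U\in\mathcal{U}}\partial U$, and apply the countable sum theorem for closed sets to get $\dim_t A\le d-1$, while $X\setminus A$ has a basis of relatively clopen sets and hence $\dim_t(X\setminus A)\le 0$ (attainment of the minimum is then automatic from integer-valuedness, so no analogue of Lemma \ref{l:inf} is needed). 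For "$\le$": given such an $A$, the separation theorem for zero-dimensional sets \cite[1.2.11.]{Eng} yields, between any point $x$ and the complement of any neighborhood of $x$, a partition disjoint from $X\setminus A$; its trace is an open set $U$ with $\partial U\subseteq A$, hence $\dim_t\partial U\le d-1$, giving a basis witnessing $\dim_t X\le d$. Your Step 1 can also be replaced by this separation argument, which is where the genuine content sits; the addition-theorem assembly buys nothing over it.
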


The next result shows that by replacing one instance of $\dim_t
A$ by $\dim_{H} A$ we again obtain
the notion of topological Hausdorff dimension.

\begin{uuuuuu} If $X$ is a non-empty separable metric space then
\[
\dim_{tH} X = \min \{ d : \exists A \subseteq X \textrm{ such that }
\dim_H A\leq d-1 \textrm{ and } \dim_t (X \setminus A)\leq 0\}.
\]
\end{uuuuuu}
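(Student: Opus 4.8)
The plan is to prove the two inequalities separately. Write $s = \dim_{tH} X$ and let
$R = \{ d : \exists A \subseteq X \textrm{ with } \dim_H A \le d-1 \textrm{ and } \dim_t (X \setminus A) \le 0 \}$.
Since $R$ is an up-set (enlarging $d$ only weakens the constraint $\dim_H A \le d-1$), it suffices to show that $s \le d$ for every $d \in R$ and that $s \in R$; together these give $\min R = s$. The case $s = \infty$ is trivial, since $A = X$ always lies in $R$, so I assume $s < \infty$.

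For the inequality $s \le \inf R$, suppose $d \in R$ with witness $A$. The tool here is the classical partition lemma for zero-dimensional subspaces: because $\dim_t(X \setminus A) \le 0$, for every point $x$ and every open $V \ni x$ there is an open $U$ with $x \in U \subseteq V$ whose boundary avoids $X \setminus A$, that is $\partial U \subseteq A$ (this is the separation half underlying Theorem \ref{t:tdecomp}; see \cite{HW,Eng}). Collecting these $U$ over all $x$ produces a basis $\mathcal{U}$ of $X$ with $\partial U \subseteq A$, whence $\dim_H \partial U \le \dim_H A \le d-1$ for every $U \in \mathcal{U}$. By Definition \ref{deftoph} this yields $s \le d$.

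For $s \in R$, which simultaneously gives $\inf R \le s$ and the attainment of the minimum, I would first exploit that $s$ is an infimum. For each $n$ we have $s + \tfrac1n > s$, so there is a basis with $\dim_H \partial U \le s - 1 + \tfrac1n$ for all its members, and by second countability of $X$ we may pass to a countable subfamily $\mathcal{U}_n$ that is still a basis. Set $A_n = \bigcup_{U \in \mathcal{U}_n} \partial U$. Then $A_n$ is $F_\sigma$ with $\dim_H A_n \le s - 1 + \tfrac1n$ by the countable stability of Hausdorff dimension, and the sets $U \cap (X \setminus A_n)$ with $U \in \mathcal{U}_n$ are clopen in $X \setminus A_n$, since $\partial_{X \setminus A_n}\!\bigl(U \cap (X \setminus A_n)\bigr) \subseteq \partial U \cap (X \setminus A_n) = \emptyset$; as they form a basis we get $\dim_t(X \setminus A_n) \le 0$.

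The main obstacle is to combine the $A_n$ into a single set $A$ of Hausdorff dimension at most $s-1$. Intersecting the $A_n$ is not enough, because $X \setminus \bigcap_n A_n = \bigcup_n (X \setminus A_n)$ is a countable union of zero-dimensional sets, which need not be zero-dimensional (witness $\RR = \QQ \cup (\RR \setminus \QQ)$). To repair this I would replace each $A_n$ by a $G_\delta$ set $A_n' \supseteq A_n$ with $\dim_H A_n' = \dim_H A_n$, available from the Borel regularity of the Hausdorff measures $\mathcal{H}^t$ (for each $t > \dim_H A_n$ pick a $G_\delta$ set of $\mathcal{H}^t$-measure zero containing $A_n$, and intersect over $t \downarrow \dim_H A_n$). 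By monotonicity $\dim_t(X \setminus A_n') \le \dim_t(X \setminus A_n) \le 0$, while now each $X \setminus A_n'$ is $F_\sigma$. Setting $A = \bigcap_n A_n'$ gives $\dim_H A \le \inf_n \dim_H A_n' \le s-1$, and $X \setminus A = \bigcup_n (X \setminus A_n')$ is a countable union of $F_\sigma$ subsets of topological dimension at most $0$, so the countable sum theorem for $\dim_t$ yields $\dim_t(X \setminus A) \le 0$. Thus $A$ witnesses $s \in R$, and the proof is complete.
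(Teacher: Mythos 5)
Your proposal is correct and follows essentially the same route as the paper: the inequality $\dim_{tH} X \le \inf R$ via the separation theorem for zero-dimensional subspaces (partitions with boundary inside $A$), and the attainment $\dim_{tH} X \in R$ via countable bases at levels $s+\tfrac1n$, unions of boundaries, $G_\delta$ hulls of the same Hausdorff dimension, and the countable sum theorem for zero-dimensional $F_\sigma$ sets. The only cosmetic difference is that you extract the witnesses $A_n$ from bases directly rather than first recording the set equality $P_{tH}=P_{dH}$ as a separate lemma; the underlying ideas are identical.
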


As a corollary we also obtain that actually $\inf = \min$ in our original
definition of the topological Hausdorff dimension, which is not only
interesting, but will also be used in one of the applications. We discuss the
analogues of the other definitions of the topological dimension as well, such as
the large inductive dimension and the Lebesgue covering dimension.

\bigskip

In Section~\ref{s:properties} we investigate the basic properties of the topological Hausdorff dimension.
Among others, we prove the following.

\begin{x}
$\dim_{t} X \le \dim_{tH} X  \le \dim_{H} X$.
\end{x}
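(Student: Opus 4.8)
The plan is to prove the two inequalities separately, the first by transforming a basis and the second by constructing one out of metric balls.

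For $\dim_t X \le \dim_{tH} X$ I would rely on the classical Szpilrajn inequality $\dim_t Y \le \dim_H Y$, valid for every separable metric space $Y$; this is exactly the fact recalled in the introduction, that $\dim_t X$ is the infimum of the Hausdorff dimensions of the spaces homeomorphic to $X$ (see \cite{HW}), applied to $Y = X$. The two dimensions are defined as infima over the \emph{same} family of bases, the only difference being whether $\partial U$ is measured by $\dim_t$ or by $\dim_H$. So I would take any $d$ admissible for $\dim_{tH}$, that is, any basis $\mathcal{U}$ with $\dim_H \partial U \le d-1$ for all $U \in \mathcal{U}$, and apply Szpilrajn's inequality to each boundary: $\dim_t \partial U \le \dim_H \partial U \le d-1$. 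Hence the same $\mathcal{U}$ shows that $d$ is admissible for $\dim_t$, so the defining set for $\dim_{tH} X$ is contained in that for $\dim_t X$, and passing to infima yields $\dim_t X \le \dim_{tH} X$. The empty-boundary and $d = \infty$ conventions are compatible, so no case needs separate attention.

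For $\dim_{tH} X \le \dim_H X$, write $s = \dim_H X$ (the case $s = \infty$ being trivial). I would build, for each $\epsilon > 0$, a basis of balls whose boundaries have Hausdorff dimension at most $s - 1 + \epsilon$. Fixing a point $x$, the distance function $\phi_x = \dist(x,\cdot)$ is $1$-Lipschitz, its level set $\phi_x^{-1}(r)$ is the sphere $S(x,r)$, and since $\partial B(x,r) \subseteq S(x,r)$ it suffices to control $\dim_H S(x,r)$. The tool is Eilenberg's inequality (the coarea inequality for Lipschitz maps into $\RR$): with $m = \max(s + \epsilon - 1,\, 0)$ one has $m + 1 > s = \dim_H X$, hence $\mathcal{H}^{m+1}(X) = 0$, and the inequality gives $\int^* \mathcal{H}^m(S(x,r))\, dr \le C\, \mathcal{H}^{m+1}(X) = 0$. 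Thus $\mathcal{H}^m(S(x,r)) = 0$, whence $\dim_H \partial B(x,r) \le \dim_H S(x,r) \le s - 1 + \epsilon$, for Lebesgue-almost every $r > 0$. When $s < 1$ this forces $m = 0$ and so $S(x,r) = \emptyset$ for a.e.\ $r$, i.e.\ almost every small ball is clopen, so this degenerate range requires no separate treatment.

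Finally, since for every center $x$ the set of admissible radii has full measure in $(0,\infty)$, it is dense near $0$, so $\mathcal{U}_\epsilon = \{B(x,r) : x \in X,\ \dim_H \partial B(x,r) \le s - 1 + \epsilon\}$ is a basis: given $p$ and an open $O \ni p$, I would pick $\delta$ with $B(p,\delta) \subseteq O$ and an admissible $r < \delta$ for the center $p$, so that $p \in B(p,r) \subseteq O$. This basis witnesses $\dim_{tH} X \le s + \epsilon$, and letting $\epsilon \downarrow 0$ gives $\dim_{tH} X \le \dim_H X$. The main obstacle, and the only genuinely non-formal step, is the coarea estimate: one must know that almost every metric sphere about a point drops the Hausdorff dimension by one. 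I would either invoke Eilenberg's inequality directly or, to keep the paper self-contained, prove its real-valued case by a direct covering argument — cover $X$ efficiently at scale $\delta$, count how many pieces meet $\phi_x^{-1}(r)$, and integrate in $r$ — which is precisely where the upper integral $\int^*$ is needed to sidestep the measurability of $r \mapsto \mathcal{H}^m(S(x,r))$.
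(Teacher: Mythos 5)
Your proposal is correct and follows essentially the same route as the paper: the first inequality by applying the classical bound $\dim_t \le \dim_H$ to the boundaries of a basis, and the second by applying the coarea (Eilenberg) inequality to the $1$-Lipschitz distance function $d(x_0,\cdot)$ to conclude that almost every sphere about $x_0$ has Hausdorff dimension at most $\dim_H X - 1 + \epsilon$, hence the balls with such radii form the required basis. The only cosmetic differences are that you work with a fixed $\epsilon$ and let $\epsilon \downarrow 0$ where the paper uses $s = \dim_H X + \frac{1}{n}$ and intersects full-measure sets of radii, that you absorb the case $\dim_H X < 1$ into the same computation where the paper treats it separately via Fact \ref{<=>}, and that your basis should consist of the open balls $U(x,r)$ rather than the closed ones (with $\partial U(x,r) \subseteq S(x,r)$ still holding).
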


We also verify that $\dim_{tH} X$ satisfies some standard properties
of a dimension, such as monotonicity, bi-Lipschitz invariance and
countable stability for closed sets. We discuss the existence of $G_\delta$
hulls and $F_\sigma$ subsets with the same topological Hausdorff dimension, as well.
Moreover, we check that this concept is genuinely new in that
$\dim_{tH} X$ cannot be expressed as a function of $\dim_{t}X$ and $\dim_{H} X$.

\bigskip

In Section~\ref{s:examples} we compute $\dim_{tH}X$ for some classical fractals, like the
Sierpi\'nski triangle and carpet, the von Koch curve, etc. For example

\begin{y}
Let $T$ be the Sierpi\'nski carpet. Then $\dim_{tH} T = \frac{\log 6}{\log 3}=\frac{\log 2}{\log 3}+1 $.
\end{y}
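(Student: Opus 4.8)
The plan is to establish the two inequalities $\dim_{tH} T \le \frac{\log 6}{\log 3}$ and $\dim_{tH} T \ge \frac{\log 6}{\log 3}$ separately, exploiting the identity $\frac{\log 6}{\log 3} = 1 + \frac{\log 2}{\log 3}$, where $\frac{\log 2}{\log 3} = \dim_H C$ for the usual middle-thirds Cantor set $C \subseteq [0,1]$. Throughout I would use the ternary description of the carpet: writing $x=\sum_i x_i 3^{-i}$ and $y = \sum_i y_i 3^{-i}$, a point $(x,y)$ belongs to $T$ iff it admits expansions with $(x_i,y_i)\neq(1,1)$ for every $i$. In particular, for $t\in C$ (an expansion with no digit equal to $1$) the whole vertical segment $\{t\}\times[0,1]$ lies in $T$, and more generally the vertical slice $T\cap(\{t\}\times[0,1])$ is governed by the ternary digits of $t$.

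For the upper bound I would exhibit a convenient basis. Call a coordinate $t\in[0,1]$ \emph{good} if its ternary expansion is eventually constant equal to $1$, i.e. $t = 0.t_1\ldots t_N\overline{1}$. The good coordinates are dense in $[0,1]$, and for a good $t$ the slice $T\cap(\{t\}\times[0,1])$ is a finite union of sets each of which, beyond the $N$-th digit, is forced to avoid the digit $1$; hence it is a self-similar ``avoid-$1$'' set and $\dim_H\bigl(T\cap(\{t\}\times[0,1])\bigr) = \frac{\log 2}{\log 3}$, and symmetrically for horizontal slices. Let $D$ be the (countable, dense) set of good coordinates and consider the family $\mathcal{U} = \{\,T\cap R : R=(a,b)\times(c,d),\ a,b,c,d\in D\,\}$. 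These sets are open in $T$ and, since $D$ is dense, form a basis of arbitrarily small neighbourhoods. For $U = T\cap R$ with $R$ open the relative boundary satisfies $\partial U \subseteq T\cap\partial R$, which is contained in the union of the four edge-slices, each of Hausdorff dimension at most $\frac{\log 2}{\log 3}$. Since Hausdorff dimension is stable under finite unions, $\dim_H\partial U \le \frac{\log 2}{\log 3} = \frac{\log 6}{\log 3}-1$ for every $U\in\mathcal{U}$, and the definition of $\dim_{tH}$ gives $\dim_{tH} T \le \frac{\log 6}{\log 3}$.

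For the lower bound I would invoke Theorem~\ref{t:tHdecomp}: it suffices to show that whenever $A\subseteq T$ satisfies $\dim_t(T\setminus A)\le 0$ one has $\dim_H A \ge \frac{\log 2}{\log 3}$, for then every $d$ admissible in the decomposition obeys $d-1 \ge \frac{\log 2}{\log 3}$, whence $d\ge 1 + \frac{\log 2}{\log 3}$. Since $\dim_t(T\setminus A)\le 0$ forces $T\setminus A$ to be totally disconnected, it cannot contain a non-degenerate connected set; in particular, for each $t\in C$ the segment $\{t\}\times[0,1]\subseteq T$ must meet $A$. Thus $C\subseteq \pi(A)$, where $\pi(x,y)=x$ is the $1$-Lipschitz projection, and since Lipschitz maps do not increase Hausdorff dimension, $\dim_H A \ge \dim_H \pi(A)\ge \dim_H C = \frac{\log 2}{\log 3}$. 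Combining the two estimates yields $\dim_{tH} T = \frac{\log 6}{\log 3}$.

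The routine-but-delicate part is the exact computation of the slice dimension for good coordinates (and the verification that good coordinates are dense), together with the relative-boundary containment $\partial U\subseteq T\cap\partial R$. The genuinely clever step, and the one I expect to be the crux, is the lower bound: recognising that total disconnectedness of $T\setminus A$ forces $A$ to puncture the entire Cantor family of vertical segments $\{t\}\times[0,1]$, $t\in C$, so that the innocuous-looking projection estimate $\dim_H A\ge\dim_H C$ pins down exactly the extra $\frac{\log 2}{\log 3}$ above the topological dimension $1$.
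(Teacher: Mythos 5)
Your proof is correct and essentially coincides with the paper's: the upper bound via a rectangular basis whose edges sit at coordinates with ternary expansion ending in $\overline{1}$ is the same construction (the paper's $z_i^n=\frac{2i-1}{2\cdot 3^n}$ are exactly such points), and your lower bound simply inlines, for the vertical segments $\{t\}\times[0,1]$, $t\in C$, the decomposition-plus-projection argument that the paper packages as monotonicity applied to $C\times[0,1]\subseteq T$ together with Theorem \ref{prod}. No gaps.
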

(Note that $\dim_t T = 1$ and $\dim_H T =\frac{\log 8}{\log 3}$
while the Hausdorff dimension of the triadic Cantor set equals $\frac{\log 2}{\log 3}$.)

\bigskip

We also consider Kakeya sets (see \cite{F} or \cite{Ma}). Unfortunately, our methods do not give any useful information concerning the Kakeya Conjecture.

\begin{z}
For every $d \in \mathbb{N}^+$ there exist a compact Kakeya set of
topological Hausdorff dimension 1 in $\mathbb{R}^d$.
\end{z}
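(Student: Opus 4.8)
The lower bound is immediate and works for \emph{any} Kakeya set $K$: since $K$ contains a unit segment, which is homeomorphic to $[0,1]$ and hence has topological dimension $1$, monotonicity of $\dim_t$ gives $\dim_t K \ge 1$, and the inequality $\dim_t K \le \dim_{tH} K$ established above yields $\dim_{tH} K \ge 1$. So the entire content of the theorem is the reverse inequality $\dim_{tH} K \le 1$ for a suitably constructed compact $K$, and by Theorem~\ref{t:tHdecomp} this is equivalent to producing a set $A \subseteq K$ with $\dim_H A \le 0$ such that $K \setminus A$ is totally disconnected.

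Before building $K$ it pays to see what such an $A$ must look like, and why the obvious candidates are doomed. If $K \setminus A$ is totally disconnected, then it contains no nondegenerate subinterval of any segment lying in $K$; equivalently, $A$ must be \emph{dense in every segment of $K$}. Since $K$ contains a segment in each of a $(d-1)$-dimensional family of directions, $A$ must thread densely through a continuum of segments while keeping $\dim_H A \le 0$. In particular $A$ cannot be a countable union of coordinate slices $K \cap \{x_i = c\}$: by a theorem of Davies a planar Kakeya set has $\dim_H = 2$, so by Marstrand's slicing theorem (see \cite{Ma}) a positive-measure set of such slices already has dimension $\ge 1$, forcing $\dim_H A \ge 1$. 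Hence the segments of $K$ must be arranged to overlap so heavily that a single dimension-zero ``dust'' can be dense in all of them simultaneously.

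This dictates the construction, which is the heart of the matter. I would build $K$ as a self-similar Kakeya set, realized as a decreasing intersection $K = \bigcap_n K_n$ of finite unions of thin parallelepipeds produced by a Perron-tree / shearing scheme; this makes $K$ automatically compact. The scheme must be chosen so that (i) every direction in $S^{d-1}$ is represented by a segment of $K$, and (ii) the configuration carries a tree coding in which points of $K$ correspond to infinite branches, the Kakeya segments to distinguished families of branches, and the relative boundaries $K \cap \partial P$ of the stage pieces $P$ have Hausdorff dimension $0$. I would then let $A$ be the union, over all finite stages and all pieces $P$, of these boundary sets $K \cap \partial P$. The role of the self-similar overlap is precisely that, although $A$ ends up dense in every segment, its Hausdorff dimension is governed by that of the transversal ``dust'', which the contraction ratios can be forced to push down to $0$; reconciling this with the $(d-1)$-dimensional ``all directions'' constraint is the step requiring the most care, and is where I expect the real obstacle to lie.

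Granting such a $K$, the verification is routine. First, $\dim_H A \le 0$ because $A$ is a countable union of the dimension-zero sets $K \cap \partial P$, and Hausdorff dimension is countably stable. Second, $K \setminus A$ is totally disconnected: given $p \in K \setminus A$ and $\varepsilon > 0$, a sufficiently deep node $P$ of the tree containing $p$ has $\diam P < \varepsilon$, and the trace of its interior on $K \setminus A$ is relatively clopen there, since its relative boundary is contained in $A$ and has been removed. Thus $K \setminus A$ has a basis of relatively clopen sets of arbitrarily small diameter, so $\dim_t(K \setminus A) \le 0$. Theorem~\ref{t:tHdecomp} now gives $\dim_{tH} K \le 1$, which together with the lower bound yields $\dim_{tH} K = 1$.
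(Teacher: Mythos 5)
Your reduction of the problem is correct (the lower bound via monotonicity, and the reformulation via Theorem~\ref{t:tHdecomp} as finding $A\subseteq K$ with $\dim_H A\le 0$ and $K\setminus A$ totally disconnected), and the final verification paragraph would indeed be routine \emph{if} the set $K$ you describe existed. But that existence is the entire content of the theorem, and you do not establish it: you say the Perron-tree scheme ``must be chosen so that'' the stage pieces have relative boundaries of Hausdorff dimension $0$, and you yourself flag that reconciling this with the all-directions constraint ``is where I expect the real obstacle to lie.'' That is precisely the obstacle, and it is left unresolved. In a standard Perron-tree construction the parallelepipeds at each stage overlap in a way that is forced by the shearing, and there is no evident reason why the traces $K\cap\partial P$ should be zero-dimensional; nothing in your sketch controls them. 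So as written the argument is a correct frame around a missing core.

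It is also worth flagging that your motivational aside arguing that $A$ \emph{cannot} be a countable union of coordinate slices is wrong, and it steers you away from the approach that actually works. Marstrand's slicing theorem gives information about almost every slice (and its lower-bound half needs positive $\mathcal{H}^s$-measure); it says nothing about a prescribed countable family of slices, which has measure zero in the parameter. The paper's proof (following K\"orner \cite{Korner}) does exactly what you rule out: it works in the complete metric space $\mathcal{F}$ of compact subsets of $\mathbb{R}^{d-1}\times[0,1]$ containing a segment in every direction from a fixed compact cone of directions, and shows (Lemma~\ref{lkak}) that for the \emph{generic} $F\in\mathcal{F}$ every rational coordinate hyperplane $S$ satisfies $\dim_H(F\cap S)=0$ --- the density of finite unions of $S$-compatible truncated cones in $\mathcal{F}$ is what makes each approximating class open and dense. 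The boundaries of rational cubes then give a basis witnessing $\dim_{tH}F\le 1$ directly from Definition~\ref{deftoph}, and finitely many isometric copies cover all directions. The Baire category argument is exactly the device that discharges the dimension-zero-boundary requirement you could not construct by hand; if you want to salvage your write-up, replacing the explicit Perron-tree construction by this genericity argument is the way to do it.
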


Following K\"orner~\cite{Korner} we  prove somewhat more, since we essentially show that the generic element of a carefully chosen space is a Kakeya set of topological Hausdorff dimension 1.

\bigskip

We show that the range of the Brownian motion almost surely (i.e. with probability $1$)
has topological Hausdorff dimension $1$ in every dimension except perhaps $2$ and $3$.
These two cases remain the most intriguing open problems of the paper.

\begin{w}
Determine the almost sure topological Hausdorff dimension of the range of the $d$-dimensional Brownian motion for $d=2$ or $3$.
Equivalently, determine the smallest $c \ge 0$ such that the range can be decomposed into a totally disconnected set and a set of
Hausdorff dimension at most $c-1$ almost surely.
\end{w}

We also relate
the planar case to a well-known open problem of W. Werner and solve the dual
version of this problem, that is, the version in which the notion of
Wiener measure is replaced by Baire category. In a similar vein, we also show
that the range of the generic continuous map
$f \colon [0,1] \to \mathbb{R}^d$ is of topological Hausdorff dimension $1$ for every $d$.

\bigskip

As our first application in Section~\ref{s:Mandelbrot} we generalize a result of Chayes,
Chayes and Durrett about the phase transition of the connectedness of the
limit set of Mandelbrot's fractal percolation process. This limit set $M = M^{(p,n)}$ is a
random Cantor set, which is constructed by dividing the unit square into $n \times n$ equal sub-squares and
keeping each of them independently with probability $p$, and then repeating the same procedure recursively
for every sub-square. (See Section~\ref{s:Mandelbrot} for more details.)

\begin{abc1}[Chayes-Chayes-Durrett, \cite{CH}]
There exists a critical probability $p_c = p_c^{(n)} \in (0,1)$ such that if
$p < p_c$ then $M$ is totally disconnected almost surely, and if $p > p_c$ then
$M$ contains a nontrivial connected component with positive probability.
\end{abc1}

It will be easy to see that this theorem is a special case of our next result.

\begin{abc2}
For every $d \in [0,2)$ there exists a critical probability $p_c^{(d)} = p_c^{(d, n)}  \in
(0,1)$ such that if $p < p_c^{(d)}$ then $\dim_{tH} M \le d$ almost surely,
and if $p > p_c^{(d)}$ then $\dim_{tH} M > d$ almost surely (provided $M \neq \emptyset$).
\end{abc2}

Theorem \ref{t:CCD} essentially says that certain curves show up at the
critical probability, and our proof will show that even `thick' families of curves
show up, which roughly speaking means a `Lipschitz-like copy' of $C \times [0,1]$ with $\dim_H C > d-1$.

We also give a numerical upper bound for $\dim_{tH} M$ which implies the following.

\begin{abc3}
Almost surely
\[
\dim_{tH} M < \dim_{H} M \textrm{ or } M = \emptyset.
\]
\end{abc3}

\bigskip

In Section~\ref{s:application} we answer Problem \ref{p:Buczo} as follows.

\begin{uu} If $K$ is a compact metric space with $\dim_t K > 0$ then for the generic $f \in C(K)$
%%%%%
$$\sup \left\{ \dim_{H}f^{-1}(y) : y \in \mathbb{R} \right\} = \dim_{tH} K - 1.$$
\end{uu}

(It is well-known that if
$\dim_t K = 0$ then the generic $f \in C(K)$ is one-to-one, thus every non-empty level set is of Hausdorff dimension 0, see
\cite[Lemma~2.6]{BBE2} for a proof.)

If $K$ is also sufficiently homogeneous, e.g.
self-similar then we can actually say more.

\begin{uuu} Let $K$ be a self-similar compact metric space with
$\dim_{t}K>0$. Then
for the generic $f\in C(K)$ for the generic $y\in f(K)$
$$\dim_{H} f^{-1}(y)=\dim_{tH}K-1.$$
\end{uuu}

It can actually also be shown that the supremum is attained in Corollary~\ref{c:ft}. On the other hand,
one cannot say more in a sense, since  there is a $K\subseteq \mathbb{R}^2$ such that for
the generic $f \in C(K)$ there is a \emph{unique} $y \in \mathbb{R}$
for which $\dim_{H}f^{-1}(y) = \dim_{tH} K - 1$. Moreover, in
certain situations we can replace `the generic $y \in f(K)$' with
`for every $y\in \inter f(K)$' as in Kirchheim's theorem. The
results of this last paragraph appeared elsewhere, see
\cite{BBE2}.

\bigskip

Finally, in Section~\ref{c:problems} we list some  open problems.

\section{Preliminaries}

Let $(X,d)$ be a metric space.
For $A,B \subseteq X$ let us define $\dist(A,B) = \inf\{d(x,y) : x\in A,~y\in B\}$.
Let $B(x,r)$ and $U(x,r)$ stand for the closed and open ball of
radius $r$ centered at $x$, respectively. Set $B(A,r) = \{x \in X : \dist(\{x\},A) \le r \}$ and
$U(A,r) = \{x \in X : \dist(\{x\},A) < r \}$. We denote by $\cl A$, $\inter A$ and
$\partial A$ the closure, interior and boundary of $A$, respectively.
The diameter of a set $A$ is denoted by $\diam A$. We use the convention $\diam \emptyset = 0$.
For two metric spaces $(X,d_{X})$ and $(Y,d_{Y})$ a function
$f\colon X\to Y$ is \emph{Lipschitz} if there exists a constant $C
\in \mathbb{R}$ such that $d_{Y}(f(x_{1}),f(x_{2}))\leq C \cdot
d_{X}(x_{1},x_{2})$ for all $x_{1},x_{2}\in X$. The smallest such
constant $C$ is called the Lipschitz constant of $f$ and denoted by
$\Lip(f)$. A function $f\colon X\to Y$ is called \emph{bi-Lipschitz}
if $f$ is a bijection and both $f$ and $f^{-1}$ are Lipschitz.
For a metric space $X$ and $s \ge 0$ the \emph{$s$-dimensional Hausdorff measure} is defined as
%%%%
\begin{align*}
\mathcal{H}^{s}(X)&=\lim_{\delta\to 0+}\mathcal{H}^{s}_{\delta}(X)
\mbox{, where}\\
\mathcal{H}^{s}_{\delta}(X)&=\inf \left\{ \sum_{i=1}^\infty (\diam
U_{i})^{s}: X \subseteq \bigcup_{i=1}^{\infty} U_{i},~
\forall i \diam U_i \le \delta \right\}.
\end{align*}
Recall that
$$\mathcal{H}^{s}_{\infty}(X)=\inf \left\{ \sum_{i=1}^\infty (\diam
U_{i})^{s}: X \subseteq \bigcup_{i=1}^{\infty} U_{i}\right\}$$
is the \emph{$s$-dimensional Hausdorff content}.

The \emph{Hausdorff dimension of $X$} is defined as
\[
\dim_{H} X = \inf\{s \ge 0: \mathcal{H}^{s}(X) =0\}.
\]
It is not difficult to see using the regularity of $\mathcal{H}^{s}_{\delta}$ that
every set is contained in a $G_\delta$ set of the same Hausdorff
dimension. For more information on these concepts see \cite{F} or \cite{Ma}.

Let $X$ be a \emph{complete} metric space. A set is \emph{somewhere dense} if
it is dense in a non-empty open set, and otherwise it is called \emph{nowhere
  dense}. We say that $M \subseteq X$ is
\emph{meager} if it is a countable union of nowhere dense sets, and
a set is called \emph{co-meager} if its complement is meager. By
Baire's Category Theorem co-meager sets are dense. It is not
difficult to show that a set is co-meager iff it contains a dense
$G_\delta$ set. We say that the generic element $x \in X$ has
property $\mathcal{P}$ if $\{x \in X : x \textrm{ has property }
\mathcal{P} \}$ is co-meager. The term `typical' is also used
instead of `generic'. See e.g. \cite{Ke} for more on these concepts.

A topological space $X$ is called \emph{totally disconnected} if $X$ is
empty or every connected component of $X$ is a singleton. If $\dim_t X\leq 0$
then $X$ is clearly totally disconnected.  If $X$ is locally compact then
$\dim_t X\leq 0$ iff $X$ is totally disconnected, see \cite[1.4.5.]{Eng}. If $X$ is a $\sigma$-compact metric space
then $\dim_t X\leq 0$ iff $X$ is totally disconnected, see the
countable stability of topological dimension zero for closed sets \cite[1.3.1.]{Eng}.

\section{Equivalent definitions of the topological Hausdorff dimension}
\label{s:equivalent}

The goal of this section is to prove some equivalent definitions which will
play an important role later. Perhaps the main point here is that while the
original definition is of local nature, we can find an equivalent global
definition.

Let us recall the following decomposition theorem for the topological
dimension, see \cite[1.5.7.]{Eng}, which can be regarded as an equivalent
definition.

\begin{theorem}\label{t:tdecomp} If $X$ is a non-empty separable metric space then
$$\dim_t X=\min\{d: \exists A\subseteq X \textrm{ such that } \dim_t A\leq d-1 \textrm{ and } \dim_t (X\setminus A)\leq 0\}.$$
\end{theorem}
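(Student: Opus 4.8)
The plan is to prove the two inequalities separately, writing $D(X)$ for the quantity on the right-hand side. The upper bound $\dim_t X \le D(X)$ should follow from the classical \emph{addition (sum) theorem} for the topological dimension, namely that $\dim_t(A \cup B) \le \dim_t A + \dim_t B + 1$ for separable metric spaces (see \cite{Eng}). Indeed, if $d$ admits a witnessing set $A$ with $\dim_t A \le d-1$ and $\dim_t(X \setminus A) \le 0$, then applying the addition theorem to $X = A \cup (X \setminus A)$ yields $\dim_t X \le (d-1) + 0 + 1 = d$; in particular, taking the minimizing $d = D(X)$ (which is feasible) gives $\dim_t X \le D(X)$. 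The degenerate cases $\dim_t X = \infty$ and $X = \emptyset$ are handled trivially by our conventions.

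For the reverse inequality $D(X) \le \dim_t X$ I would exhibit a concrete decomposition realizing $d = \dim_t X =: n$, the case $n = \infty$ being immediate. Since $X$ is separable metric, hence second countable, from a basis witnessing $\dim_t X = n$ one can extract a countable subfamily $\{U_i\}_{i \in \NN}$ that is still a basis and still satisfies $\dim_t \partial U_i \le n-1$ for every $i$ (a subfamily inherits the boundary condition, and every base of a second countable space contains a countable base). Setting $A = \bigcup_{i} \partial U_i$, each $\partial U_i$ is closed and of topological dimension at most $n-1$, so the \emph{countable sum theorem for closed sets} (see \cite{Eng}) gives $\dim_t A \le n-1$.

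The key point is then to check that $X \setminus A$ is zero-dimensional. Here removing every boundary turns the basic open sets clopen: the family $\{U_i \cap (X \setminus A)\}_i$ is a basis of the subspace $X \setminus A$, and its boundary computed in that subspace satisfies $\partial_{X \setminus A}\bigl(U_i \cap (X \setminus A)\bigr) \subseteq (\partial_X U_i) \cap (X \setminus A) = \emptyset$, since $\partial_X U_i \subseteq A$. Thus $X \setminus A$ has a basis of clopen sets, whence $\dim_t(X \setminus A) \le 0$ directly from the definition. Consequently the constructed $A$ witnesses that $n$ lies in the defining set, so $D(X) \le n = \dim_t X$, and moreover this decomposition realizes the minimum, justifying $\min$ rather than $\inf$.

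The main obstacle is not the structure above, which is a short verification, but rather that both halves rest on the two nontrivial classical theorems of dimension theory: the addition theorem for the upper bound and the countable sum theorem for closed sets for the lower bound. Since we may freely cite these from \cite{Eng}, the only genuine care needed lies in the passage to a countable subbasis and in the subspace boundary computation showing $\dim_t(X \setminus A) \le 0$, both of which are elementary once the sum theorems are in hand.
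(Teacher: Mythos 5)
Your proof is correct. Note, however, that the paper does not prove Theorem \ref{t:tdecomp} at all: it is quoted as the classical decomposition theorem with a reference to \cite[1.5.7.]{Eng}, so there is no in-paper argument to compare against. What you have written is essentially the standard textbook proof: the inequality $\dim_t X \le D(X)$ via the addition theorem $\dim_t(A\cup B)\le \dim_t A+\dim_t B+1$, and the reverse inequality by passing to a countable basis witnessing $\dim_t X=n$ (legitimate, since $\dim_t$ is integer-valued so the defining infimum is attained, and a base of a second countable space contains a countable base), taking $A=\bigcup_i\partial U_i$, invoking the countable sum theorem for closed sets to get $\dim_t A\le n-1$, and observing that the traces $U_i\cap(X\setminus A)$ are clopen in $X\setminus A$. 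All the cited ingredients are genuine theorems of \cite{Eng}, and the degenerate cases ($n=0$, where $A=\emptyset$, and $n=\infty$) work out. It is worth pointing out that your second half is exactly the mechanism the authors themselves use in the inclusion $P_{tH}\subseteq P_{dH}$ of Lemma \ref{l:P=P} (union of boundaries of a countable basis), whereas for the reverse inclusion there they cannot use an addition theorem (none is available for $\dim_{tH}$) and instead argue locally via the separation theorem for zero-dimensional sets \cite[1.2.11.]{Eng}; your addition-theorem route is available here only because both dimensions involved are topological.
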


(Note that as above, $\infty$ is assumed to be a member of the above set,
moreover we use the convention $\min \{ \infty \} = \infty$.)

\bigskip

The main goal of this section is to prove Theorem \ref{t:tHdecomp},
an analogous decomposition theorem for the topological Hausdorff dimension,
which yields an equivalent definition of the topological Hausdorff dimension.
As a by-product, we obtained that in the definition of $\dim_{tH} X$ the infimum
is attained, see Corollary~\ref{c:inf=min}.

\begin{remark}
One can actually check that, as usual in dimension theory, the assumption of
separability cannot be dropped.
\end{remark}

Let $X$ be a given non-empty separable metric space. For the sake of
notational simplicity we use the following notation.

\begin{notation}
\begin{align*}
P_{tH}&=\{d: X \textrm{ has a basis } \mathcal{U} \textrm{ such that } \dim_{H} \partial {U} \leq d-1 \textrm{ for every } U\in \mathcal{U} \},\\
P_{dH}&=\{d: \exists A\subseteq X \textrm{ such that } \dim_H A\leq d-1 \textrm{ and } \dim_t (X\setminus A)\leq 0\}.
\end{align*}
We assume $\infty \in P_{tH},P_{dH}$.
\end{notation}

The following lemmas are the heart of the section.

\begin{lemma}\label{l:P=P} $P_{tH}=P_{dH}$.
\end{lemma}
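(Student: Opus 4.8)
The plan is to prove the set equality $P_{tH}=P_{dH}$ by establishing both inclusions, and the main work will be transferring between the local, basis-oriented condition defining $P_{tH}$ and the global, single-decomposition condition defining $P_{dH}$. Since both sets contain $\infty$ by convention, I will always work with finite $d$; and since both sets are easily seen to be upward-closed (enlarging $d$ only weakens each constraint), it suffices to handle the relevant thresholds carefully.

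\textbf{The inclusion $P_{dH}\subseteq P_{tH}$.} First I would take $d \in P_{dH}$, so there is an $A\subseteq X$ with $\dim_H A\leq d-1$ and $\dim_t(X\setminus A)\leq 0$. The goal is to manufacture a basis $\mathcal{U}$ of $X$ with $\dim_H\partial U\leq d-1$ for every $U$. The idea is that $\dim_t(X\setminus A)\leq 0$ gives, by the very definition of topological dimension applied to the subspace $X\setminus A$, a basis of $X\setminus A$ consisting of relatively clopen sets (boundaries of topological dimension $\leq -1$, i.e.\ empty relative boundaries). I would lift these to a basis of $X$ by intersecting suitable open sets of $X$ with small balls and controlling what happens along $A$: the key point is that the boundary (taken in $X$) of such a lifted basic set should be squeezed into $A$, so that $\dim_H\partial U\leq\dim_H A\leq d-1$. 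Concretely, for each point and each scale I expect to choose an open $U$ whose boundary lies inside $A$, using that across $X\setminus A$ the set can be separated by clopen pieces; a standard way is to use the relatively clopen sets $V$ in $X\setminus A$ and set $U=$ an open subset of $X$ with $U\cap(X\setminus A)=V$ and $\partial U\subseteq A$.

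\textbf{The inclusion $P_{tH}\subseteq P_{dH}$.} Here I would take $d\in P_{tH}$ with a basis $\mathcal{U}$ satisfying $\dim_H\partial U\leq d-1$ for all $U$. Using separability I may assume $\mathcal{U}=\{U_n\}_{n\in\NN}$ is countable. The natural candidate is
\[
A=\bigcup_{n\in\NN}\partial U_n.
\]
Then $\dim_H A\leq d-1$ by countable stability of Hausdorff dimension (a countable union of sets of Hausdorff dimension $\leq d-1$ again has dimension $\leq d-1$, which needs the convention $\dim_H\emptyset=-1$ only to accommodate the $d=0$ edge cases). It remains to show $\dim_t(X\setminus A)\leq 0$, equivalently that $X\setminus A$ is totally disconnected (which suffices here because $X\setminus A$, being a subspace of a separable metric space, is separable metric; total disconnectedness gives $\dim_t\leq 0$ via the characterizations recalled in the Preliminaries once the relevant $\sigma$-compactness or general-position facts are invoked). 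The point is that the traces $\{U_n\cap(X\setminus A)\}$ form a basis of $X\setminus A$, and each such trace has empty boundary \emph{relative to} $X\setminus A$ because $\partial_X U_n\subseteq A$. A basis of relatively clopen sets forces every subspace's small inductive dimension to be $\leq 0$.

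\textbf{Main obstacle.} I expect the delicate direction to be $P_{dH}\subseteq P_{tH}$, namely arranging basic open sets $U$ of $X$ whose $X$-boundaries are pushed entirely into the prescribed small set $A$. The subtlety is that a clopen decomposition of $X\setminus A$ need not extend to a genuine open set of $X$ with boundary contained in $A$: one must be careful that the closure in $X$ of the lifted set does not accumulate onto $X\setminus A$ outside the intended piece. The resolution I would pursue is to work at small scales and combine the relatively clopen sets with open balls, using that $A$ is where the two relatively clopen pieces can ``touch'' in $X$; a standard separation argument (choosing $U$ as a union of relatively clopen neighborhoods refined by the metric) should confine $\partial_X U$ to $A$. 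The other place requiring care is the bookkeeping for the empty set and the $d=0$ boundary case, handled cleanly by the stated conventions $\dim_H\emptyset=\dim_t\emptyset=-1$ and $\infty-1=\infty$.
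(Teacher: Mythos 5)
Your proposal is correct and follows the paper's own proof essentially step for step: for $P_{tH}\subseteq P_{dH}$ you use the same set $A=\bigcup_n\partial U_n$ with countable stability of Hausdorff dimension and the relatively clopen traces, and for $P_{dH}\subseteq P_{tH}$ the ``standard separation argument'' you are hoping for is exactly the separation theorem for zero-dimensional subsets \cite[1.2.11.]{Eng}, which, applied to the zero-dimensional subspace $X\setminus A$, produces a partition between $x$ and $X\setminus U(x,r)$ missing $X\setminus A$, i.e.\ an open $U$ with $x\in U\subseteq U(x,r)$ and $\partial U\subseteq A$. One small correction to your write-up: $\dim_t\leq 0$ is \emph{not} equivalent to total disconnectedness for general separable metric spaces (only for locally compact or $\sigma$-compact ones, as recalled in the Preliminaries), but this does not matter since the argument you actually run — a basis of relatively clopen sets forces small inductive dimension $\leq 0$ — is the right one.
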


\begin{proof} First we prove $P_{tH}\subseteq P_{dH}$. Assume $d\in P_{tH}$ and $d<\infty$. Then there exists a countable basis $\mathcal{U}$ of $X$ such that
$\dim_H \partial U\leq d-1$ for all $U\in \mathcal{U}$. Let $A=\bigcup_{U\in \mathcal{U}} \partial U$, then the countable stability of the Hausdorff dimension yields
$\dim_H A\leq d-1$, and the definition of $A$ clearly implies $\dim_t (X\setminus A)\leq 0$. Hence $d\in P_{dH}$.

Now we prove $P_{dH}\subseteq P_{tH}$. Assume $d\in P_{dH}$ and $d<\infty$. Let us fix $x\in X$ and $r>0$. To verify
$d\in P_{tH}$ we need to find an open set $U\subseteq X$ such that $x\in U \subseteq U(x,r)$ and $\dim_{H} \partial U\leq d-1$. Since $d\in P_{dH}$,
there is a set $A\subseteq X$ such that $\dim_H A\leq d-1$ and $\dim_t(X\setminus A)\leq 0$.
As $X\setminus A$ is a separable subspace of $X$ with topological
dimension $0$, by the separation theorem for topological dimension
zero \cite[1.2.11.]{Eng} there is a so-called partition between $x$
and $X\setminus U(x,r)$ disjoint from $X\setminus A$. This means that
there exist disjoint open sets $U,U'\subseteq X$ such that $ x\in
U$, $X\setminus U(x,r)\subseteq U'$ and $\left(X\setminus (U\cup
U')\right)\cap (X\setminus A)=\emptyset$. In particular, $x \in U
\subseteq U(x,r)$. Moreover, $\partial U\cap (X\setminus A)=\emptyset$,
so $\partial U\subseteq A$, thus $\dim_{H} \partial U\leq
\dim_{H} A\leq d-1$. Hence $d\in P_{tH}$.
\end{proof}

\begin{lemma}\label{l:inf}  $\inf P_{dH}\in P_{dH}$.
\end{lemma}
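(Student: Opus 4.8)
The plan is to show that the infimum is attained by constructing a single decomposition as a limit of near-optimal ones. If $\inf P_{dH} = \infty$ there is nothing to prove, since $\infty \in P_{dH}$ by convention, so I would assume $\alpha := \inf P_{dH} < \infty$. First I would record that $P_{dH}$ is upward closed among the reals: if $d \in P_{dH}$ is witnessed by a set $A$ with $\dim_H A \le d-1$ and $\dim_t(X \setminus A) \le 0$, then the very same $A$ witnesses membership of every $d' \ge d$. Hence $(\alpha,\infty) \subseteq P_{dH}$, and I may choose $d_n = \alpha + 1/n \in P_{dH}$ together with witnessing sets $A_n$ satisfying $\dim_H A_n \le d_n - 1$ and $\dim_t(X \setminus A_n) \le 0$.

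The idea is then to take $A = \bigcap_n A_n$. Since $A \subseteq A_n$, monotonicity of the Hausdorff dimension gives $\dim_H A \le \dim_H A_n \le d_n - 1$ for every $n$, whence $\dim_H A \le \inf_n (d_n - 1) = \alpha - 1$; this is the Hausdorff half of the required conclusion. The real content is the other half, namely $\dim_t(X \setminus A) \le 0$, where $X \setminus A = \bigcup_n (X \setminus A_n)$.

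The main obstacle I expect is that the countable sum theorem for topological dimension zero is available only for closed (hence also for $F_\sigma$) pieces, whereas the sets $X \setminus A_n$ carry no such structure a priori. To get around this I would first replace each $A_n$ by a $G_\delta$ hull of the same Hausdorff dimension, invoking the regularity remark from the Preliminaries that every set is contained in a $G_\delta$ set of equal Hausdorff dimension. Enlarging $A_n$ only shrinks $X \setminus A_n$, so by monotonicity of the topological dimension the condition $\dim_t(X \setminus A_n) \le 0$ survives, while the bound $\dim_H A_n \le d_n - 1$ is untouched. Thus I may assume every $A_n$ is $G_\delta$, so that $A = \bigcap_n A_n$ is again $G_\delta$ and each $X \setminus A_n$ is $F_\sigma$.

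Finally, writing each $X \setminus A_n$ as a countable union of closed sets and collecting these across all $n$, I would express $X \setminus A$ as a countable union of closed sets, each of which is contained in some $X \setminus A_n$ and therefore has topological dimension at most $0$ by monotonicity. The countable sum theorem for topological dimension zero for closed sets \cite[1.3.1.]{Eng} then yields $\dim_t(X \setminus A) \le 0$. Combined with $\dim_H A \le \alpha - 1$, this exhibits $A$ as a witness for $\alpha \in P_{dH}$, which is exactly the assertion $\inf P_{dH} \in P_{dH}$.
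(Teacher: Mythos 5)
Your proof is correct and follows essentially the same route as the paper: choose $d_n = \inf P_{dH} + 1/n$ with witnesses $A_n$, pass to $G_\delta$ hulls of equal Hausdorff dimension, and intersect. The only cosmetic difference is that you unfold the countable stability of topological dimension zero for $F_\sigma$ sets into its closed-set form \cite[1.3.1.]{Eng}, whereas the paper cites the $F_\sigma$ corollary \cite[1.3.3.~Corollary]{Eng} directly.
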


\begin{proof} Let $d=\inf P_{dH}$, we may assume $d<\infty$. Set $d_n=d+1/n$
for all $n\in \mathbb{N}^+$.  As $d_n\in P_{dH}$, there exist sets $A_n\subseteq X$
such that $\dim_H A_n\leq d_n-1$ and $\dim_t(X\setminus A_n)\leq 0$. We may
assume that the sets $A_n$ are $G_{\delta}$, since we can take $G_{\delta}$
hulls with the same Hausdorff dimension. Let $A=\bigcap_{n=1}^{\infty} A_n$,
then clearly $\dim_H A\leq d-1$.  As $X\setminus A_n$ are $F_\sigma$ sets such
that $\dim_t(X\setminus A_n) \leq 0$ and $X\setminus A =
\bigcup_{n=1}^{\infty} (X\setminus A_n)$, countable stability of topological dimension zero for $F_{\sigma}$ sets
\cite[1.3.3.~Corollary]{Eng} yields $\dim_t (X\setminus A)\leq 0$. Hence $d\in
P_{dH}$.
\end{proof}

Now the main result of the section is an easy consequence of Lemmas~\ref{l:P=P}, ~\ref{l:inf} and the definition of the topological Hausdorff
dimension.

\begin{theorem}\label{t:tHdecomp} If $X$ is a non-empty separable metric space then
$$\dim_{tH} X=\min\{d: \exists A\subseteq X \textrm{ such that } \dim_H A\leq d-1 \textrm{ and } \dim_t (X\setminus A)\leq 0\}.$$
\end{theorem}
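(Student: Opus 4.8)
The plan is to obtain the formula by chaining together the definition of the topological Hausdorff dimension with the two lemmas just established, since these already contain all of the substance.

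First I would simply unwind notation: by Definition \ref{deftoph} we have $\dim_{tH} X = \inf P_{tH}$, where $P_{tH}$ collects the admissible exponents arising from the \emph{local} (basis with small boundaries) condition. No work is needed here. Next I would invoke Lemma \ref{l:P=P}, which asserts the equality of the local set $P_{tH}$ with the \emph{global} set $P_{dH}$ coming from the decomposition of $X$ into a piece $A$ of small Hausdorff dimension and a topologically zero-dimensional complement $X \setminus A$. Since these are literally the same set of reals (together with $\infty$), their infima agree, so $\dim_{tH} X = \inf P_{tH} = \inf P_{dH}$.

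Finally I would apply Lemma \ref{l:inf}, which says $\inf P_{dH} \in P_{dH}$; this upgrades the infimum to a minimum, giving $\inf P_{dH} = \min P_{dH}$ and hence the desired identity $\dim_{tH} X = \min P_{dH}$. The boundary case $\dim_{tH} X = \infty$ needs no special treatment because of the standing convention $\infty \in P_{dH}$ together with $\min\{\infty\} = \infty$.

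The hard part, I expect, is entirely absent at this stage: it has been pushed into the lemmas. Specifically, the nontrivial inclusion $P_{dH} \subseteq P_{tH}$ in Lemma \ref{l:P=P}---manufacturing, around each point and each scale, an open set whose boundary is contained in $A$ by means of the separation theorem for zero-dimensional subspaces---and the closure-under-infimum argument of Lemma \ref{l:inf}---passing to $G_\delta$ hulls of the approximating sets $A_n$ and invoking countable stability of topological dimension zero on the $F_\sigma$ complements---are where all the content resides. Granting those, the theorem itself is a three-line deduction, which is why it can be stated as an immediate corollary.
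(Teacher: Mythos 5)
Your proposal is correct and follows exactly the paper's own route: the theorem is deduced by combining the definition $\dim_{tH} X = \inf P_{tH}$ with Lemma \ref{l:P=P} (giving $P_{tH} = P_{dH}$) and Lemma \ref{l:inf} (upgrading the infimum of $P_{dH}$ to a minimum). You also correctly locate the substance in the two lemmas rather than in this final deduction.
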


Since every set is contained in a $G_\delta$ set of the same Hausdorff
dimension, and a $\sigma$-compact metric space is totally disconnected iff it has topological dimension zero, the above theorem yields
the following equivalent definition.

\begin{theorem}\label{t:tHdecomp2}
For a non-empty $\sigma$-compact metric space $X$
\begin{align*}
\dim_{tH} X=\min\{&d: \exists A\subseteq X \textrm{ such that } \dim_H A\leq d-1 \\
&\textrm{and } X\setminus A \textrm{ is totally disconnected}\}.
\end{align*}
\end{theorem}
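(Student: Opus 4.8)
The plan is to deduce Theorem \ref{t:tHdecomp2} directly from Theorem \ref{t:tHdecomp}, since the two statements differ only in that the condition ``$\dim_t(X\setminus A)\leq 0$'' has been replaced by ``$X\setminus A$ is totally disconnected'', and in that we no longer assert that the extremal $A$ realizing the minimum is of any special form. The two ingredients we need have both been recorded earlier in the excerpt: first, the Preliminaries note that for a $\sigma$-compact metric space $Y$ one has $\dim_t Y\leq 0$ iff $Y$ is totally disconnected, via the countable stability of topological dimension zero for closed sets \cite[1.3.1.]{Eng}; second, the remark that every subset of a metric space is contained in a $G_\delta$ set of the same Hausdorff dimension.

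First I would set $X$ to be a non-empty $\sigma$-compact (hence separable) metric space, so that Theorem \ref{t:tHdecomp} applies, and denote by $m$ the quantity defined by the minimum in that theorem and by $m'$ the quantity defined by the minimum in Theorem \ref{t:tHdecomp2}. I would show $m=m'$ by comparing the two candidate sets over which the minima are taken. The key observation making the equivalence of the two \emph{conditions} subtle is that total disconnectedness of $X\setminus A$ is equivalent to $\dim_t(X\setminus A)\leq 0$ only when $X\setminus A$ is itself $\sigma$-compact; for a general complement this equivalence can fail. This is why the $G_\delta$-hull remark is needed rather than cosmetic.

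The main step, and the place where the $G_\delta$-hull trick does real work, is the inclusion $m'\geq m$, i.e. producing from a witness for $m'$ a witness for $m$. Suppose $A\subseteq X$ satisfies $\dim_H A\leq d-1$ and $X\setminus A$ is totally disconnected, with $d<\infty$. Replace $A$ by a $G_\delta$ hull $A^*\supseteq A$ of the same Hausdorff dimension, so that $\dim_H A^*\leq d-1$ and $X\setminus A^*\subseteq X\setminus A$. Now $X\setminus A^*$ is an $F_\sigma$ subset of the $\sigma$-compact space $X$, hence itself $\sigma$-compact, and as a subset of the totally disconnected set $X\setminus A$ it is also totally disconnected; therefore by the $\sigma$-compact criterion $\dim_t(X\setminus A^*)\leq 0$. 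Thus $A^*$ witnesses $d\in P_{dH}$ in the sense of Theorem \ref{t:tHdecomp}, giving $m\leq d$; taking the infimum over all such $d$ yields $m\leq m'$, and combined with the argument below this gives $m'\geq m$. The reverse inclusion $m'\leq m$ is immediate: any $A$ with $\dim_H A\leq d-1$ and $\dim_t(X\setminus A)\leq 0$ has $X\setminus A$ totally disconnected (since $\dim_t\leq 0$ always forces total disconnectedness, as noted in the Preliminaries), so every witness for $m$ is already a witness for $m'$, whence $m'\leq m$.

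The hard part, such as it is, is purely bookkeeping: one must be careful that passing to the $G_\delta$ hull $A^*$ preserves \emph{both} desired properties simultaneously, namely that enlarging $A$ keeps the Hausdorff dimension controlled (guaranteed by the hull property) while shrinking the complement keeps it totally disconnected and, crucially, renders it $\sigma$-compact so that the total-disconnectedness criterion can be upgraded to $\dim_t\leq 0$. I would flag that without $\sigma$-compactness of $X$ (to make $X\setminus A^*$ an $F_\sigma$ and hence $\sigma$-compact) this upgrade would not be available, which is exactly why the theorem is stated for $\sigma$-compact spaces rather than for arbitrary separable ones. Once these observations are in place the proof is a two-line chain of inequalities $m\leq m'$ and $m'\leq m$, so the whole argument can be presented very compactly.
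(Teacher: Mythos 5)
Your proposal is correct and follows exactly the paper's (one-sentence) argument: deduce the statement from Theorem \ref{t:tHdecomp} using the $G_\delta$-hull of the same Hausdorff dimension to make the complement $F_\sigma$, hence $\sigma$-compact, so that total disconnectedness upgrades to $\dim_t \le 0$. The only point left tacit is that the minimum in the new formulation is attained, but this is immediate since the extremal witness from Theorem \ref{t:tHdecomp} is already a witness here.
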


Moreover, as a by-product of Lemma~\ref{l:P=P} and Lemma~\ref{l:inf} we obtain that the infimum is attained in the
original definition, which will play a role in one of the applications.

\begin{corollary}\label{c:inf=min} If $X$ is a non-empty separable metric space then
$$\dim_{tH} X=\min\{d:
 X \textrm{ has a basis } \mathcal{U} \textrm{ such that } \dim_{H} \partial {U} \leq d-1 \textrm{ for every } U\in \mathcal{U} \}.$$
\end{corollary}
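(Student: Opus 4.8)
The plan is to deduce the corollary directly from the two lemmas already established in this section, since together they say everything that is needed: Lemma \ref{l:P=P} identifies the defining set $P_{tH}$ with $P_{dH}$, and Lemma \ref{l:inf} says that $P_{dH}$ contains its own infimum. In other words, the claim that the infimum in the original definition is attained is exactly the statement that $P_{tH}$ is closed downward at its infimum, and this has effectively been proved already in the global picture.

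First I would recall that by Definition \ref{deftoph} we have $\dim_{tH} X = \inf P_{tH}$. By Lemma \ref{l:P=P} the two sets coincide, $P_{tH} = P_{dH}$, and hence $\inf P_{tH} = \inf P_{dH}$. By Lemma \ref{l:inf} we know $\inf P_{dH} \in P_{dH}$. Chaining these, $\inf P_{tH} = \inf P_{dH} \in P_{dH} = P_{tH}$, so the set $P_{tH}$ attains its infimum. Therefore $\inf P_{tH} = \min P_{tH}$, which is precisely the assertion of the corollary.

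There is essentially no real obstacle at this stage, as all the substantive work has been carried out in the two lemmas: the separation-theorem argument of Lemma \ref{l:P=P} that makes the local and global descriptions of the admissible exponents agree, and the $G_\delta$-hull together with the countable-stability argument of Lemma \ref{l:inf} that makes the global set $P_{dH}$ closed under passage to the infimum. The only point worth flagging is the degenerate case $\dim_{tH} X = \infty$, but this is already covered by the standing convention $\infty \in P_{tH}, P_{dH}$, under which the infimum is trivially a member of the set and hence attained.
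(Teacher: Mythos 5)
Your argument is correct and is exactly the paper's route: the corollary is stated there as a by-product of Lemma \ref{l:P=P} and Lemma \ref{l:inf}, via the chain $\inf P_{tH}=\inf P_{dH}\in P_{dH}=P_{tH}$. Nothing is missing, and your remark on the convention $\infty\in P_{tH},P_{dH}$ correctly disposes of the degenerate case.
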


\begin{remark}
There are even more equivalent definitions of the topological dimension, and
one can prove that the appropriate analogues result in the same notion as well,
but since these statements will not be used in the sequel we only state the
results here. For some details consult \cite{B}.

Moreover, Section~\ref{s:application} contains further equivalent definitions
in the compact case in connection with the level sets of generic continuous
functions.

The next classical theorem shows that the notion of large inductive dimension
coincides with the other definitions of topological dimension for separable
metric spaces.

\begin{theorem} For every non-empty separable metric space $X$
\begin{align*} \dim_{t} X =\min\{&d: \forall F \subseteq X \textrm{ closed and } \forall V \textrm{ open with } F \subseteq V,  \ \exists U \textrm{ open} \\ &\textrm{such that } F \subseteq U \subseteq V \textrm{ and } \dim_{t} \partial U \leq d-1 \}.
\end{align*}
\end{theorem}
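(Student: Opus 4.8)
The plan is to recognize the right-hand side as the large inductive dimension, which I will denote $\mathrm{Ind}\,X$, and to prove the two inequalities $\dim_t X \le \mathrm{Ind}\,X$ and $\mathrm{Ind}\,X \le \dim_t X$ separately. Throughout, the case where either side equals $\infty$ is trivial, since $\infty$ is a member of every set in question; so I assume the relevant dimension is finite.

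For $\dim_t X \le \mathrm{Ind}\,X$ I would argue that the separation property is a priori stronger than the basis property, simply by specializing the closed set $F$ to singletons. Concretely, suppose $d<\infty$ lies in the right-hand set. Fix $x \in X$ and an open $V \ni x$; applying the hypothesis to the closed set $F=\{x\}$ (singletons are closed in a metric space) produces an open $U$ with $x \in U \subseteq V$ and $\dim_t \partial U \le d-1$. The family of all such $U$, obtained as $x$ and $V$ vary, is a basis of $X$ each member of which has $\dim_t \partial U \le d-1$, whence $\dim_t X \le d$. Taking the infimum over admissible $d$ gives $\dim_t X \le \mathrm{Ind}\,X$.

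The substance is in the reverse inequality $\mathrm{Ind}\,X \le \dim_t X$, and here the argument runs exactly parallel to the proof of Lemma \ref{l:P=P}. Assume $\dim_t X = d < \infty$. By the decomposition Theorem \ref{t:tdecomp} there is a set $A \subseteq X$ with $\dim_t A \le d-1$ and $\dim_t(X \setminus A) \le 0$. Now let $F \subseteq X$ be closed and $V \supseteq F$ open; I must produce an open $U$ with $F \subseteq U \subseteq V$ and $\dim_t \partial U \le d-1$. Since $X \setminus A$ is a separable subspace of topological dimension $0$, the separation theorem for topological dimension zero \cite[1.2.11.]{Eng}, applied to the disjoint closed sets $F$ and $X \setminus V$, yields a partition disjoint from $X \setminus A$: there are disjoint open sets $U, U' \subseteq X$ with $F \subseteq U$, $X \setminus V \subseteq U'$ and $\bigl(X \setminus (U \cup U')\bigr) \cap (X \setminus A) = \emptyset$. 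Then $F \subseteq U \subseteq X \setminus U' \subseteq V$. Moreover, since $U'$ is open and disjoint from $U$ it is disjoint from $\cl U$, so $\partial U \subseteq X \setminus (U \cup U') \subseteq A$, and monotonicity of the topological dimension gives $\dim_t \partial U \le \dim_t A \le d-1$. Hence $d$ lies in the right-hand set, so $\mathrm{Ind}\,X \le d = \dim_t X$.

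I expect the only genuine obstacle to be invoking the separation theorem in the form needed here, namely for a zero-dimensional subspace relative to an arbitrary pair of disjoint closed sets $F$ and $X \setminus V$, rather than the point-versus-closed-set instance used in Lemma \ref{l:P=P}; this is the same classical result from \cite{Eng} and requires no new idea. The remaining points — that singletons are closed (used in the easy direction), that an open set disjoint from $U$ is disjoint from $\cl U$ (which gives the crucial inclusion $\partial U \subseteq A$), and the trivial handling of the infinite cases — are routine.
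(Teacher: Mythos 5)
Your proof is correct. The paper states this theorem without proof, presenting it in a remark as a classical fact (the coincidence of the small and large inductive dimensions for separable metric spaces), so there is no in-paper argument to compare against; but your two inequalities are exactly the standard route, and the substantive direction is a faithful transplant of the paper's own proof of Lemma \ref{l:P=P}, with Theorem \ref{t:tdecomp} playing the role of $P_{dH}$ and the second separation theorem \cite[1.2.11.]{Eng} invoked for the pair $F$, $X\setminus V$ rather than for a point and the complement of a ball. The only detail worth making explicit is the one you already flag: the inclusion $\partial U\subseteq X\setminus(U\cup U')$, which follows since the open set $U'$ is disjoint from $U$ and hence from $\cl U$.
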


The natural analogue yields the same concept again.

\begin{theorem} For every non-empty separable metric space $X$
\begin{align*} \dim_{tH} X=\min\{&d: \forall F \subseteq X \textrm{ closed and } \forall V \textrm{ open with } F \subseteq V,  \ \exists U \textrm{ open} \\ &\textrm{such that } F \subseteq U \subseteq V \textrm{ and } \dim_{H} \partial U \leq d-1 \}.
\end{align*}
\end{theorem}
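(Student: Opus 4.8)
The plan is to prove that the set on the right-hand side — call it
\begin{align*}
Q = \{d : \ & \forall F \subseteq X \textrm{ closed}, \ \forall V \textrm{ open with } F \subseteq V, \\
& \exists U \textrm{ open with } F \subseteq U \subseteq V \textrm{ and } \dim_H \partial U \le d-1\}
\end{align*}
— coincides with $P_{tH}$, and hence with $P_{dH}$. More precisely, I would establish the two inclusions $Q \subseteq P_{tH}$ and $P_{dH} \subseteq Q$ and then close the loop using Lemma \ref{l:P=P}, which gives $Q \subseteq P_{tH} = P_{dH} \subseteq Q$, so that $Q = P_{dH}$. Granting this, the theorem follows immediately: by Lemma \ref{l:inf} the infimum of $P_{dH}$ is attained, and by Theorem \ref{t:tHdecomp} it equals $\dim_{tH} X$, whence $\min Q = \min P_{dH} = \dim_{tH} X$. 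As usual we place $\infty$ in $Q$ by convention and argue only for $d < \infty$.

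For the inclusion $Q \subseteq P_{tH}$ I would simply specialize the separation property to singletons. Given $d \in Q$, for each $x \in X$ and each $r > 0$ apply the defining property with the closed set $F = \{x\}$ and the open set $V = U(x,r)$; this produces an open $U_{x,r}$ with $x \in U_{x,r} \subseteq U(x,r)$ and $\dim_H \partial U_{x,r} \le d-1$. The family $\{U_{x,r} : x \in X, \ r > 0\}$ is then a basis of $X$ all of whose members have boundaries of Hausdorff dimension at most $d-1$, so $d \in P_{tH}$.

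The reverse inclusion $P_{dH} \subseteq Q$ is the heart of the matter, and it is a direct generalization of the argument proving $P_{dH} \subseteq P_{tH}$ in Lemma \ref{l:P=P}. Fix $d \in P_{dH}$ together with a decomposition $A \subseteq X$ satisfying $\dim_H A \le d-1$ and $\dim_t(X \setminus A) \le 0$. Given an arbitrary closed $F$ and open $V \supseteq F$, the sets $F$ and $X \setminus V$ are disjoint and closed, so the separation theorem for topological dimension zero \cite[1.2.11.]{Eng}, applied to the subspace $X \setminus A$, yields a partition $L$ between $F$ and $X \setminus V$ with $L \cap (X \setminus A) = \emptyset$, i.e. $L \subseteq A$. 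By the definition of a partition, $X \setminus L = U \cup W$ for disjoint open sets $U, W$ with $F \subseteq U$ and $X \setminus V \subseteq W$; then $U \cap (X \setminus V) = \emptyset$ forces $U \subseteq V$, and the standard fact $\partial U \subseteq L$ gives $\partial U \subseteq A$ and hence $\dim_H \partial U \le d-1$. Thus $d \in Q$.

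I do not anticipate any genuine obstacle. The only mildly delicate points are invoking the separation theorem for an arbitrary pair of disjoint closed sets (rather than the point–complement pair $\{x\}$, $X \setminus U(x,r)$ used in Lemma \ref{l:P=P}) and recording that the boundary of one side of a partition is contained in the partition; both are standard and already implicit in the proof of Lemma \ref{l:P=P}. In effect, the present statement is just the large-inductive reformulation of the small-inductive characterization that Corollary \ref{c:inf=min} and Theorem \ref{t:tHdecomp} have already pinned down.
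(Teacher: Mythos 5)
Your proof is correct. Note that the paper itself does not prove this theorem: it appears inside a remark in Section \ref{s:equivalent} with the explicit comment that the analogues of the other classical definitions ``result in the same notion as well'' and that details are deferred to \cite{B}, so there is no in-paper argument to compare against. Your route is the natural one and stays entirely within the paper's own machinery: the inclusion $Q \subseteq P_{tH}$ by specializing $F=\{x\}$, $V=U(x,r)$ is immediate, and the inclusion $P_{dH} \subseteq Q$ is exactly the $P_{dH}\subseteq P_{tH}$ half of Lemma \ref{l:P=P} with the point--complement pair $\{x\}$, $X\setminus U(x,r)$ replaced by an arbitrary pair of disjoint closed sets $F$, $X\setminus V$; the separation theorem \cite[1.2.11.]{Eng} is stated for arbitrary disjoint closed sets, so this generalization costs nothing, and the observation $\partial U\subseteq L$ is standard (a boundary point of $U$ lies in neither $U$ nor the open set $W$ disjoint from $U$). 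Closing the loop via $Q\subseteq P_{tH}=P_{dH}\subseteq Q$ and then invoking Lemma \ref{l:inf} and Theorem \ref{t:tHdecomp} to convert $\inf$ into $\min$ is exactly how the paper handles the attainment issue for its other equivalent definitions (Corollary \ref{c:inf=min}), so the argument is complete.
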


Next we take up the Lebesgue covering dimension.

For a family $\mathcal{A}$ of sets and $m\in \mathbb{N}^+$ let $T_{m}(\mathcal{A})$
denote the set of points covered by at least $m$ members of $\mathcal{A}$.

\begin{theorem} For every non-empty separable metric space $X$
\begin{align*}
\dim_{t} X=\min \{&d: \forall \, \textrm{finite open cover } \mathcal{U} \textrm{ of } X ~ \exists \, \textrm{a finite open refinement } \\
&\mathcal{V}  \textrm{ of } \mathcal{U} \textrm{ such that } T_{d+2}(\mathcal{V}) = \emptyset\}.
\end{align*}
\end{theorem}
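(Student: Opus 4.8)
The plan is to establish the two inequalities separately; throughout I write $\dim_{\mathrm{cov}} X$ for the quantity on the right-hand side, so that $\dim_{\mathrm{cov}} X \le d$ means that every finite open cover of $X$ admits a finite open refinement of order at most $d+1$ (recall that $T_{d+2}(\iV)=\emptyset$ says precisely that no point lies in $d+2$ distinct members of $\iV$). I would freely use the decomposition theorem \ref{t:tdecomp} and the large inductive dimension characterization of $\dim_t$ stated just above, both of which are available for separable metric spaces.

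For $\dim_{\mathrm{cov}} X \le \dim_t X$, assume $\dim_t X \le d < \infty$. Iterating Theorem \ref{t:tdecomp} $d+1$ times produces a decomposition $X=\bigcup_{i=0}^{d} X_i$ with $\dim_t X_i \le 0$ for each $i$. The single-step lemma I need is that a separable metric space of topological dimension $\le 0$ has a clopen basis, hence every finite open cover refines to a finite pairwise disjoint open cover (order $1$). Given a finite open cover $\iU$ of $X$, I would apply this to the trace of $\iU$ on each $X_i$ to obtain pairwise disjoint relatively open families covering $X_i$ and refining $\iU$, and then extend each such family to genuinely open subsets of $X$ by a standard swelling argument, shrinking inside the corresponding members of $\iU$ to preserve refinement and disjointness. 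Writing $\iV$ for the union of the $d+1$ resulting families, $\iV$ is a finite open refinement of $\iU$; since a point of $X$ meets at most one member of each level $i$, it meets at most $d+1$ members of $\iV$, so $T_{d+2}(\iV)=\emptyset$.

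For $\dim_t X \le \dim_{\mathrm{cov}} X$, assume $\dim_{\mathrm{cov}} X \le d$ and induct on $d$ through the large inductive dimension characterization. Given closed $F$ and open $V$ with $F\subseteq V$, the sets $F$ and $X\setminus V$ are disjoint and closed, and the key ingredient is the classical partition theorem for covering dimension: since $\dim_{\mathrm{cov}} X \le d$, there is a partition $L$ between $F$ and $X\setminus V$ with $\dim_{\mathrm{cov}} L \le d-1$. By the induction hypothesis $\dim_t L \le d-1$; taking $U$ to be the open set on the $F$-side of $L$ gives $F\subseteq U\subseteq V$ and $\partial U\subseteq L$, whence $\dim_t \partial U \le d-1$, as required. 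The base case $d=0$ is direct: a disjoint open refinement of $\{V, X\setminus\{x\}\}$ produces a clopen neighborhood of $x$ inside $V$, so $X$ has a clopen basis and $\dim_t X\le 0$.

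The main obstacle is precisely the partition theorem invoked in the second inequality: converting the purely combinatorial order condition on covers into the existence of a separating set of one lower covering dimension is the genuinely hard step, and is where the real content of this classical coincidence theorem of dimension theory lies. The swelling and shrinking manipulations of covers in the first inequality, by contrast, are routine.
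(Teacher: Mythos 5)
The paper offers no proof of this statement to compare against: it appears in a remark collecting classical characterizations of the topological dimension, is stated without proof, and the details are explicitly deferred to the literature. Judged on its own, your outline is the standard proof of the coincidence theorem for separable metric spaces and its structure is sound. The direction $\dim_{\mathrm{cov}}X\le\dim_t X$ is complete modulo routine details: iterating Theorem \ref{t:tdecomp} to write $X=\bigcup_{i=0}^{d}X_i$ with $\dim_t X_i\le 0$, refining the trace of $\iU$ on each $X_i$ to a finite pairwise disjoint relatively open cover, and swelling. For the swelling step it is worth saying explicitly why it works: the members of a finite disjoint relatively open cover of $X_i$ are also relatively closed in $X_i$, hence pairwise \emph{separated} in $X$, and finitely many pairwise separated sets in a metric space can be enlarged to pairwise disjoint open sets (e.g.\ $W_j=\{x: d(x,V_j)<d(x,V_k) \text{ for all } k\ne j\}$), after which one intersects with the relevant member of $\iU$. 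Since the set on the right-hand side is upward closed in $d$, your two inequalities also justify the $\min$ rather than merely the $\inf$.

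The one substantive caveat is the one you flag yourself: the entire content of $\dim_t X\le\dim_{\mathrm{cov}}X$ is delegated to the partition theorem for covering dimension (given $\dim_{\mathrm{cov}}X\le d$ and disjoint closed sets, there is a partition $L$ between them with $\dim_{\mathrm{cov}}L\le d-1$). This is a genuine classical theorem, proved in \cite{Eng} for normal spaces by purely cover-theoretic means without any appeal to inductive dimension, so your use of it is not circular and the reduction is legitimate; but as written your argument establishes the coincidence theorem only relative to that citation, and if a self-contained proof were required, the partition theorem is precisely the step that would have to be supplied. Given that the paper itself treats the whole statement as a quoted classical fact, this level of reliance on the literature is consistent with the paper's own standard.
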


The analogous result is the following.

\begin{theorem} For every separable metric space $X$ with $\dim_t X > 0$
\begin{align*}
\dim_{tH} X=\min \{&d: \forall  \varepsilon>0 \ \forall \, \textrm{finite open cover } \mathcal{U} \textrm{ of } X ~ \exists \, \textrm{a finite open refinement } \\
&\mathcal{V}  \textrm{ of } \mathcal{U} \textrm{ such that }\mathcal{H}_{\infty}^{d-1+\varepsilon}(T_{2}(\mathcal{V}))\leq \varepsilon\}.
\end{align*}
\end{theorem}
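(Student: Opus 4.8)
The plan is to deduce the statement from the global characterization in Theorem \ref{t:tHdecomp}. Write $D$ for the right-hand side of the claimed equality, i.e.
\[
D=\min\{d: \forall\varepsilon>0\ \forall \textrm{ finite open cover }\iU\ \exists \textrm{ finite open refinement }\iV \textrm{ with }\iH_{\infty}^{d-1+\varepsilon}(T_{2}(\iV))\le\varepsilon\}.
\]
I would prove $\dim_{tH}X=D$ by the two inequalities $D\le\dim_{tH}X$ and $\dim_{tH}X\le D$, in both directions passing through a decomposition $X=A\cup(X\setminus A)$ with $\dim_{H}A\le\dim_{tH}X-1$ and $\dim_{t}(X\setminus A)\le 0$, furnished by Theorem \ref{t:tHdecomp}. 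The hypothesis $\dim_{t}X>0$ guarantees $\dim_{tH}X\ge\dim_{t}X\ge 1$, so that in the relevant range $d\ge 1$ and the exponent $d-1+\varepsilon$ stays nonnegative; this is exactly the degeneracy the hypothesis is there to rule out (when $\dim_{t}X=0$ one has $\dim_{tH}X=0$ and the exponent $d-1+\varepsilon$ would be negative).

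\textbf{The inequality $D\le\dim_{tH}X$.} Here I would show that $d_{0}:=\dim_{tH}X$ itself lies in the defining set of $D$. Fix $\varepsilon>0$ and a finite open cover $\iU=\{U_{1},\dots,U_{m}\}$. Since $\dim_{H}A\le d_{0}-1<d_{0}-1+\varepsilon$ we get $\iH^{d_{0}-1+\varepsilon}(A)=0$, hence $\iH_{\infty}^{d_{0}-1+\varepsilon}(A)=0$, so $A$ is contained in an open set $O$ with $\iH_{\infty}^{d_{0}-1+\varepsilon}(O)\le\varepsilon$ (take the union of an economical open cover of $A$). On $Y:=X\setminus A$ we have $\dim_{t}Y\le 0$, so its covering dimension is $0$ and the finite open cover $\{U_{i}\cap Y\}$ admits a finite pairwise disjoint open (in $Y$) refinement $\{W_{1},\dots,W_{k}\}$ with each $W_{j}\subseteq U_{\sigma(j)}\cap Y$; extend each to an open $G_{j}\subseteq U_{\sigma(j)}$ in $X$ with $G_{j}\cap Y=W_{j}$. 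Then $\iV:=\{G_{j}\}\cup\{O\cap U_{i}\}$ is a finite open refinement of $\iU$, it covers $X$ (the $G_{j}$ cover $Y$, the sets $O\cap U_{i}$ cover $A$), and every pairwise intersection of its members lies in $O$: the $G_{j}$ are disjoint off $A\subseteq O$, while any intersection involving some $O\cap U_{i}$ is automatically in $O$. Hence $T_{2}(\iV)\subseteq O$ and $\iH_{\infty}^{d_{0}-1+\varepsilon}(T_{2}(\iV))\le\varepsilon$, as needed.

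\textbf{The inequality $\dim_{tH}X\le D$.} Fix $d$ in the defining set of $D$; I would build $A$ with $\dim_{H}A\le d-1$ and $\dim_{t}(X\setminus A)\le 0$, whence $\dim_{tH}X\le d$ by Theorem \ref{t:tHdecomp}. I reduce to a \emph{key lemma}: for any two disjoint closed sets $F_{0},F_{1}$ there is a partition $L$ between them with $\dim_{H}L\le d-1$. Granting this, fix a countable basis of balls $\{B_{i}\}$ and run over the countably many pairs with $\overline{B_{i}}\subseteq B_{j}$; applying the key lemma to $F_{0}=\overline{B_{i}}$, $F_{1}=X\setminus B_{j}$ yields a partition $L_{i,j}$ of Hausdorff dimension $\le d-1$, and setting $A=\bigcup L_{i,j}$ gives $\dim_{H}A\le d-1$ by countable stability of Hausdorff dimension, while each $L_{i,j}\subseteq A$ makes $S_{1}\cap Y$ a relatively clopen set interpolated between $B_{i}\cap Y$ and $B_{j}\cap Y$, so $Y=X\setminus A$ acquires a clopen basis and $\dim_{t}Y\le 0$. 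To produce a partition inside a prescribed small set I would use the standard swelling: given a refinement $\iV$ of $\{B_{j},\,X\setminus\overline{B_{i}}\}$ (each member of which lies in one of the two sets), the open sets $S_{1}=\bigcup\{V\in\iV:V\cap\overline{B_{i}}\ne\emptyset\}$ and $S_{2}=\bigcup\{V\in\iV:V\cap\overline{B_{i}}=\emptyset\}$ cover $X$, separate $\overline{B_{i}}$ from $X\setminus B_{j}$, and satisfy $S_{1}\cap S_{2}\subseteq T_{2}(\iV)$; moreover $S_{1}\subseteq B_{j}$, so this boundary is bounded and of finite content.

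\textbf{The main obstacle.} The difficulty is entirely in the key lemma, i.e. upgrading the content bounds supplied by the hypothesis into a genuine bound on the Hausdorff \emph{dimension} of one fixed partition. A single application only yields $\iH_{\infty}^{d-1+\varepsilon}(S_{1}\cap S_{2})\le\varepsilon$ for one exponent, and a finite content bound at a single exponent does not bound Hausdorff dimension (e.g. a square has finite $\iH_{\infty}^{1}$ but dimension $2$). The leverage I would exploit is that \emph{smallness} of the content forces the optimal covering sets to have diameter at most $(2\varepsilon)^{1/(d-1+\varepsilon)}\to 0$; hence for a fixed target exponent $s>d-1$ the quantity $\iH_{\infty}^{s}$ of such a boundary is itself small once $\varepsilon$ is small, using monotonicity of $t\mapsto t^{s}$ on diameters below $1$. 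Turning this into $\dim_{H}L\le d-1$ for one genuine partition $L$ requires an iterated refinement scheme: one applies the hypothesis along a sequence $\varepsilon_{k}\to 0$ and builds the partition as a coherent limit of the successive separations, so that the content of the limiting boundary tends to $0$ at every exponent exceeding $d-1$. Making this nested bookkeeping precise, so that the boundary remains a bona fide partition while its dimension is driven down to $d-1$, is the delicate heart of the proof, and is the exact analogue --- with Hausdorff content in place of covering multiplicity --- of the classical argument that the Lebesgue covering dimension dominates the inductive dimension for separable metric spaces.
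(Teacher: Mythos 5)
The paper itself does not prove this theorem: it is stated in a remark in Section \ref{s:equivalent} among several equivalent definitions that ``will not be used in the sequel'', with the details deferred to \cite{B}. So your proposal has to be judged on its own terms. Your first direction, showing that $d_0=\dim_{tH}X$ belongs to the defining set, is correct and essentially complete: the decomposition $X=A\cup(X\setminus A)$ from Theorem \ref{t:tHdecomp}, the open hull $O\supseteq A$ of small content, the finite disjoint open refinement on the zero-dimensional part $X\setminus A$, and the verification $T_{2}(\iV)\subseteq O$ all work as described, and the hypothesis $\dim_t X>0$ keeps the exponent $d_0-1+\varepsilon$ positive as you say.

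The converse direction has a genuine gap, and you have located it yourself: the ``key lemma'' that a partition $L$ between two disjoint closed sets can be chosen with $\dim_H L\le d-1$ is asserted but not proved, and the one-sentence description of the ``iterated refinement scheme'' omits exactly the devices that make it work. Concretely: (i) a single application of the hypothesis to $\{X\setminus F_1,\,X\setminus F_0\}$ yields open $S_1\cup S_2=X$ with $F_0\cap S_2=\emptyset$, $F_1\cap S_1=\emptyset$, $S_1\cap S_2\subseteq T_2(\iV)$, and by normality a partition $L_\varepsilon\subseteq S_1\cap S_2$; but this only gives $\iH_{\infty}^{d-1+\varepsilon}(L_\varepsilon)\le\varepsilon$, which, as you note, bounds no dimension. (ii) To iterate one must apply the hypothesis to the two-element cover $\{S_1^{(k)},S_2^{(k)}\}$ produced at the previous stage, so that the sets $S_1^{(k)}\cap S_2^{(k)}$ are nested and $G=\bigcap_k\bigl(S_1^{(k)}\cap S_2^{(k)}\bigr)$ satisfies $\iH^s_\infty(G)=0$ for every $s>d-1$. (iii) The partition extracted in the limit, say $L=X\setminus(P_0\cup P_1)$ with $P_0=\bigcup_k\bigl(X\setminus\overline{S_2^{(k)}}\bigr)$ and $P_1=\bigcup_k\bigl(X\setminus\overline{S_1^{(k)}}\bigr)$, naturally lands only in $\bigcap_k\overline{S_1^{(k)}\cap S_2^{(k)}}$, and Hausdorff content is not outer regular under closures (a countable dense subset of a square has vanishing $\iH^1_\infty$ while its closure does not); so without an extra shrinking step --- refining at stage $k+1$ a cover $\{S_1',S_2'\}$ with $\overline{S_i'}\subseteq S_i^{(k)}$, which forces $\overline{S_1^{(k+1)}\cap S_2^{(k+1)}}\subseteq S_1^{(k)}\cap S_2^{(k)}$ --- the content bounds evaporate in the limit. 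With these three ingredients the key lemma does go through, and your reduction via $A=\bigcup L_{i,j}$ over basis pairs then gives $\dim_{tH}X\le d$ by Theorem \ref{t:tHdecomp}; but as written the hard half of the theorem is a strategy rather than a proof.
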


\end{remark}

\section{Basic properties of the topological Hausdorff dimension}
\label{s:properties}

The main goal of this section is to investigate the basic properties of the
topological Hausdorff dimension. It will turn out that some properties of this dimension are metric in nature,
while others are topological. This enhances the philosophy that our new notion is
really `between' the topological and Hausdorff dimensions.

Let $X$ be a metric space.
Since $\dim_{t} X = -1 \iff X = \emptyset \iff \dim_{H} X = -1$, we easily
obtain

\begin{fact} \label{<=>}
$\dim_{tH} X=0 \Longleftrightarrow \dim_{t} X=0$.
\end{fact}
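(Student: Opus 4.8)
The plan is to prove the two implications separately, both reducing to the elementary observation that a \emph{nonempty} set has topological and Hausdorff dimension at least $0$, whereas $\dim_t \emptyset = \dim_H \emptyset = -1$; in particular a boundary $\partial U$ satisfies $\dim_t \partial U \le -1$ (equivalently $\dim_H \partial U < 0$) if and only if $\partial U = \emptyset$, i.e. $U$ is clopen. Throughout I may assume $X \neq \emptyset$, since both $\dim_{tH} X = 0$ and $\dim_t X = 0$ exclude the empty space (whose dimensions are $-1$); conversely, for nonempty $X$ one has $\dim_t X \ge 0$ and $\dim_{tH} X \ge 0$, because if $d < 0$ then $\dim_H \partial U \le d-1 < -1$ is impossible for \emph{any} $\partial U$ (even the empty one), so no $d < 0$ can enter the infimum of Definition \ref{deftoph}, and likewise for $\dim_t$.

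For the implication $\dim_t X = 0 \Rightarrow \dim_{tH} X = 0$, I would start from the definition of the topological dimension: $\dim_t X = 0$ forces $\dim_t X \le 0$, so $X$ admits a basis $\mathcal{U}$ with $\dim_t \partial U \le -1$, i.e. $\partial U = \emptyset$, for every $U \in \mathcal{U}$. For this \emph{same} basis one has $\dim_H \partial U = \dim_H \emptyset = -1 \le d-1$ for every $d \ge 0$, so every $d \ge 0$ belongs to the set defining $\dim_{tH} X$ in Definition \ref{deftoph}. Hence $\dim_{tH} X \le 0$, and combined with $\dim_{tH} X \ge 0$ this gives $\dim_{tH} X = 0$.

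For the converse $\dim_{tH} X = 0 \Rightarrow \dim_t X = 0$, I would use that the infimum in Definition \ref{deftoph} equals $0 < 1$, so there is some $d' < 1$ in that set, witnessed by a basis $\mathcal{U}$ with $\dim_H \partial U \le d'-1 < 0$ for every $U \in \mathcal{U}$. Since a nonempty set has Hausdorff dimension at least $0$, each such $\partial U$ must be empty, so $\mathcal{U}$ is in fact a clopen basis. Then $\dim_t \partial U = -1 \le -1$ for every $U \in \mathcal{U}$, which by the definition of the topological dimension yields $\dim_t X \le 0$, hence $\dim_t X = 0$.

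The argument is short, and the only genuine point---rather than an obstacle---is the careful bookkeeping with the dimension conventions at the bottom of the scale: one must observe that near $d = 0$ the boundary conditions $\dim_H \partial U \le d-1$ and $\dim_t \partial U \le d-1$ both collapse to the single requirement $\partial U = \emptyset$, which is exactly what makes the two notions of dimension agree on the value $0$. I would also remark that the implication $\dim_{tH} X = 0 \Rightarrow \dim_t X = 0$ admits a one-line alternative via the inequality $\dim_t X \le \dim_{tH} X \le \dim_H X$ stated in the introduction, since $\dim_{tH} X = 0$ together with $X \neq \emptyset$ immediately forces $0 \le \dim_t X \le 0$.
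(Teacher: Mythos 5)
Your proof is correct and follows exactly the route the paper intends: the paper derives this fact in one line from the observation that $\dim_t A = -1 \iff A = \emptyset \iff \dim_H A = -1$, so that for $d$ near $0$ both boundary conditions $\dim_t \partial U \le d-1$ and $\dim_H \partial U \le d-1$ collapse to $\partial U = \emptyset$, which is precisely the bookkeeping you carry out. Your expanded version, including the verification that both dimensions are nonnegative on nonempty spaces, is a faithful elaboration of the paper's ``we easily obtain''.
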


As $\dim_H X$ is either $-1$ or at least $0$, we obtain

\begin{fact}\label{jump}
The topological Hausdorff dimension of a non-empty space is either $0$ or at least $1$.
\end{fact}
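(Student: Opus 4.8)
The plan is to show that the open interval $(0,1)$ contains no attainable value of $\dim_{tH}$, which I will do by proving the contrapositive of the nontrivial half: if $\dim_{tH} X < 1$ then in fact $\dim_{tH} X = 0$. The whole argument rests on the elementary value gap of the Hausdorff dimension, namely that $\dim_H H \in \{-1\} \cup [0,\infty]$ for every set $H$, with $\dim_H H = -1$ precisely when $H = \emptyset$.

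First I would record the monotonicity of the defining condition in $d$: if a basis $\mathcal{U}$ witnesses $\dim_H \partial U \le d_0 - 1$ for every $U \in \mathcal{U}$, then the same basis witnesses the condition for every $d \ge d_0$. Hence the set whose infimum defines $\dim_{tH} X$ is upward closed, so the assumption $\dim_{tH} X < 1$ guarantees the existence of some $d_0 < 1$ in this set; fix a corresponding basis $\mathcal{U}$.

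The key step is then the observation that $d_0 - 1 < 0$ forces every boundary to be empty. Indeed, for each $U \in \mathcal{U}$ we have $\dim_H \partial U \le d_0 - 1 < 0$, and since no non-empty set has negative Hausdorff dimension this is only possible when $\partial U = \emptyset$. Consequently every member of $\mathcal{U}$ is clopen (an open set with empty boundary is also closed), so $X$ possesses a clopen basis. By the inductive definition of the topological dimension this immediately yields $\dim_t X \le 0$, and since $X \ne \emptyset$ we get $\dim_t X = 0$. Applying Fact \ref{<=>} then gives $\dim_{tH} X = 0$, which completes the contrapositive.

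Combining this with the trivial lower bound $\dim_{tH} X \ge 0$ for non-empty $X$ (which holds because any $d < 0$ would require $\dim_H \partial U \le d - 1 < -1$, impossible for every $\partial U$) shows that $\dim_{tH} X$ is either $0$ or at least $1$, as claimed. I do not expect a genuine obstacle here: the argument is entirely the interplay between the value gap of $\dim_H$ below $0$ and the already-established equivalence in Fact \ref{<=>}. The only point deserving a moment's care is the chain ``empty boundaries $\Rightarrow$ clopen basis $\Rightarrow \dim_t X \le 0$'', but each implication is immediate from the definitions.
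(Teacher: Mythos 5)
Your argument is correct and rests on exactly the same observation as the paper's one-line justification: the value gap of the Hausdorff dimension (every non-empty set has $\dim_H \ge 0$) forces all boundaries in a witnessing basis for any $d<1$ to be empty. Your detour through the clopen basis, $\dim_t X=0$, and Fact~\ref{<=>} is a harmless elaboration --- one could equally note that the same basis of empty-boundary sets directly witnesses $d=0$ in the defining infimum --- but the substance is identical.
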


These two facts easily yield

\begin{corollary}\label{conn}
Every metric space with a non-trivial connected component
has topological Hausdorff dimension at least one.
\end{corollary}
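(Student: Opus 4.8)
The goal is to prove Corollary \ref{conn}: every metric space with a non-trivial connected component has topological Hausdorff dimension at least one.

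\medskip

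The plan is to chain together the two facts that immediately precede the corollary, namely Fact \ref{<=>} and Fact \ref{jump}. First I would observe that by Fact \ref{jump}, for any non-empty metric space $X$ the value $\dim_{tH} X$ is either $0$ or at least $1$. Thus, to conclude that $\dim_{tH} X \geq 1$, it suffices to rule out the case $\dim_{tH} X = 0$. By the contrapositive of Fact \ref{<=>}, $\dim_{tH} X = 0$ forces $\dim_t X = 0$, so the entire task reduces to showing that a space with a non-trivial connected component cannot have topological dimension zero.

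\medskip

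The second and essential step is therefore a purely topological-dimension statement: if $\dim_t X = 0$ then $X$ is totally disconnected. I would invoke the remark made in the Preliminaries section that $\dim_t X \le 0$ clearly implies total disconnectedness (this is immediate from the inductive definition, since a basis of sets with empty boundaries are clopen, separating any two points). Consequently, if $X$ has a non-trivial connected component, i.e.\ a connected component that is not a singleton, then $X$ is \emph{not} totally disconnected, hence $\dim_t X \neq 0$, hence $\dim_t X > 0$ (as $X$ is non-empty, its topological dimension is at least $0$). Combining, $\dim_t X > 0$ gives $\dim_{tH} X \neq 0$ via Fact \ref{<=>}, and then Fact \ref{jump} upgrades this to $\dim_{tH} X \geq 1$.

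\medskip

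I do not expect any genuine obstacle here, as the statement is a short formal consequence of the two preceding facts together with the elementary implication ``topological dimension zero implies totally disconnected.'' The only point requiring a small amount of care is the logical bookkeeping: one must note that $X$ is non-empty (a non-trivial connected component certainly forces this, so $\dim_t X \ge 0$), and then correctly apply the biconditional of Fact \ref{<=>} in its contrapositive form before reading off the dichotomy of Fact \ref{jump}. The whole argument is essentially a one-line deduction once these pieces are arranged.
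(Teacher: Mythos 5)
Your argument is correct and is exactly the deduction the paper intends: it derives the corollary from Fact \ref{jump} and the contrapositive of Fact \ref{<=>}, using the elementary implication that $\dim_t X \le 0$ forces total disconnectedness. No issues.
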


The next theorem states that the topological Hausdorff dimension is
 between the topological and the Hausdorff dimension.

\begin{theorem}\label{<}
For every metric space $X$
\[
\dim_{t}X\leq \dim_{tH} X \leq
\dim _{H} X.
\]
\end{theorem}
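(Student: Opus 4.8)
The plan is to prove the two inequalities $\dim_t X \le \dim_{tH} X$ and $\dim_{tH} X \le \dim_H X$ separately, in each case by comparing the defining sets of the three dimensions. Recall that all three notions are defined (or, for $\dim_{tH}$, equivalently characterized via Corollary~\ref{c:inf=min}) as infima over the same kind of data: a basis $\mathcal{U}$ of $X$ whose boundaries $\partial U$ have small dimension in the appropriate sense. The key observation driving the first inequality is the pointwise comparison $\dim_t \partial U \le \dim_H \partial U$, valid for every subspace $\partial U$, which is a standard fact in dimension theory (it is exactly the content of the remark in the introduction that $\dim_t$ is the infimal Hausdorff dimension over all homeomorphic copies). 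The second inequality will instead exploit that boundaries $\partial U$ are contained in $X$, so their Hausdorff dimension is bounded by $\dim_H X$.

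\textbf{First inequality.}\ To show $\dim_t X \le \dim_{tH} X$, I would show the containment of defining sets in the right direction: any $d$ admissible for the topological Hausdorff dimension is admissible for the topological dimension. Concretely, suppose $X$ has a basis $\mathcal{U}$ with $\dim_H \partial U \le d-1$ for every $U \in \mathcal{U}$. Since $\dim_t \partial U \le \dim_H \partial U$ for every set $\partial U$, the same basis witnesses $\dim_t \partial U \le d-1$ for all $U$, so by the (inductive) definition of $\dim_t$ this gives $\dim_t X \le d$. Taking the infimum over all such $d$ yields $\dim_t X \le \dim_{tH} X$. One subtlety is that the definition of $\dim_t$ is inductive and the inequality $\dim_t \le \dim_H$ is itself proved by induction on $\dim_t$; I would simply cite this as the known comparison theorem (\cite{HW}) rather than reprove it, and handle the degenerate cases $\partial U = \emptyset$ and $d = \infty$ by the stated conventions.

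\textbf{Second inequality.}\ To show $\dim_{tH} X \le \dim_H X$, the cleanest route is via the decomposition characterization in Theorem~\ref{t:tHdecomp} (assuming separability; for the non-separable or trivial cases the bound is immediate or vacuous). It suffices to exhibit a witnessing set for the right-hand side with $d = \dim_H X$. Take $A = X$ and let $d = \dim_H X$; then $\dim_H A = \dim_H X \le d$, hence $\dim_H A \le d$, but I actually need $\dim_H A \le d - 1$, so this naive choice fails and the decomposition route needs the set $X \setminus A$ to carry the topological content. The correct and simpler argument works directly from the original Definition~\ref{deftoph}: for any $d > \dim_H X$, every subset of $X$, in particular every boundary $\partial U$ of every set in any basis, satisfies $\dim_H \partial U \le \dim_H X \le d - 1$ once $d \ge \dim_H X + 1$. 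Thus I must be slightly more careful, and I would argue that for every $\varepsilon > 0$ the value $d = \dim_H X + \varepsilon$ need not be admissible unless a suitable basis exists, so the honest approach is the monotonicity of $\dim_H$ under subsets: any basis $\mathcal{U}$ whatsoever has $\dim_H \partial U \le \dim_H X$, whence $\dim_{tH} X \le \dim_H X + 1$, which is not yet the claim. This reveals the genuine content of the second inequality.

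\textbf{Main obstacle.}\ The real difficulty is therefore the second inequality, where one cannot get $\dim_{tH} X \le \dim_H X$ merely from $\partial U \subseteq X$; one loses a full $+1$. The resolution is to use the global decomposition form (Theorem~\ref{t:tHdecomp}) together with the existence of a low-dimensional set $A$ whose removal disconnects $X$ at topological dimension zero. The key input is that any metric space $X$ admits a set $A \subseteq X$ with $\dim_H A \le \dim_H X - 1$ (heuristically, one removes a codimension-one ``slicing'' family) such that $\dim_t(X \setminus A) \le 0$; in Euclidean or self-similar settings this comes from intersecting with generic hyperplanes or using the fact that almost every level set of a coordinate projection drops dimension by one. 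Establishing such a decomposition in full generality is the crux, and I expect this is precisely where the bulk of the work lies, drawing on a Fubini-type or slicing argument for Hausdorff measure; once it is in hand, Theorem~\ref{t:tHdecomp} immediately gives $\dim_{tH} X \le \dim_H X$.
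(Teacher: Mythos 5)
Your first inequality is correct and is essentially the paper's argument: the classical comparison $\dim_t \leq \dim_H$ applied to each boundary $\partial U$ shows that any basis witnessing a value $d$ in the defining set for $\dim_{tH}$ also witnesses it for $\dim_t$.

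The second inequality, however, is not actually proved. You correctly diagnose that monotonicity of $\dim_H$ under subsets only yields $\dim_{tH} X \leq \dim_H X + 1$, and that one must produce boundaries (or a decomposition set $A$) of Hausdorff dimension at most $\dim_H X - 1$; but you then stop at the assertion that ``a Fubini-type or slicing argument'' should supply this, and the slicings you gesture at (coordinate projections, generic hyperplanes) do not exist in a general metric space. The missing idea is to slice by the distance function: fix $x_0 \in X$ and set $f(x) = d(x,x_0)$, which is $1$-Lipschitz into $\RR$; the Eilenberg-type coarea inequality (Lemma~\ref{lip}, i.e.\ \cite[Thm.~7.7]{Ma}, whose proof works verbatim in arbitrary metric spaces) gives
\[
\int^{\star} \mathcal{H}^{s-1}\left(f^{-1}(y)\right)\, \mathrm{d}\mathcal{H}^{1}(y) \leq c(1)\,\mathcal{H}^{s}(X) = 0
\]
for every $s > \dim_H X$ with $s \geq 1$, hence $\dim_H f^{-1}(y) \leq \dim_H X - 1$ for a.e.\ $y$. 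Since $\partial U(x_0,y) \subseteq f^{-1}(y)$, this yields a neighborhood basis at $x_0$ whose boundaries have Hausdorff dimension at most $\dim_H X - 1$, and therefore $\dim_{tH} X \leq \dim_H X$ directly from Definition~\ref{deftoph} --- no appeal to the decomposition theorem (and hence no separability hypothesis) is needed. One must also treat separately the cases $\dim_H X = \infty$ (trivial) and $\dim_H X < 1$ (where $\dim_t X = 0$ by integrality of $\dim_t$, so $\dim_{tH} X = 0$ by Fact~\ref{<=>}), since the coarea inequality requires $s \geq 1$; your sketch addresses neither.
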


\begin{proof}
We can clearly assume that $X$ is non-empty. It is well-known that $\dim_{t} X
\leq \dim_{H} X$ (see e.g. \cite{HW}), which easily implies
$\dim_{t} X \leq \dim_{tH} X$ using the definitions.
The second inequality is obvious if $\dim_{H}
X=\infty$. If $\dim_{H} X<1$ then $\dim_{t} X=0$ (since $\dim_{t} X \leq \dim_{H} X$ and $\dim_{t} X$ only takes integer values) and by Fact~\ref{<=>} we obtain $\dim _{tH} X=0$, hence the second inequality holds.
Therefore we may assume that $1\leq \dim_{H}
 X < \infty$. The following lemma is basically \cite[Thm.~7.7]{Ma}.
 It is only stated there in the special case $X = A \subseteq \mathbb{R}^n$, but the proof works verbatim for all metric spaces $X$.

\begin{lemma} \label{lip}
Let $X$ be a metric space and $f\colon X \to \mathbb{R}^{m}$ be
Lipschitz. If $s\geq m$, then
%%%%%%%%%%%%%
 \begin{equation} \int ^{\star} \mathcal{H}^{s-m}\left(f^{-1}(y)\right)\, \mathrm{d} \mathcal{H}^{m}(y)\leq c(m)
 \Lip(f)^{m} \mathcal{H}^{s}(X), \end{equation}
%%%%%%%%%%%%%%%%%%%%
where $\int ^{\star}$ denotes the upper integral and $c(m)$ is a
finite constant depending only on $m$.
\end{lemma}
%%%%%%%%%%%%%%%%%%
Now we return to the proof of Theorem \ref{<}. We fix $x_{0}\in X$
and define $f\colon X\to \mathbb{R}$ by $f(x)=d_{X}(x,x_{0})$. Using
the triangle inequality it is easy to see that $f$ is Lipschitz with
$\Lip(f)\leq 1$. We fix $n\in \mathbb{N}^{+}$ and apply Lemma~\ref{lip} for $f$ and $s=\dim_{H} X+\frac 1n>1 = m$. Hence
$$\int ^{\star} \mathcal{H}^{s-1}(f^{-1}(y))\, \mathrm{d} \mathcal{H}^{1}(y)\leq c(1) \mathcal{H}^{s}(X)=0.$$
 Thus $\mathcal{H}^{s-1}(f^{-1}(y))=\mathcal{H}^{\dim_{H} X+\frac
 1n-1}(f^{-1}(y))=0$ holds for a.e. $y\in \mathbb{R}$.
 Since this is true for all $n\in \mathbb{N}^{+}$, we obtain that
  $\dim_{H} f^{-1}(y) \leq \dim_{H} X-1$ for a.e.
$y\in \mathbb{R}$. From the definition of $f$ it follows that
$\partial U(x_{0},y) \subseteq f^{-1}(y)$. Hence there is a
neighborhood basis of $x_{0}$ with boundaries of Hausdorff dimension
at most $\dim_{H} X-1$, and this is true for all $x_{0}\in X$, so
there is a basis with
 boundaries of Hausdorff dimension at most $\dim_{H}X-1$. By
 the definition of the topological Hausdorff dimension this implies $\dim_{tH}
 X\leq \dim_{H} X$.
\end{proof}

\bigskip

There are some elementary properties one expects from a notion of dimension.
Now we verify some of these for the topological Hausdorff dimension.

\bigskip

\textbf{Extension of the classical dimension.}  Theorem \ref{<}
implies that the topological Hausdorff dimension of a countable set
equals zero, moreover, for open subspaces of $\mathbb{R}^{d}$ and
for smooth $d$-dimensional manifolds the topological Hausdorff
dimension equals $d$.

\textbf{Monotonicity.} Let $X \subseteq Y$. If $\mathcal{U}$ is a
basis in $Y$ then $\mathcal{U}_{X}=\{U\cap X: U\in \mathcal{U}\}$ is
a basis in $X$, and $\partial _{X} (U\cap X)\subseteq \partial_{Y}
U$ holds for all $U\in \mathcal{U}$. This yields

\begin{fact}[Monotonicity] \label{mon}
If $X\subseteq Y$ are metric
spaces then $\dim_{tH} X \leq \dim_{tH} Y$.
\end{fact}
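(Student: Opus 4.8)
The plan is to transfer a basis of $Y$ with low-dimensional boundaries to a basis of $X$ by intersecting its members with $X$, and then to compare relative boundaries. Since Fact \ref{mon} is stated for arbitrary metric spaces, I would work directly from the infimum in Definition \ref{deftoph} rather than from Corollary \ref{c:inf=min} (which needs separability). I would first dispose of the trivial case $\dim_{tH} Y = \infty$. Otherwise, by the definition of $\dim_{tH} Y$ as an infimum, it suffices to show that any finite $d$ admissible for $Y$ (i.e. such that $Y$ has a basis whose members have boundaries of Hausdorff dimension at most $d-1$) is also admissible for $X$; letting $d \to \dim_{tH} Y$ then yields $\dim_{tH} X \leq \dim_{tH} Y$.

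So let $\mathcal{U}$ be a basis of $Y$ with $\dim_H \partial_Y U \leq d-1$ for every $U \in \mathcal{U}$, and set $\mathcal{U}_X = \{U \cap X : U \in \mathcal{U}\}$. I would first observe that $\mathcal{U}_X$ is a basis of $X$ in the subspace topology: each $U \cap X$ is relatively open, and any relatively open set is of the form $V \cap X$ for some open $V \subseteq Y$, which is a union of members of $\mathcal{U}$, so $V \cap X$ is the corresponding union of members of $\mathcal{U}_X$.

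The only point requiring care is the boundary inclusion $\partial_X(U \cap X) \subseteq \partial_Y U$, which I would establish directly from the definitions. Since $U$ is open in $Y$, the set $U \cap X$ is relatively open, so $\inter_X(U \cap X) = U \cap X$, while $\cl_X(U \cap X) = \cl_Y(U \cap X) \cap X \subseteq \cl_Y(U) \cap X$. Subtracting, any $z \in \partial_X(U \cap X)$ lies in $\cl_Y(U) \cap X$ but not in $U$, hence $z \in (\cl_Y U \setminus U) \cap X = \partial_Y U \cap X \subseteq \partial_Y U$. Monotonicity of the Hausdorff dimension then gives $\dim_H \partial_X(U \cap X) \leq \dim_H \partial_Y U \leq d-1$ for every $U \in \mathcal{U}$, so $d$ is admissible for $X$, which is exactly what was needed.

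I do not expect any genuine obstacle here: the substantive content is the elementary topological inclusion for relative boundaries, and everything else is routine bookkeeping with the infimum in the definition of $\dim_{tH}$. The argument uses only monotonicity of $\dim_H$ and basic properties of the subspace topology, so no separability or completeness hypotheses are required.
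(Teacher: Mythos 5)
Your argument is correct and is essentially the paper's own proof: the paper likewise passes to the basis $\mathcal{U}_X=\{U\cap X: U\in\mathcal{U}\}$ and uses the inclusion $\partial_X(U\cap X)\subseteq \partial_Y U$ together with monotonicity of $\dim_H$. You merely spell out the routine verifications (that $\mathcal{U}_X$ is a basis, the boundary inclusion, and the passage through the infimum) that the paper leaves implicit.
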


\textbf{Bi-Lipschitz invariance.}  First we prove that the topological Hausdorff
dimension
does not increase under Lipschitz homeomorphisms. An easy
consequence of this that our dimension is bi-Lipschitz invariant, and
does not increase under an injective Lipschitz map on a compact
space. After obtaining corollaries of Theorem \ref{lipinv} we give some
examples illustrating the necessity of  certain conditions in this theorem
and its corollaries.

\begin{theorem} \label{lipinv}
Let $X,Y$ be metric spaces. If  $f\colon X\to Y$ is
a Lipschitz homeomorphism then $\dim_{tH}Y\leq \dim_{tH}X$.
\end{theorem}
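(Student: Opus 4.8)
The plan is to transport a good basis of $X$ \emph{forward} to $Y$ along $f$, and to check that the boundaries of the transported basis elements do not grow in Hausdorff dimension. The direction of the inequality is dictated by the hypotheses: it is $f$, not $f^{-1}$, that is assumed Lipschitz, and a Lipschitz map can only \emph{decrease} Hausdorff dimension, so it is natural to realize the boundaries on the $Y$-side as images under $f$ of boundaries on the $X$-side. This is why the conclusion comes out as $\dim_{tH} Y \le \dim_{tH} X$ and not the reverse.

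First I would dispose of the trivial case $\dim_{tH} X = \infty$ and then fix any finite $d$ for which $X$ has a basis $\iU$ with $\dim_H \partial U \le d-1$ for every $U \in \iU$. Since $f$ is a homeomorphism it is in particular an open map, so each $f(U)$ is open and $f(\iU) = \{f(U) : U \in \iU\}$ is a basis of $Y$.

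The key step is the boundary identity $\partial(f(U)) = f(\partial U)$ for open $U$, which holds because a homeomorphism is a bijection preserving closures: for open $U$ one has $\partial U = \cl U \setminus U$, and applying the bijection $f$ gives $f(\cl U) \setminus f(U) = \cl f(U) \setminus f(U) = \partial f(U)$, where the middle equality uses $f(\cl U) = \cl f(U)$ together with the openness of $f(U)$. At this point the Lipschitz hypothesis enters, and only here: since a Lipschitz map does not increase Hausdorff dimension, $\dim_H \partial f(U) = \dim_H f(\partial U) \le \dim_H \partial U \le d-1$. Hence $f(\iU)$ is a basis of $Y$ all of whose boundaries have Hausdorff dimension at most $d-1$, giving $\dim_{tH} Y \le d$; taking the infimum over all admissible $d$ yields $\dim_{tH} Y \le \dim_{tH} X$.

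The only genuine obstacle is the careful bookkeeping of the boundary identity, and it is worth being explicit about which hypothesis does which job: the homeomorphism property is what lets us transport the basis and identify boundaries (via openness, bijectivity, and preservation of closure), while the Lipschitz property is used solely to bound $\dim_H f(\partial U)$ above by $\dim_H \partial U$. This separation also makes the promised corollaries immediate: bi-Lipschitz invariance follows by applying the theorem to both $f$ and $f^{-1}$, and the statement for an injective Lipschitz map on a compact space follows because such a map is automatically a homeomorphism onto its (compact) image.
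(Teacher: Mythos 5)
Your proposal is correct and follows essentially the same route as the paper's own proof: transport the basis forward via the homeomorphism, use the identity $\partial f(U)=f(\partial U)$, and invoke the fact that Lipschitz maps do not increase Hausdorff dimension. Your additional verification of the boundary identity via $f(\cl U)=\cl f(U)$ is sound detail that the paper simply states without proof.
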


\begin{proof} Since $f$ is a homeomorphism, if $\mathcal{U}$ is a basis in $X$ then
$\mathcal{V}=\{f(U): U\in \mathcal{U}\}$ is a basis in $Y$, and
 $\partial f(U)=f(\partial U)$ for all $U\in \mathcal{U}$.
The Lipschitz property of $f$ implies that $\dim_{H} \partial
V=\dim_{H}\partial f(U)=\dim_{H}f(\partial U)\leq \dim_{H} \partial
U$ for all $V=f(U)\in \mathcal{V}$.  Thus $\dim_{tH} Y\leq \dim_{tH}
X$.
\end{proof}

Note that every bi-Lipschitz map and each injective continuous map defined on a compact space is a homeomorphism.
Therefore we obtain the following corollaries.

\begin{corollary}[Bi-Lipschitz invariance]
\label{c:bi-Lip}
Let $X,Y$ be metric
spaces. If $f\colon X\to Y$ is a bi-Lipschitz onto map, then
$\dim_{tH}X=\dim_{tH}Y$. \end{corollary}

\begin{corollary} \label{inj} If $K$ is a compact metric space, and $f\colon K\to Y$
is \emph{one-to-one} Lipschitz then $\dim_{tH}f(K)\leq
\dim_{tH}K$. \end{corollary}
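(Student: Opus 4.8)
The plan is to reduce the statement to Theorem \ref{lipinv} by observing that the only hypothesis of that theorem not explicitly granted here is that $f$ be a \emph{homeomorphism} onto its image, and that compactness of $K$ supplies exactly this for free. So first I would regard $f$ as a map onto $f(K)$ equipped with the subspace metric inherited from $Y$. With this codomain $f$ is a bijection, and it remains Lipschitz (the Lipschitz constant is unchanged), hence in particular continuous.

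Next I would invoke the classical topological fact that a continuous bijection from a compact space onto a Hausdorff space is automatically a homeomorphism. Concretely: every closed subset $C\subseteq K$ is compact (closed in a compact space), so $f(C)$ is compact by continuity, hence closed in the metric space $f(K)$; thus $f$ is a closed map, and a continuous closed bijection has continuous inverse. Therefore $f\colon K\to f(K)$ is a Lipschitz homeomorphism.

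Finally, applying Theorem \ref{lipinv} to the Lipschitz homeomorphism $f\colon K\to f(K)$ yields $\dim_{tH} f(K)\leq \dim_{tH} K$, which is precisely the assertion.

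I do not expect any real obstacle: the entire content is the observation that injective continuous maps on compacta are homeomorphisms onto their image, after which Theorem \ref{lipinv} applies verbatim. The only point worth being careful about is that Lipschitzness (and its constant) must be understood with respect to the metric on $Y$ restricted to $f(K)$, which is automatic since the subspace metric agrees with the ambient one; no separability or completeness of $Y$ is needed, only that $f(K)$ is metric, so that compactness forces the Hausdorff separation used above.
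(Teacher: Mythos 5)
Your proof is correct and follows exactly the route the paper intends: the corollary is stated as an immediate consequence of Theorem \ref{lipinv}, the implicit point being precisely that a continuous injection on a compact space is a homeomorphism onto its image, so $f\colon K\to f(K)$ is a Lipschitz homeomorphism. Your spelled-out justification (closed map argument) is a standard and accurate filling-in of that step.
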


 The following example shows that we cannot drop injectivity here. First
 we need a well-known lemma.

\begin{lemma} \label{lipsurj}
Let $M\subseteq \mathbb{R}$ be measurable with positive Lebesgue
measure. Then there exists a Lipschitz onto map $f\colon M\to
[0,1]$.
\end{lemma}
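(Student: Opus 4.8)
The plan is to realize the desired surjection through a ``cumulative measure'' function. Write $\lambda$ for Lebesgue measure on $\RR$ and consider $F\colon \RR\to\RR$ defined by $F(x)=\lambda(M\cap(-\infty,x])$. Monotonicity of $\lambda$ makes $F$ nondecreasing, and for $x<x'$ we have $F(x')-F(x)=\lambda(M\cap(x,x'])\le x'-x$, so $F$ is Lipschitz with constant $1$, hence continuous, and it maps $\RR$ continuously onto the interval $[0,\lambda(M)]$. On all of $\RR$ this essentially already solves the problem after rescaling, so the whole difficulty is that we may only use points of $M$ as arguments, and $F|_M$ need \emph{not} be onto: if a value $y=F(x_0)$ is attained only at a point $x_0$ lying in a gap of $M$ (for instance $M=[0,1]\setminus\{1/2\}$ and $y=1/2$), then $y$ is missed by $F|_M$.

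To overcome this I would first shrink $M$. By inner regularity of Lebesgue measure choose a compact set $K\subseteq M$ with $c:=\lambda(K)>0$, and from now on use $F(x)=\lambda(K\cap(-\infty,x])$, which is still $1$-Lipschitz on all of $\RR$ and takes values in the bounded interval $[0,c]$. The key step, where compactness is exploited, is to prove that $F(K)=[0,c]$, i.e.\ that the restriction of $F$ to $K$ is already onto. The endpoints $y=0$ and $y=c$ are attained at $\min K$ and $\max K$, both in $K$. For $y\in(0,c)$ set $x_0=\inf\{x:F(x)\ge y\}$; continuity and monotonicity of $F$ give $F(x_0)=y$. The point of working with a \emph{compact} $K$ is that if $x_0\notin K$ then, $K$ being closed, some interval $(x_0-\delta,x_0+\delta)$ misses $K$, so $F$ is constant on it; but then $F(x)=y$ already holds for some $x<x_0$, contradicting the definition of $x_0$ as an infimum. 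Hence $x_0\in K$, proving $F(K)=[0,c]$.

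Finally set $f=\tfrac1c F$, viewed as a map on $M$. It is Lipschitz with constant $1/c$. Since $K\subseteq M$ we get $f(M)\supseteq f(K)=\tfrac1c F(K)=[0,1]$, while trivially $f(M)\subseteq f(\RR)\subseteq[0,1]$; therefore $f\colon M\to[0,1]$ is the required Lipschitz surjection. Note that because $F$ is defined on all of $\RR$ and already bounded by $c$, no separate Lipschitz extension (e.g.\ McShane) or truncation from $K$ to $M$ is needed. I expect the only genuine obstacle to be the surjectivity claim $F(K)=[0,c]$, and more precisely the observation that passing to a \emph{compact} subset is exactly what forces the chosen preimage $x_0$ back into the set on which $f$ is defined.
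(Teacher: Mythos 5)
Your proof is correct and follows essentially the same route as the paper: both pass to a compact subset $C\subseteq M$ of positive measure and use the normalized cumulative measure function $x\mapsto \lambda\left(C\cap(-\infty,x]\right)/\lambda(C)$ as the Lipschitz surjection. The only difference is that you spell out the surjectivity argument (why the preimage point $x_0$ must lie in the compact set), which the paper leaves as "not difficult to see."
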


\begin{proof} Let us choose a compact set $C \subseteq M$ of positive Lebesgue
  measure. Define $f\colon
M\to [0,1]$ by
%%%%%%%%%%%%%5
$$f(x)=\frac{\lambda\left((-\infty,x)\cap C\right)}{\lambda (C)},$$
%%%%%%%%%%%%%%
where $\lambda$ denotes the one-dimensional Lebesgue measure. Then it is not difficult to see that $f$ is Lipschitz
(with $\Lip(f) \le \frac{1}{\lambda (C)}$) and $f(C) = f(M) = [0,1]$.
\end{proof}

\begin{example} \label{ex2.15} Let $K\subseteq \mathbb{R}$ be a Cantor set (that is,
  a set homeomorphic to the middle-thirds Cantor set) of
positive Lebesgue measure. By Fact~\ref{<=>}, $\dim_{tH}
 K =\dim_{t} K =0$. Using Lemma~\ref{lipsurj} there is a Lipschitz
map $f\colon K\to [0,1]$ such that $f(K)=[0,1]$. By Theorem
\ref{<}, $\dim_{tH}[0,1]=1$, hence
$\dim_{tH} K =0<1=\dim_{tH}[0,1]=\dim_{tH} f(K)$.
 \end{example}

The next example shows that Corollary~\ref{inj} does not hold without
the assumption of compactness. We even have a separable metric
counterexample.

\begin{example} \label{ex2.16} Let $C$ be the middle-thirds Cantor set, and $f\colon
C\times C\to [0,2]$ be defined by $f(x,y)=x+y$. It is well-known and
easy to see that
$f$ is Lipschitz and $f(C\times C)=[0,2]$. Therefore one can
select
 a subset $X\subseteq C\times C$ such that
$f|_{X}$ is a bijection from $X$ onto $[0,2]$.
Then $X$ is separable metric. Monotonicity and $\dim_{t}
 (C\times C) =0$ imply $\dim_{tH}X \leq \dim_{tH}  (C\times C) =0$.
Therefore, $f$ is one-to-one and Lipschitz on $X$ but $\dim_{tH}X=0<1=\dim_{tH}[0,2]=\dim_{tH} f(X)$.
\end{example}

Our last example shows that the topological Hausdorff dimension is not
invariant under homeomorphisms. Not even for compact metric spaces.

\begin{example} Let $C_1, C_2 \subseteq \mathbb{R}$ be Cantor sets such that $\dim_H
  C_1 \neq \dim_H C_2$. We will see in Theorem \ref{prod} that $\dim_{tH} (C_i
  \times [0,1]) = \dim_H C_i + 1$ for $i=1,2$. Hence $C_1 \times [0,1]$ and
  $C_2 \times [0,1]$ are homeomorphic compact metric spaces whose topological
  Hausdorff dimensions disagree.
 \end{example}

\textbf{Stability and countable stability.}
As the following  example shows, similarly to
the case of topological dimension, stability does not hold for non-closed sets.
That is, $X=\bigcup_{n=1}^{k} X_{n}$
does not imply $\dim_{tH}X=\max_{1 \leq n \leq k}\dim_{tH}X_{n}.$

\begin{example}
Theorem \ref{<} implies $\dim_{tH} \mathbb{R}=1$, and Fact~\ref{<=>} yields $\dim_{tH} \mathbb{Q}=\dim_{t} \mathbb{Q}=0$ and
$\dim_{tH}(\mathbb{R}\setminus
\mathbb{Q})=\dim_{t}(\mathbb{R}\setminus \mathbb{Q})=0$. Thus
$\dim_{tH}
\mathbb{R}=1>0=\max\{\dim_{tH} \mathbb{Q},\dim_{tH}(\mathbb{R}\setminus
\mathbb{Q})\}$, and therefore stability fails.
\end{example}

As a corollary, we now show that as opposed to the case of Hausdorff (and
packing) dimension,
there is no reasonable family of measures inducing the topological Hausdorff
dimension. Let us say that a 1-parameter family of measures $\{ \mu^s \}_{s \ge
  0}$ is \emph{monotone} if $\mu^s(A) = 0$, $s<t$ implies $\mu^t(A) = 0$.
The family of Hausdorff (or packing) measures certainly satisfies this
criterion. It is not difficult to see that monotonicity implies that the induced
notion of dimension, that is,
$\dim A = \inf\{ s : \mu^s (A) =0 \}$ is countably stable. Hence we obtain

\begin{corollary}
There is no monotone 1-parameter family of measures $\{ \mu^s \}_{s \ge 0}$
such that $\dim_{tH} A = \inf\{ s : \mu^s (A) =0 \}$.
\end{corollary}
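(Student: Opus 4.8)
The plan is to prove the contrapositive: if a monotone $1$-parameter family of measures $\{\mu^s\}_{s\ge 0}$ induced the topological Hausdorff dimension via $\dim_{tH} A = \inf\{s : \mu^s(A)=0\}$, then this induced dimension would be countably stable, and I would derive a contradiction from the failure of even finite stability exhibited just above. So first I would establish the auxiliary claim stated in the paragraph preceding the corollary: \emph{monotonicity of the family implies countable stability of the induced dimension}. Suppose $A = \bigcup_{n=1}^\infty A_n$ and write $\dim A = \inf\{s : \mu^s(A)=0\}$. Let $t > \sup_n \dim A_n$; I want to show $\mu^t(A)=0$. For each $n$, since $t > \dim A_n$, by definition of the infimum there is some $s_n$ with $\dim A_n \le s_n < t$ and $\mu^{s_n}(A_n)=0$. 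Monotonicity (with $s_n < t$) then gives $\mu^t(A_n)=0$. Hence $\mu^t(A) \le \sum_{n=1}^\infty \mu^t(A_n) = 0$ by countable subadditivity of the measure $\mu^t$. This shows $\dim A \le t$ for every $t > \sup_n \dim A_n$, i.e.\ $\dim A \le \sup_n \dim A_n$; the reverse inequality $\dim A \ge \sup_n \dim A_n$ is immediate from monotonicity of the induced dimension (each $A_n \subseteq A$), so $\dim A = \sup_n \dim A_n$, which is countable stability.

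Next I would invoke the explicit example preceding the corollary, namely the decomposition $\RR = \QQ \cup (\RR\setminus\QQ)$, for which Theorem~\ref{<} and Fact~\ref{<=>} give $\dim_{tH}\RR = 1$ while $\dim_{tH}\QQ = \dim_{tH}(\RR\setminus\QQ) = 0$. If $\dim_{tH}$ were induced by a monotone family $\{\mu^s\}$, then by the claim just proved $\dim_{tH}$ would be countably (in particular finitely) stable, forcing $\dim_{tH}\RR = \max\{\dim_{tH}\QQ,\ \dim_{tH}(\RR\setminus\QQ)\} = 0$, contradicting $\dim_{tH}\RR = 1$. This contradiction completes the proof.

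I do not expect any serious obstacle here; the corollary is essentially a formal consequence of the displayed counterexample to stability together with the elementary observation about monotone families. The only point requiring a little care is to make sure the subadditivity step uses countable subadditivity of the fixed measure $\mu^t$ (which is automatic, as every measure is countably subadditive) rather than any joint property across different $s$, and to confirm that the failing decomposition $\RR = \QQ \cup (\RR\setminus\QQ)$ is a genuinely countable (indeed two-fold) union so that the stability conclusion applies. Since finite stability already fails, invoking either finite or countable stability suffices, and the argument is robust to which one is used.
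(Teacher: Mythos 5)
Your proof is correct and follows exactly the paper's intended route: the paper derives the corollary from the observation that monotonicity of the family implies countable stability of the induced dimension, combined with the preceding example $\RR=\QQ\cup(\RR\setminus\QQ)$ showing that $\dim_{tH}$ fails even finite stability. You have merely written out the (easy) details that the paper leaves to the reader, and they check out.
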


However, just like in the case of topological dimension, even countable
stability holds for \emph{closed} sets.

\begin{theorem}[Countable stability for closed sets] Let $X$ be a separable metric
space and $X=\bigcup_{n\in \mathbb{N}} \ X_{n}$, where $X_{n}$
$(n\in \mathbb{N})$ are closed subsets of $X$. Then $\dim_{tH}
X=\sup_{n\in \mathbb{N}} \dim_{tH}X_{n}$.
\end{theorem}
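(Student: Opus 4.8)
The plan is to prove the two inequalities separately, the nontrivial content lying entirely in one of them. The inequality $\dim_{tH}X \ge \sup_{n} \dim_{tH}X_n$ is immediate: each $X_n$ is a subspace of $X$, so monotonicity (Fact \ref{mon}) gives $\dim_{tH}X_n \le \dim_{tH}X$ for every $n$, and taking the supremum over $n$ yields the claim. So the whole argument reduces to establishing $\dim_{tH}X \le d$, where I write $d := \sup_{n} \dim_{tH}X_n$. If $d = \infty$ there is nothing to prove, so I may assume $d < \infty$.

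The essential idea is that one should \emph{not} work with the original local (basis) definition here, since boundaries of basic open sets behave badly under countable unions and there is no good way to amalgamate bases of the pieces $X_n$ into a basis of $X$ with controlled boundaries. Instead I would invoke the global decomposition characterization, Theorem \ref{t:tHdecomp}: for a separable metric space, $\dim_{tH}$ is the \emph{minimum} of those $d$ for which there is a set of Hausdorff dimension $\le d-1$ whose complement is zero-dimensional. Since $\dim_{tH}X_n \le d$ and $X_n$ is itself separable metric, Theorem \ref{t:tHdecomp} supplies, for each $n$, a set $A_n \subseteq X_n$ with $\dim_H A_n \le d-1$ and $\dim_t(X_n \setminus A_n) \le 0$. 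I then set $A = \bigcup_{n} A_n$ and aim to show that this single global $A$ witnesses $d \in P_{dH}$, i.e. that $\dim_H A \le d-1$ and $\dim_t(X \setminus A) \le 0$; by Theorem \ref{t:tHdecomp} this will give $\dim_{tH}X \le d$.

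The first of these two conditions is routine: countable stability of Hausdorff dimension gives $\dim_H A = \sup_n \dim_H A_n \le d-1$. The second is where the argument has real content, and it is the step I expect to be the main obstacle. The point is to decompose the complement as $X \setminus A = \bigcup_{n} (X_n \setminus A)$ and apply the countable sum theorem for topological dimension zero \cite[1.3.1.]{Eng}. This requires two verifications. First, each piece is zero-dimensional: since $A \supseteq A_n$ we have $X_n \setminus A \subseteq X_n \setminus A_n$, so monotonicity of the topological dimension gives $\dim_t(X_n \setminus A) \le \dim_t(X_n \setminus A_n) \le 0$. Second, and this is exactly where the closedness hypothesis of the theorem enters, each piece must be \emph{closed in the subspace} $X \setminus A$: because $X_n$ is closed in $X$, the set $X_n \setminus A = X_n \cap (X \setminus A)$ is closed in $X \setminus A$. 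Thus $X \setminus A$ is a countable union of closed subsets of topological dimension $\le 0$, and the sum theorem (applicable since $X \setminus A$ is separable metric) yields $\dim_t(X \setminus A) \le 0$.

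In short, the proof mirrors the classical proof of countable stability of the ordinary topological dimension, with the single modification that one of the two pieces in the decomposition is controlled by Hausdorff rather than topological dimension; the reduction to the global form via Theorem \ref{t:tHdecomp} is what makes this modification transparent. The delicate points to watch are keeping the union $A$ fixed across all $n$ so that the complement splits into genuinely closed pieces, and confirming that the cited zero-dimensional sum theorem is being applied in its correct (closed-subset, separable) form.
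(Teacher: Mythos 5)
Your proof is correct and follows essentially the same route as the paper: monotonicity for one inequality, and for the other the reduction via Theorem \ref{t:tHdecomp} to a global decomposition, taking $A=\bigcup_n A_n$ and using countable stability of Hausdorff dimension together with a zero-dimensional sum theorem. The only (harmless) divergence is in the last step: the paper first replaces each $A_n$ by a $G_\delta$ hull of the same Hausdorff dimension, so that the sets $X_n\setminus A_n$ become $F_\sigma$ in $X$, and then applies the sum theorem for $F_\sigma$ sets \cite[1.3.3.~Corollary]{Eng} to the superset $\bigcup_n(X_n\setminus A_n)\supseteq X\setminus A$ followed by monotonicity; you instead write $X\setminus A=\bigcup_n(X_n\setminus A)$, observe that each piece $X_n\cap(X\setminus A)$ is relatively closed precisely because $X_n$ is closed in $X$, and invoke the sum theorem for closed sets \cite[1.3.1.]{Eng} directly, which dispenses with the $G_\delta$ hulls. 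Both variants locate the use of the closedness hypothesis correctly and both are valid.
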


\begin{proof} Monotonicity clearly implies $\dim_{tH}X \geq \sup_{n\in \mathbb{N}} \dim_{tH} X_{n}$.

For the opposite inequality let $d=\sup_{n\in \mathbb{N}}\dim_{tH}X_{n}$, we may assume $d<\infty$. Theorem \ref{t:tHdecomp} yields that there are
sets $A_n\subseteq X_n$ such that $\dim_H A_n\leq d-1$ and $\dim_t (X_n\setminus A_n)\leq 0$. We may assume that the sets $A_n$ are $G_{\delta}$,
since we can take $G_{\delta}$ hulls with the same Hausdorff dimension. Let $A=\bigcup_{n=0}^{\infty} A_n\subseteq X$,
the countable stability of the Hausdorff dimension implies $\dim_H A\leq d-1$. As the sets $X_n\setminus A_n$ are $F_\sigma$ in $X$ with $\dim_t(X_n\setminus A_n) \leq 0$ and $X\setminus A\subseteq \bigcup_{n=0}^{\infty} (X_n\setminus A_n)$,
monotonicity and countable stability of topological dimension zero for $F_{\sigma}$ sets \cite[1.3.3.~Corollary]{Eng} yield $\dim_t (X\setminus A)\leq 0$.
Finally, $\dim_H A\leq d-1$ and $\dim_t (X\setminus A)\leq 0$ together with Theorem \ref{t:tHdecomp} imply $\dim_{tH} X\leq d$, and the proof is complete.
\end{proof}

\begin{corollary}
Countable stability holds for $F_{\sigma}$ sets, as well.
\end{corollary}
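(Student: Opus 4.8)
The plan is to reduce everything to the already established case of closed sets. Since each $X_n$ is $F_\sigma$ in $X$, I would first write $X_n = \bigcup_{m \in \mathbb{N}} X_{n,m}$ with each $X_{n,m}$ closed in $X$. Then $X = \bigcup_{n,m \in \mathbb{N}} X_{n,m}$ exhibits $X$ as a countable union of closed subsets, so the preceding countable stability theorem for closed sets applies directly and gives
\[
\dim_{tH} X = \sup_{n,m} \dim_{tH} X_{n,m}.
\]

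It then remains to identify this double supremum with $\sup_n \dim_{tH} X_n$. In one direction, monotonicity (Fact \ref{mon}) yields $\dim_{tH} X_{n,m} \leq \dim_{tH} X_n$ for all $m$, whence $\sup_{n,m} \dim_{tH} X_{n,m} \leq \sup_n \dim_{tH} X_n$. For the reverse direction I would apply the closed-set theorem a second time, now to each space $X_n$ separately: since $X_{n,m}$ is closed in $X$ and contained in $X_n$, it is closed in the subspace $X_n$, so the decomposition $X_n = \bigcup_m X_{n,m}$ gives $\dim_{tH} X_n = \sup_m \dim_{tH} X_{n,m}$. Taking the supremum over $n$ produces $\sup_n \dim_{tH} X_n = \sup_{n,m} \dim_{tH} X_{n,m}$, and combining this with the displayed equality finishes the argument.

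There is essentially no serious obstacle; this is a routine two-step reduction. The only point that requires (minor) care is the observation that closedness in the ambient space $X$ passes to closedness in each subspace $X_n$, which is exactly what legitimizes invoking the closed-set stability theorem both for the global decomposition of $X$ and for the local decompositions of the individual pieces $X_n$. Note also that all the subspaces involved are again separable metric, so the hypotheses of the closed-set theorem are met throughout.
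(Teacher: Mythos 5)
Your proof is correct and is exactly the intended reduction (the paper leaves this corollary to the reader): decompose each $F_\sigma$ set into closed pieces, apply the closed-set stability theorem to the refined countable closed cover of $X$, and identify the suprema. The only remark is that your second application of the closed-set theorem to each $X_n$ is unnecessary — monotonicity already gives $\sup_n \dim_{tH} X_n \leq \dim_{tH} X$ since $X_n \subseteq X$ — but this costs nothing.
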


\textbf{Regularity.} Next we investigate the existence of $G_{\delta}$ hulls
and $F_{\sigma}$ subsets with the same topological Hausdorff dimension.

\begin{theorem}[Enlargement theorem for topological Hausdorff dimension] If
$X$ is a metric space and $Y\subseteq X$ is a separable subspace then there exists
a $G_{\delta}$ set $G\subseteq X$ such that $Y\subseteq G$ and $\dim_{tH}
G=\dim_{tH} Y$.
\end{theorem}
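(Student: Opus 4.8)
The plan is to reduce everything to the global decomposition of Theorem \ref{t:tHdecomp} and then to enlarge the two pieces of that decomposition independently. By monotonicity (Fact \ref{mon}), \emph{any} $G_\delta$ set $G$ with $Y \subseteq G$ already satisfies $\dim_{tH} G \ge \dim_{tH} Y$, so the entire content of the theorem is to produce such a $G$ with $\dim_{tH} G \le \dim_{tH} Y =: d$. I dispose of the degenerate cases first: if $Y = \emptyset$ take $G = \emptyset$, and if $d = \infty$ take $G = X$ (which is $G_\delta$ in itself, whence $\dim_{tH} G = \infty$ by monotonicity). So from now on I assume $Y \neq \emptyset$ and $d < \infty$.

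Since $Y$ is separable, Theorem \ref{t:tHdecomp} supplies a set $A \subseteq Y$ with $\dim_H A \le d-1$ and $\dim_t(Y \setminus A) \le 0$. Now I enlarge the two pieces. For the Hausdorff-small piece I invoke the fact, recorded in the preliminaries, that every set lies in a $G_\delta$ set of the same Hausdorff dimension; this gives $A^* \supseteq A$, $G_\delta$ in $X$, with $\dim_H A^* = \dim_H A \le d-1$. For the zero-dimensional piece I put $B = Y \setminus A^* \subseteq Y \setminus A$, so that $\dim_t B \le 0$ and $B$ is separable, and I apply the classical enlargement theorem of dimension theory (the case $n=0$, see e.g.\ \cite{Eng}) inside the separable closed subspace $\cl B \subseteq X$ to obtain $B^* \supseteq B$, $G_\delta$ in $\cl B$, with $\dim_t B^* \le 0$. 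Because $\cl B$ is closed, hence $G_\delta$, in the metric space $X$, the set $B^*$ is in fact $G_\delta$ in $X$.

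Finally I set $G = A^* \cup B^*$. Then $Y = (Y \cap A^*) \cup B \subseteq A^* \cup B^* = G$, and $G$ is $G_\delta$ in $X$ because a finite union of $G_\delta$ sets is again $G_\delta$ (if $A^* = \bigcap_n U_n$ and $B^* = \bigcap_m V_m$ with $U_n, V_m$ open, then $A^* \cup B^* = \bigcap_{n,m}(U_n \cup V_m)$). To bound the dimension I use the decomposition $G = A^* \cup (G \setminus A^*)$: here $\dim_H A^* \le d-1$, while $G \setminus A^* = B^* \setminus A^* \subseteq B^*$ gives $\dim_t(G \setminus A^*) \le \dim_t B^* \le 0$; as $G$ is separable, Theorem \ref{t:tHdecomp} yields $\dim_{tH} G \le d$. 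Together with monotonicity this gives $\dim_{tH} G = d = \dim_{tH} Y$, as required.

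\textbf{Main obstacle.} The only genuinely non-formal ingredient is the enlargement step for the zero-dimensional piece, namely that a separable subspace of topological dimension $0$ admits a $G_\delta$ hull that is still of dimension $0$; this is the classical enlargement theorem, and the sole point requiring care is that the ambient $X$ need not be separable, which I circumvent by first working inside the separable closed subspace $\cl B$ and then upgrading ``$G_\delta$ in $\cl B$'' to ``$G_\delta$ in $X$''. The one superficially worrying point, that $A^* \cup B^*$ might fail to be $G_\delta$, dissolves immediately since finite unions of $G_\delta$ sets are $G_\delta$.
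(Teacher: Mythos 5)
Your argument is correct and is essentially the paper's own proof: both decompose $Y$ via Theorem \ref{t:tHdecomp}, take a $G_\delta$ Hausdorff-dimension-preserving hull of the small piece and a $G_\delta$ zero-dimensional hull of the rest via the classical enlargement theorem \cite[1.2.14.]{Eng}, and then apply Theorem \ref{t:tHdecomp} to the union. Your only deviations are cosmetic but welcome ones: you treat the cases $Y=\emptyset$ and $\dim_{tH}Y=\infty$ explicitly, and you sidestep the possible non-separability of the ambient $X$ by enlarging the zero-dimensional piece inside the separable closed subspace $\cl B$ and then observing that a $G_\delta$ subset of a closed set is $G_\delta$ in $X$ — a point the paper glosses over when citing Engelking directly.
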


\begin{proof} We may assume $Y\neq \emptyset$. Theorem \ref{t:tHdecomp}
implies that there exists $Z\subseteq Y$ such that $\dim_H Z=\dim_{tH} Y-1$
and $\dim_{t}(Y\setminus Z)\leq 0$. Let $A\subseteq X$ be a $G_{\delta}$ set
such that $Z\subseteq A$ and $\dim_H A=\dim_H Z=\dim_{tH} Y-1$. By
\cite[1.2.14.]{Eng} there exists a
$G_{\delta}$ set $B\subseteq X$ such that $Y\setminus Z\subseteq B$ and
$\dim_{t} B\leq 0$.  Let $G = A\cup B$, then $G$ is a
$G_{\delta}$ subset of $X$ with $Y \subseteq G$. Since $\dim_{t} (G\setminus
A)\leq \dim_t B\leq 0$, Theorem \ref{t:tHdecomp} implies that $\dim_{tH} G\leq
\dim_{tH} A+1=\dim_{tH} Y$, and monotonicity yields $\dim_{tH} G=\dim_{tH} Y$.
\end{proof}

The following example shows that inner regularity does not hold even for
$G_{\delta}$ subsets of Euclidean spaces.

\begin{example}\label{ex:Maz} For every $d\in \mathbb{N}^+$ Mazurkiewicz \cite{Maz}
constructed a $G_{\delta}$ set $G\subseteq [0,1]^{d+1}$ such that $\dim_{t}
G=d$ and $G$ is totally disconnected
(see \cite[Thm.~3.9.3.]{Mi} for a proof in English).
Theorem \ref{<} implies that $\dim_{tH}G\geq
\dim_t G=d$. If $F\subseteq G$ is closed in $\mathbb{R}^{d+1}$ then $F$ is compact
and totally disconnected, thus $\dim_t F\leq 0$, so Fact~\ref{<=>} implies
$\dim_{tH}F\leq 0$. Therefore countable stability of the topological
Hausdorff dimension for closed sets yields that every $F_{\sigma}$ subset of
$G$ has topological Hausdorff dimension at most $0$.
\end{example}

\textbf{Products.}
Now we investigate products from the point of view of topological
Hausdorff dimension. By the product of two metric spaces we will always mean the
$l^2$-product, that is,
\[
d_{X\times Y}((x_{1},y_{1}),(x_{2},y_{2}))=
\sqrt{d^{2}_{X}(x_1,x_2)+d^{2}_{Y}(y_1,y_2)}.
\]
%%%%%%
First we recall a well-known statement, see \cite[Chapter~3]{F} and \cite[Product formula~7.3]{F} for
the definition of the upper box-counting dimension and the proof, respectively. In fact, \cite{F} works
in Euclidean spaces only, but the proof goes through verbatim to general metric spaces.

\begin{lemma} \label{box}
Let $X,Y$ be non-empty metric spaces such that
$\dim_{H}Y=\overline{\dim}_{B} Y$, where $\overline{\dim}_{B}$ is
the upper box-counting dimension. Then
%%%%%%%%%%%%%5
$$\dim_{H}(X\times Y) \leq \dim_{H}X+\dim_{H}Y.$$
\end{lemma}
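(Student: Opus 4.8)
The plan is to first prove the sharper-looking inequality $\dim_H(X\times Y)\le \dim_H X + \overline{\dim}_B Y$ and then feed in the hypothesis $\overline{\dim}_B Y = \dim_H Y$ to obtain the stated bound. I may assume $\dim_H X<\infty$ and $\overline{\dim}_B Y<\infty$, since otherwise the right-hand side is infinite and there is nothing to prove; note that finiteness of $\overline{\dim}_B Y$ forces $Y$ to be totally bounded, which guarantees that the box-counting covers below actually exist. Throughout I write $N_\delta(Y)$ for the least number of sets of diameter at most $\delta$ needed to cover $Y$.

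I would fix $s>\dim_H X$ and $t>\overline{\dim}_B Y$. By the definition of the upper box-counting dimension there is $\delta_0>0$ with $N_\delta(Y)\le \delta^{-t}$ for all $0<\delta\le\delta_0$. Since $s>\dim_H X$ we have $\mathcal H^s(X)=0$, hence $\mathcal H^s_{\delta'}(X)=0$ for every $\delta'>0$. Thus, given any $\delta'\in(0,\delta_0]$ and any $\eta>0$, I can choose a cover $\{U_i\}_i$ of $X$ with every $\diam U_i\le\delta'$ and $\sum_i(\diam U_i)^s\le\eta$; after discarding empty sets and enlarging any singletons slightly (which can be arranged to cost an arbitrarily small amount in the $s$-sum while keeping the mesh $\le\delta'$), I may assume each $d_i:=\diam U_i$ satisfies $0<d_i\le\delta_0$.

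Now for each $i$ I cover $Y$ by $N_{d_i}(Y)\le d_i^{-t}$ sets of diameter at most $d_i$. Forming products, the boxes $U_i\times W$ (as $W$ ranges over this cover of $Y$) cover $X\times Y$, and in the $l^2$-product each has diameter at most $\sqrt{d_i^2+d_i^2}=\sqrt2\,d_i$. Their total $(s+t)$-cost is
\[
\sum_i N_{d_i}(Y)\,\bigl(\sqrt2\,d_i\bigr)^{s+t}\le 2^{(s+t)/2}\sum_i d_i^{-t}\,d_i^{s+t}=2^{(s+t)/2}\sum_i d_i^{s}\le 2^{(s+t)/2}\eta.
\]
Since all boxes have diameter at most $\sqrt2\,\delta'$, this shows $\mathcal H^{s+t}_{\sqrt2\,\delta'}(X\times Y)\le 2^{(s+t)/2}\eta$; letting $\eta\to0$ gives $\mathcal H^{s+t}_{\sqrt2\,\delta'}(X\times Y)=0$, and letting $\delta'\to0$ gives $\mathcal H^{s+t}(X\times Y)=0$, so $\dim_H(X\times Y)\le s+t$. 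Taking the infimum over admissible $s$ and $t$ yields $\dim_H(X\times Y)\le\dim_H X+\overline{\dim}_B Y=\dim_H X+\dim_H Y$, as required.

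The one genuinely delicate point is the scale matching: the estimate $N_{d_i}(Y)\le d_i^{-t}$ holds only once $d_i\le\delta_0$, which is why I insist that the cover of $X$ have mesh at most $\delta_0$ and why the zero-diameter pieces must be perturbed to positive diameter before the scale-$d_i$ cover of $Y$ is built. Everything else is the routine interplay between the definitions of $\mathcal H^s$ and $\overline{\dim}_B$, together with the fact that $\mathcal H^s(X)=0$ makes $\mathcal H^s_{\delta'}(X)$ vanish at \emph{every} scale, so that arbitrarily fine covers of the product are available.
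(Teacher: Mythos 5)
Your argument is correct and is essentially the standard proof of Falconer's Product formula 7.3, which is exactly what the paper cites here (the paper gives no proof of its own, only the remark that Falconer's Euclidean argument transfers verbatim to metric spaces). Your scale-matching of the Hausdorff cover of $X$ with box-counting covers of $Y$, including the care taken with zero-diameter pieces and with the threshold $\delta_0$, is the same mechanism as in the cited proof, so nothing further is needed.
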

%%%%%%%%%%%%%%%%%

Now we prove our next theorem which provides a large class of sets for which the
topological Hausdorff dimension and the Hausdorff dimension coincide.

\begin{theorem} \label{prod} Let $X$ be a non-empty separable metric space. Then
%%%%%%%%%%%%
$$\dim_{tH}\left(X \times [0,1]\right)=\dim_{H} \left(X\times [0,1]\right)=\dim_{H} X+1.$$
\end{theorem}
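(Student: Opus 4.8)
The plan is to establish the chain of inequalities
\[
\dim_H X + 1 \le \dim_{tH}(X \times [0,1]) \le \dim_H(X \times [0,1]) \le \dim_H X + 1,
\]
which forces all three quantities to coincide and proves the theorem. The rightmost inequality is immediate from Lemma~\ref{box} applied with $Y = [0,1]$, since $\dim_H [0,1] = \overline{\dim}_B [0,1] = 1$. The middle inequality is exactly Theorem~\ref{<}. Thus the whole statement reduces to the leftmost inequality, the lower bound $\dim_{tH}(X \times [0,1]) \ge \dim_H X + 1$. It is worth noting that this single bound also recovers the Hausdorff product equality $\dim_H(X\times[0,1]) = \dim_H X + 1$ for free through the chain, so no separate product formula for $\dim_H$ is required.

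For the lower bound I would invoke the decomposition characterization in Theorem~\ref{t:tHdecomp}, which applies since $X \times [0,1]$ is separable. It suffices to show that whenever $A \subseteq X \times [0,1]$ satisfies $\dim_t\left((X\times[0,1])\setminus A\right) \le 0$, one necessarily has $\dim_H A \ge \dim_H X$. Applying this to the optimal set $A$ realizing the minimum in Theorem~\ref{t:tHdecomp} (so that $\dim_H A \le \dim_{tH}(X\times[0,1]) - 1$) then gives $\dim_{tH}(X\times[0,1]) - 1 \ge \dim_H X$, as desired. I would prove the claim by contraposition: assuming $\dim_H A < \dim_H X$, I would exhibit a nontrivial connected subset of the complement.

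The key geometric observation is that the coordinate projection $\pi \colon X \times [0,1] \to X$, $\pi(x,t) = x$, is $1$-Lipschitz for the $l^2$-product metric, so $\dim_H \pi(A) \le \dim_H A < \dim_H X$. In particular $\pi(A) \ne X$, hence there is a point $x_0 \in X \setminus \pi(A)$. The entire vertical segment $\{x_0\} \times [0,1]$ then avoids $A$ and lies in $(X\times[0,1])\setminus A$; being homeomorphic to $[0,1]$, it is a nontrivial connected set. Consequently $(X\times[0,1])\setminus A$ is not totally disconnected, and by the implication $\dim_t \le 0 \Rightarrow$ totally disconnected recalled in the Preliminaries, we obtain $\dim_t\left((X\times[0,1])\setminus A\right) > 0$, contradicting the hypothesis. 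This establishes the lower bound.

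I expect the main, though modest, obstacle to be the handling of boundary cases and conventions rather than any serious difficulty. The case $\dim_H X = \infty$ must be treated with the convention $\infty + 1 = \infty$: there no finite $d$ admits an admissible $A$, since $\dim_H A < \infty = \dim_H X$ always forces $\pi(A) \ne X$. The degenerate case $\dim_H X = 0$ also needs a word, as then $\dim_H A < 0$ means $A = \emptyset$, yet the argument still produces a vertical segment inside the complement. Apart from these checks, the proof is a direct combination of the $1$-Lipschitz projection estimate, Theorem~\ref{t:tHdecomp}, and the totally-disconnected criterion, and should be short.
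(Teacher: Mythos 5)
Your proposal is correct and follows essentially the same route as the paper: the upper bounds via Theorem~\ref{<} and Lemma~\ref{box}, and the lower bound by applying Theorem~\ref{t:tHdecomp} and observing that total disconnectedness of the complement forces the $1$-Lipschitz projection of $A$ onto $X$ to be surjective, whence $\dim_H A \ge \dim_H X$. The paper states this last step directly (every fiber $\{x\}\times[0,1]$ must meet $A$) rather than contrapositively, but the argument is identical in substance.
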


\begin{proof}
From Theorem \ref{<} it follows that $\dim_{tH} \left(X\times
[0,1]\right) \leq \dim_{H} \left(X\times [0,1]\right)$.

Applying  Lemma~\ref{box} for $Y=[0,1]$ we deduce that
$$\dim_{H} \left(X\times [0,1]\right)\leq \dim_{H} X+\dim_{H} [0,1]=\dim_{H} X+1.$$

Finally, we prove that $\dim_H X+1\leq \dim_{tH} \left(X\times [0,1]\right)$. Let us define $\pr_X\colon
X\times [0,1]\to X$ as $\pr_X (x,y)=x$ and let $Z=X\times [0,1]$. Theorem \ref{t:tHdecomp} implies that there is a set $A\subseteq Z$ such that $\dim_H A\leq \dim_{tH}Z-1$ and $\dim_t (Z\setminus A)\leq 0$. Since $Z\setminus A$ is totally disconnected, $A$ intersects $\{x\}\times [0,1]$ for all $x\in X$, thus $\pr_X (A)=X$.
Projections do not increase the Hausdorff dimension, thus
%%%%
$$\dim_{tH}Z-1\geq \dim_H A\geq \dim_H \pr_X (A)=\dim_H X.$$
%%%%
Hence $\dim_{tH} \left(X\times [0,1]\right) \geq \dim_H X+1$, and the proof is complete.
\end{proof}

\begin{remark}
We cannot drop separability here.
Indeed, if $X$ is an uncountable discrete metric space then it is not difficult to
see that $\dim_{tH} (X\times [0,1])=1$ and $\dim_{H} (X\times
[0,1])=\dim_{H} X=\infty$.

Separability is a rather natural assumption throughout the paper. First, the
Hausdorff dimension is only meaningful in this context (it is always infinite
for non-separable spaces), secondly for the theory of topological dimension
this is the most usual framework.
\end{remark}

\begin{corollary} \label{prodn} If $X$ is a non-empty  separable metric space
  then
\[
\dim_{tH} \left(X\times [0,1]^{d}\right)=\dim_{H} \left(X\times
  [0,1]^{d}\right)=\dim_{H} X+d.
\]
\end{corollary}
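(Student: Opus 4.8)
The plan is to reduce Corollary \ref{prodn} to Theorem \ref{prod} by induction on $d$, since the single-factor case $X \times [0,1]$ has already been established. First I would rewrite $X \times [0,1]^d$ as $\left(X \times [0,1]^{d-1}\right) \times [0,1]$ and observe that $X \times [0,1]^{d-1}$ is again a non-empty separable metric space, so that Theorem \ref{prod} applies directly to it.

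For the base case $d=1$ the statement is precisely Theorem \ref{prod}. For the inductive step, assuming the claim holds for $d-1$, I would set $Y = X \times [0,1]^{d-1}$ and apply Theorem \ref{prod} to $Y$, yielding
\[
\dim_{tH}\left(Y \times [0,1]\right) = \dim_{H}\left(Y \times [0,1]\right) = \dim_{H} Y + 1.
\]
The inductive hypothesis gives $\dim_{H} Y = \dim_{H}\left(X \times [0,1]^{d-1}\right) = \dim_{H} X + (d-1)$, so substituting produces $\dim_{H} X + d$, as desired.

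One technical point to verify is that the $l^2$-product is associative in the required sense, i.e. that the metric space $X \times [0,1]^d$ is bi-Lipschitz (indeed isometric) to $\left(X \times [0,1]^{d-1}\right) \times [0,1]$ under the canonical identification. This is immediate from the definition of the $l^2$-product metric, since $\sqrt{\left(\sqrt{a^2+b^2}\right)^2 + c^2} = \sqrt{a^2+b^2+c^2}$, so the nested product carries exactly the same metric as the flat product. Because the topological Hausdorff dimension is bi-Lipschitz invariant by Corollary \ref{c:bi-Lip} and the Hausdorff dimension is likewise preserved under isometry, applying Theorem \ref{prod} in the nested form is legitimate.

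I do not expect any real obstacle here, as the statement is a routine consequence of Theorem \ref{prod} together with associativity of the product metric; the only thing to be careful about is keeping track of the separability hypothesis through the induction, which is preserved since a product of separable metric spaces is separable.
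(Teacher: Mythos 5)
Your proof is correct and is essentially the argument the paper intends: the corollary is stated without proof precisely because it follows from Theorem \ref{prod} by the induction on $d$ that you describe, and your verification that the nested $l^2$-product is isometric to the flat one (together with the preservation of separability) covers the only points that need checking.
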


\textbf{The possible values of $(\dim_t X, \dim_{tH} X, \dim_H X)$.}
Now we provide a complete description of the possible values of
the triple $(\dim_t X, \dim_{tH} X, \dim_H X)$. Moreover, all possible values
can be realized by compact spaces as well.

\begin{theorem} \label{trip} For a triple $(d,s,t) \in [0, \infty]^3$ the
  following are equivalent.
\begin{enumerate}[(i)]
\item
There exists a compact metric space $K$ such that $\dim_t K = d$, $\dim_{tH} K
= s$, and $\dim_H K = t$.
\item
There exists a separable metric space $X$ such that $\dim_t X = d$, $\dim_{tH} X
= s$, and $\dim_H X = t$.
\item
There exists a metric space $X$ such that $\dim_t X = d$, $\dim_{tH} X
= s$, and $\dim_H X = t$.
\item
$d=s=t=-1$, or $d=s=0$, $t \in [0, \infty]$, or $d\in \mathbb{N}^{+}
\cup \{ \infty \}, \ s,t \in [1, \infty], \ d\leq s \leq t$.
\end{enumerate}
\end{theorem}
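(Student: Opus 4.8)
The plan is to prove the statement via the cyclic chain $(i)\Rightarrow(ii)\Rightarrow(iii)\Rightarrow(iv)\Rightarrow(i)$. The implications $(i)\Rightarrow(ii)\Rightarrow(iii)$ are immediate, since a compact metric space is separable and a separable metric space is a metric space, and the three dimensions of the witnessing space do not change. So the genuine content is the necessity direction $(iii)\Rightarrow(iv)$, which records the constraints a realizable triple must satisfy, and the realization direction $(iv)\Rightarrow(i)$, which produces a compact witness for every admissible triple.

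For $(iii)\Rightarrow(iv)$ I would simply read off the constraints from facts already established. If the witnessing $X$ is empty, then $\dim_t X=\dim_{tH}X=\dim_H X=-1$, the first alternative. Otherwise $\dim_t X$ is a non-negative integer or $\infty$ (the small inductive dimension is integer-valued), and Theorem~\ref{<} gives $d\le s\le t$. If $d=0$, then Fact~\ref{<=>} forces $s=0$, while $t\ge s=0$ may be any element of $[0,\infty]$, which is the second alternative $d=s=0$, $t\in[0,\infty]$. If instead $d\ge 1$, then $s\ge d\ge 1$ and $t\ge s\ge 1$, so $s,t\in[1,\infty]$ with $d\le s\le t$, exactly the third alternative. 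This settles $(iii)\Rightarrow(iv)$.

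The substance is $(iv)\Rightarrow(i)$, which I would prove by an explicit compact construction assembled from two blocks. The first block is, for a prescribed value $a\in[0,\infty]$, a totally disconnected compact metric space $C_a$ with $\dim_H C_a=a$ (so $\dim_t C_a=\dim_{tH}C_a=0$ by Fact~\ref{<=>}); for finite $a$ one may take a Cantor set of Hausdorff dimension $a$ inside $\mathbb{R}^n$ with $n>a$, and for $a=\infty$ take $C_\infty=\{p\}\cup\bigcup_{n} D_n'$, where $D_n'$ is an isometric copy of a Cantor set of Hausdorff dimension $n$ scaled to diameter at most $2^{-n}$ and placed so that the copies are mutually disjoint and accumulate only at $p$; this $C_\infty$ is compact, totally disconnected, and has $\dim_H C_\infty=\sup_n n=\infty$ since scaling preserves Hausdorff dimension. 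The second block is the product $C_a\times[0,1]$, for which Theorem~\ref{prod} gives $\dim_t=1$ and $\dim_{tH}=\dim_H=a+1$ when $a<\infty$; when $a=\infty$ the subsets $D_n'\times[0,1]\subseteq C_\infty\times[0,1]$ satisfy $\dim_{tH}(D_n'\times[0,1])=n+1$ by Theorem~\ref{prod}, so monotonicity yields $\dim_{tH}(C_\infty\times[0,1])=\dim_H(C_\infty\times[0,1])=\infty$.

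I would then glue finitely many blocks by a disjoint union, realized as a compact metric space in which the blocks are mutually separated and hence clopen; finite stability of $\dim_t$ and $\dim_H$ for closed pieces and the countable stability of $\dim_{tH}$ for closed sets proved above show that each of the three dimensions of the union equals the maximum over the blocks. For the empty alternative take $X=\emptyset$; for $d=s=0$, $t\in[0,\infty]$ take $X=C_t$; and in the finite case $d\in\mathbb{N}^+$, $1\le d\le s\le t<\infty$ take
\[
X=[0,1]^d\sqcup\big(C_{s-1}\times[0,1]\big)\sqcup C_t ,
\]
whose dimensions are $\max(d,1,0)=d$, $\max(d,s,0)=s$ and $\max(d,s,t)=t$, as required. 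The subcases with an infinite entry and $d$ finite are handled by replacing the relevant block with $C_\infty$ or $C_\infty\times[0,1]$, and the single remaining triple $(\infty,\infty,\infty)$ is realized by the Hilbert cube $[0,1]^{\mathbb{N}}$, which contains $[0,1]^n$ for all $n$ and hence has all three dimensions equal to $\infty$. I expect the only delicate points to be the existence of a totally disconnected compact set of infinite Hausdorff dimension and the product formula $\dim_{tH}(C_a\times[0,1])=a+1$ at $a=\infty$; both are dispatched by the shrinking-copies construction together with the monotonicity argument, so beyond Theorem~\ref{prod} and the stability results no new ideas are needed, and the remaining checks (Cantor sets of every finite dimension, additivity-as-maxima on clopen blocks) are routine.
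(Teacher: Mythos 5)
Your proof is correct, and it follows the same overall strategy as the paper: the only content is in $(iii)\Rightarrow(iv)$ (read off from Fact~\ref{<=>} and Theorem~\ref{<}) and in $(iv)\Rightarrow(i)$ (an explicit compact witness built as a finite disjoint union of a product block and Cantor-set blocks). The one genuine difference is the decomposition used for the realization. The paper takes the two-block space $K_{d,s,t}=(K_{s-d}\times[0,1]^{d})\cup K_t$, letting the single product $K_{s-d}\times[0,1]^d$ simultaneously force $\dim_t=d$ and $\dim_{tH}=\dim_H=s$; this requires Corollary~\ref{prodn} (the product formula with $[0,1]^d$) together with the inequality $\dim_t(X\times Y)\le\dim_t X+\dim_t Y$. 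Your three-block space $[0,1]^d\sqcup(C_{s-1}\times[0,1])\sqcup C_t$ decouples these roles: the cube carries the topological dimension, the product with a single interval carries the topological Hausdorff dimension, and you only ever invoke Theorem~\ref{prod} rather than its iterated corollary or the product inequality for $\dim_t$. You also build the infinite-dimensional totally disconnected block as a null sequence of Cantor sets of unbounded dimension accumulating at a point, where the paper uses $C^{\mathbb{N}}$; both work, and your monotonicity argument for $\dim_{tH}(C_\infty\times[0,1])=\infty$ is in fact redundant since Theorem~\ref{prod} already covers $\dim_H X=\infty$. The trade-off is purely aesthetic: the paper's witness is a union of two pieces and handles all finite $d$ uniformly, while yours uses one more piece but leans on strictly fewer auxiliary results.
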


\begin{proof} The implications
$(i) \implies (ii)$ and $(ii) \implies (iii)$ are obvious, and $(iii) \implies (iv)$ can easily be
  checked using Fact~\ref{<=>} and Theorem \ref{<}.

It remains to prove that $(iv) \implies (i)$.
First, the empty set takes care of the case $d=s=t=-1$. Let now $d=s=0$, $t \in [0,
  \infty]$.
For $t \in [0,\infty]$ let $K_{t}$ be a Cantor set with $\dim_{H}
K_{t}=t$. Such sets are well-known to exist already in $[0,1]^n$ for
large enough $n$ in case $t< \infty$, whereas if $C$ is the
middle-thirds Cantor set then $C^\mathbb{N}$ is such a set for
$t=\infty$. Then clearly $\dim_{t} K_{t} =\dim_{tH} K_{t} =0$ and
$\dim_{H} K_{t}=t$, so we are done with this case.

Finally, let $d\in \mathbb{N}^{+} \cup \{ \infty \}, \ s,t \in [1,
\infty], \ d\leq s \leq t$. We may assume $d < \infty$, otherwise
the Hilbert cube provides a suitable example. (Indeed, clearly
$\dim_{t} [0,1]^\mathbb{N} = \dim_{tH} [0,1]^\mathbb{N} = \dim_H
[0,1]^\mathbb{N} = \infty$.) Define $K_{d, s
,t}=(K_{s-d}\times[0,1]^{d})\cup K_{t}$ (this can be understood as
the disjoint sum of metric spaces, but we may also assume that all
these spaces are in the Hilbert cube, so the union is well defined).
Since $\dim_t (X\times Y) \le \dim_t X + \dim_t Y$ for non-empty
spaces (see e.g. \cite{Eng}), we obtain $\dim_t
(K_{s-d}\times[0,1]^{d}) =0+d = d$. Hence, by the stability of the
topological dimension for closed sets, $\dim_{t} K_{d,s,t} = \max
\left\{ \dim_{t} \left(K_{s-d} \times [0,1]^{d} \right) , \dim_{t}
K_{t} \right\} = \max\{d,0\}=d$. Using Corollary~\ref{prodn} and the
stability of the topological Hausdorff dimension for closed sets we
infer that
$\dim_{tH}K_{d,s,t}=\max\left\{\dim_{tH}\left(K_{s-d}\times
[0,1]^{d}\right),\dim_{tH} K_{t}\right\}=\max \{s-d+d,0\}=s$. Again
by Corollary~\ref{prodn} and by the stability of the Hausdorff
dimension we obtain that
$\dim_{H}K_{d,s,t}=\max\left\{\dim_{H}\left(K_{s-d}\times
[0,1]^{d}\right),\dim_{H} K_{t}\right\}=\max
\{s-d+d,t\}=\max\{s,t\}=t$. This completes the proof.
\end{proof}

\textbf{The topological Hausdorff dimension is not a function of the
topological and the Hausdorff dimension.}
As a particular case of the above theorem we obtain that there are
compact metric spaces $X$ and $Y$ such that $\dim_{t} X = \dim_{t}
Y$ and $\dim_{H} X = \dim_{H} Y$ but $\dim_{tH} X \neq \dim_{tH} Y$.
This immediately implies the following, which shows that the
topological Hausdorff dimension is indeed a genuinely new concept.

\begin{corollary} \label{funindep} $\dim_{tH} X$ cannot be
  calculated from $\dim_{t} X$ and $\dim_{H} X$, even for compact metric spaces.
\end{corollary}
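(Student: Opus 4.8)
The plan is to deduce the corollary immediately from the classification obtained in Theorem \ref{trip}. Saying that $\dim_{tH}$ cannot be calculated from $\dim_t$ and $\dim_H$ means precisely that there is no function $\varphi$ with $\dim_{tH} Z = \varphi(\dim_t Z, \dim_H Z)$ for all (compact metric) spaces $Z$. Thus it suffices to produce two compact metric spaces $X$ and $Y$ with $\dim_t X = \dim_t Y$ and $\dim_H X = \dim_H Y$ but $\dim_{tH} X \neq \dim_{tH} Y$; any such $\varphi$ would then be forced to take two different values at the same argument, a contradiction.

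To build the pair I would fix a signature $(d,t)$ that leaves room for the middle coordinate to vary, the simplest choice being $d=1$ and $t=2$. By part (iv) of Theorem \ref{trip}, since $1 \in \mathbb{N}^{+}$ and $1 \le s \le 2$, every triple $(1,s,2)$ with $s \in [1,2]$ is admissible, and the implication $(iv) \Rightarrow (i)$ then furnishes a compact metric space realizing it. Taking $s=1$ and $s=2$ yields compact metric spaces $X$ and $Y$ with $\dim_t X = \dim_t Y = 1$ and $\dim_H X = \dim_H Y = 2$, while $\dim_{tH} X = 1 \neq 2 = \dim_{tH} Y$, which is exactly what is needed.

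There is essentially no remaining obstacle, since all the substantive work lies in Theorem \ref{trip}; the only thing to check is that the interval of admissible middle values is nondegenerate, which holds as soon as $d < t$, as in the choice above. If one prefers an explicit pair rather than an abstract appeal to the classification, one can take one of the spaces to be $(C' \times [0,1]) \cup K_2$ with $\dim_H C' = 0$ and $\dim_H K_2 = 2$, giving topological Hausdorff dimension $1$ by the stability results and Theorem \ref{prod}, and the other to be $C \times [0,1]$ with $\dim_H C = 1$, giving topological Hausdorff dimension $2$ by Theorem \ref{prod}; both have topological dimension $1$ and Hausdorff dimension $2$, exhibiting the same phenomenon concretely.
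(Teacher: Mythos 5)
Your proposal is correct and follows exactly the paper's route: the paper also deduces the corollary immediately from Theorem \ref{trip} by noting that it yields compact spaces $X$, $Y$ with $\dim_t X=\dim_t Y$ and $\dim_H X=\dim_H Y$ but $\dim_{tH}X\neq\dim_{tH}Y$. Your explicit pair (built from $C\times[0,1]$, Theorem \ref{prod}, and stability for closed sets) is a valid concrete instance of the same construction used inside the proof of Theorem \ref{trip}.
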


\section{Calculating the topological Hausdorff dimension}\label{s:examples}

\subsection{Some classical fractals}

First we present certain natural examples of compact sets $K$ with
$\dim_t K = \dim_{tH} K < \dim_H K$. Let $S$ be the Sierpi\'nski
triangle, then it is well-known that $\dim_t S = 1$ and $\dim_H S
=\frac{\log 3}{\log 2}$.

 \begin{theorem} \label{sierhth} Let $S$ be the Sierpi\'nski triangle. Then
   $\dim_{tH}(S)=1$.
\end{theorem}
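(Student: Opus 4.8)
The plan is to prove $\dim_{tH}(S)=1$ by establishing the two inequalities separately, relying heavily on the fact that $S$ is one-dimensional in the topological sense. For the lower bound, I would note that the Sierpi\'nski triangle contains nontrivial connected components (indeed the outer edges form line segments lying in $S$), so Corollary \ref{conn} immediately gives $\dim_{tH}(S) \ge 1$. Alternatively, since $\dim_t S = 1$, Theorem \ref{<} yields $\dim_{tH}(S) \ge \dim_t S = 1$ directly, which is the cleanest route.

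For the upper bound $\dim_{tH}(S) \le 1$, the natural strategy is to exhibit, for each point, a neighborhood basis whose boundaries have Hausdorff dimension at most $0$. By Corollary \ref{c:inf=min} (or Definition \ref{deftoph}), it suffices to produce a basis $\mathcal{U}$ of $S$ such that $\dim_H \partial U \le 0$ for every $U \in \mathcal{U}$. The key geometric observation is that $S$ is built from three affine copies of itself meeting pairwise in single points (the midpoints of the edges of the enclosing triangle), and iterating this gives a nested family of "small triangles" at every scale. I would cut $S$ by lines that avoid the rich part of the fractal and intersect $S$ only in a set of Hausdorff dimension zero; concretely, the boundaries of the triangular pieces at each level of the construction meet $S$ in finite (hence $\dim_H = 0$) sets, because the three subtriangles touch only at the three edge-midpoints. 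By taking unions and intersections of such small triangular neighborhoods one obtains a genuine basis with zero-dimensional (in fact finite or countable) boundaries.

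Equivalently, and perhaps more transparently, I would use the decomposition form of the dimension from Theorem \ref{t:tHdecomp}: it suffices to find $A \subseteq S$ with $\dim_H A \le 0$ such that $S \setminus A$ is totally disconnected (i.e.\ has $\dim_t \le 0$). Here I would take $A$ to be the countable set consisting of all the "touching points" arising in the self-similar construction, namely the images under the iterated function system of the three vertices (or edge-midpoints) of the base triangle. Removing these countably many points disconnects $S$ into the pieces of the construction, and since the pieces shrink to single points, the complement $S \setminus A$ becomes totally disconnected. As $A$ is countable, $\dim_H A = 0 \le 1 - 1$, so Theorem \ref{t:tHdecomp} gives $\dim_{tH}(S) \le 1$.

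The main obstacle is the careful verification that removing the countable set $A$ of gluing points actually renders $S \setminus A$ totally disconnected: one must check that every connected subset of $S \setminus A$ is a singleton. This requires arguing that any two distinct points of $S$ are separated by removing finitely many construction-vertices at a sufficiently fine level, which follows from the fact that the diameters of the level-$n$ subtriangles tend to $0$ and that distinct points eventually land in different subtriangles whose only possible contact points lie in $A$. Once this separation argument is made rigorous, combining it with the trivial lower bound completes the proof.
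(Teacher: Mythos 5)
Your proposal is correct, and its first half is essentially the paper's own argument: the paper proves the lower bound by noting that $S$ contains a line segment (so monotonicity gives $\dim_{tH}S\ge\dim_{tH}[0,1]=1$, which is the same content as your appeal to Corollary \ref{conn} or to Theorem \ref{<}), and proves the upper bound by taking the basis $\mathcal{U}=\{\inter_S H : H \text{ a finite union of elementary pieces}\}$, whose members have \emph{finite} boundary relative to $S$. One caveat on your phrasing: the assertion that ``the boundaries of the triangular pieces at each level meet $S$ in finite sets'' is false if ``boundary'' means the boundary in $\RR^2$ --- the edges of every elementary triangle are entirely contained in $S$, so that intersection has Hausdorff dimension $1$. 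What is finite, and what the argument actually needs, is the boundary of the piece \emph{relative to the subspace $S$}, which consists only of the touching points where distinct elementary pieces meet; your justification via the edge-midpoints shows you intend exactly this, but the distinction should be made explicit, since ``cutting by lines that intersect $S$ in a small set'' is not available here.

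Your second route, via Theorem \ref{t:tHdecomp}, is genuinely different from the paper's proof of this theorem and is also valid: taking $A$ to be the countable set of all touching points (images of the vertices under the iterated function system), one checks that any two points of $S\setminus A$ lie, at a sufficiently fine level, in distinct elementary pieces whose traces on $S\setminus A$ are relatively clopen, so $S\setminus A$ is totally disconnected and $\dim_{tH}S\le\dim_H A+1=1$. The basis approach is local and mirrors the original definition; the decomposition approach is global and avoids having to verify that a proposed family of open sets is actually a basis, at the cost of the separation argument you correctly identify as the main thing to make rigorous. Both are acceptable; the paper happens to use the former for this example.
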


\begin{proof} Let $\varphi _{i}\colon  \mathbb{R}^{2}\rightarrow \mathbb{R}^{2}$ ($i=1,2,3$) be
the three similitudes with ratio $1/2$ for which $S=\bigcup
_{i=1}^{3} \varphi _{i} (S)$. Sets of the form $\varphi _{i_{n}}
\circ \dots \circ \varphi _{i_{1}}(S)$, $ n\in \mathbb{N}, \,  j \in
\{1,\dots,n\}, \,  i_{j} \in \{1,2,3\}$ are called the elementary
pieces of $S$. It is not difficult to see that
\[
\mathcal{U} = \left\{ \inter_S H : H \textrm{ is a finite union of
elementary pieces of } S \right\}
\]
is a basis of $S$ such that $\#\partial_S U$ is finite for every
$U\in \mathcal{U}$.  Therefore $\dim_{H} \partial_S U \leq 0$, and
hence
 $\dim_{tH} S\leq 1$. On the other hand, $S$ contains a line
segment, therefore $\dim_{tH} S\geq \dim_{tH}[0,1] = 1$ by monotonicity.
 \end{proof}

Now we turn to the von Koch snowflake curve $D$. Recall that $\dim_t D = 1$
and $\dim_H D=\frac{\log 4}{\log 3}$.

\begin{fact} \label{interval} If $K$ is homeomorphic to $[0,1]$ then $\dim
_{tH} K=1$. \end{fact}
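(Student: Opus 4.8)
The plan is to prove the two inequalities $\dim_{tH} K \geq 1$ and $\dim_{tH} K \leq 1$ separately. For the lower bound, I would observe that a space homeomorphic to $[0,1]$ is connected and contains more than one point, hence has a non-trivial connected component (namely itself); Corollary \ref{conn} then immediately gives $\dim_{tH} K \geq 1$. (Alternatively, since the topological dimension is a topological invariant, $\dim_t K = \dim_t [0,1] = 1$, and Theorem \ref{<} yields $\dim_{tH} K \geq \dim_t K = 1$.)

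For the upper bound, I would exhibit a basis of $K$ all of whose elements have boundary of Hausdorff dimension at most $0$. Fix a homeomorphism $h \colon [0,1] \to K$ and let $\mathcal{V}$ be the standard basis of $[0,1]$ consisting of the relatively open intervals, i.e. the sets of the form $(a,b)$, $[0,b)$, and $(a,1]$. Each such $V$ satisfies $\#\partial_{[0,1]} V \leq 2$. Setting $\mathcal{U} = \{ h(V) : V \in \mathcal{V} \}$, the homeomorphism property guarantees that $\mathcal{U}$ is a basis of $K$ and that $\partial_K h(V) = h(\partial_{[0,1]} V)$ for every $V$, exactly as in the proof of Theorem \ref{lipinv}.

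The point where one has to be slightly careful is that the Hausdorff dimension is \emph{not} a topological invariant, so in principle $h$ could distort the boundaries badly. This is precisely where the choice of the interval basis pays off: since $\partial_{[0,1]} V$ is finite, its image $h(\partial_{[0,1]} V)$ is also finite, and a finite set has Hausdorff dimension at most $0$ in any metric. Hence $\dim_H \partial_K U \leq 0$ for every $U \in \mathcal{U}$, and by the definition of the topological Hausdorff dimension this gives $\dim_{tH} K \leq 1$. Combining the two inequalities yields $\dim_{tH} K = 1$. I expect no serious obstacle; the only substantive observation is that the finiteness of the boundaries circumvents the failure of topological invariance of the Hausdorff dimension.
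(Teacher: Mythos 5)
Your proposal is correct and follows essentially the same route as the paper: the lower bound via Corollary \ref{conn}, and the upper bound by transporting the interval basis of $[0,1]$ through the homeomorphism so that every basic open set has boundary of cardinality at most $2$, hence of Hausdorff dimension at most $0$. The paper states this more tersely, but your elaboration (including the remark that finiteness of the boundaries is what circumvents the non-invariance of Hausdorff dimension under homeomorphisms) is exactly the intended argument.
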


\begin{proof} By  Corollary~\ref{conn} we obtain
that $\dim_{tH} K \geq 1$. On the other hand, since $K$ is
homeomorphic to $[0,1]$, there is a basis in $K$ such that
$\#\partial U\leq 2$ for every $U\in \mathcal{U}$. Thus $\dim_{tH} K
\leq 1$.
\end{proof}

\begin{corollary}
Let $D$ be the von Koch curve. Then $\dim_{tH} D= 1$.
\end{corollary}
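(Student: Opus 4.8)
The plan is to derive this directly from Fact \ref{interval}, which asserts that every space homeomorphic to $[0,1]$ has topological Hausdorff dimension $1$. Thus the entire content of the corollary reduces to the classical topological fact that the von Koch curve is a simple arc, i.e.\ homeomorphic to the unit interval; once that is established there is nothing left to compute.

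To justify the topological claim I would recall the iterative construction: let $K_0$ be a segment and obtain $K_{n+1}$ from $K_n$ by replacing the middle third of each edge with the other two sides of the equilateral triangle erected on it. Parametrizing each polygonal curve $K_n$ proportionally to arc length gives maps $\gamma_n\colon [0,1]\to \RR^2$; since the modification made at the $n$-th stage moves each point by $O(3^{-n})$, the sequence $(\gamma_n)$ is uniformly Cauchy and converges uniformly to a continuous $\gamma$ whose image is $K$. One then checks that $\gamma$ is injective, using that the four subarcs created at each step are pairwise disjoint except at shared endpoints, so distinct parameters are kept apart. A continuous injection of the compact space $[0,1]$ into the Hausdorff space $\RR^2$ is automatically a homeomorphism onto its image, whence $K$ is homeomorphic to $[0,1]$.

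Given this, Fact \ref{interval} immediately yields $\dim_{tH} K = 1$. I expect no genuine obstacle here: the only non-formal point is the injectivity of the limit parametrization, which is standard and follows from the self-similar, geometrically separated structure of the construction. (If one instead wished to treat the full closed snowflake curve, which is homeomorphic to $S^1$ rather than to $[0,1]$, the same conclusion would follow by covering it with finitely many arcs, each homeomorphic to an interval, and invoking \emph{stability of the topological Hausdorff dimension for closed sets} together with Fact \ref{interval}.)
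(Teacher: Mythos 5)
Your proposal is correct and matches the paper's intended argument exactly: the corollary is stated as an immediate consequence of Fact \ref{interval} together with the classical fact that the von Koch curve is a simple arc, which is precisely your route. The extra detail you supply on the injectivity of the limit parametrization is sound but is taken for granted in the paper.
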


Next we take up a natural example of a compact
set $K$ with $\dim_t K < \dim_{tH} K < \dim_H K$. Let $T$ be the Sierpi\'nski carpet,
then it is well-known that $\dim_t T = 1$ and $\dim_H T
=\frac{\log 8}{\log 3}$.

\begin{theorem} \label{sierszth}
Let $T$ be the Sierpi\'nski carpet. Then $\dim_{tH} T = \frac{\log 2}{\log 3}
+ 1 = \frac{\log 6}{\log 3}$.
\end{theorem}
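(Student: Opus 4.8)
The plan is to prove the two inequalities $\dim_{tH} T \ge \frac{\log 2}{\log 3}+1$ and $\dim_{tH} T \le \frac{\log 2}{\log 3}+1$ separately; the stated equality then follows since $\frac{\log 2}{\log 3}+1=\frac{\log 6}{\log 3}$. Throughout I would identify a point of $[0,1]^2$ with a pair of ternary expansions $(0.a_1a_2\ldots,\,0.b_1b_2\ldots)$ and use that, up to the countably many points with ambiguous expansions, $(x,y)\in T$ iff $(a_m,b_m)\neq(1,1)$ for every $m$, since the middle square removed at each stage is exactly the one whose column digit and row digit are both $1$.

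The lower bound is immediate from results already proved. Let $C\subseteq[0,1]$ be the middle-thirds Cantor set, so $\dim_H C=\frac{\log 2}{\log 3}$. If $x\in C$ then $a_m\in\{0,2\}$ for all $m$, hence $(a_m,b_m)\neq(1,1)$ for every $m$ and every $y$; thus $C\times[0,1]\subseteq T$. By monotonicity (Fact \ref{mon}) and Theorem \ref{prod}, $\dim_{tH} T\ge \dim_{tH}(C\times[0,1])=\dim_H C+1=\frac{\log 2}{\log 3}+1$.

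For the upper bound I would invoke the decomposition theorem in the form of Theorem \ref{t:tHdecomp2}, which applies since $T$ is compact: it suffices to exhibit $A\subseteq T$ with $\dim_H A\le \frac{\log 2}{\log 3}$ and $T\setminus A$ totally disconnected. The key idea is to cut $T$ not along the grid lines of the construction (these contain whole segments, e.g. $\{1/3\}\times[0,1]\subseteq T$, so have dimension $1$), but along the \emph{central} line of every surviving column and row. Concretely, let $A$ be the union, over all $n$ and all column/row centers $v=\frac{j+\frac12}{3^n}$, of the slices $T\cap(\{v\}\times[0,1])$ and $T\cap([0,1]\times\{v\})$. Such a center $v$ has ternary digits eventually all equal to $1$, so a point of $T$ lying on the slice $x=v$ must have $b_m\in\{0,2\}$ for all large $m$; hence each slice is contained in a finite union of scaled copies of $C$ and has Hausdorff dimension $\frac{\log 2}{\log 3}$. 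As $A$ is a countable union of such slices, countable stability of the Hausdorff dimension gives $\dim_H A=\frac{\log 2}{\log 3}$.

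It then remains to verify that $T\setminus A$ is totally disconnected, which I expect to be the main (though still elementary) point. Given distinct $p,q\in T\setminus A$, at least one coordinate differs, say $p_1<q_1$; since the centers $\frac{j+\frac12}{3^n}$ are dense in $[0,1]$, I would pick such a center $v\in(p_1,q_1)$. Then $T\cap(\{v\}\times[0,1])\subseteq A$, so no point of $T\setminus A$ has first coordinate $v$, and the two relatively open sets $\{x<v\}$ and $\{x>v\}$ partition $T\setminus A$ into clopen pieces separating $p$ from $q$ (continuity of the coordinate projection guarantees these are genuinely separated). The case $p_2\neq q_2$ is symmetric, using a horizontal center. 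Hence every quasicomponent of $T\setminus A$ is a singleton, so $T\setminus A$ is totally disconnected, and Theorem \ref{t:tHdecomp2} yields $\dim_{tH} T\le \frac{\log 2}{\log 3}+1$. The delicate point to get right is precisely the choice of cutting lines: one must check simultaneously that the central slices are thin (dimension $\frac{\log 2}{\log 3}$) and dense enough to disconnect $T$, whereas the more obvious grid slices satisfy density but fail thinness.
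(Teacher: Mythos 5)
Your proof is correct, and its geometric core coincides with the paper's: the same lower bound via $C\times[0,1]\subseteq T$ combined with Fact \ref{mon} and Theorem \ref{prod}, and the same family of cutting lines $x=\frac{j+1/2}{3^n}$, $y=\frac{j+1/2}{3^n}$ (the paper writes these midpoints as $z_i^n=\frac{2i-1}{2\cdot 3^n}$), whose traces on $T$ are finite unions of scaled copies of the Cantor set. The only real difference is the packaging of the upper bound: the paper stays with the local, basis-based definition and sketches a rectangular basis of $T$ whose relative boundaries lie on these lines, whereas you collect all the central slices into one set $A$ and invoke the global decomposition criterion of Theorem \ref{t:tHdecomp2}. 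Since the paper has already established the equivalence of the two formulations, nothing changes logically, but your route has the advantage of being fully explicit where the paper says only that ``it is not difficult to construct a rectangular basis'': the countable-stability computation of $\dim_H A$ and the density argument showing $T\setminus A$ is totally disconnected replace the unverified claim that the rectangles' boundaries relative to $T$ are contained in the cutting lines. Your closing observation --- that the grid lines of the construction fail because they contain whole segments of $T$, so one must cut along the cell midlines --- is exactly the point the paper's choice of $z_i^n$ is designed to address.
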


\begin{proof}
Let $C$ denote the middle-thirds Cantor set. Observe that $C \times [0,1]
\subseteq T$. Then monotonicity and Theorem
\ref{prod} yield $\dim_{tH} T\geq \dim_{tH} (C\times [0,1]) = \dim_{H} C + 1
 = \frac{\log 2}{\log 3} + 1$.

Let us now prove the opposite inequality. For $n\in \mathbb{N}$ and
$i=1,\dots, 3^{n}$ let $z_{i}^{n}=\frac{2i-1}{2(3^{n})}$. Then
clearly
\[
\left\{z_{i}^{n} : n \in \mathbb{N}, \, i \in
\left\{1,\dots,3^{n}\right\} \right\}
\]
%%%%%%%%%%%%%%
is dense in $[0,1]$. Let $L$ be a horizontal line defined by an
equation of the form $y = z_i^n$ or a vertical line defined by $x =
z_i^n$. It is easy to see that $L \cap T$  consists of finitely many
sets geometrically similar to the middle-thirds Cantor set. Using
these lines it is not difficult to construct a rectangular basis
$\mathcal{U}$ of $T$ such that $\dim_{H} \partial_T U = \frac{\log
2}{\log 3}$ for every $U \in \mathcal{U}$, and hence $\dim_{tH}
T\leq \frac{\log 2}{\log 3} + 1$.
\end{proof}

Finally we remark that, by Theorem \ref{prod}, $K = C \times [0,1]$ (where $C$
is the middle-thirds Cantor set) is a natural example of a compact
set with $\dim_t K < \dim_{tH} K = \dim_H K$.

\subsection{Kakeya sets}

\begin{definition}
A subset of $\mathbb{R}^{d}$ is called a \emph{Kakeya set} if it
contains a non-degenerate line segment in every direction (some
authors call these sets \emph{Besicovitch sets}).
\end{definition}

According to a surprising classical result, Kakeya sets of Lebesgue
measure zero exist. However, one of the most famous conjectures in
analysis is the Kakeya Conjecture stating that every Kakeya set in
$\mathbb{R}^{d}$ has Hausdorff dimension $d$. This is known to hold
only in dimension at most 2 so far, and a solution already in
$\mathbb{R}^3$ would have a huge impact on numerous areas of
mathematics.

It would be tempting to attack the Kakeya Conjecture using $\dim_{tH} K \le \dim_H K$, but the following theorem,
the main theorem of this subsection will show that
unfortunately we cannot get anything non-trivial this way.

\begin{theorem}
\label{tkak} There exists a Kakeya set $K\subseteq \mathbb{R}^{d}$
of topological Hausdorff dimension 1 for every integer $d \ge 1$.
\end{theorem}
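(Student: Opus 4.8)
The plan is to prove the two inequalities $\dim_{tH} K \ge 1$ and $\dim_{tH} K \le 1$ separately, the first being immediate and the second carrying all the content. For the lower bound, observe that any Kakeya set contains a non-degenerate segment and hence has a non-trivial connected component, so Corollary \ref{conn} gives $\dim_{tH} K \ge 1$ for \emph{every} Kakeya set. Thus it suffices to exhibit, for each $d$, one Kakeya set $K \subseteq \mathbb{R}^d$ with $\dim_{tH} K \le 1$ (the case $d=1$ being trivial, since $[0,1]$ itself works).

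First I would reduce to a single ``cap'' of directions. Cover the sphere $S^{d-1}$ by finitely many caps, each consisting of the directions making a small angle with one of finitely many fixed axes; after an orthogonal change of coordinates such a cap may be written, up to normalization, as $\{(\xi,1) : \xi \in [-\rho,\rho]^{d-1}\}$. A segment in such a direction can be taken of the form $t \mapsto (\gamma(\xi) + t\xi,\, t)$, $t \in [0,1]$, so a set containing one segment per cap-direction is governed by an intercept function $\gamma \colon [-\rho,\rho]^{d-1} \to \mathbb{R}^{d-1}$. If for each cap I can produce a \emph{compact} set containing a non-degenerate segment in every direction of that cap and of topological Hausdorff dimension at most $1$, then the union over the finitely many caps is a Kakeya set, and by the countable stability of the topological Hausdorff dimension for closed sets its topological Hausdorff dimension is the maximum over the pieces, hence still at most $1$. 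So the problem reduces to a single cap.

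To handle a single cap I would follow Körner \cite{Korner} and argue by Baire category. The idea is to set up a complete metric space $\mathcal{G}$ of intercept functions $\gamma$ (equivalently, of the associated compact sets under the Hausdorff metric) so that \emph{every} member already contains a non-degenerate segment in each direction of the cap; then Kakeya-ness is built into the space and need not be re-proved for the generic element, and one only has to show that the generic $\gamma \in \mathcal{G}$ yields a set $E_\gamma$ with $\dim_{tH} E_\gamma \le 1$. I would verify this through the global decomposition form of the definition, Theorem \ref{t:tHdecomp2}: it suffices to find $A \subseteq E_\gamma$ with $\dim_H A \le 0$ and $E_\gamma \setminus A$ totally disconnected. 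The guiding picture is that the generic Besicovitch set is so sparse that its transverse (leaf-space) structure is a Hausdorff-dimension-zero Cantor set; locally $E_\gamma$ then resembles $C \times [0,1]$ with $\dim_H C = 0$, for which Theorem \ref{prod} already gives topological Hausdorff dimension exactly $1$. Concretely, $A$ would be assembled from the approximating sparse ``skeletons'' arising along the category construction, arranged so as to meet every segment densely while remaining of Hausdorff dimension zero.

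The main obstacle is precisely this last point: keeping the skeleton $A$ of Hausdorff dimension $0$ while still retaining a segment in \emph{every} direction. These two requirements pull against one another, since forcing a segment in each direction tends to create many crossings and a transverse set of positive dimension. The technical heart of the argument is therefore the genericity statement that, for the carefully chosen space $\mathcal{G}$, the existence of such a dimension-zero decomposition (equivalently, $\dim_{tH} E_\gamma \le 1$) holds on a dense $G_\delta$ set of parameters; this is where Körner's approximation scheme, refined to control Hausdorff dimension rather than merely Lebesgue measure, must do the work. Once the single-cap construction is in place, the reduction above together with the finite stability for closed sets completes the argument, and Corollary \ref{c:inf=min} is what permits the clean conclusion $\dim_{tH} K = 1$.
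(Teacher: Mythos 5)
Your overall strategy is the one the paper actually follows: the lower bound from Corollary \ref{conn}, the reduction to a compact ``cap'' of directions with finitely many rotated copies glued together by stability for closed sets, and a K\"orner-style Baire category argument in a complete space of compact sets each of which already contains a segment in every direction of the cap. But the proposal stops exactly where the proof has to begin. The entire content of the theorem is the genericity statement that the generic element of this space has topological Hausdorff dimension at most $1$, and you explicitly defer it (``this is where K\"orner's approximation scheme \dots must do the work'') without supplying the mechanism. As it stands this is a plan, not a proof: you exhibit no dense (or dense open, or co-meager) family in your space $\mathcal{G}$ witnessing small boundaries or a dimension-zero skeleton, and you give no reason why the two competing demands you yourself identify --- a segment in every direction versus a transverse set of dimension zero --- can in fact be reconciled.

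The paper resolves this in Lemma \ref{lkak}. It does not pass through the global decomposition of Theorem \ref{t:tHdecomp2}; it uses the basis definition directly: the rational cubes restrict to a basis of any $F$, their boundaries are covered by countably many axis-orthogonal rational hyperplanes $S$, so it suffices to show that for each fixed such $S$ the set $\{F : \dim_H(F\cap S)=0\}$ is co-meager. This is written as $\bigcap_n \mathcal{F}_n$ with $\mathcal{F}_n=\{F : \mathcal{H}^{1/n}_{1/n}(F\cap S)<1/n\}$, and each $\mathcal{F}_n$ is shown to contain a dense open set. Density is the key construction: an arbitrary member of the space is approximated in the Hausdorff metric by a finite union of truncated cones, each chosen $S$-compatible (meeting $S$ in at most one point), plus finitely many points; the relatively open sets of directions subtended by the cones cover the compact cap, so the approximant still contains a segment in every direction of the cap, yet meets $S$ in only finitely many points. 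Openness of a ball around such an approximant inside $\mathcal{F}_n$ then follows because $\{F : F\cap S\subseteq U_0\}$ is open in the Hausdorff metric. Without this cone construction (or some substitute that simultaneously preserves all directions and collapses the intersection with each hyperplane to a finite set), the argument has no content; the missing lemma is the theorem, not a technicality. A minor additional remark: Corollary \ref{c:inf=min} is not needed for the conclusion $\dim_{tH}K=1$, which follows from the two inequalities alone.
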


This result is of course sharp, since if a set contains a line segment then its topological Hausdorff dimension is at least 1.

We will actually prove somewhat more, since we will essentially show that the generic element of a
carefully chosen space is a Kakeya set of topological Hausdorff dimension $1$. This idea, as well
as most of the others in this subsection are already present in \cite{Korner} by T. W. K\"orner.
However, he only works in the plane and his space slightly differs from ours. For the sake of completeness we provide the rather short proof
in detail.

\bigskip

Let $(\mathcal{K},d_{H})$ be the set of non-empty compact subsets of
$\mathbb{R}^{d-1}\times [0,1]$ endowed with the Hausdorff metric,
that is, for each $K_1,K_2\in \mathcal{K}$
%%%%%%%%%%%%%%%%
$$d_{H}(K_1,K_2)=\min \left\{r: K_1\subseteq B(K_2,r) \textrm{ and } K_2\subseteq B(K_1,r)\right\}.$$
%%%%%%%%%%%%%%%5
It is well-known that
$(\mathcal{K},d_{H})$ is a complete metric space, see
e.g.~\cite{Ke}.
%Later we also need $U(K,r)=\left\{x\in \mathbb{R}^{d-1}\times [0,1]: \dist(x,K)< r\right\}$.
%%%%%%%%%%%%%

Let
$$\Gamma=\left\{(x_{1},\dots ,x_{d-1},1): 1/2 \leq x_i \leq 1, \quad i=1,\dots ,d-1\right\}$$
denote a subset of directions in $\mathbb{R}^{d}$.
%%%%%%%%%%%%%%%%%%%
A closed line segment $w$ connecting $\mathbb{R}^{d-1}\times \{0\}$
and $\mathbb{R}^{d-1}\times \{1\}$ is called a standard segment.

Let us denote by $\mathcal{F}\subseteq \mathcal{K}$ the system of
those compact sets in $\mathbb{R}^{d-1}\times [0,1]$ in which for
each $v\in \Gamma$ we can find a standard segment $w$ parallel to
$v$.
%%%%%%%%%%%%%%%
First we show that $\mathcal{F}$ is closed in $\mathcal{K}$. Let us
assume that $F_{n}\in \mathcal{F}$, $K\in \mathcal{K}$ and $F_{n}\to
K$ with respect to $d_H$. We have to show that $K \in \mathcal{F}$.
Let $v \in \Gamma$ be arbitrary. Since $F_{n}\in \mathcal{F}$, there
exists a $w_{n}\subseteq F_{n}$ parallel to $v$ for every $n$. It is
easy to see that $\bigcup_{n\in \mathbb{N}}F_{n}$ is bounded, hence
we can choose a subsequence $n_k$ such that $w_{n_k}$ is convergent
with respect to $d_H$. But then clearly $w_{n_k} \to w$ for some
standard segment $w\subseteq K$, and $w$ is parallel to $v$. Hence
$K \in \mathcal{F}$ indeed.

Therefore, $(\mathcal{F},d_{H})$ is a complete metric space and
hence we can use Baire category arguments.

The next lemma is based on \cite[Thm.~3.6.]{Korner}.

\begin{lemma}
\label{lkak} The generic set in $(\mathcal{F},d_H)$ is of
topological Hausdorff dimension 1.
\end{lemma}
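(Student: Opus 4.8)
The plan is to use a Baire category argument in the complete metric space $(\mathcal{F}, d_H)$. I need to show two things about the generic element $K \in \mathcal{F}$: a \emph{lower bound} $\dim_{tH} K \geq 1$ and an \emph{upper bound} $\dim_{tH} K \leq 1$. The lower bound is immediate and requires no genericity: every $K \in \mathcal{F}$ contains a standard segment (indeed many), hence contains a non-trivial connected component, so by Corollary \ref{conn} we have $\dim_{tH} K \geq 1$ for \emph{all} $K \in \mathcal{F}$. Thus the entire content of the lemma is the upper bound, which is where genericity must be exploited.

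For the upper bound I would use the decomposition characterization from Theorem \ref{t:tHdecomp2}: since $K$ is compact, $\dim_{tH} K \leq 1$ holds provided there is a set $A \subseteq K$ with $\dim_H A \leq 0$ (in fact I will aim for $A$ countable, or even $A = \emptyset$ if possible) such that $K \setminus A$ is totally disconnected. The intuition, following Körner, is that the generic $K \in \mathcal{F}$ is essentially a ``dust'' of segments: although it must contain a standard segment in every direction $v \in \Gamma$, generically these segments are pairwise disjoint and their union is totally disconnected once we remove a small exceptional set. So the strategy is to define, for each pair of rational-level horizontal slabs or each scale parameter, the set $G_m \subseteq \mathcal{F}$ of those compact sets that can be covered by finitely many balls of small total ``slicing diameter'' in a way that separates points at horizontal cross-sections, and then show each $G_m$ is open and dense. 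A clean way to organize this: for $m \in \mathbb{N}^+$ let
\[
G_m = \{ K \in \mathcal{F} : \text{every horizontal slice } K \cap (\mathbb{R}^{d-1} \times \{t\}) \text{ can be covered by finitely many sets of diameter} < 1/m \text{ that are uniformly separated}\},
\]
suitably formalized so that membership in $\bigcap_m G_m$ forces the horizontal slices (for a co-countable set of heights $t$) to be totally disconnected, which in turn yields a decomposition witnessing $\dim_{tH} K \leq 1$.

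The density of the $G_m$ is the geometric heart of the argument and is where Körner's construction enters: given any $F \in \mathcal{F}$ and $\varepsilon > 0$, I must produce $K \in G_m$ with $d_H(F, K) < \varepsilon$. The idea is to replace the finitely many approximating segments (one can work with a finite $\varepsilon$-net $v_1, \dots, v_N \in \Gamma$ of directions since $\Gamma$ is compact) by thin ``broken'' or slightly perturbed standard segments that are pairwise disjoint and whose horizontal slices are finite sets of tiny, well-separated points, while keeping a standard segment parallel to each required direction and staying within $\varepsilon$ of $F$ in the Hausdorff metric. Openness of $G_m$ should follow from the fact that the relevant covering/separation conditions are stable under small $d_H$-perturbations, using compactness of the sets involved. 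Then $\bigcap_{m=1}^{\infty} G_m$ is a dense $G_\delta$, hence co-meager, and every member has totally disconnected slices off a countable set, giving the decomposition $\dim_H A \leq 0$, $\dim_t(K \setminus A) \leq 0$ and therefore $\dim_{tH} K \leq 1$.

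The main obstacle I anticipate is the density step: constructing, within $d_H$-distance $\varepsilon$ of an arbitrary $F \in \mathcal{F}$, a compact set that simultaneously (a) still contains a genuine standard segment parallel to \emph{every} $v \in \Gamma$ (not merely the finitely many net directions), and (b) is nonetheless totally disconnected after deleting a negligible set. Reconciling (a) with total disconnectedness is delicate, since the uncountable family $\Gamma$ of required segments threatens to reconnect the set; the resolution is that only finitely many ``fat'' segments are needed to approximate all directions in $\Gamma$ up to the resolution $\varepsilon$, while the actual required segments for the remaining directions can be made to appear only in the limit and can be arranged to meet each horizontal slice in a totally disconnected (indeed, at the generic level, a measure-zero Cantor-like) set. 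I would lean on the cited result \cite[Thm.~3.6.]{Korner} for the precise perturbation lemma and adapt its planar construction to $\mathbb{R}^{d-1} \times [0,1]$, which the author has already signaled is routine.
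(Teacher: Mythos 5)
Your lower bound (every $K \in \mathcal{F}$ contains a standard segment, so $\dim_{tH} K \ge 1$ by Corollary \ref{conn}) is correct and matches the paper, and your overall Baire-category framework is the right one. But the heart of the lemma is the density step, and there your proposal has a genuine gap. Every element of $\mathcal{F}$ --- including each approximant you use to prove density of your sets $G_m$ --- must itself contain a standard segment parallel to \emph{every} $v\in\Gamma$, an uncountable family; your suggestion that ``the actual required segments for the remaining directions can be made to appear only in the limit'' is not available, because there is no limit here: the approximant must already lie in $\mathcal{F}$. The mechanism that resolves this (in the paper, following K\"orner) is to approximate each segment $w_v\subseteq F$ by a truncated \emph{cone} $C'(p_v,v,\alpha_v)$ chosen to be $S$-compatible, i.e.\ meeting the fixed coordinate hyperplane $S$ in at most the apex $p_v$ (possible precisely because no segment with direction in $\Gamma$ lies inside such an $S$). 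Each cone contains standard segments in a relatively open set of directions $V(C(p_v,v,\alpha_v))$, so by compactness of $\Gamma$ finitely many cones suffice; their union, together with finitely many points of $F$ to control the Hausdorff distance, is an element of $\mathcal{F}$ that meets $S$ in a finite set. This is the single idea your sketch is missing: you gesture at ``fat'' segments but never state the property that makes them work (segments in an open set of directions) nor the property that keeps their trace on $S$ finite ($S$-compatibility). Deferring this to \cite[Thm.~3.6.]{Korner} does not close the gap, since reconciling the uncountable direction requirement with small slices is exactly the content of the lemma.

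There is a second, smaller gap in your upper-bound architecture. You work only with horizontal slices and then invoke the decomposition of Theorem \ref{t:tHdecomp2}: but if $A$ is the union of the slices at a countable dense set of heights, a connected component of $K\setminus A$ could still be a nondegenerate continuum lying entirely inside a single horizontal slice at an exceptional height, and nothing in your $G_m$ condition rules this out. The paper avoids this by not using the decomposition theorem at all: it shows that the generic $F$ satisfies $\dim_H(F\cap S)=0$ for every rational hyperplane $S$ orthogonal to any of the $d$ coordinate directions, so the rational cubes give a basis of $F$ with relative boundaries of Hausdorff dimension $0$, and $\dim_{tH}F\le 1$ follows directly from Definition \ref{deftoph}. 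If you insist on the slice/decomposition route you must either control \emph{all} hyperplane directions or separately argue that generic horizontal slices contain no nondegenerate continua.
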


\begin{proof} The rational cubes form a basis of $\mathbb{R}^d$, and their boundaries are covered by the rational
hyperplanes orthogonal to one of the usual basis vectors of
$\mathbb{R}^d$. Therefore, it suffices to show that if $S$ is a
fixed hyperplane orthogonal to one of the usual basis vectors then
$\{ F \in \mathcal{F} : \dim_H (F \cap S) = 0 \}$ is co-meager.

For $n\in \mathbb{N}^{+}$ define
\[
\mathcal{F}_{n}=\left\{F\in \mathcal{F}: \mathcal{H}^{\frac
1n}_{\frac 1n}(F\cap S)<\frac 1n\right\}.
\]
In order to show that $\{ F \in \mathcal{F} : \dim_H (F \cap S) = 0
\} = \bigcap_{n\in \mathbb{N}^{+}}\mathcal{F}_{n}$ is co-meager, it
is enough to prove that each $\mathcal{F}_{n}$ contains a dense open
set.

For $p\in \mathbb{R}^{d}$, $v\in \Gamma$ and $0<\alpha<\pi/2$ we
denote by $C(p,v,\alpha)$ the following doubly infinite closed cone
\[
C(p,v,\alpha) = \{ x \in \mathbb{R}^d : \textrm{ the angle between
the lines of $v$ and $x-p$ is at most } \alpha \}.
\]
We denote by $V(C(p,v,\alpha))$ the set of those vectors
$u=(u_{1},...,u_{d-1},1)$ for which there is a line in
$\inter(C(p,v,\alpha))\cup \{ p \}$ parallel to $u$. Then
$V(C(p,v,\alpha))$ is relatively open in $\mathbb{R}^{d-1}\times \{
1 \}$.

The sets of the form $C'(p,v,\alpha)= C(p,v,\alpha)\cap
\left(\mathbb{R}^{d-1}\times [0,1]\right)$ will be called truncated
cones, and the system of truncated cones will be denoted by
${\mathcal{C}}'$.
%%%%%%%%%%%%%%%
A truncated cone $C'(p,v,\alpha)$ is $S$-compatible if either
$C'(p,v,\alpha)\cap S=\{ p \}$, or $C'(p,v,\alpha)\cap S=\emptyset.$
The set of $S$-compatible truncated cones is denoted by ${\mathcal{
C}}'_S$. Define ${\mathcal{F}}_{S}$ as the set of those $F\in
\mathcal{F}$ that can be written as the union of finitely many
$S$-compatible truncated cones and  finitely many points in
$\mathbb{R}^{d-1}\times [0,1]$.

Next we check that ${\mathcal{F}}_{S}$ is dense in $\mathcal{F}$.

Suppose $F\in \mathcal{F}$ is arbitrary and $\varepsilon >0$ is
given. First choose finitely many points $\{ y_{i} \}_{i=1}^t$ in
$F$ such that $F\subseteq B\left(\{ y_{i} \}_{i=1}^t,\varepsilon
\right)$.
%%%%%%%%%
Let $v \in \Gamma$ be arbitrary, then there exists a standard
segment $w_v \subseteq F$ parallel to $v$. By the choice of $S$ and
$\Gamma$, clearly $w_v \nsubseteq S$, hence we can choose $p_v$ and
$\alpha_v$ such that $C'(p_v,v,\alpha_v)\in {\mathcal{C}}'_S$ and
$d_H(C'(p_v,v,\alpha_v), w_v) \le \varepsilon$. Obviously $v\in
V(C(p_v,v,\alpha_v))$, so $\{ V(C(p_v, v, \alpha_v)) \}_{v\in
\Gamma}$ is an open cover of the compact set $\Gamma$. Therefore,
there are $\{ C'(p_{v_i},v_{i},\alpha_{v_i}) \}_{i=1}^m$ in
${\mathcal{C}}'_S$ such that $ \Gamma \subseteq \bigcup_{i=1}^{m}
V(C(p_{v_i},v_{i},\alpha_{v_i}))$. Put
$F'=\bigcup_{i=1}^{m}C'(p_{v_i},v_{i},\alpha_{v_i})\cup \{ y_{1},
\dots, y_t \}$, then $F' \in {\mathcal{F}}_{S}$. It is easy to see
that $\bigcup_{i=1}^{m}C'(p_{v_i},v_{i},\alpha_{v_i})\subseteq
B(F,\varepsilon)$, and combining this with $\{ y_i \}_{i=1}^t
\subseteq F$ we obtain that $F'\subseteq B(F,\varepsilon)$. By the
choice of $\{ y_i \}_{i=1}^t$ we also have $F\subseteq
B(F',\varepsilon)$. Thus $d_{H}(F,F') \le \varepsilon$.

Now using our dense set ${\mathcal{F}}_{S}$ we verify that
${\mathcal{F}}_{n}$ contains a dense open set $\mathcal{U}$. We
construct for all $F_{0}\in {\mathcal{F}}_{S}$ a ball in
$\mathcal{F}_{n}$ centered at $F_{0}$. By the definition of
$S$-compatibility $F_{0}\cap S$ is finite. Hence we can easily
choose a relatively open set $U_0\subseteq S$ such that $F_{0}\cap S
\subseteq U_0$ and $\mathcal{H}^{\frac 1n}_{\frac 1n}(U_0)<\frac
1n$. Let us define
%%%%%%%%%%%%%%%%%%5
$$\mathcal{U}= \left\{F\in \mathcal{F}: F\cap S\subseteq U_{0}\right\}.$$
%%%%%%%%%%%%%%%%%%
Clearly $F_{0}\in \mathcal{U}$, $\mathcal{U}\subseteq
\mathcal{F}_{n}$ and it is easy to see that $\mathcal{U}$ is open in
$\mathcal{F}$. This completes the proof.
\end{proof}

From this we obtain the main theorem of the subsection as follows.

\begin{proof}[Proof of Theorem \ref{tkak}]
By the above lemma we can choose $F_{0}\in \mathcal{F}$ such that
$\dim_{tH} F_{0}=1$. Then $F_{0}$ contains a line segment in every
direction of $\Gamma$, hence we can choose finitely many isometric
copies of it, $\{F_i\}_{i=1}^n$ such that the compact set
$K=\bigcup_{i=0}^{n}F_{i}$ contains a line segment in every
direction. By the Lipschitz invariance of the topological Hausdorff
dimension $\dim_{tH} F_{i} = \dim_{tH} F_{0}$ for all $i$, and by
the stability of the topological Hausdorff dimension for closed sets
$\dim_{tH}K=1$.
\end{proof}

\subsection{Brownian motion}

One of the most important stochastic processes is the Brownian motion (see
e.g. \cite{MP}). Its range
and graph also serve as important examples of fractal sets in geometric
measure theory. Since the graph is always homeomorphic to $[0, \infty)$, Fact~\ref{interval} and countable stability for closed sets yield that its
topological Hausdorff dimension is 1. Hence we focus on the range only.

Each statement in this paragraph is to be understood to hold with probability
1 (almost surely). Clearly, in dimension 1 the range is a non-degenerate interval, so it has
topological Hausdorff dimension 1. Moreover, if the dimension is at least $4$
then the range has no multiple points (\cite{MP}), so it is homeomorphic to
$[0, \infty)$, which in turn implies as above
that the range has topological Hausdorff dimension 1
again.

\begin{problem}
\label{p:Brown}
Let $d = 2$ or $3$. Determine the almost sure topological Hausdorff
dimension of the range of the $d$-dimensional Brownian motion. Equivalently, determine the smallest $c \ge 0$ such that the range can be decomposed into a totally disconnected set and a set of Hausdorff dimension at most $c-1$ almost surely.
\end{problem}

Indeed, the two formulations of the problem are equivalent by Theorem \ref{t:tHdecomp2}.
The following open problem of W. Werner \cite[p. 384.]{MP} is closely related to Problem~\ref{p:Brown} in the case $d=2$.

\begin{notation}
By a \emph{curve} we mean a continuous map $\gamma \colon [0,1] \to \mathbb{R}^d$.
Let us denote by $\ran(\gamma)$ the \emph{range} of $\gamma$.
If $\gamma \colon [0,1] \to \mathbb{R}^2$ is a closed curve and $p\in \mathbb{R}^2\setminus \ran(\gamma)$
then let us denote by $WN(\gamma ,p)$ the \emph{winding number} of
$\gamma$ with respect to $p$, see \cite{Fu} for the definition.
\end{notation}

\begin{problem}[W. Werner] \label{p:Werner} Is it true almost surely for the planar Brownian motion $B\colon  [0,\infty) \to \mathbb{R}^2$
that for every $x,y\in \mathbb{R}^2 \setminus \ran(B)$ there exists a curve
$\gamma \colon [0,1]\to \mathbb{R}^2$ such that $\gamma(0)=x$, $\gamma(1)=y$, and $\ran(\gamma) \cap \ran(B)$ is finite?
\end{problem}

\begin{remark}
An affirmative answer to Problem \ref{p:Werner} would probably also solve
Problem \ref{p:Brown}. More precisely, if there exists any function $b \colon
(0, \infty) \to (0, \infty)$ with $\lim_{x\to 0+} b(x) = 0$ such that the above curve $\gamma$ can be constructed
in the disc $U(x, b(|x-y|))$ then one can build a basis
of $\ran (B)$ as follows. Let $x \in \ran (B)$ and $\varepsilon > 0$ be given, and pick
$\delta > 0$ such that $|x-y| < \delta$ implies $b(|x-y|) < \frac{\varepsilon}{2}$. Select a sequence of points $x_0, x_1, \dots, x_n = x_0$ on the circle of radius $\frac{\varepsilon}{2}$ centered at $x$ such that $x_i \notin \ran (B)$ and $|x_i - x_{i+1}| < \delta$ for every $i = 0, \dots, n-1$.
Moreover, we can also assume that the argument of the vectors $x_i - x$ $(i = 0, \dots, n-1)$ is an increasing sequence in $[0, 2\pi)$. This implies that if for every $i = 0, \dots, n-1$ we construct a curve $\gamma_i$ from $x_i$ to $x_{i+1}$ in $U(x_i, \frac{\varepsilon}{2})$ intersecting $\ran (B)$ finitely many times, and glue these curves together to obtain a closed curve $\gamma$ then the winding number $WN(\gamma, x) = 1$, and hence $x$ is in a bounded component $U_{x, \varepsilon}$ of $\mathbb{R}^2 \setminus \ran (\gamma)$ see \cite[Proposition~3.16.]{Fu}. Then it is easy to see that $\partial U_{x, \varepsilon} \subseteq \ran (\gamma)$, hence $\partial U_{x, \varepsilon} \cap \ran (B)$ is finite, and $x \in U_{x, \varepsilon} \subseteq U(x, \varepsilon)$. Therefore, the sets of the form $U_{x, \varepsilon} \cap \ran(B)$ form a basis of $\ran(B)$ such that the boundary of $U_{x, \varepsilon}$ relative to $\ran(B)$ is finite for all $x$ and $\varepsilon$, thus the topological Hausdorff dimension of $\ran (B)$ is $1$ almost surely.
\end{remark}

\bigskip

While we can solve neither Problem \ref{p:Brown} nor Problem \ref{p:Werner}, we are able to solve
their Baire category duals. First we need some preparation.

\begin{definition}
Let us denote by $\mathcal {C}^d$ the space of curves from $[0,1]$ to $\mathbb{R}^d$ endowed with
the supremum metric. As this is a complete metric space, we can use Baire category arguments.

A \emph{standard line segment} in $\mathbb{R}^d$ is a closed non-degenerate line segment parallel to one of the standard basis vectors
$e_1=(1,0,\dots,0),\dots, e_d=(0,0,\dots,1)$.
\end{definition}

The following theorem gives an affirmative answer to the category dual of Problem~\ref{p:Brown}.

\begin{theorem} The range of the generic $f\in \mathcal{C}^d$ has topological Hausdorff dimension $1$ for every $d\in \mathbb{N}^+$.
\end{theorem}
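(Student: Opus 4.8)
The plan is to show that the range of the generic curve $f\in \mathcal{C}^d$ has topological Hausdorff dimension $1$ by combining a lower bound that is automatic with a Baire category argument for the upper bound, in close analogy with the Kakeya construction of Lemma \ref{lkak}. The lower bound is immediate: since the generic curve is non-constant (in fact any non-degenerate range contains a non-trivial continuum), Corollary \ref{conn} gives $\dim_{tH}\ran(f)\geq 1$. So the entire content is the upper estimate $\dim_{tH}\ran(f)\leq 1$ for the generic $f$, and by Definition \ref{deftoph} (or rather Corollary \ref{c:inf=min}) it suffices to produce, for the generic $f$, a basis of $\ran(f)$ whose boundary relative to $\ran(f)$ has Hausdorff dimension $0$.

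Following the Kakeya approach, I would reduce this to a countable intersection of co-meager conditions. The rational axis-parallel hyperplanes form a countable family whose induced half-space cubes give a basis of $\mathbb{R}^d$, and the boundary of such a cube relative to $\ran(f)$ sits inside a finite union of intersections $\ran(f)\cap S$ where $S$ ranges over rational hyperplanes orthogonal to a standard basis vector. Thus it is enough to prove that for each fixed such hyperplane $S$, the set $\{f\in \mathcal{C}^d : \dim_H(\ran(f)\cap S)=0\}$ is co-meager. Writing this as $\bigcap_{n\in\NN^+}\mathcal{C}_n^d$ with
\[
\mathcal{C}_n^d=\left\{f\in\mathcal{C}^d : \mathcal{H}^{1/n}_{1/n}(\ran(f)\cap S)<\tfrac1n\right\},
\]
I would show each $\mathcal{C}_n^d$ contains a dense open set. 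Openness follows from the upper semicontinuity properties of $\mathcal{H}^{1/n}_{1/n}$ under small sup-norm perturbations of $f$ (small changes in $f$ move $\ran(f)\cap S$ only slightly, and one arranges $\ran(f)\cap S$ to be covered by a fixed finite relatively open set in $S$ of small premeasure, exactly as in Lemma \ref{lkak}).

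The heart of the matter, and the step I expect to be the main obstacle, is density: approximating an arbitrary curve in the sup metric by one whose range meets $S$ in a small (ideally finite) set. The natural idea is to perturb $f$ so that it crosses each hyperplane $S$ transversally in only finitely many points — concretely, by replacing $f$ with a piecewise-linear curve built from \emph{standard line segments} that cross $S$ cleanly, so that $\ran(f)\cap S$ becomes finite. One must do this simultaneously for the given hyperplane while keeping the approximation within $\varepsilon$ in sup norm; since we fix one $S$ at a time this is manageable, but the piecewise-linear approximant must genuinely avoid accumulating on $S$, which is where the transversality (choosing the polygonal pieces to hit $S$ at isolated points rather than run along it) does the real work. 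The rest is then the same bookkeeping as in the Kakeya lemma: once $\ran(f_0)\cap S$ is finite for a dense set of $f_0$, enclose it in a relatively open $U_0\subseteq S$ with $\mathcal{H}^{1/n}_{1/n}(U_0)<1/n$ and note $\{f : \ran(f)\cap S\subseteq U_0\}$ is an open neighborhood of $f_0$ inside $\mathcal{C}_n^d$. Intersecting the resulting co-meager sets over all rational hyperplanes $S$ and invoking Corollary \ref{c:inf=min} yields a basis of $\ran(f)$ with $0$-dimensional boundaries, hence $\dim_{tH}\ran(f)\leq 1$, completing the proof.
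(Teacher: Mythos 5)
Your proposal is correct and follows essentially the same route as the paper: the lower bound from Corollary \ref{conn}, the reduction to co-meagerness of $\{f\in\mathcal{C}^d : \dim_H(\ran(f)\cap S)=0\}$ for countably many hyperplanes $S$ via the sets $\mathcal{F}_n=\{f : \mathcal{H}^{1/n}_{1/n}(\ran(f)\cap S)<1/n\}$, and density obtained from curves whose ranges are finite unions of standard line segments. The one point you flag as delicate --- making the polygonal approximant cross $S$ transversally rather than run along it --- is dispatched in the paper by rotating the coordinate system so that $S$ is not parallel to any standard basis vector, after which every standard line segment meets $S$ in at most one point and the intersection is automatically finite.
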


\begin{proof} If $d=1$ then the statement is straightforward, thus we may assume $d>1$.
By Corollary~\ref{conn} the range of every non-constant $f \in \mathcal{C}^d$ has topological Hausdorff dimension at least 1,
thus we need to show that the generic $f \in \mathcal{C}^d$
has topological Hausdorff dimension at most 1. The rational cubes form a basis of $\mathbb{R}^d$,
and their boundaries are covered by countably many hyperplanes,
therefore it is enough to show that if $S$ is a fixed hyperplane then $\{ f \in \mathcal{C}^d : \dim_H \left(\ran(f) \cap S\right) = 0 \}$
is co-meager. By rotating our coordinate system we may suppose that $S$ is not parallel to any vector in the standard basis $\{e_1,\dots, e_d\}$.
For $n\in \mathbb{N}^{+}$ define
%%%
$$\mathcal{F}_{n}=\left\{ f \in \mathcal{C}^d: \mathcal{H}^{\frac
1n}_{\frac 1n}\left(\ran(f) \cap S\right)<\frac 1n\right\}.$$
%%%%%
In order to show that $\{ f \in \mathcal{C}^d : \dim_H \left(\ran(f) \cap S\right) = 0\}= \bigcap_{n\in \mathbb{N}^{+}}\mathcal{F}_{n}$ is co-meager, it
is enough to prove that each $\mathcal{F}_{n}$ is a dense open
set. Let us fix $n \in \mathbb{N}^+$. The regularity of $\mathcal{H}^{\frac 1n}_{\frac 1n}$ implies that $\mathcal{F}_{n}$ is open.
Let $\mathcal{G}$ be the set of curves $f\in\mathcal{C}^d$ such that $\ran(f)$ is a union of finitely many standard line segments.
It is easy to see that $\mathcal{G}$ is dense in $\mathcal{C}^d$. As $\ran(f)\cap S$ is finite for every $f \in \mathcal{G}$, we have $\mathcal{G}\subseteq  \mathcal{F}_n$. Thus $\mathcal{F}_{n}$ is dense in $\mathcal{C}^d$, and the proof is complete.
\end{proof}

\begin{definition} Let $C \subseteq \mathbb{R}^2$ and let $p,q\in \mathbb{R}^2\setminus
C$. We say that $C$ \emph{separates $p$ and $q$} if these points belong to
different connected components of $\mathbb{R}^2\setminus C$.  A metric space is
called a \emph{continuum} if it is compact and connected.  A set $V\subseteq
\mathbb{R}^2$ is a \emph{standard open set} if it is a union of finitely many
axis-parallel open squares.

We say that $f\in \mathcal{C}^2$ is a \emph{standard curve} if there exist
$0=x_0<\dots<x_m=1$ such that $f$ maps the interval $I_k=[x_{k-1},x_k]$
bijectively to a standard line segment, and the standard line segments $f(I_k)$ and $f(I_{k+1})$ are orthogonal
for all $k\in \{1,\dots,m\}$, where we use the notation $I_{m+1} = I_1$.
Notice that the points $x_k$ are uniquely determined. We say
that $f(x_k)$ and $I_k$ are the \emph{turning points} and \emph{edge
intervals} of $f$, respectively.  A simple closed standard curve $\gamma\in
\mathcal{C}^2$ is a \emph{standard polygon}.

Let $\alpha>0$. Let us define $w\in \mathcal{C}^2$ to be an \emph{$\alpha$-wire} if
there exists a standard polygon $\gamma \in \mathcal{C}^2$ with edge intervals
$I_1,\dots,I_m$ such that the lengths of $\gamma(I_k)$ are at most $\alpha$,
and if $E_k$ denotes the segment we obtain from the edge $\gamma(I_k)$ by expanding it by length $\alpha$
in both directions, then $w(I_k)=E_k$ for all $k\in \{1,\dots,m\}$, see Figure \ref{fig:curve1}.
We say that the $E_k$ are the \emph{edges} of the $\alpha$-wire $w$, and $E_k$ and $E_{k+1}$
are \emph{adjacent} if $k\in \{1,\dots,m\}$, where  $E_{m+1}=E_1$.
Of course, $w$ passes through points of $w(I_{k})\setminus \gamma(I_{k})$ more than once.
\end{definition}

\placedrawing[ht]{
\begin{picture}(90,52)
\Thicklines
\drawpath{6.0}{24.0}{6.0}{20.0}
\drawpath{6.0}{20.0}{10.0}{20.0}
\drawpath{10.0}{20.0}{10.0}{14.0}
\drawpath{10.0}{14.0}{16.0}{14.0}
\drawpath{16.0}{14.0}{16.0}{12.0}
\drawpath{16.0}{12.0}{22.0}{12.0}
\drawpath{22.0}{12.0}{22.0}{18.0}
\drawpath{22.0}{18.0}{24.0}{18.0}
\drawpath{24.0}{18.0}{24.0}{24.0}
\drawpath{24.0}{24.0}{26.0}{24.0}
\drawpath{26.0}{24.0}{26.0}{26.0}
\drawpath{26.0}{26.0}{28.0}{26.0}
\drawpath{28.0}{26.0}{28.0}{28.0}
\drawpath{28.0}{28.0}{30.0}{28.0}
\drawpath{30.0}{28.0}{30.0}{32.0}
\drawpath{30.0}{32.0}{24.0}{32.0}
\drawpath{24.0}{32.0}{24.0}{36.0}
\drawpath{24.0}{36.0}{20.0}{36.0}
\drawpath{20.0}{36.0}{20.0}{40.0}
\drawpath{20.0}{40.0}{18.0}{40.0}
\drawpath{18.0}{40.0}{14.0}{40.0}
\drawpath{14.0}{40.0}{14.0}{38.0}
\drawpath{14.0}{38.0}{12.0}{38.0}
\drawpath{12.0}{38.0}{12.0}{36.0}
\drawpath{12.0}{36.0}{10.0}{36.0}
\drawpath{10.0}{36.0}{10.0}{32.0}
\drawpath{10.0}{32.0}{4.0}{32.0}
\drawpath{4.0}{32.0}{4.0}{28.0}
\drawpath{4.0}{28.0}{8.0}{28.0}
\drawpath{8.0}{28.0}{8.0}{24.0}
\drawpath{8.0}{24.0}{6.0}{24.0}
\drawpath{62.0}{40.0}{68.0}{40.0}
\drawpath{68.0}{40.0}{68.0}{36.0}
\drawpath{68.0}{36.0}{72.0}{36.0}
\drawpath{72.0}{36.0}{72.0}{32.0}
\drawpath{72.0}{32.0}{78.0}{32.0}
\drawpath{78.0}{32.0}{78.0}{28.0}
\drawpath{78.0}{28.0}{76.0}{28.0}
\drawpath{76.0}{28.0}{76.0}{26.0}
\drawpath{76.0}{26.0}{74.0}{26.0}
\drawpath{74.0}{26.0}{74.0}{24.0}
\drawpath{74.0}{24.0}{72.0}{24.0}
\drawpath{72.0}{24.0}{72.0}{18.0}
\drawpath{72.0}{18.0}{70.0}{18.0}
\drawpath{70.0}{18.0}{70.0}{12.0}
\drawpath{70.0}{12.0}{64.0}{12.0}
\drawpath{64.0}{12.0}{64.0}{14.0}
\drawpath{64.0}{14.0}{58.0}{14.0}
\drawpath{58.0}{14.0}{58.0}{20.0}
\drawpath{58.0}{20.0}{54.0}{20.0}
\drawpath{54.0}{20.0}{54.0}{24.0}
\drawpath{54.0}{24.0}{56.0}{24.0}
\drawpath{56.0}{24.0}{56.0}{28.0}
\drawpath{56.0}{28.0}{52.0}{28.0}
\drawpath{52.0}{28.0}{52.0}{32.0}
\drawpath{52.0}{32.0}{58.0}{32.0}
\drawpath{58.0}{32.0}{58.0}{36.0}
\drawpath{58.0}{32.0}{58.0}{36.0}
\drawpath{58.0}{36.0}{60.0}{36.0}
\drawpath{60.0}{36.0}{60.0}{38.0}
\drawpath{60.0}{38.0}{62.0}{38.0}
\drawpath{62.0}{38.0}{62.0}{40.0}
\thinlines
\drawpath{68.0}{40.0}{74.0}{40.0}
\drawpath{72.0}{36.0}{72.0}{42.0}
\drawpath{72.0}{36.0}{78.0}{36.0}
\drawpath{78.0}{32.0}{78.0}{38.0}
\drawpath{78.0}{32.0}{84.0}{32.0}
\drawpath{72.0}{32.0}{66.0}{32.0}
\drawpath{68.0}{36.0}{62.0}{36.0}
\drawpath{68.0}{40.0}{68.0}{46.0}
\drawpath{68.0}{36.0}{68.0}{30.0}
\drawpath{62.0}{40.0}{56.0}{40.0}
\drawpath{62.0}{40.0}{62.0}{46.0}
\drawpath{62.0}{38.0}{62.0}{32.0}
\drawpath{62.0}{38.0}{68.0}{38.0}
\drawpath{60.0}{36.0}{66.0}{36.0}
\drawpath{60.0}{38.0}{54.0}{38.0}
\drawpath{58.0}{36.0}{52.0}{36.0}
\drawpath{52.0}{32.0}{46.0}{32.0}
\drawpath{58.0}{32.0}{64.0}{32.0}
\drawpath{58.0}{36.0}{58.0}{42.0}
\drawpath{58.0}{32.0}{58.0}{26.0}
\drawpath{60.0}{38.0}{60.0}{44.0}
\drawpath{60.0}{36.0}{60.0}{30.0}
\drawpath{72.0}{32.0}{72.0}{26.0}
\drawpath{76.0}{28.0}{70.0}{28.0}
\drawpath{78.0}{28.0}{84.0}{28.0}
\drawpath{78.0}{28.0}{78.0}{22.0}
\drawpath{76.0}{26.0}{82.0}{26.0}
\drawpath{76.0}{26.0}{76.0}{20.0}
\drawpath{74.0}{26.0}{68.0}{26.0}
\drawpath{76.0}{28.0}{76.0}{34.0}
\drawpath{74.0}{26.0}{74.0}{32.0}
\drawpath{74.0}{24.0}{80.0}{24.0}
\drawpath{74.0}{24.0}{74.0}{18.0}
\drawpath{72.0}{24.0}{66.0}{24.0}
\drawpath{72.0}{24.0}{72.0}{30.0}
\drawpath{72.0}{18.0}{72.0}{12.0}
\drawpath{72.0}{18.0}{78.0}{18.0}
\drawpath{70.0}{18.0}{64.0}{18.0}
\drawpath{70.0}{12.0}{70.0}{6.0}
\drawpath{70.0}{18.0}{70.0}{24.0}
\drawpath{70.0}{12.0}{76.0}{12.0}
\drawpath{64.0}{12.0}{58.0}{12.0}
\drawpath{64.0}{14.0}{64.0}{20.0}
\drawpath{64.0}{12.0}{64.0}{6.0}
\drawpath{64.0}{12.0}{58.0}{12.0}
\drawpath{58.0}{20.0}{64.0}{20.0}
\drawpath{54.0}{20.0}{48.0}{20.0}
\drawpath{54.0}{20.0}{54.0}{14.0}
\drawpath{58.0}{14.0}{58.0}{8.0}
\drawpath{58.0}{14.0}{52.0}{14.0}
\drawpath{58.0}{20.0}{58.0}{26.0}
\drawpath{58.0}{24.0}{52.0}{24.0}
\drawpath{52.0}{28.0}{46.0}{28.0}
\drawpath{56.0}{28.0}{62.0}{28.0}
\drawpath{56.0}{28.0}{56.0}{34.0}
\drawpath{56.0}{24.0}{62.0}{24.0}
\drawpath{56.0}{24.0}{56.0}{18.0}
\drawpath{54.0}{24.0}{48.0}{24.0}
\drawpath{54.0}{24.0}{54.0}{30.0}
\drawpath{52.0}{28.0}{52.0}{22.0}
\drawpath{52.0}{32.0}{52.0}{38.0}
\drawpath{64.0}{14.0}{70.0}{14.0}
\drawcenteredtext{16.0}{22.0}{$\ran(\gamma)$}
\drawcenteredtext{64.0}{22.0}{$\ran(w)$}
\drawoverbrace{81.0}{33.0}{6.0}
\drawcenteredtext{81.0}{35.0}{$\alpha$}
\end{picture}
}
{A standard polygon $\gamma$ and the corresponding $\alpha$-wire $w$}{fig:curve1}

The above definitions easily imply the following facts.

\begin{fact} \label{f:standard} The standard curves form a dense set in $\mathcal{C}^2$.
\end{fact}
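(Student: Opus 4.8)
Fact \ref{f:standard} claims that standard curves are dense in $\mathcal{C}^2$.

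The plan is to show that an arbitrary curve $f \in \mathcal{C}^2$ and an arbitrary $\varepsilon > 0$ admit a standard curve $g$ with $\sup_{t\in[0,1]}|f(t)-g(t)| < \varepsilon$. Since a standard curve is required to trace out axis-parallel segments that alternate between horizontal and vertical, the natural construction proceeds by approximating $f$ by a polygonal path on a fine grid and then replacing each straight segment by a ``staircase'' of alternating horizontal and vertical moves.

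First I would exploit the uniform continuity of $f$ on the compact interval $[0,1]$ to fix a partition $0=t_0<t_1<\cdots<t_N=1$ so fine that $\diam f([t_{j-1},t_j]) < \varepsilon/2$ for every $j$. On each subinterval $[t_{j-1},t_j]$, the value of $f$ stays within an $\varepsilon/2$-ball of $f(t_{j-1})$, so it suffices to connect consecutive sample points $f(t_{j-1})$ and $f(t_j)$ by a standard curve whose range lies in that same small ball. Between two nearby points $p=(p_1,p_2)$ and $q=(q_1,q_2)$ I would insert an $\mathsf{L}$-shaped path: first move horizontally from $p$ to the corner $(q_1,p_2)$ along a standard (horizontal) segment, then move vertically from $(q_1,p_2)$ to $q$. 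This keeps the range inside the axis-parallel rectangle spanned by $p$ and $q$, hence inside the $\varepsilon/2$-ball, guaranteeing the sup-distance bound. Parametrizing each such leg with constant speed and concatenating over all $j$ produces a piecewise axis-parallel curve $g$ with $\|f-g\|_\infty < \varepsilon$.

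The main technical nuisance is matching the \emph{formal} requirements of a standard curve, namely that consecutive edge intervals map to \emph{orthogonal} segments: a purely horizontal leg must be immediately followed by a vertical one and vice versa. Two issues arise: a leg might be degenerate (e.g.\ when $p_1=q_1$ the horizontal part collapses), and two consecutive segments in the same direction must be merged into one edge interval. I would handle degeneracies by omitting collapsed segments and, whenever two successive non-degenerate segments happen to share an orientation, either inserting a tiny orthogonal ``jog'' of length below the remaining error budget or simply fusing them so that the alternation pattern is restored. Since the definition allows the turning points $x_k$ to be arbitrary (only their uniqueness is asserted), the partition can always be adjusted to realize a genuinely alternating sequence of orthogonal standard segments. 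These are precisely the ``easily implied'' bookkeeping details the statement refers to, so I would note them briefly and conclude that standard curves are dense.
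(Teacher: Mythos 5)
Your construction is correct and is exactly the argument the paper has in mind: the paper offers no proof at all, simply declaring Fact \ref{f:standard} an easy consequence of the definitions, and the uniform-continuity-plus-staircase approximation with $\mathsf{L}$-shaped legs is the standard way to fill that in. The only bookkeeping item you do not explicitly mention is the wrap-around clause in the definition (the requirement that $f(I_m)$ be orthogonal to $f(I_{m+1})=f(I_1)$, which forces the segments to alternate cyclically), but this is handled by the same tiny-jog device you already describe for consecutive segments of equal orientation.
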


\begin{fact} \label{f:wire}
Let $E,E'\subseteq \mathbb{R}^2$ be two adjacent edges of an $\alpha$-wire. Assume that $f\in \mathcal{C}^2$ and $I,I'\subseteq [0,1]$ are disjoint closed intervals such that
$f(I)=E$ and $f(I')=E'$. Then for all $g\in U(f,\alpha/2)$ we obtain that $g(I)\cap g(I')\neq \emptyset$.
\end{fact}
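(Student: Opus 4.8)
The plan is to put the shared corner of the two adjacent edges at the origin and then reduce the whole statement to a standard planar crossing lemma. First I would unwind the definitions: since $E$ and $E'$ are adjacent edges of an $\alpha$-wire, they come from two orthogonal edges $\gamma(I_k)$ and $\gamma(I_{k+1})$ of the underlying standard polygon that meet at a turning point, and each edge of the wire is obtained from its polygon edge by extending it by length $\alpha$ in \emph{both} directions. Placing the turning point at the origin with $E$ on the $x$-axis and $E'$ on the $y$-axis, the crucial geometric feature is that each edge pokes past the corner by exactly $\alpha$ in a direction perpendicular to the other: concretely $E$ contains the point $(\alpha,0)$ (and its other endpoint has $x$-coordinate $\le -\alpha$), while $E'$ contains $(0,-\alpha)$ (and its other endpoint has $y$-coordinate $\ge \alpha$).

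Next I would translate the hypothesis $g\in U(f,\alpha/2)$ into containment in strips. Because $f(I)=E$ lies on the $x$-axis, for every $s\in I$ the $y$-coordinate of $f(s)$ is $0$, so $|\,\pi_y g(s)|<\alpha/2$; thus $g(I)$ lies in the open horizontal strip $\{|y|<\alpha/2\}$, and, evaluating $g$ at preimages of the two endpoints of $E$, it contains a point with $x>\alpha/2$ and a point with $x<-\alpha/2$. Symmetrically $g(I')$ lies in the vertical strip $\{|x|<\alpha/2\}$ and contains points with $y>\alpha/2$ and with $y<-\alpha/2$. In other words, with $Q=[-\alpha/2,\alpha/2]^2$, the connected set $g(I)$ crosses $Q$ from its left edge to its right edge while staying in the horizontal strip, and $g(I')$ crosses $Q$ from top to bottom while staying in the vertical strip. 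Note that the choice of radius $\alpha/2$ is exactly what yields strict margins ($x>\alpha/2$ against the strip $|x|<\alpha/2$, etc.), so no degenerate boundary cases occur.

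To finish I would extract honest sub-arcs living inside $Q$ and invoke the crossing lemma. By an intermediate-value extraction applied to $s\mapsto \pi_x g(s)$ on $I$ (taking the last entry to the left edge before the final exit through the right edge), I can find a closed subinterval $J\subseteq I$ with $g(J)\subseteq Q$ such that $g$ meets both the left and the right edge of $Q$; symmetrically I get $J'\subseteq I'$ with $g(J')\subseteq Q$ meeting the top and bottom edges. Then $g(J)$ and $g(J')$ are compact connected subsets of $Q$ joining opposite pairs of sides, so the classical crossing lemma (a connected set joining the two vertical sides of a rectangle meets every connected set joining the two horizontal sides) gives $g(J)\cap g(J')\neq\emptyset$, and hence $g(I)\cap g(I')\neq\emptyset$.

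The hard part is not the geometry, which is routine once the corner is placed at the origin, but the topological crossing lemma together with the clean extraction of the subintervals $J,J'$ — the subtlety being that $f$, and therefore $g$, need not traverse $E$ or $E'$ monotonically or injectively, so one really must select genuine sub-arcs contained in $Q$ rather than argue with the full images. I would either cite the crossing lemma as a standard consequence of the Jordan curve theorem (or of a Poincar\'e--Miranda/Brouwer argument) or include a short self-contained proof of it; everything else reduces to the intermediate value theorem and the definition of the $\alpha$-wire.
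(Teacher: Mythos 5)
Your argument is correct. The paper offers no proof of Fact \ref{f:wire} at all --- it is stated as an immediate consequence of the definitions (``The above definitions easily imply the following facts'') --- so there is nothing to compare against; your write-up simply supplies the omitted details, and it does so accurately: the normalization at the shared turning point, the observation that each extended edge overshoots the corner by exactly $\alpha$ so that the $\alpha/2$-perturbed images still clear the square $Q=[-\alpha/2,\alpha/2]^2$ with strict margin, and the first-entrance/last-exit extraction of subintervals $J,J'$ are all sound. One small caution: the crossing lemma as you state it parenthetically (``a connected set joining the two vertical sides \dots meets every connected set joining the two horizontal sides'') is false for general connected sets --- there are classical examples of two disjoint connected subsets of a square joining opposite pairs of sides --- but it is true for continua, and since $g(J)$ and $g(J')$ are continuous images of compact intervals (hence path-connected continua) your application is legitimate; for paths one can even quote Poincar\'e--Miranda directly. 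Your extraction of genuine sub-arcs contained in $Q$, rather than arguing with the full images $g(I)$ and $g(I')$, is exactly the step that makes the reduction to the crossing lemma honest, since $g$ need not traverse the edges monotonically.
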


The next theorem answers the category dual of Problem \ref{p:Werner} in the negative.

\begin{theorem} \label{t:br2} For the generic $f \in \mathcal{C}^2$ if $p,q\in \mathbb{R}^2\setminus \ran(f)$ and $\ran(f)$ separates
$p$ and $q$ then for every continuum $C\subseteq \mathbb{R}^2$ with $p,q\in C$ the intersection $C \cap \ran(f)$ has cardinality continuum.
\end{theorem}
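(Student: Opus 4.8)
The plan is to use a Baire category argument in the complete metric space $\iC^2$. The target property is that whenever $\ran(f)$ separates two points $p,q\in \RR^2\setminus\ran(f)$, every continuum joining them meets $\ran(f)$ in a set of cardinality continuum. The key mechanism will be the $\alpha$-wires: I want to show that the generic $f$ contains, on an arbitrarily fine scale, a densely-placed supply of wire-like structures, so that any separating arc of $\ran(f)$ is forced to be crossed transversally in a Cantor-like (hence uncountable, in fact size-continuum) set of places. The heuristic is that Fact \ref{f:wire} gives a robust, \emph{open} crossing condition: if a curve's range contains two adjacent edges of an $\alpha$-wire, then every nearby curve still has its images of the two edge-intervals intersecting. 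This robustness is exactly what survives passage to the generic limit.

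First I would reduce the statement to a countable intersection of dense open (or at least co-meager) sets. Fix a countable basis of pairs of disjoint rational open sets, and for each natural number $n$ define a family $\iG_n\subseteq\iC^2$ capturing ``on scale $1/n$, every separating part of $\ran(f)$ is crossed uncountably often.'' Concretely I would build, for each standard polygon $\gamma$ and each small $\alpha$, the associated $\alpha$-wire $w$ (Figure \ref{fig:curve1}), and show that the set of $f$ whose range contains a suitably dense net of such wires is dense; openness will come from Fact \ref{f:wire} together with the regularity/stability already used for standard curves (Fact \ref{f:standard}). The density step uses Fact \ref{f:standard} to first approximate an arbitrary $f$ by a standard curve and then thicken selected edges into wires without moving the range much in the supremum metric.

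The analytic heart is the implication from ``range contains many wires near a separating set'' to ``$|C\cap\ran(f)|=\mathfrak c$.'' Here I would argue topologically: if $C$ is a continuum containing $p$ and $q$ that lie in different components of $\RR^2\setminus\ran(f)$, then $C$ must cross $\ran(f)$, and by the standard Jordan/winding-number separation facts (cf. \cite[Proposition~3.16.]{Fu}) a single crossing cannot be isolated once the range carries a wire of each scale along the separating boundary. The plan is to produce a nested sequence of wires of scales $1/n$ straddling a fixed separating sub-polygon, use Fact \ref{f:wire} to guarantee that $C$ meets the image of each edge-interval, and organize these forced intersection points along a binary splitting pattern so that distinct infinite branches give distinct points of $C\cap\ran(f)$. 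This yields a Cantor scheme inside $C\cap\ran(f)$, hence cardinality continuum.

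The main obstacle, I expect, is precisely this last combinatorial-topological step: turning ``$C$ crosses the range somewhere near each wire'' into a genuine \emph{branching} family of crossings rather than finitely many, and ensuring the branches converge to genuinely distinct points of $C$. The delicate point is that a continuum can be wild, so I cannot assume $C$ is an arc or rectifiable; I must extract the splitting purely from connectedness and from the fact that adjacent wire edges at the next finer scale separate the two halves of a coarser edge within $\RR^2\setminus\ran(f)$. Managing this cleanly will likely require choosing the wire scales so that the edges at level $n+1$ are nested strictly inside the ``gaps'' of level $n$, so that Fact \ref{f:wire} applied at each level forces $C$ to reenter infinitely often; the uncountability then follows from a König-type argument on the resulting infinite branching tree of forced crossings.
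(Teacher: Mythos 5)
Your overall strategy (Baire category in $\iC^2$, wires, the robust intersection condition of Fact \ref{f:wire}, a Cantor scheme of forced crossings) is the right one and matches the paper's, but two essential ideas are missing from the plan and one step, as stated, would fail. First, the branching mechanism is not identified. A single nested sequence of wires of scales $1/n$ "straddling a fixed separating sub-polygon" forces the continuum $C$ into a single decreasing sequence of compact sets, which by compactness yields only \emph{one} point of $C\cap\ran(f)$, not continuum many; and nothing prevents all the crossings of $C$ with one wire from being concentrated at a single point. What produces the binary splitting in the paper (Lemmas \ref{l:br3} and \ref{l:br4}) is that at each stage one inserts \emph{two disjoint} $\alpha$-wires $w_0,w_1$, one just inside and one just outside the separating polygon $\gamma$ (Figure \ref{fig:curve2}), each of which is itself a closed curve with the same winding numbers about $p$ and $q$ as $\gamma$, hence each \emph{globally} separates $p$ from $q$; their neighborhoods $V_0,V_1$ have positive distance, so the continuum is forced into both, and iterating gives disjoint closures along every split of the binary tree. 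Your alternative suggestion that finer wire edges "separate the two halves of a coarser edge" is a local statement and does not yield two disjoint global separators of $p$ and $q$. Relatedly, Fact \ref{f:wire} cannot "guarantee that $C$ meets the image of each edge-interval": it only says that for $g$ near $f_0$ the images $g(I_k)$ of consecutive edge-intervals intersect. Its actual role is to show that these images chain into a closed curve $\varphi$ homotopic to $\gamma$ in $\RR^2\setminus\{p,q\}$, so that $WN(\varphi,p)\neq WN(\varphi,q)$ and $\ran(\varphi)$ still separates; $C$ is then forced to meet it only because a connected set containing $p$ and $q$ must meet every set separating them.

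Second, the reduction to a countable intersection of scale-indexed dense open sets $\iG_n$ does not go through. The tree of separating neighborhoods $V_{j_1\dots j_n}$ that carries the Cantor scheme must be chosen \emph{adaptively}: where the wires at level $n+1$ can be placed depends on where the range of the (already perturbed) function lies at level $n$, so these sets cannot be pre-indexed by a countable family independent of $f$; and for a fixed prescribed wire location the corresponding set of functions is not dense. This is precisely why the paper fixes rational $p,q$, reduces to the sets $\iF_{p,q}$, and proves each co-meager via the Banach--Mazur game, where Player II builds the nested family $V_{j_1\dots j_n}$ in response to the opponent's open sets (also handling separately the case where some $f\in\iU_1$ fails to separate $p$ and $q$, since the theorem is conditional). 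Some such adaptive (game or fusion) argument is needed in place of your $\bigcap_n\iG_n$; finally, a K\"onig-type argument only extracts a single infinite branch, whereas the cardinality-continuum conclusion needs the full perfect-set structure coming from the positive distances $\dist(V_{s0},V_{s1})>0$ at every node $s$.
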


Before proving Theorem \ref{t:br2} we need some lemmas.

\begin{lemma} \label{l:opensep} Let $f\in \mathcal{C}^2$ be a standard curve and let $p,q\in \mathbb{R}^2\setminus \ran (f)$. If $V\subseteq \mathbb{R}^2$ is a standard open set
such that $V\cap \ran(f)$ separates $p$ and $q$ then there is a standard
polygon $\gamma\in \mathcal{C}^2$ such that $\ran(\gamma) \subseteq V\cap \ran(f)$ separates $p$ and $q$.
\end{lemma}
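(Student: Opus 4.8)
The plan is to reduce the statement to a finite, purely combinatorial problem about rectilinear planar graphs, and then to invoke planar duality together with the Jordan curve theorem. Since $f$ is a standard curve, $\ran(f)$ is a finite union of axis-parallel closed segments, and $V$ is a finite union of open axis-parallel squares. First I would extend every segment of $\ran(f)$ and every side of every square of $V$ to a full line, and add, if necessary, a few more horizontal and vertical lines so that $p$ and $q$ lie in the interiors of distinct cells. These finitely many lines cut the plane into a finite rectilinear grid $\Gamma$ in which $\ran(f)$ is a union of (closed) grid edges, each open grid edge lies either entirely inside $V$, or entirely off $\bar V$, or on $\partial V$, and $p,q$ lie in cell interiors. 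Writing $A = V\cap \ran(f)$, the set $A$ is then exactly the union of those open grid edges $e$ with $e\subseteq \ran(f)$ and $\inter e \subseteq V$, together with whichever grid vertices happen to lie in $A$.

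Next I would translate the hypothesis into dual connectivity. Let $G$ be the grid $\Gamma$ viewed as a planar graph with its cells as faces, let $G^{\ast}$ be its planar dual, and let $F_p,F_q$ be the dual vertices corresponding to the cells of $p$ and $q$. A short point-set check (routing paths through cell interiors and edge interiors while avoiding the finitely many vertices) shows that $A$ separates $p$ and $q$ in $\RR^2$ precisely when $F_p$ and $F_q$ lie in different components of $G^{\ast}$ after deleting the dual edges of those primal edges that belong to $A$. Choosing an inclusion-minimal subcollection $E_0$ of the edges of $A$ whose deletion still disconnects $F_p$ from $F_q$ yields a minimal cut (a bond) of $G^{\ast}$. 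By planar duality a bond of $G^{\ast}$ is dual to a single simple cycle $J$ of $G$, and by the Jordan curve theorem this simple closed rectilinear curve separates $p$ from $q$. Merging maximal collinear runs of grid edges turns $J$ into a simple closed standard curve, i.e.\ a standard polygon $\gamma$ with $\ran(\gamma)=J$.

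Finally I would check that $\ran(\gamma)\subseteq A$. Every edge of $J$ has interior in $A$ by construction, so the only danger is a corner $x$ of $\gamma$ lying in $\bar A\setminus A\subseteq \partial V$. This is exactly where the openness of $V$ bites, and it is the main obstacle. The resolution is that such a corner cannot occur for a curve that genuinely separates inside $A$: if $x\notin V$, then the two edges of $J$ meeting at $x$ are disconnected from one another within $A$, since no path in $A$ can pass through $x$; hence $J\cap A$ would be a mere arc rather than a closed curve and could not separate the plane. Equivalently, because passage through the non-$A$ point $x$ is \emph{free}, the minimal cut $E_0$ would not have needed to block both edges incident to $x$, contradicting the minimality of $E_0$. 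Thus the minimal separating cycle automatically has all of its corners in $A$, giving $\ran(\gamma)\subseteq V\cap \ran(f)$, as required. The two points that need careful verification are the point-set equivalence between planar separation and dual disconnection (properly accounting for passage through grid vertices, including the "diagonal" passage through vertices not in $A$) and the duality fact that an inclusion-minimal separating edge set is dual to a single simple cycle.
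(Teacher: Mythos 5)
Your reduction to a finite rectilinear grid, the translation of separation into dual-graph disconnection, and the extraction of a simple cycle $J$ from an inclusion-minimal dual cut via the bond--cycle duality are all workable, and this is a genuinely different route from the paper's (which instead deletes ``marginal segments'' to get a compact subset of $V\cap\ran(f)$ that still separates, and then quotes Newman and Whyburn to find a simple closed curve inside one of its components). However, the final step --- the one you yourself single out as the main obstacle --- is resolved incorrectly, and this is a genuine gap. The inclusion-minimality of $E_0$ is a statement purely about the dual graph, which records only which open edges are blocked; it carries no information about whether the grid \emph{vertices} of the resulting cycle lie in $V$ or on $\partial V$. Hence no contradiction with minimality can be extracted from ``passage through the non-$A$ point $x$ is free'', and the observation that $J\cap A$ would be a non-separating arc contradicts nothing: the hypothesis is that $A=V\cap\ran(f)$ separates $p$ and $q$, not that $J\cap A$ does.

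Concretely, the claim that every inclusion-minimal $E_0$ yields a cycle with all corners in $A$ is false. Let $\ran(f)$ contain a square $J$ with $p$ inside and $q$ far outside, together with a small square $Q$ encircling one corner $v$ of $J$, and let $V$ be a standard open set covering $(J\setminus\{v\})\cup Q$ but not $v$ (two of the covering squares can have a side on the grid lines through $v$). Then $A$ separates $p$ from $q$: a path avoiding $A$ could cross $J$ only at $v$, but to reach $v$ it must enter the disk bounded by $Q$ and hence meet $Q\subseteq A$. Nevertheless $E(J)$ is an inclusion-minimal subset of the edges of $A$ whose duals disconnect $F_p$ from $F_q$ (deleting any one edge of $J$ leaves an arc, whose complement is connected), and its cycle is $J$ itself, which contains the corner $v\notin A$. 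The correct separating polygon must detour along $Q$, and nothing in your argument produces it. What is missing is precisely the content of the paper's marginal-segment step: one must first discard the edges of $A$ that dead-end on $\partial V$ and \emph{prove} that the remaining compact set still separates, which requires modifying a hypothetical avoiding curve so that it slips through the $\partial V$-endpoints of the discarded edges. That topological argument has no counterpart in dual-graph minimality, so as it stands the proposal is incomplete.
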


\begin{proof}
We say that an open line segment $S$ is a \emph{marginal segment} if $S\subseteq V\cap \ran(f)$ and one of its endpoints is in $\partial V$
and the other endpoint is either in $\partial V$, or is a turning point, or is
a point attained by $f$ more than once. Let $S_1,\dots,S_n\subseteq V\cap \ran(f)$ be the marginal
segments. Clearly, they are pairwise disjoint.
Set $K=(V\cap \ran(f))\setminus \left(\bigcup_{i=1}^{n} S_i\right)$.
Then $K\subseteq V\cap \ran(f)$ is compact.

First we prove that $K$ separates $p$ and $q$. Assume to the contrary that this is not the case,
then there is a standard curve $g\in \mathcal{C}^2$ with $g(0)=p$ and $g(1)=q$ such that $K\cap \ran(g)=\emptyset$.
Let $y_1$ be an endpoint of $S_1$ in $\partial V$. If $g$ meets $S_1$ then we can modify $g$ in a small neighborhood of $S_1$ such that the modified curve
passes through $y_1$ and avoids $S_1\cup K$. Continuing this procedure we obtain a curve $\widetilde{g}\in \mathcal{C}^2$ with $\widetilde{g}(0)=p$ and $\widetilde{g}(1)=q$ such that $(V\cap \ran(f))\cap \ran(\widetilde{g})=\emptyset$, but this contradicts the fact that $V\cap \ran(f)$ separates $p$ and $q$.

Now elementary considerations show that there is a standard
polygon $\gamma \in \mathcal{C}^2$ such that $\ran(\gamma) \subseteq K$. For the sake of completeness we mention that \cite[Thm.~14.3.]{Ne}
implies that $K$ contains a component $C$ separating $p$ and $q$. Then $C$ is a locally connected continuum,
so \cite[(2.41)]{Wh} yields that there exists a simple closed curve
$\gamma \in \mathcal{C}^2$ such that $\ran(\gamma) \subseteq C$ separates $p$ and $q$,
and after reparametrization $\gamma$ will be a standard polygon.
\end{proof}

\begin{lemma} \label{l:br3} Let $p,q\in \mathbb{R}^2$ and $\varepsilon>0$. Assume that
$f\in \mathcal{C}^2$ is a standard curve and $V\subseteq \mathbb{R}^2$ is a standard open set such that
$V\cap \ran(f)$ separates $p$ and $q$. Then there exist a standard curve
$f_0\in \mathcal{C}^2$, standard open sets $V_0,V_1\subseteq \mathbb{R}^2$ and $\delta>0$ such that
\begin{enumerate}[(i)]
\item \label{l:iii}  $V_0,V_1\subseteq V$ and $\dist(V_0,V_1)>0$,
\item \label{l:iv}  $V_j\cap \ran(g)$ separates $p$ and $q$ if $j\in \{0,1\}$ and $g\in U(f_0,\delta)$,
\item  \label{l:ii}  $U(f_0,\delta)\subseteq U(f,\varepsilon)$,
\item \label{l:i}  $f_0=f$ on $[0,1]\setminus f^{-1}(V)$.
\end{enumerate}
\end{lemma}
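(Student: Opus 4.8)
The plan is to manufacture, out of the single separator $V\cap\ran(f)$, two \emph{disjoint} separators that survive every perturbation of size $\delta$. First I would apply Lemma \ref{l:opensep} to $f,p,q,V$, obtaining a standard polygon $\gamma\in\iC^2$ with $\ran(\gamma)\subseteq V\cap\ran(f)$ that separates $p$ and $q$. A standard polygon is a simple closed curve, so it bounds a Jordan domain containing exactly one of the two points, say $p$; since $p,q\notin\ran(\gamma)$ they have positive distance to $\ran(\gamma)$, hence there is $\eta_0>0$ so that translating every edge of $\gamma$ inward by any $\eta<\eta_0$ yields a standard polygon that still encloses $p$ and excludes $q$, and whose range stays inside the open set $V$. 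Taking two such offsets produces disjoint standard polygons $\gamma_0,\gamma_1$, each separating $p$ and $q$, with $\ran(\gamma_0),\ran(\gamma_1)\subseteq V$ and $\dist(\ran(\gamma_0),\ran(\gamma_1))>0$.

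Next I would fix a small $\alpha>0$, fatten each $\gamma_j$ into an $\alpha$-wire $w_j$, and set $\delta=\alpha/2$; let $V_0,V_1$ be disjoint standard open neighborhoods of $\ran(w_0)$ and $\ran(w_1)$ of width much smaller than $\alpha$. Choosing $\alpha$ and the offsets small guarantees $V_0,V_1\subseteq V$ and $\dist(V_0,V_1)>0$, which is the first required property. The curve $f_0$ is defined to coincide with $f$ on $[0,1]\setminus f^{-1}(V)$ --- this is the last required property --- while on the part of the domain where $f$ runs through $\ran(\gamma)\subseteq V$ it is rerouted, using Fact \ref{f:standard}, into a standard curve that traverses the two wires $w_0$ and $w_1$. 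The rerouting is carried out within a $2\eta$-neighborhood of $\ran(\gamma)\subseteq\ran(f)$ and in step with the original traversal, so that $\|f_0-f\|_\infty<\varepsilon-\delta$; by the triangle inequality this gives $U(f_0,\delta)\subseteq U(f,\varepsilon)$, the third required property. Keeping the three length scales --- the offset $\eta$, the fattening $\alpha$, and the tube width --- compatible so that all of these hold simultaneously is a bookkeeping matter that I would record but not belabor.

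The remaining property, stability of the separation, is the heart of the proof and the step I expect to be hardest: for every $g\in U(f_0,\delta)$ and $j\in\{0,1\}$, the set $V_j\cap\ran(g)$ must still separate $p$ and $q$. Fix $j$ and let $I_1,\dots,I_m$ be the edge intervals along which $f_0$ traverses the wire $w_j$. Because $\delta=\alpha/2$, Fact \ref{f:wire} yields $g(I_k)\cap g(I_{k+1})\neq\emptyset$ for every pair of adjacent edges (indices read modulo $m$). Picking $z_k\in g(I_k)\cap g(I_{k+1})$ and joining $z_{k-1}$ to $z_k$ inside the path-connected set $g(I_k)$, I would splice these arcs into a single closed curve $\sigma_j$ with $\ran(\sigma_j)\subseteq\bigcup_{k}g(I_k)\subseteq V_j\cap\ran(g)$. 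Since $g$ is $\delta$-close to $f_0$ and the wire $w_j$ hugs $\gamma_j$, the loop $\sigma_j$ lies within a small Hausdorff distance of $\gamma_j$, so by the continuity of the winding number off a curve one gets $WN(\sigma_j,p)=WN(\gamma_j,p)$ and $WN(\sigma_j,q)=WN(\gamma_j,q)$; as $\gamma_j$ separates $p$ and $q$ these differ, whence $\sigma_j$, and a fortiori $V_j\cap\ran(g)$, separates $p$ and $q$. The delicate point throughout is exactly this non-breaking of the chain $g(I_1),\dots,g(I_m)$ under a $\delta$-perturbation, which is precisely what the $\alpha$-wire was engineered to ensure.
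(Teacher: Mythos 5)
Your proposal is correct and follows essentially the same route as the paper: reduce to a standard polygon via Lemma \ref{l:opensep}, replace it by two disjoint $\alpha$-wires, reroute $f$ through the wires inside $f^{-1}(V)$, and use Fact \ref{f:wire} plus a winding-number/homotopy argument to show the chain of images $g(I_k)$ cannot break under a $\delta=\alpha/2$ perturbation. The only cosmetic difference is that you obtain the two wires by first offsetting the polygon inward twice, whereas the paper simply chooses two non-intersecting $\alpha$-wires in a small neighborhood of $\gamma$; this does not change the substance of the argument.
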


\begin{proof}
It is enough to construct a not necessarily standard $f_0\in \mathcal{C}^2$ with the
above properties, since by Fact~\ref{f:standard} we can replace it with a standard one.

By Lemma~\ref{l:opensep}
there exists a standard
polygon $\gamma \in \mathcal{C}^2$ such that $\ran(\gamma) \subseteq V\cap \ran(f)$ separates $p$ and $q$.
Let $\Gamma=\ran(\gamma)$, we may assume by decreasing $\varepsilon$ if necessary
that $U(\Gamma,\varepsilon)\subseteq V$ and $p,q \notin U(\Gamma,\varepsilon)$. Let us suppose that $q$ is in the bounded component of $\mathbb{R}^2\setminus \Gamma$.
Then the winding numbers $WN(\gamma,p)=0$ and $WN(\gamma,q)=\pm 1$, see
\cite[Proposition~3.16.]{Fu} and \cite[Proposition~5.20.]{Fu}.
It is easy to see that we can fix a small enough $\alpha\in (0,\varepsilon/9)$ such that
there exist non-intersecting $\alpha$-wires $w_0, w_1\in U(\gamma,\varepsilon/3)$, see Figure \ref{fig:curve2}.

\placedrawing[ht]{
\begin{picture}(58,58)
\Thicklines
\drawpath{11.0}{47.0}{47.0}{47.0}
\drawpath{47.0}{47.0}{47.0}{11.0}
\drawpath{47.0}{11.0}{23.0}{11.0}
\drawpath{23.0}{11.0}{23.0}{23.0}
\drawpath{23.0}{23.0}{11.0}{23.0}
\drawpath{11.0}{23.0}{11.0}{47.0}
\drawpath{16.0}{44.0}{16.0}{42.0}
\drawpath{16.0}{42.0}{14.0}{42.0}
\drawpath{14.0}{42.0}{14.0}{40.0}
\drawpath{14.0}{40.0}{16.0}{40.0}
\drawpath{16.0}{40.0}{16.0}{38.0}
\drawpath{16.0}{38.0}{14.0}{38.0}
\drawpath{14.0}{38.0}{14.0}{36.0}
\drawpath{14.0}{36.0}{16.0}{36.0}
\drawpath{16.0}{36.0}{16.0}{34.0}
\drawpath{16.0}{34.0}{14.0}{34.0}
\drawpath{14.0}{34.0}{14.0}{32.0}
\drawpath{14.0}{32.0}{16.0}{32.0}
\drawpath{16.0}{32.0}{16.0}{30.0}
\drawpath{16.0}{30.0}{14.0}{30.0}
\drawpath{14.0}{30.0}{14.0}{28.0}
\drawpath{14.0}{28.0}{16.0}{28.0}
\drawpath{16.0}{28.0}{16.0}{26.0}
\drawpath{16.0}{26.0}{18.0}{26.0}
\drawpath{18.0}{26.0}{18.0}{28.0}
\drawpath{18.0}{28.0}{20.0}{28.0}
\drawpath{20.0}{28.0}{20.0}{26.0}
\drawpath{20.0}{26.0}{22.0}{26.0}
\drawpath{22.0}{26.0}{22.0}{28.0}
\drawpath{22.0}{28.0}{24.0}{28.0}
\drawpath{24.0}{28.0}{24.0}{26.0}
\drawpath{24.0}{26.0}{26.0}{26.0}
\drawpath{26.0}{26.0}{26.0}{24.0}
\drawpath{26.0}{24.0}{28.0}{24.0}
\drawpath{28.0}{24.0}{28.0}{22.0}
\drawpath{28.0}{22.0}{26.0}{22.0}
\drawpath{26.0}{22.0}{26.0}{20.0}
\drawpath{26.0}{20.0}{28.0}{20.0}
\drawpath{28.0}{20.0}{28.0}{18.0}
\drawpath{28.0}{18.0}{26.0}{18.0}
\drawpath{26.0}{18.0}{26.0}{16.0}
\drawpath{26.0}{16.0}{28.0}{16.0}
\drawpath{28.0}{16.0}{28.0}{14.0}
\drawpath{28.0}{14.0}{30.0}{14.0}
\drawpath{30.0}{14.0}{30.0}{16.0}
\drawpath{30.0}{16.0}{32.0}{16.0}
\drawpath{32.0}{16.0}{32.0}{14.0}
\drawpath{32.0}{14.0}{34.0}{14.0}
\drawpath{34.0}{14.0}{34.0}{16.0}
\drawpath{34.0}{16.0}{36.0}{16.0}
\drawpath{36.0}{16.0}{36.0}{14.0}
\drawpath{36.0}{14.0}{38.0}{14.0}
\drawpath{38.0}{14.0}{38.0}{16.0}
\drawpath{38.0}{16.0}{40.0}{16.0}
\drawpath{40.0}{16.0}{40.0}{14.0}
\drawpath{40.0}{14.0}{42.0}{14.0}
\drawpath{42.0}{14.0}{42.0}{16.0}
\drawpath{42.0}{16.0}{44.0}{16.0}
\drawpath{44.0}{16.0}{44.0}{18.0}
\drawpath{44.0}{18.0}{42.0}{18.0}
\drawpath{42.0}{18.0}{42.0}{20.0}
\drawpath{42.0}{20.0}{44.0}{20.0}
\drawpath{44.0}{20.0}{44.0}{22.0}
\drawpath{44.0}{22.0}{42.0}{22.0}
\drawpath{42.0}{22.0}{42.0}{24.0}
\drawpath{42.0}{24.0}{44.0}{24.0}
\drawpath{44.0}{24.0}{44.0}{26.0}
\drawpath{44.0}{26.0}{42.0}{26.0}
\drawpath{42.0}{26.0}{42.0}{28.0}
\drawpath{42.0}{28.0}{44.0}{28.0}
\drawpath{42.0}{30.0}{42.0}{30.0}
\drawpath{44.0}{28.0}{44.0}{30.0}
\drawpath{44.0}{30.0}{42.0}{30.0}
\drawpath{42.0}{30.0}{42.0}{32.0}
\drawpath{42.0}{32.0}{44.0}{32.0}
\drawpath{44.0}{32.0}{44.0}{34.0}
\drawpath{44.0}{34.0}{42.0}{34.0}
\drawpath{42.0}{34.0}{42.0}{36.0}
\drawpath{42.0}{36.0}{44.0}{36.0}
\drawpath{44.0}{36.0}{44.0}{38.0}
\drawpath{44.0}{38.0}{42.0}{38.0}
\drawpath{42.0}{38.0}{42.0}{40.0}
\drawpath{42.0}{40.0}{44.0}{40.0}
\drawpath{44.0}{40.0}{44.0}{42.0}
\drawpath{44.0}{42.0}{42.0}{42.0}
\drawpath{42.0}{42.0}{42.0}{44.0}
\drawpath{42.0}{44.0}{40.0}{44.0}
\drawpath{40.0}{44.0}{40.0}{42.0}
\drawpath{40.0}{42.0}{38.0}{42.0}
\drawpath{38.0}{42.0}{38.0}{44.0}
\drawpath{38.0}{44.0}{36.0}{44.0}
\drawpath{36.0}{44.0}{36.0}{42.0}
\drawpath{36.0}{42.0}{34.0}{42.0}
\drawpath{34.0}{42.0}{34.0}{44.0}
\drawpath{34.0}{44.0}{32.0}{44.0}
\drawpath{32.0}{44.0}{32.0}{42.0}
\drawpath{32.0}{42.0}{30.0}{42.0}
\drawpath{30.0}{42.0}{30.0}{44.0}
\drawpath{30.0}{44.0}{28.0}{44.0}
\drawpath{28.0}{44.0}{28.0}{42.0}
\drawpath{28.0}{42.0}{26.0}{42.0}
\drawpath{26.0}{42.0}{26.0}{44.0}
\drawpath{26.0}{44.0}{24.0}{44.0}
\drawpath{24.0}{44.0}{24.0}{42.0}
\drawpath{24.0}{42.0}{22.0}{42.0}
\drawpath{22.0}{42.0}{22.0}{44.0}
\drawpath{22.0}{44.0}{20.0}{44.0}
\drawpath{20.0}{44.0}{20.0}{42.0}
\drawpath{20.0}{42.0}{18.0}{42.0}
\drawpath{18.0}{42.0}{18.0}{44.0}
\drawpath{18.0}{44.0}{18.0}{44.0}
\drawpath{18.0}{44.0}{16.0}{44.0}
\thinlines
\drawpath{14.0}{44.0}{14.0}{26.0}
\drawpath{14.0}{26.0}{28.0}{26.0}
\drawpath{28.0}{26.0}{28.0}{12.0}
\drawpath{26.0}{14.0}{44.0}{14.0}
\drawpath{44.0}{14.0}{44.0}{44.0}
\drawpath{44.0}{44.0}{14.0}{44.0}
\drawpath{26.0}{14.0}{26.0}{28.0}
\drawpath{26.0}{28.0}{12.0}{28.0}
\drawpath{16.0}{24.0}{16.0}{46.0}
\drawpath{12.0}{42.0}{46.0}{42.0}
\drawpath{42.0}{12.0}{42.0}{46.0}
\drawpath{18.0}{40.0}{18.0}{46.0}
\drawpath{20.0}{40.0}{20.0}{46.0}
\drawpath{22.0}{40.0}{22.0}{46.0}
\drawpath{24.0}{40.0}{24.0}{46.0}
\drawpath{26.0}{40.0}{26.0}{46.0}
\drawpath{28.0}{40.0}{28.0}{46.0}
\drawpath{30.0}{40.0}{30.0}{46.0}
\drawpath{32.0}{40.0}{32.0}{46.0}
\drawpath{34.0}{40.0}{34.0}{46.0}
\drawpath{36.0}{40.0}{36.0}{46.0}
\drawpath{38.0}{40.0}{38.0}{46.0}
\drawpath{40.0}{40.0}{40.0}{46.0}
\drawpath{40.0}{40.0}{46.0}{40.0}
\drawpath{40.0}{38.0}{46.0}{38.0}
\drawpath{40.0}{36.0}{46.0}{36.0}
\drawpath{40.0}{34.0}{46.0}{34.0}
\drawpath{40.0}{32.0}{46.0}{32.0}
\drawpath{40.0}{30.0}{46.0}{30.0}
\drawpath{40.0}{28.0}{46.0}{28.0}
\drawpath{40.0}{26.0}{46.0}{26.0}
\drawpath{40.0}{24.0}{46.0}{24.0}
\drawpath{40.0}{22.0}{46.0}{22.0}
\drawpath{40.0}{20.0}{46.0}{20.0}
\drawpath{40.0}{22.0}{46.0}{22.0}
\drawpath{40.0}{24.0}{46.0}{24.0}
\drawpath{12.0}{40.0}{18.0}{40.0}
\drawpath{12.0}{38.0}{18.0}{38.0}
\drawpath{12.0}{36.0}{18.0}{36.0}
\drawpath{12.0}{34.0}{18.0}{34.0}
\drawpath{12.0}{32.0}{18.0}{32.0}
\drawpath{12.0}{30.0}{18.0}{30.0}
\drawpath{18.0}{24.0}{18.0}{30.0}
\drawpath{20.0}{24.0}{20.0}{30.0}
\drawpath{22.0}{24.0}{22.0}{30.0}
\drawpath{24.0}{24.0}{24.0}{30.0}
\drawpath{24.0}{24.0}{30.0}{24.0}
\drawpath{24.0}{22.0}{30.0}{22.0}
\drawpath{24.0}{20.0}{30.0}{20.0}
\drawpath{24.0}{18.0}{30.0}{18.0}
\drawpath{24.0}{16.0}{30.0}{16.0}
\drawpath{30.0}{18.0}{30.0}{12.0}
\drawpath{32.0}{18.0}{32.0}{12.0}
\drawpath{34.0}{18.0}{34.0}{12.0}
\drawpath{36.0}{18.0}{36.0}{12.0}
\drawpath{38.0}{18.0}{38.0}{12.0}
\drawpath{40.0}{18.0}{40.0}{12.0}
\drawpath{40.0}{18.0}{46.0}{18.0}
\drawpath{40.0}{16.0}{46.0}{16.0}
\Thicklines
\drawpath{14.0}{20.0}{14.0}{18.0}
\drawpath{14.0}{18.0}{16.0}{18.0}
\drawpath{16.0}{18.0}{16.0}{16.0}
\drawpath{16.0}{16.0}{18.0}{16.0}
\drawpath{18.0}{16.0}{18.0}{14.0}
\drawpath{18.0}{14.0}{20.0}{14.0}
\drawpath{20.0}{14.0}{20.0}{12.0}
\drawpath{20.0}{12.0}{18.0}{12.0}
\drawpath{18.0}{12.0}{18.0}{10.0}
\drawpath{18.0}{10.0}{20.0}{10.0}
\drawpath{20.0}{10.0}{20.0}{8.0}
\drawpath{20.0}{8.0}{22.0}{8.0}
\drawpath{22.0}{8.0}{22.0}{6.0}
\drawpath{22.0}{6.0}{24.0}{6.0}
\drawpath{24.0}{6.0}{24.0}{8.0}
\drawpath{24.0}{8.0}{26.0}{8.0}
\drawpath{26.0}{8.0}{26.0}{6.0}
\drawpath{26.0}{6.0}{28.0}{6.0}
\drawpath{28.0}{6.0}{28.0}{8.0}
\drawpath{28.0}{8.0}{30.0}{8.0}
\drawpath{30.0}{8.0}{30.0}{6.0}
\drawpath{30.0}{6.0}{32.0}{6.0}
\drawpath{32.0}{6.0}{32.0}{8.0}
\drawpath{32.0}{8.0}{34.0}{8.0}
\drawpath{34.0}{8.0}{34.0}{6.0}
\drawpath{34.0}{6.0}{36.0}{6.0}
\drawpath{36.0}{6.0}{36.0}{8.0}
\drawpath{36.0}{8.0}{38.0}{8.0}
\drawpath{38.0}{8.0}{38.0}{6.0}
\drawpath{38.0}{6.0}{40.0}{6.0}
\drawpath{40.0}{6.0}{40.0}{8.0}
\drawpath{40.0}{8.0}{42.0}{8.0}
\drawpath{42.0}{8.0}{42.0}{6.0}
\drawpath{42.0}{6.0}{44.0}{6.0}
\drawpath{44.0}{6.0}{44.0}{8.0}
\drawpath{44.0}{8.0}{46.0}{8.0}
\drawpath{46.0}{8.0}{46.0}{6.0}
\drawpath{46.0}{6.0}{48.0}{6.0}
\drawpath{48.0}{6.0}{48.0}{8.0}
\drawpath{48.0}{8.0}{50.0}{8.0}
\drawpath{50.0}{8.0}{50.0}{10.0}
\drawpath{50.0}{10.0}{52.0}{10.0}
\drawpath{52.0}{10.0}{52.0}{12.0}
\drawpath{52.0}{12.0}{50.0}{12.0}
\drawpath{50.0}{12.0}{50.0}{14.0}
\drawpath{50.0}{14.0}{52.0}{14.0}
\drawpath{52.0}{14.0}{52.0}{16.0}
\drawpath{52.0}{16.0}{50.0}{16.0}
\drawpath{50.0}{16.0}{50.0}{18.0}
\drawpath{50.0}{18.0}{52.0}{18.0}
\drawpath{52.0}{18.0}{52.0}{20.0}
\drawpath{52.0}{20.0}{50.0}{20.0}
\drawpath{50.0}{20.0}{50.0}{22.0}
\drawpath{50.0}{22.0}{52.0}{22.0}
\drawpath{52.0}{22.0}{52.0}{24.0}
\drawpath{52.0}{24.0}{50.0}{24.0}
\drawpath{50.0}{24.0}{50.0}{26.0}
\drawpath{50.0}{26.0}{52.0}{26.0}
\drawpath{52.0}{26.0}{52.0}{28.0}
\drawpath{52.0}{28.0}{50.0}{28.0}
\drawpath{50.0}{28.0}{50.0}{30.0}
\drawpath{50.0}{30.0}{52.0}{30.0}
\drawpath{52.0}{30.0}{52.0}{32.0}
\drawpath{52.0}{32.0}{50.0}{32.0}
\drawpath{50.0}{32.0}{50.0}{34.0}
\drawpath{50.0}{34.0}{52.0}{34.0}
\drawpath{52.0}{34.0}{52.0}{36.0}
\drawpath{52.0}{36.0}{50.0}{36.0}
\drawpath{50.0}{36.0}{50.0}{38.0}
\drawpath{50.0}{38.0}{52.0}{38.0}
\drawpath{52.0}{38.0}{52.0}{40.0}
\drawpath{52.0}{40.0}{50.0}{40.0}
\drawpath{50.0}{40.0}{50.0}{42.0}
\drawpath{50.0}{42.0}{52.0}{42.0}
\drawpath{52.0}{42.0}{52.0}{42.0}
\drawpath{52.0}{42.0}{52.0}{44.0}
\drawpath{52.0}{44.0}{50.0}{44.0}
\drawpath{50.0}{44.0}{50.0}{46.0}
\drawpath{50.0}{46.0}{52.0}{46.0}
\drawpath{52.0}{46.0}{52.0}{48.0}
\drawpath{52.0}{48.0}{50.0}{48.0}
\drawpath{50.0}{48.0}{50.0}{50.0}
\drawpath{50.0}{50.0}{48.0}{50.0}
\drawpath{48.0}{50.0}{48.0}{52.0}
\drawpath{48.0}{52.0}{46.0}{52.0}
\drawpath{46.0}{52.0}{46.0}{50.0}
\drawpath{46.0}{50.0}{44.0}{50.0}
\drawpath{44.0}{50.0}{44.0}{52.0}
\drawpath{44.0}{52.0}{42.0}{52.0}
\drawpath{42.0}{52.0}{42.0}{50.0}
\drawpath{42.0}{50.0}{40.0}{50.0}
\drawpath{40.0}{50.0}{40.0}{52.0}
\drawpath{40.0}{52.0}{38.0}{52.0}
\drawpath{38.0}{52.0}{38.0}{50.0}
\drawpath{38.0}{50.0}{36.0}{50.0}
\drawpath{36.0}{50.0}{36.0}{52.0}
\drawpath{36.0}{52.0}{34.0}{52.0}
\drawpath{34.0}{52.0}{34.0}{50.0}
\drawpath{34.0}{50.0}{32.0}{50.0}
\drawpath{32.0}{50.0}{32.0}{52.0}
\drawpath{32.0}{52.0}{30.0}{52.0}
\drawpath{30.0}{52.0}{30.0}{50.0}
\drawpath{30.0}{50.0}{28.0}{50.0}
\drawpath{28.0}{50.0}{28.0}{52.0}
\drawpath{28.0}{52.0}{26.0}{52.0}
\drawpath{26.0}{52.0}{26.0}{50.0}
\drawpath{26.0}{50.0}{24.0}{50.0}
\drawpath{24.0}{50.0}{24.0}{52.0}
\drawpath{22.0}{50.0}{22.0}{52.0}
\drawpath{22.0}{50.0}{20.0}{50.0}
\drawpath{20.0}{50.0}{20.0}{52.0}
\drawpath{20.0}{52.0}{18.0}{52.0}
\drawpath{18.0}{52.0}{18.0}{50.0}
\drawpath{18.0}{50.0}{16.0}{50.0}
\drawpath{16.0}{50.0}{16.0}{52.0}
\drawpath{16.0}{52.0}{14.0}{52.0}
\drawpath{14.0}{52.0}{14.0}{50.0}
\drawpath{14.0}{50.0}{12.0}{50.0}
\drawpath{12.0}{50.0}{12.0}{52.0}
\drawpath{12.0}{52.0}{10.0}{52.0}
\drawpath{10.0}{52.0}{10.0}{50.0}
\drawpath{10.0}{50.0}{8.0}{50.0}
\drawpath{8.0}{50.0}{8.0}{48.0}
\drawpath{8.0}{48.0}{6.0}{48.0}
\drawpath{6.0}{48.0}{6.0}{46.0}
\drawpath{6.0}{46.0}{8.0}{46.0}
\drawpath{8.0}{46.0}{8.0}{44.0}
\drawpath{8.0}{44.0}{6.0}{44.0}
\drawpath{6.0}{44.0}{6.0}{42.0}
\drawpath{6.0}{42.0}{8.0}{42.0}
\drawpath{8.0}{42.0}{8.0}{40.0}
\drawpath{8.0}{40.0}{6.0}{40.0}
\drawpath{6.0}{40.0}{6.0}{38.0}
\drawpath{6.0}{38.0}{8.0}{38.0}
\drawpath{8.0}{38.0}{8.0}{36.0}
\drawpath{8.0}{36.0}{6.0}{36.0}
\drawpath{6.0}{36.0}{6.0}{34.0}
\drawpath{6.0}{34.0}{8.0}{34.0}
\drawpath{8.0}{34.0}{8.0}{32.0}
\drawpath{8.0}{32.0}{6.0}{32.0}
\drawpath{6.0}{32.0}{6.0}{30.0}
\drawpath{6.0}{30.0}{8.0}{30.0}
\drawpath{8.0}{30.0}{8.0}{28.0}
\drawpath{8.0}{28.0}{6.0}{28.0}
\drawpath{6.0}{28.0}{6.0}{26.0}
\drawpath{6.0}{26.0}{8.0}{26.0}
\drawpath{8.0}{26.0}{8.0}{24.0}
\drawpath{8.0}{24.0}{6.0}{24.0}
\drawpath{6.0}{24.0}{6.0}{22.0}
\drawpath{6.0}{22.0}{8.0}{22.0}
\drawpath{8.0}{22.0}{8.0}{20.0}
\drawpath{8.0}{20.0}{10.0}{20.0}
\drawpath{10.0}{20.0}{10.0}{18.0}
\drawpath{10.0}{18.0}{12.0}{18.0}
\drawpath{12.0}{18.0}{12.0}{20.0}
\drawpath{12.0}{20.0}{14.0}{20.0}
\drawpath{22.0}{52.0}{24.0}{52.0}
\thinlines
\drawpath{8.0}{52.0}{50.0}{52.0}
\drawpath{50.0}{52.0}{50.0}{6.0}
\drawpath{50.0}{6.0}{20.0}{6.0}
\drawpath{20.0}{6.0}{20.0}{16.0}
\drawpath{20.0}{16.0}{14.0}{16.0}
\drawpath{14.0}{16.0}{14.0}{22.0}
\drawpath{16.0}{20.0}{6.0}{20.0}
\drawpath{6.0}{20.0}{6.0}{50.0}
\drawpath{6.0}{50.0}{52.0}{50.0}
\drawpath{52.0}{50.0}{52.0}{8.0}
\drawpath{52.0}{8.0}{18.0}{8.0}
\drawpath{18.0}{8.0}{18.0}{18.0}
\drawpath{18.0}{18.0}{8.0}{18.0}
\drawpath{8.0}{18.0}{8.0}{52.0}
\drawpath{10.0}{54.0}{10.0}{48.0}
\drawpath{12.0}{54.0}{12.0}{48.0}
\drawpath{14.0}{54.0}{14.0}{48.0}
\drawpath{16.0}{54.0}{16.0}{48.0}
\drawpath{16.0}{54.0}{16.0}{48.0}
\drawpath{18.0}{54.0}{18.0}{48.0}
\drawpath{20.0}{54.0}{20.0}{48.0}
\drawpath{22.0}{54.0}{22.0}{48.0}
\drawpath{24.0}{54.0}{24.0}{48.0}
\drawpath{26.0}{54.0}{26.0}{48.0}
\drawpath{28.0}{54.0}{28.0}{48.0}
\drawpath{30.0}{54.0}{30.0}{48.0}
\drawpath{32.0}{54.0}{32.0}{48.0}
\drawpath{34.0}{54.0}{34.0}{48.0}
\drawpath{36.0}{54.0}{36.0}{48.0}
\drawpath{38.0}{54.0}{38.0}{48.0}
\drawpath{40.0}{54.0}{40.0}{48.0}
\drawpath{42.0}{54.0}{42.0}{48.0}
\drawpath{44.0}{54.0}{44.0}{48.0}
\drawpath{46.0}{54.0}{46.0}{48.0}
\drawpath{48.0}{54.0}{48.0}{48.0}
\drawpath{48.0}{48.0}{54.0}{48.0}
\drawpath{48.0}{46.0}{54.0}{46.0}
\drawpath{48.0}{44.0}{54.0}{44.0}
\drawpath{48.0}{42.0}{54.0}{42.0}
\drawpath{48.0}{40.0}{54.0}{40.0}
\drawpath{48.0}{38.0}{54.0}{38.0}
\drawpath{48.0}{36.0}{54.0}{36.0}
\drawpath{48.0}{34.0}{54.0}{34.0}
\drawpath{48.0}{32.0}{54.0}{32.0}
\drawpath{48.0}{30.0}{54.0}{30.0}
\drawpath{48.0}{28.0}{54.0}{28.0}
\drawpath{48.0}{26.0}{54.0}{26.0}
\drawpath{48.0}{24.0}{54.0}{24.0}
\drawpath{48.0}{22.0}{54.0}{22.0}
\drawpath{48.0}{20.0}{54.0}{20.0}
\drawpath{48.0}{18.0}{54.0}{18.0}
\drawpath{48.0}{16.0}{54.0}{16.0}
\drawpath{48.0}{14.0}{54.0}{14.0}
\drawpath{54.0}{16.0}{52.0}{16.0}
\drawpath{48.0}{12.0}{54.0}{12.0}
\drawpath{48.0}{10.0}{54.0}{10.0}
\drawpath{48.0}{10.0}{48.0}{4.0}
\drawpath{46.0}{4.0}{46.0}{10.0}
\drawpath{44.0}{10.0}{44.0}{4.0}
\drawpath{42.0}{4.0}{42.0}{10.0}
\drawpath{40.0}{10.0}{40.0}{4.0}
\drawpath{38.0}{4.0}{38.0}{10.0}
\drawpath{36.0}{10.0}{36.0}{4.0}
\drawpath{34.0}{4.0}{34.0}{10.0}
\drawpath{32.0}{10.0}{32.0}{4.0}
\drawpath{30.0}{4.0}{30.0}{10.0}
\drawpath{28.0}{10.0}{28.0}{4.0}
\drawpath{26.0}{4.0}{26.0}{10.0}
\drawpath{24.0}{10.0}{24.0}{4.0}
\drawpath{22.0}{4.0}{22.0}{10.0}
\drawpath{16.0}{10.0}{22.0}{10.0}
\drawpath{22.0}{12.0}{16.0}{12.0}
\drawpath{16.0}{14.0}{22.0}{14.0}
\drawpath{16.0}{14.0}{16.0}{20.0}
\drawpath{12.0}{16.0}{12.0}{22.0}
\drawpath{10.0}{16.0}{10.0}{22.0}
\drawpath{4.0}{22.0}{10.0}{22.0}
\drawpath{4.0}{24.0}{10.0}{24.0}
\drawpath{4.0}{26.0}{10.0}{26.0}
\drawpath{10.0}{28.0}{4.0}{28.0}
\drawpath{4.0}{30.0}{10.0}{30.0}
\drawpath{10.0}{32.0}{4.0}{32.0}
\drawpath{4.0}{34.0}{10.0}{34.0}
\drawpath{10.0}{36.0}{4.0}{36.0}
\drawpath{4.0}{38.0}{10.0}{38.0}
\drawpath{10.0}{40.0}{4.0}{40.0}
\drawpath{4.0}{42.0}{10.0}{42.0}
\drawpath{10.0}{44.0}{4.0}{44.0}
\drawpath{4.0}{46.0}{10.0}{46.0}
\drawpath{10.0}{48.0}{4.0}{48.0}
\drawcenteredtext{20.0}{20.0}{$\Gamma$}
\drawcenteredtext{30.0}{34.0}{$\ran(w_0)$}
\drawcenteredtext{8.0}{10.0}{$\ran(w_1)$}
\end{picture}
}
{Illustration to Lemma~\ref{l:br3}}{fig:curve2}

Let $W_j=\ran(w_j)$ for $j\in \{0,1\}$, then clearly $W_0\cap W_1=\emptyset$ and $W_0\cup W_1\subseteq U(\Gamma,\varepsilon/3)$.
Let us denote the edges of $w_0$ and $w_1$ by $E_1,\dots,E_{n_0}$ and $E_{n_0+1},\dots, E_{n_0+n_1}$, respectively.
Set $n=n_0+n_1$. Since $W_0\cup W_1\subseteq U(\Gamma,\varepsilon/3)\subseteq U(\ran(f),\varepsilon/3)$
and $\diam E_k\leq 3\alpha<\varepsilon/3$, there exist distinct points $z_1,\dots,z_n\in [0,1]$
such that $E_k\subseteq U(f(z_k),2\varepsilon/3)$ for all $k\in \{1,\dots,n\}$.
By the continuity of $f$ there are pairwise disjoint closed non-degenerate intervals
$I_1,\dots, I_{n}\subseteq [0,1]$ such that $E_k\subseteq U(f(x),2\varepsilon/3)$ for all $x\in I_k$ and $k\in \{1,\dots,n\}$.
Notice that each $x\in \bigcup_{k=1}^{n} I_k$ satisfies $f(x)\in U(W_0\cup
W_1, 2\varepsilon/3)\subseteq U(\Gamma,\varepsilon)\subseteq V$, thus
$\bigcup_{k=1}^{n} I_k\subseteq f^{-1}(V)$.
Now for all $k\in \{1,\dots,n\}$ define $f_0|_{I_k}$ such that $f_0(I_k)=E_k$
and let $f_0=f$ on $[0,1]\setminus f^{-1}(V)$.
It is easy to check that $|f(x) - f_0 (x)| < 2\varepsilon/3$ for every $x \in
\bigcup_{k=1}^{n} I_k \cup \left([0,1] \setminus f^{-1}(V)\right)$.
Then Tietze's Extension Theorem implies that we can extend $f_0$ continuously
to $[0,1]$ such that $f_0\in B(f,2\varepsilon/3)$. Property \eqref{l:i}
follows from the definition of $f_0$.

Now let $\delta=\min\left \{\alpha/2,\dist(W_0,W_1)/5 \right\}>0$. As $f_0\in B(f,2\varepsilon/3)$ and
$\delta\leq \alpha/2<\varepsilon/3$, we obtain $U(f_0,\delta)\subseteq U(f,\varepsilon)$, thus property \eqref{l:ii} holds.

Let us pick standard open sets $V_0$ and $V_1$ such that $U(W_j,\delta)\subseteq V_j \subseteq U(W_j,2\delta)$ for $j\in \{0,1\}$, then
%%%%%
$$V_j\subseteq U(W_j,2\delta)\subseteq U(\Gamma,2\delta+\varepsilon/3)\subseteq U(\Gamma ,\varepsilon)\subseteq V,$$
%%%%%%%
and
%%%
$$\dist(V_0,V_1)\geq \dist(W_0,W_1)-4\delta \geq \delta>0,$$
hence property \eqref{l:iii} holds.

Finally, let us fix $g\in U(f_0,\delta)$, we need to prove for property \eqref{l:iv} that $V_0\cap \ran(g)$ separates $p$ and $q$,
and of course the same argument works for $V_1\cap \ran(g)$. Let us define the compact set $K=\bigcup_{k=1}^{n_0} g(I_k)$.
Then $\bigcup_{k=1}^{n_0} f_0(I_k)=W_0$ yields $K\subseteq U(W_0,\delta)\subseteq V_0$, thus $K\subseteq V_0\cap \ran(g)$.
Therefore it is enough to prove that $K$ separates $p$ and $q$. Consider $0=x_0<x_1<\dots<x_{n_0}=1$ such that
$w_0\left([x_{k-1},x_k]\right)=E_k$ for every $k\in \{1,\dots, n_0\}$.
Then $g\in U(f_0,\delta)\subseteq U(f_0,\alpha/2)$ and Fact~\ref{f:wire} imply that $g(I_k)\cap g(I_{k+1})\neq \emptyset$ for all
$k\in \{1,\dots, n_{0}-1\}$ and $g(I_{n_0})\cap g(I_1)\neq \emptyset$. Thus we can choose $u_k,v_k\in I_k$ for all $k\in \{1,\dots,n_0\}$ such that $g(v_k)=g(u_{k+1})$ and let $h_k \colon [x_{k-1},x_{k}] \to [u_k,v_k]$ be a homeomorphism with $h_k(x_{k-1})=u_{k}$
and $h_k(x_k)=v_k$, where we use the notations $u_{n_0+1}=u_1$ and $[u,v]=\left[\min\{u,v\},\max\{u,v\}\right]$.
Let us define $\varphi \colon [0,1]\to K$ such that $\varphi|_{[x_{k-1},x_k]}=g\circ h_k$ for all $k\in \{1,\dots,n_0\}$.
Then $\varphi$ is a closed curve with $\ran(\varphi)\subseteq K$,
so it is enough to prove that $\ran(\varphi)$ separates $p$ and $q$.
The function $WN(\varphi,\cdot)$ is constant on the components of
$\mathbb{R}^2\setminus \ran(\varphi)$ by \cite[Proposition 3.16.]{Fu},
thus it is sufficient to prove that $WN(\varphi,p)\neq WN(\varphi,q)$.
From $g\in U(f_0,\delta)$, $f_0(I_k)=E_k$ and $\diam E_k<\varepsilon/3$ it follows that $\varphi\in U(w_0,\delta+\varepsilon/3)\subseteq U(w_0,2\varepsilon/3)$.
Thus $w_0\in U(\gamma,\varepsilon/3)$ yields $\varphi \in U(\gamma,\varepsilon)$.
Hence $\varphi$ and $\gamma$ are homotopic in $U(\Gamma,\varepsilon)\subseteq \mathbb{R}^{2}\setminus \{p,q\}$,
so $WN(\varphi,p)=WN(\gamma,p)=0$ and $WN(\varphi,q)=WN(\gamma,q)=\pm 1$, see \cite[Corollary~3.8.]{Fu}.
The proof is complete.
\end{proof}

\begin{lemma} \label{l:br4} Assume $p,q\in \mathbb{R}^2$ and $\varepsilon>0$. Let $f\in \mathcal{C}^2$ be a standard curve and
let $V_1,\dots,V_m \subseteq \mathbb{R}^2$ be pairwise disjoint standard open sets such that $V_k\cap \ran(f)$ separates $p$ and $q$ for all $k\in \{1,\dots,m\}$.
Then there exist a non-empty open set $\mathcal{V}\subseteq U(f,\varepsilon)$ and standard open sets $V_{kj}\subseteq \mathbb{R}^2$
such that for all $k\in \{1,\dots,m\}$ and $j\in \{0,1\}$
\begin{enumerate}[(i)]
\item \label{e:1} $V_{kj}\subseteq V_k$ and $\dist(V_{k0},V_{k1})>0$,
\item \label{e:2} $V_{kj}\cap \ran(g)$ separates $p$ and $q$ if $g\in \mathcal{V}$.
\end{enumerate}
\end{lemma}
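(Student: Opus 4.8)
The plan is to deduce the statement from a single set $V$ (Lemma \ref{l:br3}) by iterating that lemma once for each of the $m$ sets $V_1,\dots,V_m$, producing a decreasing chain of balls in $\iC^2$ whose innermost ball will serve as $\iV$. Set $f_0=f$ and $\delta_0=\varepsilon$. Assume inductively that for some $k\in\{1,\dots,m\}$ we have a standard curve $f_{k-1}\in\iC^2$ and a radius $\delta_{k-1}>0$ with $U(f_{k-1},\delta_{k-1})\subseteq U(f,\varepsilon)$, together with standard open sets $V_{ij}$ $(i<k,\ j\in\{0,1\})$ satisfying \eqref{e:1} and such that $V_{ij}\cap\ran(g)$ separates $p$ and $q$ for every $g\in U(f_{k-1},\delta_{k-1})$. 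For the $k$-th step I would first verify that $V_k\cap\ran(f_{k-1})$ still separates $p$ and $q$ (the crux, discussed below), and then apply Lemma \ref{l:br3} to the standard curve $f_{k-1}$, the standard open set $V_k$, and a parameter $\varepsilon_k\le\delta_{k-1}$. This produces a standard curve $f_k$, standard open sets $V_{k0},V_{k1}$, and $\delta_k>0$ with $V_{k0},V_{k1}\subseteq V_k$, $\dist(V_{k0},V_{k1})>0$ (property \eqref{l:iii}), with $V_{kj}\cap\ran(g)$ separating $p$ and $q$ for all $g\in U(f_k,\delta_k)$ (property \eqref{l:iv}), with $U(f_k,\delta_k)\subseteq U(f_{k-1},\varepsilon_k)\subseteq U(f_{k-1},\delta_{k-1})$ (property \eqref{l:ii}), and with $f_k=f_{k-1}$ on $[0,1]\setminus f_{k-1}^{-1}(V_k)$ (property \eqref{l:i}). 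Since $U(f_k,\delta_k)\subseteq U(f_{k-1},\delta_{k-1})\subseteq U(f,\varepsilon)$, the separation statements already secured for the $V_{ij}$ with $i<k$ remain valid on the smaller ball $U(f_k,\delta_k)$, so the inductive hypothesis is restored with $k$ in place of $k-1$.

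The heart of the matter, and the step I expect to be the main obstacle, is checking that $V_k\cap\ran(f_{k-1})$ separates $p$ and $q$, i.e.\ that the separation originally present in $V_k$ survives the $k-1$ earlier modifications. I would exploit that each modification is localized and that separation is monotone under enlarging the obstacle. For each $i<k$, property \eqref{l:i} gives $f_i=f_{i-1}$ off $f_{i-1}^{-1}(V_i)$; since the $V_i$ are pairwise disjoint, any point of $\ran(f_{i-1})$ lying in $V_k$ has its preimage outside $f_{i-1}^{-1}(V_i)$ and is therefore also attained by $f_i$, so $\ran(f_{i-1})\cap V_k\subseteq\ran(f_i)\cap V_k$. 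Iterating yields $\ran(f)\cap V_k\subseteq\ran(f_{k-1})\cap V_k$. I would then invoke the elementary fact that if $C$ separates $p$ and $q$ and $C\subseteq C'$ with $p,q\notin C'$, then $C'$ separates $p$ and $q$ as well, since any path avoiding $C'$ avoids $C$. Applied to $C=\ran(f)\cap V_k$ and $C'=\ran(f_{k-1})\cap V_k$ this gives the required separation, \emph{provided} $p,q\notin\ran(f_{k-1})\cap V_k$. The latter is the only genuinely delicate point, and I would settle it by taking the $\varepsilon_i$ small: if $p$ (or $q$) happens to lie in some $V_k$, then it lies at positive distance from every disjoint $V_i$, so keeping each earlier modification inside a sufficiently small neighborhood of the $V_i$ in which it is performed prevents the curve from ever reaching $p$ or $q$; and if $p\notin V_k$, then $p\notin\ran(f_{k-1})\cap V_k$ automatically. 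These finitely many positive distances can be fixed in advance, so each $\varepsilon_i\le\delta_{i-1}$ can be shrunk accordingly without disturbing the nesting.

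Finally I would set $\iV=U(f_m,\delta_m)$, a non-empty open subset of $\iC^2$ contained in $U(f,\varepsilon)$ via the chain $U(f_m,\delta_m)\subseteq\cdots\subseteq U(f_0,\delta_0)=U(f,\varepsilon)$. For each $k\in\{1,\dots,m\}$ and $j\in\{0,1\}$ the set $V_{kj}$ satisfies \eqref{e:1} directly from property \eqref{l:iii} of the $k$-th application, and \eqref{e:2} holds because $\iV\subseteq U(f_k,\delta_k)$ while separation of $p$ and $q$ by $V_{kj}\cap\ran(g)$ was established for all $g$ in the larger ball $U(f_k,\delta_k)$. This completes the proof.
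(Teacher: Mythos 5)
Your proposal is correct and follows essentially the same route as the paper's proof: an induction over $k\in\{1,\dots,m\}$ applying Lemma \ref{l:br3} once per $V_k$ with a shrinking chain of balls, using the pairwise disjointness of the $V_k$ together with property \eqref{l:i} of Lemma \ref{l:br3} to see that $\ran(f)\cap V_k\subseteq\ran(f_{k-1})\cap V_k$ (the paper phrases this as the invariant $g_k=f$ on $[0,1]\setminus f^{-1}(V_1\cup\dots\cup V_k)$), and finally taking $\iV$ to be the innermost ball. Your extra discussion of why $p,q\notin\ran(f_{k-1})\cap V_k$ is a point the paper leaves implicit, but it does not change the argument.
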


\begin{proof} Let $g_0=f$ and $\delta_0=\varepsilon$.
Now for $k\in \{0,\dots,m-1\}$ we construct by induction a standard curve
$g_{k}\in \mathcal{C}^2$, a $\delta_{k}>0$, and for all $i\in \{1,\dots,k\}$ and $j\in \{0,1\}$ standard open sets $V_{ij}$ such that for all $i\in \{1,\dots,k\}$ and $j\in \{0,1\}$
\begin{enumerate}[(1)]
\item \label{e:a}  $V_{ij}\subseteq V_i$ and $\dist(V_{i0},V_{i1})>0$,
\item  \label{e:b} $V_{ij}\cap \ran(g)$ separates $p$ and $q$ if $g\in U(g_k,\delta_k)$,
\item  \label{e:c} $U(g_k,\delta_k)\subseteq U(f,\varepsilon)$,
\item  \label{e:d} $g_k=f$ on $[0,1]\setminus f^{-1}(V_1\cup\dots \cup V_k)$.
\end{enumerate}

The case $k=0$ is done, since \eqref{e:c} and \eqref{e:d} obviously hold,
while  \eqref{e:a} and \eqref{e:b} hold vacuously, since there are no sets
$V_{ij}$.

Let us now take up the inductive step. Since $g_k=f$ on
$$f^{-1}(V_{k+1})\subseteq [0,1]\setminus f^{-1}(V_1\cup\dots \cup V_k),$$
we obtain that $V_{k+1}\cap \ran(g_k)$ separates $p$ and $q$.
Lemma~\ref{l:br3} applied to the standard curve $g_k$, standard open set $V_{k+1}$ and $\delta_k>0$ yields that we can satisfy properties \eqref{e:a}-\eqref{e:d} for $k+1$.

Finally, the non-empty open set $\mathcal{V}=U(g_m,\delta_m)\subseteq U(f,\varepsilon)$ and the constructed standard open sets
$V_{ij}$ satisfy properties \eqref{e:1}-\eqref{e:2}.
\end{proof}

Now we are ready to prove Theorem \ref{t:br2}.

\begin{proof}[Proof of Theorem \ref{t:br2}] Let $\mathcal{F}$ be the set of functions $f\in \mathcal{C}^2$ such that if $\ran(f)$
separates a pair of points
$p$ and $q$ then for every continuum $C\subseteq \mathbb{R}^2$ with $p,q\in C$ the intersection $C \cap \ran(f)$ has cardinality continuum.
We need to show that $\mathcal{F}$ is co-meager in $\mathcal{C}^2$.
For fixed $p,q\in \mathbb{R}^2$ let $\mathcal{F}_{p,q}$ be the set of functions $f\in \mathcal{C}^2$ such that if $\ran(f)$
separates $p$ and
$q$ then for every continuum $C\subseteq \mathbb{R}^2$ with $p,q\in C$ the intersection $C \cap \ran(f)$ has cardinality continuum. As
%%%%
$$\mathcal{F}=\bigcap _{p,q\in \mathbb{Q}\times \mathbb{Q}} \mathcal{F}_{p,q},$$
%%%%%
it is enough to show that the sets $\mathcal{F}_{p,q}$ are co-meager in $\mathcal{C}^2$. Let $p,q\in \mathbb{R}^2$, $p\neq q$ be arbitrarily fixed.
In order to prove that $\mathcal{F}_{p,q}$ is co-meager,
we play the Banach-Mazur game in the metric space $\mathcal{C}^2$:
First Player I chooses a non-empty open set $\mathcal{U}_1\subseteq \mathcal{C}^2$, then Player II
chooses a non-empty open set $\mathcal{V}_1\subseteq \mathcal{U}_1$, Player I continues with a non-empty open set $\mathcal{U}_2\subseteq \mathcal{V}_1$, and so on.
By definition Player II wins this game if $\bigcap_{n=1}^{\infty} \mathcal{V}_n\subseteq \mathcal{F}_{p,q}$.
It is well-known that Player II has a winning strategy iff $\mathcal{F}_{p,q}$ is co-meager in $\mathcal{C}^2$, see \cite[Thm.~1]{Ox}
or \cite[(8.33)]{Ke}. Thus we need to prove that Player II has a winning strategy.

Now we describe the strategy of Player II. If there is an $f \in \mathcal{U}_1$ such that $p$ and $q$ are in the same component of
$\mathbb{R}^2\setminus \ran(f)$ then there is an $\varepsilon>0$ such that all functions in $U(f,\varepsilon)\subseteq \mathcal{U}_1$ have this property.
Thus $U(f,\varepsilon)\subseteq \mathcal{F}_{p,q}$, and the strategy of Player II is the following. Let $\mathcal{V}_1=U(f,\varepsilon)$, and the other moves of
Player II are arbitrary. Then clearly $\bigcap_{n=1}^{\infty} \mathcal{V}_n\subseteq \mathcal{F}_{p,q}$, so Player II wins the game.

If $\ran(f)$ separates $p$ and $q$ for all $f \in \mathcal{U}_1$ then the strategy of Player II is the following.
The functions $f$ with the property $p,q\notin \ran(f)$
form a dense open subset in $\mathcal{U}_1$, so by Fact~\ref{f:standard}
there is a standard curve $f_1\in \mathcal{U}_1$ such that $\ran(f_1)$ separates $p$ and $q$. Let $V\subseteq \mathbb{R}^2$ be a standard open set with $\ran(f_1)\subseteq V$.
Let $\varepsilon_1>0$ be so small that $U(f_1,\varepsilon_1)\subseteq \mathcal{U}_1$.
Applying Lemma~\ref{l:br4} for $\varepsilon_1$, $f_1$ and $V$ implies that there is a non-empty open
set $\mathcal{V}_1 \subseteq U(f_1,\varepsilon_1)$ and standard open sets $V_0,V_1\subseteq V$ such that $\dist(V_0,V_1)>0$ and
$V_j\cap \ran(g)$ separates $p$ and $q$ for every $j\in \{0,1\}$ and $g\in \mathcal{V}_1$.
Next we suppose that $n\in \mathbb{N}^+$
and after the $n$th move of Player II
 the non-empty open set $\mathcal{V}_n$ and standard open sets $V_{j_1 \dots j_i}\subseteq \mathbb{R}^2$
($i\in \{1,\dots,n\},~j_1,\dots, j_i\in \{0,1\}$) are already defined such that for all $j_1, \dots, j_n\in \{0,1\}$
\begin{enumerate}[(i)]
\item \label{*1} $V_{j_1\dots j_n} \subseteq V_{j_1\dots j_{n-1}}$ and $\dist(V_{j_1 \dots j_{n-1}0},V_{j_1 \dots j_{n-1}1})>0$,
\item  \label{*2} $V_{j_1 \dots j_n}\cap \ran(g)$ separates $p$ and $q$ for all $g\in \mathcal{V}_n$.

\end{enumerate}
%%%%%%
Suppose that Player I continues with $\mathcal{U}_{n+1}\subseteq \mathcal{V}_n$. By Fact~\ref{f:standard} Player II can choose a standard curve
$f_{n+1} \in \mathcal{U}_{n+1}$. Then $f_{n+1}\in \mathcal{V}_n$ implies that \eqref{*2} holds for $f_{n+1}$.
Let $\varepsilon_{n+1}>0$ be so small that $U(f_{n+1},\varepsilon_{n+1})\subseteq \mathcal{U}_{n+1}$.
Applying Lemma~\ref{l:br4} for $\varepsilon_{n+1}$, $f_{n+1}$ and the open sets
$V_{j_1\dots j_n}$ we obtain a non-empty open set $\mathcal{V}_{n+1}\subseteq U(f_{n+1},\varepsilon_{n+1})\subseteq \mathcal{U}_{n+1}$ and standard open sets
$V_{j_1\dots j_{n+1}}\subseteq \mathbb{R}^2$ witnessing that properties \eqref{*1}-\eqref{*2} hold for $n+1$.

Finally, we need to prove that Player II wins the game with the above strategy if $\ran(f)$ separates $p$ and $q$ for all $f \in \mathcal{U}_1$.
Let $f\in \bigcap_{n=1}^{\infty} \mathcal{V}_{n}$,  we need to prove that $f\in \mathcal{F}_{p,q}$.
Let $C\subseteq \mathbb{R}^2$ be a continuum with $p,q\in C$, we need to show that $C\cap \ran(f)$ has cardinality continuum.
Property \eqref{*2} implies that we can choose $c_{j_1\dots j_n}\in C\cap (V_{j_1 \dots j_n}\cap \ran(f))$
for every $n\in \mathbb{N}^{+}$ and $j_1, \dots, j_n\in \{0,1\}$. If $\underline{j}=(j_1,j_2,\dots)\in \{0,1\}^{\mathbb{N}}$
then let $c_{\underline{j}}$ be an arbitrary limit point of the set $\left\{c_{j_1\dots j_n}: n\in \mathbb{N}^{+}\right\}$.
Then the compactness of $C$ and $\ran(f)$ implies that $c_{\underline{j}}\in C\cap \ran(f)$ for every $\underline{j}\in \{0,1\}^{\mathbb{N}}$.
Thus it is enough to prove that if $\underline{j}, \underline{k}\in \{0,1\}^{\mathbb{N}}$, $\underline{j}\neq \underline{k}$
then $c_{\underline{j}}\neq c_{\underline{k}}$. Assume that $n\in \mathbb{N}^{+}$ is the minimal number such that the $n$th
coordinate of $\underline{j}$ and $\underline{k}$ differ. As the sets $V_{j_1\dots j_i}$ form a nested sequence by property \eqref{*1},
we obtain $c_{\underline{j}}\in \cl V_{j_1\dots j_n}$
and $c_{\underline{k}}\in \cl V_{k_1\dots k_n}$, and property \eqref{*1}
also yields $\cl V_{j_1\dots j_n}\cap \cl V_{k_1\dots k_n}=\emptyset$.
Therefore $c_{\underline{j}}\neq c_{\underline{k}}$, and the proof is complete.
\end{proof}

\section{Application I: Mandelbrot's fractal percolation process}
\label{s:Mandelbrot}

In this section we take up one of the most important random fractals, the
limit set $M\subseteq \mathbb{R}^2$ of the fractal percolation process defined by Mandelbrot in
\cite{Ma2}.

His original motivation was that this model captures certain
features of turbulence, but then this random set turned out to be very interesting in its own right.
For example, $M$ serves as a very powerful tool for
calculating Hausdorff dimension. It is well known that for each $\gamma\in (0,2)$ there is a fractal percolation
$M(\gamma)$ such that $\dim_H M(\gamma)=2-\gamma$ almost surely,
provided that $M(\gamma) \neq \emptyset$. Hawkes \cite{Ha} has shown that
for a fixed closed set $C$ in the unit square $M(\gamma) \cap C = \emptyset$ almost surely if $\dim_H C<\gamma$,
and $M(\gamma) \cap C\neq \emptyset$ with positive probability if $\dim_H C>\gamma$.
Therefore $\{M(\gamma): \gamma\in (0,2)\}$ serves as a family of \emph{random test sets}:
If we know which percolation limit sets $M(\gamma)$ intersect $C$ with positive probability,
we can determine $\dim_H C$. The idea of using random test sets goes back to Taylor \cite{T}, while the use of percolation limit sets as test
sets is due to Khoshnevisan, Peres, and Xiao \cite{KPX}. In the context of trees, similar ideas were developed by Lyons \cite{L}.

Moreover, it can be shown that the range of a $d$-dimensional Brownian motion
is \emph{intersection-equivalent} to a fractal percolation, that is, they intersect all
closed subsets of the $d$-dimensional unit cube with comparable probabilities. This can be used to
deduce numerous dimension related results about Brownian motion. For more on these results
see Peres \cite{P1,P2}, see also \cite{MP}.

Let us now formally describe the fractal percolation process. Let
$p\in (0,1)$ and $n\geq 2$, $n\in \mathbb{N}$ be fixed. Set $M_0 =
M_{0}^{(p,n)} =[0,1]^{2}$. We divide the unit square into $n^{2}$
equal closed sub-squares of side-length $1/n$ in the natural way. We
keep each sub-square independently with probability $p$ (and erase it
with probability $1-p$), and denote by $M_1 = M_1^{(p,n)}$ the union
of the kept sub-squares. Then each square in $M_1$ is divided into
$n^{2}$ squares of side-length ${1}/{n^2}$, and we keep each of them
independently (and also independently of the earlier choices) with
probability $p$, etc. After $k$ steps let $M_k = M_k^{(p,n)}$ be the
union of the kept $k^{th}$ level squares with side-length
${1}/{n^k}$. Let
\begin{equation}
\label{M}
M = M^{(p,n)} = \bigcap_{k=1}^{\infty} M_{k}.
\end{equation}
The process we have just described is
called \emph{Mandelbrot's fractal percolation process}, and $M$ is called its
\emph{limit set}.

Percolation fractals are not
only interesting from the point of view of turbulence and fractal geometry, but they are also
closely related to the (usual, graph-theoretic) percolation theory.
In case of
the fractal percolation the role of the clusters is played by the connected
components. Our starting point will be the following celebrated theorem.

\begin{theorem}[Chayes-Chayes-Durrett, \cite{CH}]
\label{t:CCD}
There exists a critical probability $p_c = p_c^{(n)} \in (0,1)$ such that if
$p < p_c$
then $M$ is totally disconnected almost surely, and if $p > p_c$ then
$M$ contains a nontrivial connected component with positive probability.
\end{theorem}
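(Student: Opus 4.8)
The plan is to obtain Theorem \ref{t:CCD} for free as the special case $d=0$ of Theorem \ref{t:lower}, rather than to reprove it by the original percolation-theoretic route. Concretely, I would set $p_c:=p_c^{(0)}=p_c^{(0,n)}$, which lies in $(0,1)$ by Theorem \ref{t:lower}, and then translate the two conclusions of that theorem, phrased in terms of $\dim_{tH}M$, into the connectivity statements of Theorem \ref{t:CCD}. The whole argument thus reduces to the dictionary between ``$\dim_{tH}=0$'' and ``totally disconnected'', which for the compact set $M$ is supplied by the earlier facts.

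For the subcritical phase, suppose $p<p_c$. Theorem \ref{t:lower} with $d=0$ gives $\dim_{tH}M\le 0$ almost surely. By Fact \ref{jump} the topological Hausdorff dimension of a non-empty space is either $0$ or at least $1$, so $\dim_{tH}M\le 0$ forces $\dim_{tH}M\in\{-1,0\}$, i.e.\ either $M=\emptyset$ or $\dim_{tH}M=0$. In the latter case Fact \ref{<=>} yields $\dim_t M=0$, and since $M$ is compact the equivalence recorded in the Preliminaries (\cite[1.4.5.]{Eng}) shows that $M$ is totally disconnected; the empty set is vacuously totally disconnected. Hence $M$ is totally disconnected almost surely, which is exactly the first assertion of Theorem \ref{t:CCD}.

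For the supercritical phase, suppose $p>p_c$. Theorem \ref{t:lower} gives $\dim_{tH}M>0$ almost surely on the event $\{M\neq\emptyset\}$. On this event Fact \ref{<=>} turns $\dim_{tH}M>0$ into $\dim_t M>0$, so $M$ is not totally disconnected (again by the compact case of the equivalence), and hence $M$ possesses a non-trivial connected component, in accordance with Corollary \ref{conn}. It therefore remains only to check that $\{M\neq\emptyset\}$ has positive probability, and here I would invoke the elementary branching-process description of the construction: the number of retained children of a square is $\mathrm{Binomial}(n^2,p)$ with mean $n^2p$, so $M=\emptyset$ almost surely precisely when $n^2p\le 1$. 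Consequently $p\le 1/n^2$ already forces $M=\emptyset$, hence total disconnectedness, almost surely, which shows $p_c\ge 1/n^2$; thus $p>p_c$ implies $p>1/n^2$ and $\Pr(M\neq\emptyset)>0$. Combining this with the previous sentence produces a non-trivial connected component with positive probability, the second assertion of Theorem \ref{t:CCD}.

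The only genuinely non-formal ingredient in this plan is Theorem \ref{t:lower} itself, which is proved separately; once it is available the present deduction is purely a matter of unwinding definitions, so there is no real obstacle here. By contrast, if one insisted on proving Theorem \ref{t:CCD} directly, the hard part would be the supercritical direction: one would have to run a renormalisation/crossing-probability bootstrap for the self-similar family $M_k$, using the \textup{FKG} inequality together with an \textup{RSW}-type argument to glue crossings of neighbouring sub-squares into a macroscopic connected crossing of $M$, the delicate point being that a kept sub-square need not meet a shared edge in the right place. Routing everything through Theorem \ref{t:lower} is precisely what lets us bypass this difficulty.
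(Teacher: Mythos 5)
Your derivation is correct and is essentially the paper's own route: the paper states Theorem \ref{t:CCD} as a quoted result of \cite{CH} and then observes, immediately after Theorem \ref{t:lower}, that it is recovered as the case $d=0$ via exactly the dictionary you use (Fact \ref{<=>}, Fact \ref{jump}, the equivalence of total disconnectedness and $\dim_t=0$ for compact spaces, and the branching-process criterion guaranteeing $P(M\neq\emptyset)>0$ for $p>1/n^2$). The only caveat worth recording is that this does not yield an independent proof of the Chayes--Chayes--Durrett theorem, since the proof of Theorem \ref{t:lower} itself imports Lemma \ref{lfull} and the subcritical estimate of Remark \ref{r:egy} from \cite{CH} --- but it is not circular, as Theorem \ref{t:CCD} itself is nowhere used in that proof.
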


They actually prove more, the most powerful version states that in the
supercritical case (i.e. when $p > p_c$) there is actually a unique unbounded
component if the process is extended to the whole plane, but we will only
concentrate on the most surprising fact that the critical probability is
strictly between 0 and 1.

The main goal of the present section will be to prove the following generalization
of the above theorem.

\begin{theorem}
\label{t:lower}
For every $d \in [0,2)$ there exists a critical probability $p_c^{(d)} = p_c^{(d, n)}  \in
(0,1)$ such that if $p < p_c^{(d)}$ then $\dim_{tH} M \le d$ almost surely,
and if $p > p_c^{(d)}$ then $\dim_{tH} M > d$ almost surely (provided $M \neq \emptyset$).
\end{theorem}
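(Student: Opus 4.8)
The plan is to split the theorem into two essentially independent ingredients: a soft, self-similarity based $0$--$1$ law that simultaneously produces the critical probability and the almost sure dichotomy, and a hard geometric construction of ``thick'' families of crossings that forces $p_c^{(d)}$ to lie strictly inside $(0,1)$. Throughout I write $M = M^{(p,n)}$ and $\theta_d(p) = \mathbb{P}(\dim_{tH} M > d)$. For every lower estimate I will use only the following consequence of Section~\ref{s:properties}: if $M$ contains a subset $L$ that is bi-Lipschitz equivalent to $C \times [0,1]$ for some compact $C \subseteq \mathbb{R}$ with $\dim_H C > d-1$, then by monotonicity (Fact~\ref{mon}), bi-Lipschitz invariance (Corollary~\ref{c:bi-Lip}) and Theorem~\ref{prod},
\[
\dim_{tH} M \ge \dim_{tH} L = \dim_{tH}(C \times [0,1]) = \dim_H C + 1 > d .
\]
For the left endpoint $p_c^{(d)} > 0$ I use instead the opposite inequality $\dim_{tH} M \le \dim_H M$ (Theorem~\ref{<}) together with the well-known fact that, conditioned on $M \neq \emptyset$, one has $\dim_H M = 2 + \frac{\log p}{\log n}$ almost surely. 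Hence $p \le n^{d-2}$ already forces $\dim_{tH} M \le d$ a.s., so $\theta_d(p) = 0$ and $p_c^{(d)} \ge n^{d-2} > 0$.

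The heart of the soft part is a self-similar recursion. Splitting $M$ according to the retained first level squares $Q_1,\dots,Q_N$ gives $M = \bigcup_i (M \cap Q_i)$, a finite union of compact sets, where each $M \cap Q_i$ is a rescaled, conditionally independent copy of $M$. Countable stability of the topological Hausdorff dimension for closed sets yields $\dim_{tH} M = \max_i \dim_{tH}(M \cap Q_i)$, and bi-Lipschitz invariance gives $\dim_{tH}(M \cap Q_i) \stackrel{d}{=} \dim_{tH} M$. Writing $q = \mathbb{P}(\dim_{tH} M \le d)$ and conditioning on the first level configuration, independence produces the recursion
\[
q = \bigl(pq + (1-p)\bigr)^{n^2},
\]
which is exactly the fixed point equation of the extinction probability $q_0 = \mathbb{P}(M = \emptyset)$. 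Since $\{M = \emptyset\} \subseteq \{\dim_{tH} M \le d\}$ we have $q \ge q_0$, and because $x \mapsto (px + 1-p)^{n^2}$ is convex it has at most two fixed points in $[0,1]$, namely $1$ and $q_0$; thus $q \in \{q_0,1\}$. Consequently $\theta_d(p) \in \{0,\ \mathbb{P}(M \neq \emptyset)\}$, i.e.\ either $\dim_{tH} M \le d$ almost surely, or $\dim_{tH} M > d$ almost surely on $\{M \neq \emptyset\}$. The standard monotone coupling (retain $Q$ iff $U_Q \le p$ for i.i.d.\ uniforms $U_Q$) together with Fact~\ref{mon} makes $\theta_d$ nondecreasing, so $p_c^{(d)} = \inf\{p : \theta_d(p) > 0\}$ is a genuine threshold; below it $\theta_d = 0$ and above it $\theta_d > 0$, which combined with the dichotomy is precisely the asserted statement (note $p_c^{(d)} \ge n^{d-2} \ge n^{-2}$, so $\mathbb{P}(M \neq \emptyset) > 0$ throughout the supercritical range).

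It remains to prove $p_c^{(d)} < 1$, that is, to exhibit one $p < 1$ with $\theta_d(p) > 0$; by the first paragraph it suffices to build, with positive probability, a bi-Lipschitz copy of $C \times [0,1]$ in $M$ with $\dim_H C > d-1$. This is the crux, and the main obstacle. Fix an integer $m$ with $n^{d-1} < m \le n$, which is possible exactly because $d < 2$ gives $n^{d-1} < n$; the goal is a Cantor set $C$ with $\dim_H C \ge \frac{\log m}{\log n} > d-1$ carrying a family of pairwise horizontally separated top-to-bottom crossing curves of $M$. I would construct $C$ and the curves simultaneously along the $n$-adic tree of columns: to each active width-$n^{-k}$ tube I attach a monotone top-to-bottom crossing of $M$ inside a slight enlargement of the tube, and I declare $m$ of its $n$ width-$n^{-(k+1)}$ sub-tubes active, chosen so that the crossing can be prolonged into a crossing of each; comparing the resulting branching structure with a supercritical branching process then yields $\dim_H C \ge \frac{\log m}{\log n}$ on a positive probability event. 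The genuine difficulty is that a tube only one square wide offers no sideways room, so the crossings must be allowed to wander through the finer grid inside each tube while remaining connected across \emph{all} scales and staying horizontally separated for the bi-Lipschitz bound. The saving grace, and the reason to take $p$ close to $1$, is that every relevant crossing-and-branching event then has probability tending to $1$, so the whole scheme survives with positive probability; making this simultaneous control of connectivity, branching and separation precise is the technical heart of the argument.
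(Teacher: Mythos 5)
Your ``soft'' half is essentially the paper's own argument: the fixed-point equation $q=(pq+1-p)^{n^2}$ for $q=\mathbb{P}(\dim_{tH}M\le d)$, the convexity argument showing $q\in\{\mathbb{P}(M=\emptyset),1\}$, the monotone coupling, and the lower bound $p_c^{(d)}\ge n^{d-2}>0$ via $\dim_{tH}M\le\dim_H M$ are all correct and match Lemma \ref{thomog} and the surrounding remarks (one small repair: you should condition on the squares $Q_i$ actually retained and work with $M^{Q_i}=\bigcap_k M_k^{Q_i}$ rather than $M\cap Q_i$, since the latter may contain boundary points coming from neighbouring squares and is therefore not distributed as a rescaled copy of $M$; the union $\bigcup_i M^{Q_i}$ still equals $M$, so stability for closed sets applies).

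The genuine gap is the half that carries all the content, namely $p_c^{(d)}<1$, which you explicitly defer as ``the technical heart of the argument.'' Two concrete problems with the sketch you give. First, you aim to embed a \emph{bi-Lipschitz} copy of $C\times[0,1]$ into $M$. This is stronger than needed and very likely unobtainable from the natural construction: the top-to-bottom crossings one extracts as nested intersections of chains of squares are continua (even curves), but there is no control on their length, so there is no reason for them to be Lipschitz arcs, let alone for the whole family to be a bi-Lipschitz image of $C\times[0,1]$. The paper instead builds a family $\{\Gamma(y)\}_{y\in C\setminus\QQ}$ of \emph{disjoint continua} together with a one-sided Lipschitz map $\varphi\colon\bigcup_y\Gamma(y)\to C\setminus\QQ$ collapsing each $\Gamma(y)$ to the point $y$; it then argues directly that for any basis $\mathcal U$ of $M$ the set $\bigcup_{U\in\mathcal U}\partial_M U$ must meet every $\Gamma(y)$ (a connected set cannot be split by a relatively clopen partition), hence its $\varphi$-image is all of $C\setminus\QQ$ and its Hausdorff dimension exceeds $d-1$. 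Second, your parameter choice $n^{d-1}<m\le n$ leaves no room for deleted squares: since squares are erased at every level with probability $1-p>0$, one must sacrifice columns at every scale to route the crossings around the holes, and when $d$ is close to $2$ there is no integer $m<n$ with $\log m/\log n>d-1$. This is exactly why the paper first works at a large scale $N$ with $\frac{\log(N-2)}{\log N}>d-1$, uses the ``fullness'' event (at most one deleted sub-square per square, Lemma \ref{lfull}) together with the combinatorial walk lemmas to thread $N-2$ disjoint crossings around one forbidden square per cell, and then needs the separate renormalization Lemma \ref{ln2} ($p_c^{(d,n)}<1\iff p_c^{(d,n^2)}<1$) to transfer the conclusion from $N=n^{2^k}$ back to the given $n$. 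None of these ingredients appears in your proposal, so the supercritical half of the theorem remains unproved.
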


In order to see that we actually obtain a generalization, just note that
a compact space is totally disconnected iff $\dim_t M = 0$ (\cite{Eng}), also that
$\dim_{tH} M = 0$ iff $\dim_t M = 0$, and use $d = 0$.
Theorem \ref{t:CCD} basically says that certain curves show up at the
critical probability, and our proof will show that even `thick' families of curves
show up, where the word thick is related to large Hausdorff dimension.

In the rest of this section first we do some preparations in the first subsection,
then we prove the main theorem (Theorem \ref{t:lower}) in the next subsection, and
finally give an upper bound for $\dim_{tH} M$ and conclude
that $\dim_{tH} M < \dim_H M$ almost surely in the non-trivial cases.

\subsection{Preparation}

For the proofs of the statements in the next two remarks see e.g.~\cite{CH}.

\begin{remark}
\label{r:egy}
It is well-known from the theory of branching processes that $M=\emptyset$
almost surely iff $p \leq \frac{1}{n^{2}}$, so we may assume in
the following that $p > \frac{1}{n^{2}}$.

If $\frac{1}{n^{2}}< p\leq \frac{1}{\sqrt{n}}$ then $\dim_t M = 0$ almost
surely. Hence Fact~\ref{<=>} implies that $\dim_{tH} M = 0$ almost surely. (In
fact, the same holds even for $p < p_c$, see Theorem \ref{t:CCD}.)
\end{remark}

\begin{remark}
\label{r:ketto}
As for the Hausdorff dimension, for $p > \frac{1}{n^{2}}$ we have
\[
\dim_H M = 2 + \frac{\log p}{\log n}
\]
almost surely, provided $M \neq \emptyset$.

We will also need the 1-dimensional analogue of the process (intervals instead
of squares). Here $M^{(1D)} = \emptyset$ almost surely iff $p \leq \frac1n$,
and for $p > \frac1n$ we have
\[
\dim_H M^{(1D)} = 1 + \frac{\log p}{\log n}
\]
almost surely, provided $M^{(1D)} \neq \emptyset$.
\end{remark}

Now we check that the almost sure topological Hausdorff dimension of $M$
also exists.

\begin{lemma}
\label{thomog} For every $p > \frac {1}{n^{2}}$ and $n \ge 2$, $n\in
\mathbb{N}$ there exists a number $d  = d^{(p,n)} \in [0,2]$ such
that
\[
\dim_{tH} M = d
\]
almost surely, provided $M \neq \emptyset$.
\end{lemma}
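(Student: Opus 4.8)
The plan is to exploit the statistical self-similarity of $M$ together with the countable stability of $\dim_{tH}$ for closed sets, reducing the statement to the classical fixed-point dichotomy for supercritical branching processes. Denote by $Q_1,\dots,Q_{n^2}$ the $n^2$ first-level subsquares and let $X_i\in\{0,1\}$ record whether $Q_i$ is kept, so that $N=\sum_{i=1}^{n^2}X_i$ is $\mathrm{Binomial}(n^2,p)$. Run, independently of the $X_i$, independent copies $\tilde M_1,\dots,\tilde M_{n^2}$ of $M=M^{(p,n)}$; then $M\cap Q_i=\phi_i(\tilde M_i)$ whenever $X_i=1$, where $\phi_i$ is the similarity (a scaling by $1/n$) carrying $[0,1]^2$ onto $Q_i$, and $M=\bigcup_{i:\,X_i=1}(M\cap Q_i)$ is a finite union of compact sets. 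Since similarities are bi-Lipschitz, Corollary \ref{c:bi-Lip} gives $\dim_{tH}(M\cap Q_i)=\dim_{tH}\tilde M_i$, and the countable stability of the topological Hausdorff dimension for closed sets yields the fundamental identity
\[
\dim_{tH} M=\max_{i:\,X_i=1}\dim_{tH}\tilde M_i,
\]
where the maximum over the empty index set is $\dim_{tH}\emptyset=-1$, corresponding to $M=\emptyset$.

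Next I would turn this into a functional equation for the distribution function. For $t\in[0,2]$ put $q(t)=\mathbb{P}(\dim_{tH}M\le t)$; as $\dim_{tH}\emptyset=-1\le t$, the event $\{M=\emptyset\}$ lies in $\{\dim_{tH}M\le t\}$, so $q(t)\ge\rho$, where $\rho=\mathbb{P}(M=\emptyset)$. Because the $\tilde M_i$ are i.i.d.\ and independent of the $X_i$, the displayed identity gives $\{\dim_{tH}M\le t\}=\bigcap_{i:\,X_i=1}\{\dim_{tH}\tilde M_i\le t\}$, and conditioning on $N$ yields
\[
q(t)=\mathbb{E}\bigl[q(t)^{N}\bigr]=\bigl(1-p+p\,q(t)\bigr)^{n^2}=g(q(t)),
\]
so $q(t)$ is a fixed point in $[0,1]$ of the probability generating function $g$ of $N$. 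Since $p>1/n^2$ (Remark \ref{r:egy}) we have $g'(1)=n^2p>1$, and by the strict convexity of $g$ its only fixed points in $[0,1]$ are $s=1$ and the extinction probability $\rho\in[0,1)$. Hence $q(t)\in\{\rho,1\}$ for every $t\in[0,2]$.

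Finally I would read off the conclusion. The function $q$ is non-decreasing and right-continuous with $q(2)=1$, because $\dim_{tH}M\le\dim_H M\le 2$ by Theorem \ref{<} and monotonicity of the Hausdorff dimension. Setting $d=\inf\{t\in[0,2]:q(t)=1\}$ we obtain $q(t)=\rho$ for $t<d$ and $q(d)=1$, so $\dim_{tH}M\le d$ almost surely. On the other hand, for every $t<d$ we have $\mathbb{P}(\dim_{tH}M\le t,\ M\neq\emptyset)=q(t)-\rho=0$, and letting $t\uparrow d$ gives $\dim_{tH}M\ge d$ almost surely on $\{M\neq\emptyset\}$. Thus $\dim_{tH}M=d$ almost surely provided $M\neq\emptyset$, with $d\in[0,2]$.

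I expect the main obstacle to be the rigorous justification of the self-similar identity. One must verify that $M\cap Q_i$ really is, as a random closed set, a scaled \emph{exact} copy of an independent fractal percolation limit set, so that boundary sharing between adjacent kept squares is irrelevant (it is, since $M\cap Q_i=\bigcap_k(M_k\cap Q_i)$ is intrinsically determined by the sub-process in $Q_i$), and that $\dim_{tH}M$ is a genuine measurable random variable, so that $q(t)$ and the generating-function identity are meaningful. Once this measurable self-similarity is established, the branching-process fixed-point argument is routine.
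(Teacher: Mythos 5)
Your proof is correct and follows essentially the same route as the paper: decompose by the first-level squares, use statistical self-similarity together with the stability of $\dim_{tH}$ for closed sets to show that $P(\dim_{tH} M \le t)$ is a fixed point of the probability generating function of $N$, and conclude from strict convexity that this probability equals $P(M=\emptyset)$ or $1$, so the conditional distribution function jumps from $0$ to $1$ at some $d$. One small correction to your parenthetical justification: $M\cap Q_i$ is \emph{not} an exact scaled copy of an independent limit set (it may contain points of $\partial Q_i$ produced by the process in an adjacent kept square); the paper instead works with $M^{Q_i}=\bigcap_k M_k^{Q_i}$, and since $M=\bigcup_{i:X_i=1}M^{Q_i}$ still holds, your identity $\dim_{tH}M=\max_{i:X_i=1}\dim_{tH}\tilde M_i$ and the rest of the argument survive unchanged.
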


\begin{proof} Let $N$ be the random number of squares in $M_1$. Let us set
$q = P(M = \emptyset)$. Then $q < 1$ by Remark~\ref{r:egy}, and \cite[Thm.~15.2]{F}
gives that $q$ is the least positive root of the polynomial
\[
f(t)=-t+\sum _{k=0}^{n^2} P(N=k)t^{k}.
\]
%%%
Let us fix an arbitrary $x\in [0,\infty)$. As $q>0$, we obtain $P(\dim_{tH} M \leq x)>0$.
First we show that $P(\dim_{tH} M \leq x)$ is a root of $f$.

If $N>0$ then let $M_{1} = \{Q_{1},\dots ,Q_{N}\}$, where the $Q_{i}$ are the
first level sub-squares. For every $i$ and $k$ let $M_k^{Q_{i}}$
be the union of those squares in $M_k$ that are in $Q_i$, and let
$M^{Q_{i}} = \bigcap_k M_k^{Q_i}$. (Note that this is not the same
as $M \cap Q_i$, since in this latter set there may be points on the
boundary of $Q_i$ `coming from squares outside of $Q_i$'.) Then
$M^{Q_{i}}$ has the same distribution as a similar copy of $M$ (this
is called statistical self-similarity), and hence for every $i$
\[
P \left( \dim_{tH} M^{Q_i} \leq x \right) =
P \left( \dim_{tH} M \leq x \right).
\]
Using the stability of the topological Hausdorff dimension for
closed sets and the fact that the $M^{Q_i}$ are
independent and have the same distribution under the condition $N=k>0$, this
implies
\begin{align*}
P \left( \dim_{tH} M \leq x \,| \, N=k \right) &= P \left( \dim_{tH}
M^{Q_i} \leq x \textrm{ for each } 1 \le i\leq k\,|
  \, N=k \right) \notag \\
&=\big(P \left( \dim_{tH} M^{Q_1} \leq x \right) \big)^{k} \\
&= \big(P \left( \dim_{tH} M \leq x \right) \big)^{k}.
\end{align*}
For $N=0$ we have $P \left( \dim_{tH} M \leq x \,| \, N=0 \right)=1$, therefore we obtain
\begin{align*} P ( \dim_{tH} M \leq x) &= \sum _{k=0}^{n^2} P(N=k)
P ( \dim_{tH} M \leq x \,| \, N=k)\\
&= \sum _{k=0}^{n^2} P(N=k) \big( P \left( \dim_{tH} M \leq x
\right) \big)^{k},
\end{align*}
and thus $P ( \dim_{tH} M \leq x)$ is indeed a root of $f$ for every $x\in [0,\infty)$.

As mentioned above, $q \neq 1$ and $q$ is also a root of
$f$. Moreover, $1$ is obviously also a root, and it is easy to see that $f$ is
strictly convex on $[0,\infty)$, hence there are at most two nonnegative roots.
Therefore $q$ and $1$ are the only nonnegative roots, thus $P(\dim_{tH} M \leq x)= q$ or $1$
for every $x\in [0,\infty)$.

If $P(\dim_{tH} M \leq 0)=1$ then we are done, so we may assume that $P(\dim_{tH} M \leq 0)=q$.
Then the distribution function
$$F(x)=P(\dim_{tH} M \leq x \, | \, M \neq \emptyset)=\frac{P(\dim_{tH} M \leq x)-q}{1-q}$$
only attains the values $0$ and $1$, and clearly $F(0) = 0$ and $F(2) = 1$. Thus there is a value $d$ where it
`jumps' from $0$ to $1$. This concludes the proof.
\end{proof}

\subsection{Proof of Theorem \ref{t:lower}; the lower estimate of $\dim_{tH} M$}

Set
\[
p_c^{(d,n)} = \sup \left\{p : \dim_{tH} M^{(p,n)} \le d \textrm{ almost
  surely}\right\}.
\]

%\begin{theorem}  \label{tpn} The inequality $p(2-\varepsilon,n)<1$ holds for every
%$\varepsilon >0$ and for every $p>p(2-\varepsilon,n)$
%%%%%%%%%%%%%%%%%%%%%%%%%%%%%
%$$P_{p}\left(\dim_{tH}M(n) > 2-\varepsilon \, \Big| \, M(n)\neq \emptyset\right)=1.$$
%\end{theorem}

First we need some lemmas. The following one is analogous to \cite[p. 387]{GG}.

\begin{lemma}
\label{ln2} For every $d\in \mathbb{R}$ and $n\in \mathbb{N}$, $n
\ge 2$
\[
p_c^{(d,n)} < 1 \quad \Longleftrightarrow \quad p_c^{(d,n^2)} < 1.
\]
\end{lemma}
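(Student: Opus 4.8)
The plan is to prove the two implications separately, each by coupling the $n$-process and the $n^2$-process on one probability space and then invoking the monotonicity of $\dim_{tH}$ under inclusion (Fact~\ref{mon}). The key structural remark is that one step of the $n^2$-process and two consecutive steps of the $n$-process refine a surviving square into the \emph{same} grid of $n^2\times n^2$ sub-squares. In the $n^2$-process each of these $n^4$ sub-squares is kept independently with probability $p$; in the two-step $n$-process a sub-square is kept iff both its level-one $n\times n$ ancestor and it itself are kept, an event of probability $p^2$ but one that erases an entire $n\times n$ block at once when the ancestor dies. Before starting I would also record that $p\mapsto\dim_{tH} M^{(p,n)}$ is monotone: coupling through shared uniforms gives $M^{(p_1,n)}\subseteq M^{(p_2,n)}$ for $p_1\le p_2$, so by Lemma~\ref{thomog} the almost sure value $d^{(p,n)}$ is nondecreasing in $p$ and $p_c^{(d,n)}$ is a genuine threshold.

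The easy implication is $p_c^{(d,n)}<1\Rightarrow p_c^{(d,n^2)}<1$. Using shared uniforms for the ancestor-squares and for the sub-squares, I would couple the two processes so that a sub-square kept by the $n$-process is automatically kept by the $n^2$-process; iterating over all scales yields $M^{(p,n)}\subseteq M^{(p,n^2)}$ for every $p$. Fact~\ref{mon} then gives $\dim_{tH} M^{(p,n)}\le\dim_{tH} M^{(p,n^2)}$ almost surely, hence $p_c^{(d,n^2)}\le p_c^{(d,n)}$, and the implication is immediate.

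The substantive direction is $p_c^{(d,n^2)}<1\Rightarrow p_c^{(d,n)}<1$, and the block dependency is exactly what makes it hard: the inclusion above runs the wrong way, and no coupling can give $M^{(q,n^2)}\subseteq M^{(p,n)}$, since in the $n$-process entire $n\times n$ blocks vanish together with positive probability at every scale, something independent rate-$q$ percolation cannot dominate. My approach would be a renormalization with $p$ close to $1$. Fix $q\in(p_c^{(d,n^2)},1)$, so that $\dim_{tH} M^{(q,n^2)}>d$ almost surely (given $M^{(q,n^2)}\neq\emptyset$) by the definition of $p_c^{(d,n^2)}$ and monotonicity. Call a square together with its level-one $n\times n$ subdivision a \emph{good block} if that subdivision is kept in full; good blocks occur independently with probability $p^{n^2}$, and conditioned on being good the subsequent level of the $n$-process is an unconstrained rate-$p$ percolation of the enclosed $n^4$ cells, which dominates a rate-$q$ step of the $n^2$-process as soon as $p\ge q$. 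The idea is that for $p$ near $1$ good blocks are so abundant that, by a Peierls-type contour argument, the surviving good blocks carry a Lipschitz copy of the thick family of curves witnessing $\dim_{tH} M^{(q,n^2)}>d$; by the Lipschitz invariance and monotonicity of $\dim_{tH}$ (Theorem~\ref{lipinv}, Fact~\ref{mon}) this forces $\dim_{tH} M^{(p,n)}>d$, hence $p_c^{(d,n)}\le p<1$.

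I expect the real difficulty to be quantitative: passing from all blocks to good blocks must not lower the Hausdorff dimension of the set $C$ (with $\dim_H C>d-1$) that parametrizes the thick curve family. One has to choose $p$ close enough to $1$ that the density of bad blocks is negligible on the scale at which $C$ is resolved, so that the renormalized family still satisfies $\dim_H>d-1$. Matching the rate at which bad blocks are suppressed against the rate at which the parametrizing Cantor set is constructed is the heart of the argument, and this is precisely the point where the analogy with \cite[p.~387]{GG} would be exploited.
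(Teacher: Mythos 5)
Your first implication and your remark that $p\mapsto\dim_{tH}M^{(p,n)}$ is monotone (so that $p_c^{(d,n)}$ is a genuine threshold) are correct and agree with the paper, which couples the processes so that $M^{(p,n)}$ is dominated by $M^{(p,n^2)}$ and concludes $p_c^{(d,n^2)}\le p_c^{(d,n)}$. The problem is the other implication, where your starting premise is false: a coupling of the kind you declare impossible is exactly what the paper constructs. Set $\varphi(q)=1-(1-q)^{1/n^2}$. The probability that an independent rate-$\varphi(q)$ percolation of the $n^2$ cells of side $1/n^2$ contained in a given square of side $1/n$ erases \emph{all} of them is $(1-\varphi(q))^{n^2}=1-q$, which is precisely the probability that the $n$-process erases that square; hence one can couple so that every cell surviving the rate-$\varphi(q)$ step lies in a square surviving the rate-$q$ step, i.e.\ $M^{(\varphi(q),n^2)}$ is dominated by $M^{(q,n)}$. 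Interleaving an additional independent thinning by $q$ on the $n^2$ side with one extra subdivision step on the $n$ side upgrades this to: $M^{(q\varphi(q),n^2)}$ is dominated by $M^{(q,n)}$. Your objection --- that independent percolation cannot erase a whole block at once --- ignores that $(1-r)^{n^2}$ can be made equal to $1-q$ by taking $r=\varphi(q)$, and that $q\varphi(q)\to 1$ as $q\to 1$. The resulting inequality $p_c^{(d,n)}\varphi\bigl(p_c^{(d,n)}\bigr)\le p_c^{(d,n^2)}$ gives the hard implication in contrapositive form: if $p_c^{(d,n)}=1$ then the left-hand side equals $1$, forcing $p_c^{(d,n^2)}=1$.

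By contrast, your renormalization sketch is not a proof and I do not see how to complete it. The hypothesis $\dim_{tH}M^{(q,n^2)}>d$ supplies no ``thick family of curves'' to transport: the definition of $\dim_{tH}$ yields no such geometric witness, and the curve families in the proof of Theorem \ref{t:lower} are built only on the event that $M$ is full, which is the kind of structural conclusion you are trying to reach, not something you may assume. Moreover, the locations that are good at every scale form a strict random sub-fractal --- the density $1-p^{n^2}$ of bad blocks does not decay with the scale, only with $p$ --- so ``running the $n^2$-process on the good blocks'' does not produce a subset of $M^{(p,n)}$ distributed like, or containing a Lipschitz copy of, $M^{(q,n^2)}$; controlling the accumulated holes is exactly the content of the fullness lemma (Lemma \ref{lfull}) and of the walk construction in the proof of Theorem \ref{t:lower}. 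In effect your plan re-derives that direct lower bound from scratch, which is precisely the work that Lemma \ref{ln2} is designed to make unnecessary.
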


\begin{proof} Clearly, it is enough to show that
\begin{equation}
\label{pcn}
p_c^{(d,n)} \left( 1-\left(1-p_c^{(d,n)} \right)^{\frac
{1}{n^2}}\right)\leq p_c^{(d,n^2)} \leq p_c^{(d,n)}.
\end{equation}
We say that the random construction $X$ is dominated by the random
construction $Y$ if they can be realized on the same probability
space such that $X\subseteq Y$ almost surely.

Let us first prove the second inequality in \eqref{pcn}. It clearly suffices to show that
\[
\dim_{tH} M^{(p,n^2)} \le d \textrm{ almost surely} \implies \dim_{tH}
M^{(p,n)} \le d \textrm{ almost surely}.
\]
But this is rather straightforward, since $M_{2k}^{(p,n)}$ is easily seen to
be dominated by $M_k^{(p,n^2)}$ for every $k$, hence $M^{(p,n)}$ is dominated by
$M^{(p,n^2)}$.

Let us now prove the first inequality in \eqref{pcn}. Set $\varphi(x) =
1-\left(1-x\right)^{{1}/{n^2}}$. We need to show that
\begin{equation}
\label{e:kell}
0 < p < p_c^{(d,n)} \varphi(p_c^{(d,n)}) \implies \dim_{tH}
M^{(p,n^2)} \le d \textrm{ almost surely}.
\end{equation}

Since $x\varphi(x)$ is an increasing homeomorphism of the unit interval, $p =
q \varphi(q)$ for some $q \in (0,1)$. Then clearly $q < p_c^{(d,n)}$, so $\dim_{tH}
M^{(q,n)} \le d$ almost surely. Therefore, in order to prove (\ref{e:kell}) it
suffices to check that
\begin{equation}
\label{e:kell2}
M^{(p,n^2)} \textrm{ is dominated by } M^{(q,n)}.
\end{equation}

First we check that
\begin{equation}
\label{e:domin}
M_k^{(\varphi(q),n^2)} \textrm{ is dominated by } M_k^{(q,n)}  \textrm{ for
  every } k,
\end{equation}
and consequently $M^{(\varphi(q),n^2)}$ is dominated by $M^{(q,n)}$. Indeed,
in the second case we
erase a sub-square of side length $\frac1n$ with probability $1-q$ and
keep it with probability $q$, while in the first case we \emph{completely}
erase a sub-square of side length $\frac1n$ with the same probability
$(1-\varphi(q))^{n^2}=1-q$ and hence keep \emph{at least a subset of it} with
probability $q$.

But this will easily imply (\ref{e:kell2}), which will complete the
proof.
Indeed, after each step of the processes
$M^{(\varphi(q),n^2)}$ and $M^{(q,n)}$ let us perform the
following procedures. For $M^{(\varphi(q),n^2)}$ let us keep every existing
square independently with probability $q$ and erase it with probability $1-q$
(we do not do any subdivisions in this case). For $M^{(q,n)}$ let us take one
more step of the construction of $M^{(q,n)}$. Using (\ref{e:domin}) this
easily implies that $M_k^{(q \varphi(q),n^2)}$ is
dominated by $M_{2k}^{(q,n)}$ for every $k$, hence $M^{(q \varphi(q),n^2)}$ is
dominated by $M^{(q,n)}$, but $q \varphi(q) = p$, and hence (\ref{e:kell2}) holds.
\end{proof}

\bigskip

From now on let $N$ be a fixed (large) positive integer to be chosen
later. Recall that a square of level $k$ is a set of the form
$\left[\frac{i}{N^k}, \frac{i+1}{N^k}\right] \times
\left[\frac{j}{N^k}, \frac{j+1}{N^k}\right] \subseteq [0,1]^2$.

\begin{definition}
A \emph{walk of level $k$} is a sequence $(S_1, \dots, S_l)$ of non-overlapping squares of level $k$ such
that $S_r$ and $S_{r+1}$ are abutting for every $r = 1, \dots, l-1$, moreover $S_1 \cap ( \{0\} \times [0,1])
\neq \emptyset$ and $S_l \cap (\{1\} \times [0,1]) \neq \emptyset$.
\end{definition}

In particular, the only walk of level $0$ is $([0,1]^2)$.

\begin{definition}
We say that $(S_1, \dots, S_l)$ is a \emph{turning walk (of level $1$)} if it satisfies the properties of a walk of level $1$ except that instead of
$S_l \cap (\{1\} \times [0,1]) \neq \emptyset$ we require that $S_l \cap ([0,1] \times \{ 1 \} ) \neq \emptyset$.
\end{definition}

\begin{lemma}
\label{l:through} Let $\mathcal{S}$ be a set of $N-2$ distinct
squares of level $1$ intersecting $\{ 0 \} \times [0,1]$, and let
$\mathcal{T}$ be a set of $N-2$ distinct squares of level $1$
intersecting $\{ 1 \} \times [0,1]$. Moreover, let $F^*$ be a square
of level $1$ such that the row of $F^*$ does not intersect
$\mathcal{S} \cup \mathcal{T}$. Then there exist $N-2$
non-overlapping walks of level $1$ not containing $F^*$ such that
the set of their first squares coincides with $\mathcal{S}$ and the
set of their last squares coincides with $\mathcal{T}$. (See Figure~\ref{fig:percol3}.)
\end{lemma}

\placedrawing[ht]{
\begin{picture}(92,38)
\thinlines
\drawshadebox{4.0}{16.0}{28.0}{10.0}{22.0}{34.0}{}{0.5}
\drawshadebox{34.0}{46.0}{58.0}{10.0}{22.0}{34.0}{}{0.5}
\drawshadebox{64.0}{76.0}{88.0}{10.0}{22.0}{34.0}{}{0.5}
\drawshadebox{16.0}{18.0}{20.0}{30.0}{32.0}{34.0}{}{0}
\drawshadebox{4.0}{6.0}{8.0}{26.0}{28.0}{30.0}{}{0.25}
\drawshadebox{4.0}{6.0}{8.0}{22.0}{24.0}{26.0}{}{0.25}
\drawshadebox{4.0}{6.0}{8.0}{18.0}{20.0}{22.0}{}{0.25}
\drawshadebox{24.0}{26.0}{28.0}{22.0}{24.0}{26.0}{}{0.25}
\drawshadebox{24.0}{26.0}{28.0}{18.0}{20.0}{22.0}{}{0.25}
\drawshadebox{24.0}{26.0}{28.0}{14.0}{16.0}{18.0}{}{0.25}
\drawshadebox{24.0}{26.0}{28.0}{10.0}{12.0}{14.0}{}{0.25}
\drawshadebox{4.0}{6.0}{8.0}{14.0}{16.0}{18.0}{}{0.25}
\drawshadebox{34.0}{36.0}{38.0}{30.0}{32.0}{34.0}{}{0.25}
\drawshadebox{54.0}{56.0}{58.0}{30.0}{32.0}{34.0}{}{0.25}
\drawshadebox{42.0}{44.0}{46.0}{22.0}{24.0}{26.0}{}{0}
\drawshadebox{34.0}{36.0}{38.0}{26.0}{28.0}{30.0}{}{0.25}
\drawshadebox{34.0}{36.0}{38.0}{18.0}{20.0}{22.0}{}{0.25}
\drawshadebox{54.0}{56.0}{58.0}{14.0}{16.0}{18.0}{}{0.25}
\drawshadebox{54.0}{56.0}{58.0}{10.0}{12.0}{14.0}{}{0.25}
\drawshadebox{34.0}{36.0}{38.0}{10.0}{12.0}{14.0}{}{0.25}
\drawshadebox{54.0}{56.0}{58.0}{18.0}{20.0}{22.0}{}{0.25}
\drawshadebox{84.0}{86.0}{88.0}{22.0}{24.0}{26.0}{}{0.25}
\drawshadebox{84.0}{86.0}{88.0}{14.0}{16.0}{18.0}{}{0.25}
\drawshadebox{72.0}{74.0}{76.0}{18.0}{20.0}{22.0}{}{0}
\drawshadebox{64.0}{66.0}{68.0}{14.0}{16.0}{18.0}{}{0.25}
\drawshadebox{64.0}{66.0}{68.0}{26.0}{28.0}{30.0}{}{0.25}
\drawshadebox{84.0}{86.0}{88.0}{26.0}{28.0}{30.0}{}{0.25}
\drawshadebox{84.0}{86.0}{88.0}{10.0}{12.0}{14.0}{}{0.25}
\drawshadebox{64.0}{66.0}{68.0}{30.0}{32.0}{34.0}{}{0.25}
\drawshadebox{64.0}{66.0}{68.0}{22.0}{24.0}{26.0}{}{0.25}
\drawpath{8.0}{10.0}{8.0}{32.0}
\drawpath{8.0}{32.0}{8.0}{34.0}
\drawpath{12.0}{10.0}{12.0}{34.0}
\drawpath{16.0}{10.0}{16.0}{34.0}
\drawpath{20.0}{10.0}{20.0}{32.0}
\drawpath{20.0}{34.0}{20.0}{32.0}
\drawpath{24.0}{10.0}{24.0}{34.0}
\drawpath{4.0}{30.0}{28.0}{30.0}
\drawpath{4.0}{26.0}{28.0}{26.0}
\drawpath{4.0}{22.0}{28.0}{22.0}
\drawpath{4.0}{18.0}{28.0}{18.0}
\drawpath{4.0}{14.0}{26.0}{14.0}
\drawpath{26.0}{14.0}{28.0}{14.0}
\drawpath{34.0}{30.0}{34.0}{30.0}
\drawpath{34.0}{30.0}{58.0}{30.0}
\drawpath{34.0}{26.0}{58.0}{26.0}
\drawpath{42.0}{10.0}{42.0}{34.0}
\drawpath{46.0}{10.0}{46.0}{34.0}
\drawpath{34.0}{18.0}{58.0}{18.0}
\drawpath{34.0}{14.0}{58.0}{14.0}
\drawpath{38.0}{10.0}{38.0}{34.0}
\drawpath{50.0}{10.0}{50.0}{34.0}
\drawpath{54.0}{10.0}{54.0}{34.0}
\drawpath{34.0}{22.0}{58.0}{22.0}
\drawpath{34.0}{34.0}{58.0}{34.0}
\drawpath{58.0}{34.0}{58.0}{10.0}
\drawpath{58.0}{10.0}{34.0}{10.0}
\drawpath{34.0}{10.0}{34.0}{34.0}
\drawpath{4.0}{34.0}{28.0}{34.0}
\drawpath{28.0}{34.0}{28.0}{10.0}
\drawpath{28.0}{10.0}{4.0}{10.0}
\drawpath{4.0}{10.0}{4.0}{34.0}
\drawpath{64.0}{34.0}{88.0}{34.0}
\drawpath{88.0}{34.0}{88.0}{10.0}
\drawpath{88.0}{10.0}{64.0}{10.0}
\drawpath{64.0}{10.0}{64.0}{34.0}
\drawpath{64.0}{30.0}{88.0}{30.0}
\drawpath{64.0}{26.0}{88.0}{26.0}
\drawpath{64.0}{22.0}{88.0}{22.0}
\drawpath{64.0}{18.0}{88.0}{18.0}
\drawpath{64.0}{14.0}{88.0}{14.0}
\drawpath{68.0}{34.0}{68.0}{10.0}
\drawpath{72.0}{34.0}{72.0}{10.0}
\drawpath{76.0}{34.0}{76.0}{10.0}
\drawpath{80.0}{34.0}{80.0}{10.0}
\drawpath{84.0}{34.0}{84.0}{10.0}
\Thicklines
\drawpath{4.0}{28.0}{26.0}{28.0}
\drawpath{26.0}{28.0}{26.0}{24.0}
\drawpath{26.0}{24.0}{28.0}{24.0}
\drawpath{4.0}{24.0}{22.0}{24.0}
\drawpath{22.0}{24.0}{22.0}{20.0}
\drawpath{22.0}{20.0}{28.0}{20.0}
\drawpath{4.0}{20.0}{18.0}{20.0}
\drawpath{18.0}{20.0}{18.0}{16.0}
\drawpath{18.0}{16.0}{28.0}{16.0}
\drawpath{4.0}{16.0}{14.0}{16.0}
\drawpath{14.0}{16.0}{14.0}{12.0}
\drawpath{14.0}{12.0}{28.0}{12.0}
\drawpath{28.0}{12.0}{28.0}{12.0}
\drawpath{34.0}{32.0}{58.0}{32.0}
\drawpath{54.0}{20.0}{58.0}{20.0}
\drawpath{54.0}{16.0}{58.0}{16.0}
\drawpath{54.0}{12.0}{58.0}{12.0}
\drawpath{84.0}{24.0}{88.0}{24.0}
\drawpath{84.0}{16.0}{88.0}{16.0}
\drawpath{84.0}{12.0}{88.0}{12.0}
\thinlines
\drawpath{64.0}{28.0}{82.0}{28.0}
\drawpath{82.0}{28.0}{82.0}{24.0}
\drawpath{82.0}{24.0}{84.0}{24.0}
\drawpath{64.0}{24.0}{78.0}{24.0}
\drawpath{78.0}{24.0}{78.0}{16.0}
\drawpath{78.0}{16.0}{86.0}{16.0}
\drawpath{64.0}{16.0}{74.0}{16.0}
\drawpath{74.0}{16.0}{74.0}{12.0}
\drawpath{74.0}{12.0}{84.0}{12.0}
\drawpath{34.0}{28.0}{52.0}{28.0}
\drawpath{52.0}{28.0}{52.0}{20.0}
\drawpath{52.0}{20.0}{54.0}{20.0}
\drawpath{34.0}{20.0}{48.0}{20.0}
\drawpath{48.0}{20.0}{48.0}{16.0}
\drawpath{48.0}{16.0}{54.0}{16.0}
\drawpath{34.0}{12.0}{54.0}{12.0}
\drawcenteredtext{16.0}{6.0}{Case 1.}
\drawcenteredtext{46.0}{6.0}{Case 2.}
\drawcenteredtext{76.0}{6.0}{Case 3.}
\Thicklines
\drawpath{64.0}{32.0}{86.0}{32.0}
\drawpath{86.0}{32.0}{86.0}{28.0}
\drawpath{86.0}{28.0}{88.0}{28.0}
\drawframebox{44.0}{20.0}{20.0}{20.0}{}
\drawframebox{74.0}{20.0}{20.0}{20.0}{}
\end{picture}
}
{Illustration to Lemma~\ref{l:through}}{fig:percol3}

\begin{proof}
The proof is by induction on $N$. The case $N = 2$ is obvious.

\noindent
\textbf{Case 1.} \emph{$F^*$ is in the top or bottom row.}

By simply ignoring this row it is straightforward how to construct the walks
in the remaining rows.

\noindent \textbf{Case 2.} \emph{$F^*$ is not in the top or bottom
row, and both top corners or both bottom corners are in $\mathcal{S}
\cup \mathcal{T}$.}

Without loss of generality we may suppose that both top corners are
in $\mathcal{S} \cup \mathcal{T}$. Let the straight walk connecting
these two corners be one of the walks to be constructed. Then let us
shift the remaining members of $\mathcal{T}$ to the left by one
square, and either we can apply the induction hypothesis to the
$(N-1) \times (N-1)$ many squares in the bottom left corner of the
original $N \times N$ many squares, or  $F^*$ is not among these
$(N-1) \times (N-1)$ many squares and then the argument is even
easier. Then one can see how to get the required walks.

\noindent
\textbf{Case 3.} \emph{Neither Case 1 nor Case 2 holds.}

Since there are only two squares missing on both sides, and $F^*$
cannot be the top or bottom row, we infer that both $\mathcal{S}$
and $\mathcal{T}$ contain at least one corner. Since Case 2 does not
hold, we obtain that both the top left and the bottom right corners
or both the bottom left and the top right corners are in
$\mathcal{S} \cup \mathcal{T}$. Without loss of generality we may
suppose that
 both the top left and the bottom
right corners are in $\mathcal{S} \cup \mathcal{T}$. By reflecting
the picture about the center of the unit square if necessary, we may
assume that $F^*$ is not in the rightmost column. We now construct
the first walk. Let it run straight from the top left corner to the
top right corner, and then continue downwards until it first reaches
a member of $\mathcal{T}$. Then,  as above, we can similarly apply
the induction hypothesis to the $(N-1) \times (N-1)$ many squares in
the bottom left corner, and we are done.
\end{proof}

\begin{lemma}
\label{l:turn} Let $\mathcal{S}$ be a set of $N-2$ distinct squares
of level $1$ intersecting $\{ 0 \} \times [0,1]$, and let
$\mathcal{T}$ be a set of $N-2$ distinct squares of level $1$
intersecting $[0,1] \times \{ 1 \}$ (the sets of starting and
terminal squares). Moreover, let $F^*$ be a square of level $1$ (the
forbidden square) such that the row of $F^*$ does not intersect
$\mathcal{S}$ and the column of $F^*$ does not intersect
$\mathcal{T}$. Then there exist $N-2$ non-overlapping turning walks
not containing $F^*$ such that the set of their first squares
coincides with $\mathcal{S}$ and the set of their last squares
coincides with $\mathcal{T}$.
\end{lemma}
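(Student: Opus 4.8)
The plan is to prove this by induction on $N$, in the same spirit as Lemma~\ref{l:through}, but with the case analysis organised around the top-left corner square. I first fix coordinates: index the rows of the level-$1$ grid by $1,\dots,N$ from bottom to top and the columns by $1,\dots,N$ from left to right, so that $\mathcal{S}$ occupies column $1$ in a set of rows $R$ with $|R|=N-2$ and $\mathcal{T}$ occupies row $N$ in a set of columns $C$ with $|C|=N-2$. Thus exactly two rows are \emph{free} (meet no member of $\mathcal{S}$) and exactly two columns are free (meet no member of $\mathcal{T}$); by hypothesis the row of $F^*$ is one of the free rows and the column of $F^*$ is one of the free columns. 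I will use only \emph{monotone} turning walks, moving rightwards and upwards; such a walk from $(1,i)$ to $(j,N)$ stays inside the upper-left triangular region $\{\, \text{row} \ge \text{column}\,\}$, which already separates the turning walks from the bottom-right part of the grid.

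A short planarity observation fixes which walk should go to which terminal. Listing the boundary squares in cyclic order as $L_1,\dots,L_N,T_1,\dots,T_N$ (up the left edge, then across the top edge), a family of disjoint monotone walks induces a non-crossing matching of the $L_i$'s to the $T_j$'s, and one checks that such a matching must be \emph{order-reversing}: the $k$-th lowest start is joined to the $k$-th rightmost terminal. Accordingly I aim to build nested ``staircase'' walks realising this matching. The easy case is when $F^*$ lies in the bottom row or in the rightmost column: since $r^*\notin R$ forces every start to lie in a row $\ge 2$, and $c^*\notin C$ forces every terminal to lie in a column $\le N-1$, the monotone walks never enter row $1$ (respectively column $N$) at all, so they miss $F^*$ automatically and the problem reduces to the forbidden-free instance, which is solved directly by the order-reversing staircase routing. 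Otherwise I would split according to whether the two missing starts or the two missing terminals contain the top-left corner, in each case running one walk straight along an edge (or, in the degenerate case where the corner square $(1,N)$ belongs to both $\mathcal{S}$ and $\mathcal{T}$, using it as a legal one-square turning walk) so as to vacate a full row and a full column and expose a smaller $(N-1)\times(N-1)$ instance, exactly paralleling Cases~1--3 of Lemma~\ref{l:through}.

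The main obstacle is the interior placement of $F^*$, i.e.\ the analogue of Cases~2--3, where $r^*\ge 2$ and $c^*\le N-1$. Here the single walk of the order-reversing family that would otherwise be forced through $F^*$ must be detoured, and the detour has to use precisely the slack provided by the hypotheses — the free row and the free column through $F^*$ — while (i) still meeting its prescribed start and terminal, (ii) remaining monotone, and (iii) not colliding with the neighbouring nested walks. The delicate point is to check that after this local rerouting the remaining configuration is again an instance of the lemma on the reduced grid, so that the induction hypothesis applies; in particular one must verify that the two free rows and two free columns, and the position of $F^*$ relative to them, survive the reduction with the required separation properties. Once the corner bookkeeping is set up as in Lemma~\ref{l:through}, each individual reduction step is elementary, and the non-overlap of the finished family follows from the nestedness of the staircases together with the order-reversing matching.
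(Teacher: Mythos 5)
The lemma has a one-line proof, and it is the one the paper gives: take the simplest ``L-shaped'' walks. Pair the $k$-th lowest starting row with the $k$-th rightmost terminal column (the order-reversing matching you correctly identify), and let the corresponding walk run straight along its starting row until it reaches its terminal column, then straight up to the top edge. These walks are pairwise non-overlapping: each lives entirely in one row and one column, and the order-reversing matching prevents the vertical leg of a lower walk from meeting the horizontal leg of a higher one. And none of them can contain $F^*$, because each walk uses only squares in its own starting row and its own terminal column, while by hypothesis the row of $F^*$ contains no member of $\mathcal{S}$ and the column of $F^*$ contains no member of $\mathcal{T}$. There is nothing to detour around.

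This is where your proposal goes astray. The ``main obstacle'' you describe --- a walk of the order-reversing family ``forced through $F^*$'' that must be rerouted using the free row and free column --- does not exist once the walks are taken to be L-shapes; you manufacture it by allowing general nested staircases, and then you never resolve it: the rerouting, the verification that the reduced configuration is again an instance of the lemma, and the non-collision of the detoured walk with its neighbours are all announced (``I would split\dots'', ``the delicate point is to check\dots'') but not carried out, so as written the argument is incomplete precisely at the step you yourself flag as delicate. (A smaller slip: a monotone walk from column $1$, row $i$ to column $j$, row $N$ need not stay in the region $\{\mathrm{row}\ge\mathrm{column}\}$ --- the L-shape with $i<j$ leaves it --- though nothing essential hinges on that remark.) The repair is to discard the induction and the detours entirely and observe that the hypotheses on $F^*$ are exactly what makes the naive L-shaped routing work.
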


\begin{proof}
Obvious, just take the simplest `L-shaped' walks.
\end{proof}

\placedrawing[ht]{
\begin{picture}(58,58)
\thinlines
\drawshadebox{4.0}{9.0}{14.0}{14.0}{19.0}{24.0}{}{0.4}
\drawshadebox{14.0}{19.0}{24.0}{14.0}{19.0}{24.0}{}{0.4}
\drawshadebox{14.0}{19.0}{24.0}{24.0}{29.0}{34.0}{}{0.4}
\drawshadebox{14.0}{19.0}{24.0}{34.0}{39.0}{44.0}{}{0.4}
\drawshadebox{24.0}{29.0}{34.0}{34.0}{39.0}{44.0}{}{0.4}
\drawshadebox{34.0}{39.0}{44.0}{34.0}{39.0}{44.0}{}{0.4}
\drawshadebox{34.0}{39.0}{44.0}{24.0}{29.0}{34.0}{}{0.4}
\drawshadebox{34.0}{39.0}{44.0}{14.0}{19.0}{24.0}{}{0.4}
\drawshadebox{34.0}{39.0}{44.0}{4.0}{9.0}{14.0}{}{0.4}
\drawshadebox{44.0}{49.0}{54.0}{4.0}{9.0}{14.0}{}{0.4}
\drawshadebox{14.0}{19.0}{24.0}{44.0}{49.0}{54.0}{}{0.4}
\drawshadebox{24.0}{29.0}{34.0}{44.0}{49.0}{54.0}{}{0.4}
\drawshadebox{10.0}{11.0}{12.0}{18.0}{19.0}{20.0}{}{0}
\drawshadebox{16.0}{17.0}{18.0}{20.0}{21.0}{22.0}{}{0}
\drawshadebox{22.0}{23.0}{24.0}{28.0}{29.0}{30.0}{}{0}
\drawshadebox{14.0}{15.0}{16.0}{38.0}{39.0}{40.0}{}{0}
\drawshadebox{24.0}{25.0}{26.0}{38.0}{38.0}{38.0}{}{0}
\drawshadebox{24.0}{25.0}{26.0}{36.0}{37.0}{38.0}{}{0}
\drawshadebox{36.0}{37.0}{38.0}{40.0}{41.0}{42.0}{}{0}
\drawshadebox{34.0}{35.0}{36.0}{24.0}{25.0}{26.0}{}{0}
\drawshadebox{40.0}{41.0}{42.0}{14.0}{15.0}{16.0}{}{0}
\drawshadebox{38.0}{39.0}{40.0}{10.0}{11.0}{12.0}{}{0}
\drawshadebox{44.0}{45.0}{46.0}{8.0}{9.0}{10.0}{}{0}
\drawshadebox{18.0}{19.0}{20.0}{48.0}{49.0}{50.0}{}{0}
\drawshadebox{26.0}{27.0}{28.0}{44.0}{45.0}{46.0}{}{0}
\drawpath{4.0}{54.0}{54.0}{54.0}
\drawpath{54.0}{54.0}{54.0}{4.0}
\drawpath{54.0}{4.0}{4.0}{4.0}
\drawpath{4.0}{4.0}{4.0}{54.0}
\drawpath{14.0}{54.0}{14.0}{4.0}
\drawpath{24.0}{54.0}{24.0}{4.0}
\drawpath{34.0}{54.0}{34.0}{4.0}
\drawpath{44.0}{54.0}{44.0}{4.0}
\drawpath{44.0}{4.0}{44.0}{54.0}
\drawpath{4.0}{44.0}{54.0}{44.0}
\drawpath{4.0}{34.0}{54.0}{34.0}
\drawpath{4.0}{24.0}{54.0}{24.0}
\drawpath{4.0}{14.0}{54.0}{14.0}
\drawpath{4.0}{22.0}{24.0}{22.0}
\drawpath{4.0}{20.0}{24.0}{20.0}
\drawpath{4.0}{18.0}{24.0}{18.0}
\drawpath{4.0}{16.0}{24.0}{16.0}
\drawpath{6.0}{24.0}{6.0}{14.0}
\drawpath{8.0}{24.0}{8.0}{14.0}
\drawpath{10.0}{24.0}{10.0}{14.0}
\drawpath{10.0}{24.0}{10.0}{24.0}
\drawpath{12.0}{24.0}{12.0}{14.0}
\drawpath{16.0}{44.0}{16.0}{14.0}
\drawpath{18.0}{44.0}{18.0}{44.0}
\drawpath{18.0}{44.0}{18.0}{14.0}
\drawpath{20.0}{44.0}{20.0}{44.0}
\drawpath{20.0}{44.0}{20.0}{14.0}
\drawpath{22.0}{44.0}{22.0}{14.0}
\drawpath{22.0}{44.0}{22.0}{14.0}
\drawpath{22.0}{14.0}{22.0}{44.0}
\drawpath{14.0}{42.0}{44.0}{42.0}
\drawpath{14.0}{40.0}{44.0}{40.0}
\drawpath{14.0}{38.0}{44.0}{38.0}
\drawpath{14.0}{36.0}{44.0}{36.0}
\drawpath{44.0}{36.0}{14.0}{36.0}
\drawpath{14.0}{36.0}{44.0}{36.0}
\drawpath{36.0}{44.0}{36.0}{4.0}
\drawpath{38.0}{44.0}{38.0}{4.0}
\drawpath{40.0}{44.0}{40.0}{4.0}
\drawpath{40.0}{30.0}{40.0}{30.0}
\drawpath{42.0}{4.0}{42.0}{44.0}
\drawpath{42.0}{44.0}{42.0}{4.0}
\drawpath{34.0}{6.0}{54.0}{6.0}
\drawpath{34.0}{8.0}{54.0}{8.0}
\drawpath{34.0}{10.0}{54.0}{10.0}
\drawpath{34.0}{12.0}{54.0}{12.0}
\drawpath{26.0}{44.0}{26.0}{34.0}
\drawpath{28.0}{44.0}{28.0}{34.0}
\drawpath{30.0}{44.0}{30.0}{34.0}
\drawpath{32.0}{44.0}{32.0}{34.0}
\drawpath{32.0}{34.0}{32.0}{44.0}
\drawpath{30.0}{44.0}{30.0}{34.0}
\drawpath{30.0}{34.0}{30.0}{44.0}
\drawpath{30.0}{44.0}{30.0}{34.0}
\drawpath{30.0}{44.0}{30.0}{34.0}
\drawpath{14.0}{32.0}{24.0}{32.0}
\drawpath{14.0}{28.0}{24.0}{28.0}
\drawpath{14.0}{30.0}{24.0}{30.0}
\drawpath{14.0}{26.0}{24.0}{26.0}
\drawpath{34.0}{32.0}{44.0}{32.0}
\drawpath{34.0}{30.0}{44.0}{30.0}
\drawpath{34.0}{28.0}{44.0}{28.0}
\drawpath{34.0}{26.0}{44.0}{26.0}
\drawpath{34.0}{32.0}{44.0}{32.0}
\drawpath{34.0}{22.0}{44.0}{22.0}
\drawpath{34.0}{20.0}{44.0}{20.0}
\drawpath{34.0}{18.0}{44.0}{18.0}
\drawpath{34.0}{16.0}{44.0}{16.0}
\drawpath{46.0}{14.0}{46.0}{4.0}
\drawpath{46.0}{4.0}{46.0}{14.0}
\drawpath{48.0}{14.0}{48.0}{4.0}
\drawpath{50.0}{14.0}{50.0}{4.0}
\drawpath{52.0}{14.0}{52.0}{4.0}
\drawpath{16.0}{44.0}{16.0}{54.0}
\drawpath{18.0}{44.0}{18.0}{54.0}
\drawpath{20.0}{44.0}{20.0}{54.0}
\drawpath{22.0}{44.0}{22.0}{54.0}
\drawpath{14.0}{52.0}{34.0}{52.0}
\drawpath{14.0}{50.0}{34.0}{50.0}
\drawpath{14.0}{46.0}{34.0}{46.0}
\drawpath{14.0}{48.0}{34.0}{48.0}
\drawpath{34.0}{48.0}{14.0}{48.0}
\drawpath{14.0}{52.0}{34.0}{52.0}
\drawpath{26.0}{54.0}{26.0}{44.0}
\drawpath{28.0}{54.0}{28.0}{44.0}
\drawpath{30.0}{54.0}{30.0}{44.0}
\drawpath{32.0}{54.0}{32.0}{44.0}
\Thicklines
\drawpath{4.0}{23.0}{15.0}{23.0}
\drawpath{15.0}{23.0}{15.0}{33.0}
\drawpath{15.0}{33.0}{17.0}{33.0}
\drawpath{17.0}{33.0}{17.0}{53.0}
\drawpath{17.0}{53.0}{33.0}{53.0}
\drawpath{33.0}{53.0}{33.0}{43.0}
\drawpath{33.0}{43.0}{43.0}{43.0}
\drawpath{43.0}{43.0}{43.0}{13.0}
\drawpath{43.0}{13.0}{54.0}{13.0}
\drawpath{4.0}{21.0}{13.0}{21.0}
\drawpath{13.0}{21.0}{13.0}{17.0}
\drawpath{13.0}{17.0}{19.0}{17.0}
\drawpath{19.0}{17.0}{19.0}{43.0}
\drawpath{19.0}{43.0}{21.0}{43.0}
\drawpath{21.0}{43.0}{21.0}{51.0}
\drawpath{21.0}{51.0}{31.0}{51.0}
\drawpath{31.0}{51.0}{31.0}{39.0}
\drawpath{31.0}{39.0}{41.0}{39.0}
\drawpath{41.0}{39.0}{41.0}{31.0}
\drawpath{41.0}{31.0}{39.0}{31.0}
\drawpath{39.0}{31.0}{39.0}{21.0}
\drawpath{39.0}{21.0}{37.0}{21.0}
\drawpath{37.0}{21.0}{37.0}{7.0}
\drawpath{37.0}{7.0}{54.0}{7.0}
\drawpath{4.0}{15.0}{21.0}{15.0}
\drawpath{21.0}{15.0}{21.0}{41.0}
\drawpath{21.0}{41.0}{23.0}{41.0}
\drawpath{23.0}{41.0}{23.0}{47.0}
\drawpath{23.0}{47.0}{29.0}{47.0}
\drawpath{29.0}{47.0}{29.0}{35.0}
\drawpath{29.0}{35.0}{39.0}{35.0}
\drawpath{39.0}{35.0}{39.0}{33.0}
\drawpath{39.0}{33.0}{37.0}{33.0}
\drawpath{37.0}{33.0}{37.0}{23.0}
\drawpath{37.0}{23.0}{35.0}{23.0}
\drawpath{35.0}{23.0}{35.0}{5.0}
\drawpath{35.0}{5.0}{54.0}{5.0}
\end{picture}
}
{Illustration to Lemma~\ref{walk}}{fig:percol1}

The last two lemmas will almost immediately imply the following.

\begin{lemma}
\label{walk} Let $(S_1, \dots, S_l)$ be a walk of level $k$, and
$\mathcal{F}$ a system of squares of level $k+1$ such that each
$S_r$ contains at most $1$ member of $\mathcal{F}$. Then $(S_1,
\dots, S_l)$ contains $N-2$ non-overlapping sub-walks of level $k+1$
avoiding $\mathcal{F}$. (See Figure~\ref{fig:percol1}.)
\end{lemma}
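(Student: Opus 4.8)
The plan is to build the $N-2$ sub-walks square by square along $(S_1,\dots,S_l)$ and then glue the pieces together. I would subdivide each $S_r$ into its $N\times N$ grid of level $k+1$ squares. For an interior index $r$ the walk enters $S_r$ through the edge shared with $S_{r-1}$ and leaves through the edge shared with $S_{r+1}$; since $S_{r-1}\neq S_{r+1}$, these \emph{entry} and \emph{exit} edges lie on different sides of $S_r$, so the walk either passes straight through $S_r$ (opposite sides) or turns in $S_r$ (adjacent sides). For $S_1$ the entry edge is the part of $\{0\}\times[0,1]$ on its boundary, and for $S_l$ the exit edge is the part of $\{1\}\times[0,1]$; if $l=1$ these are the two boundary edges of the single square. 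Inside a straight square I would produce $N-2$ sub-walks by Lemma \ref{l:through}, and inside a turning square by Lemma \ref{l:turn}, in each case taking $F^*$ to be the unique member of $\mathcal{F}$ in $S_r$ (if there is one).

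To make the pieces glue, I first fix, for every edge $e$ used by the walk, the set of level $k+1$ squares through which the sub-walks cross $e$. Because $S_r$ and $S_{r+1}$ are abutting squares of the same level, their subdivisions are aligned along $e$, so a crossing square just to the left of $e$ (in $S_r$) abuts the crossing square in the same row just to the right of $e$ (in $S_{r+1}$); hence it suffices to prescribe, on each vertical edge, a set $R_e$ of $N-2$ rows, and on each horizontal edge a set of $N-2$ columns, and to use these on both sides. The two lemmas impose exactly the following local requirement, which depends only on the orientation of the edge and not on the direction of travel: on a vertical entry/exit edge of $S_r$ one must avoid the row of $F^*$, and on a horizontal one the column of $F^*$. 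This is literally the hypothesis of Lemma \ref{l:through} when the walk is straight, and, read in the appropriate rotated or reflected frame, of Lemma \ref{l:turn} when it turns. Now each edge $e$ borders at most two walk-squares, each contributing at most one forbidden row (resp. column), so at most two indices must be excluded from $R_e$; since $R_e$ omits exactly two of the $N$ indices anyway, such a choice always exists (fill any remaining omissions arbitrarily).

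With the crossing sets chosen, I apply Lemma \ref{l:through} or Lemma \ref{l:turn} inside each $S_r$ with $\mathcal{S}$ equal to the crossing set on the entry edge and $\mathcal{T}$ equal to the crossing set on the exit edge; by the previous paragraph the relevant row/column of $F^*$ misses $\mathcal{S}\cup\mathcal{T}$ in the straight case, and misses $\mathcal{S}$ and $\mathcal{T}$ in the turning case, so the hypotheses hold and we obtain $N-2$ non-overlapping sub-walks of level $k+1$ inside $S_r$ avoiding $F^*$. Finally I concatenate: the last square of a sub-walk in $S_r$ lies in some row of $R_e$ on the $S_r$-side of the exit edge $e$, and it abuts the first square, in the same row, of a unique sub-walk in $S_{r+1}$, so following these chains from $S_1$ to $S_l$ yields $N-2$ sequences of abutting level $k+1$ squares. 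Their first squares meet $\{0\}\times[0,1]$ (they lie in $S_1$) and their last squares meet $\{1\}\times[0,1]$ (they lie in $S_l$), so these are genuine walks of level $k+1$; they avoid $\mathcal{F}$ and are pairwise non-overlapping, since pieces coming from distinct $S_r$ lie in interior-disjoint level $k$ squares, while pieces inside a single $S_r$ are non-overlapping by the lemmas.

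The routine part is the case analysis and the verification that concatenated pieces abut; the one point needing care is the simultaneous bookkeeping on shared edges, namely choosing each $R_e$ so that it both matches on the two sides and dodges the forbidden rows/columns of both neighbouring squares. The key numerical coincidence making this possible is that the hypothesis ``each $S_r$ contains at most one member of $\mathcal{F}$'' forces at most two exclusions per edge, exactly the slack available in passing from $N$ to $N-2$; this is precisely why the two auxiliary lemmas are stated with $N-2$ walks and with a single forbidden square.
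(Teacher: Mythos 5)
Your proof is correct and follows essentially the same route as the paper's: apply Lemma \ref{l:through} or Lemma \ref{l:turn} inside each $S_r$, choosing the crossing set on each shared edge to omit the row (resp.\ column) of the forbidden squares of both adjacent walk-squares, which costs at most the two exclusions available in passing from $N$ to $N-2$. Your version merely spells out the edge bookkeeping that the paper leaves to the reader.
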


\begin{proof}
We may assume that each $S_r$ contains exactly $1$ member of
$\mathcal{F}$. Let us denote the member of $\mathcal{F}$ in $S_r$ by
$F^*_r$. The sub-walks will be constructed separately in each $S_r$,
using an appropriately rotated or reflected version of either Lemma~\ref{l:turn} or
Lemma~\ref{l:through}. It suffices to construct
$\mathcal{S}_r$ and $\mathcal{T}_r$ for every $r$ (compatible with
$F^*_r$) so that for every member of $\mathcal{T}_r$ there is an
abutting member of $\mathcal{S}_{r+1}$. (Of course we also have to
make sure that every member of $\mathcal{S}_1$ intersects $\{ 0 \}
\times [0,1]$ and every member of $\mathcal{T}_l$ intersects $\{ 1
\} \times [0,1]$.)  For example, the construction of $\mathcal{T}_r$
for $r < l$ is as follows. The squares $S_r$ and $S_{r+1}$ share a
common edge $E$. Assume for simplicity that $E$ is horizontal. Then
$\mathcal{T}_r$ will consist of those sub-squares of $S_r$ of level
$k+1$ that intersect $E$ and whose column differs from that of
$F^*_r$ and $F^*_{r+1}$. If these two columns happen to coincide
then we can arbitrarily erase one more square. The remaining
constructions are similar and the details are left to the reader.
\end{proof}

\bigskip

\begin{definition} We say that a square in $M_k$ is
\emph{$1$-full} if it contains at least $N^2-1$ many sub-squares from
$M_{k+1}$. We say that it is \emph{${m}$-full}, if it contains at
least $N^2-1$ many $m-1$-full sub-squares from $M_{k+1}$. We call $M$
\emph{full} if $M_{0}$ is $m$-full for every $m\in \mathbb{N}^{+}$.
\end{definition}

The following lemma was the key realization in \cite{CH}.

\begin{lemma}
\label{lfull} There is a $p^{(N)}<1$ so that $P \left(M^{(p,N)} \textrm{ is full}\right)>0$ for all $p>p^{(N)}$.
\end{lemma}

See \cite{CH} or \cite[Prop. 15.5]{F} for the proof.

\bigskip

\begin{definition}
Let $L\leq N$ be positive integers. A compact set $K \subseteq
[0,1]$ is called $(L,N)$-regular if it is of the form $K =
\bigcap_{i \in \mathbb{N}} K_i$, where $K_0 = [0,1]$, and $K_{k+1}$
is obtained by dividing every interval $I$ in $K_k$ into $N$ many
non-overlapping closed intervals of length $1/N^{k+1}$, and choosing
$L$ many of them for each $I$.
\end{definition}

The following fact is well-known, see e.g. the more general \cite[Thm.~9.3]{F}.

\begin{fact}
\label{LN}
An $(L,N)$-regular compact set has Hausdorff dimension $\frac{\log L}{\log N}$.
\end{fact}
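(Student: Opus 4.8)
The plan is to establish $s := \frac{\log L}{\log N}$ as both an upper and a lower bound for $\dim_H K$. The only features of the construction I would use are that every surviving interval at level $k$ has length exactly $N^{-k}$ and splits into exactly $L$ children, so strict self-similarity is not needed and one need not verify any open set condition. The key numerical identity is $N^{-s} = 1/L$, equivalently $L^{-k} = (N^{-k})^s$ for every $k \in \NN$, which is what makes the two matching estimates possible.

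For the upper bound I would simply use the natural covering. At level $k$ the set $K$ is contained in the union of the $L^k$ intervals constituting $K_k$, each of diameter $N^{-k}$. Hence for every $\delta \ge N^{-k}$,
\[
\mathcal{H}^s_\delta(K) \le \sum_{I \in K_k} (\diam I)^s = L^k (N^{-k})^s = 1,
\]
using $L^{-k} = (N^{-k})^s$. Letting $k \to \infty$ gives $\mathcal{H}^s(K) \le 1 < \infty$, so $\dim_H K \le s$.

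For the lower bound I would apply the mass distribution principle. Define a Borel probability measure $\mu$ supported on $K$ by declaring $\mu(I) = L^{-k}$ for every level-$k$ interval $I$; this is consistent, since each of the $L$ children of a parent inherits an $L$-th of its mass, and it extends to a measure on $K$. The crucial geometric observation is that for small $r > 0$, choosing $k$ with $N^{-(k+1)} \le r < N^{-k}$, any interval of length $2r < 2N^{-k}$ can meet at most three of the pairwise non-overlapping level-$k$ intervals (they all lie in a region of length $2r + 2N^{-k} < 4N^{-k}$, which can contain at most three interior-disjoint intervals of length $N^{-k}$). Therefore, using $N^{-k} \le Nr$,
\[
\mu(B(x,r)) \le 3 L^{-k} = 3 (N^{-k})^s \le 3 (Nr)^s = 3 N^s r^s
\]
for every $x$. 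By the mass distribution principle this yields $\mathcal{H}^s(K) \ge \mu(K)/(3N^s) > 0$, hence $\dim_H K \ge s$, and combining the two estimates gives $\dim_H K = s$.

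The covering upper bound is entirely routine; the only step requiring care is the geometric counting in the lower bound, namely verifying that a short interval meets only boundedly many level-$k$ pieces. This is where the lack of strict self-similarity matters: the $L$ intervals chosen inside a single parent may be adjacent and are not uniformly separated, so one cannot simply invoke a clean self-similar-set dimension theorem. However, the homogeneous Moran structure — constant length $N^{-k}$ and constant branching $L$ at every level — is exactly what makes the Frostman-type density estimate above go through, so I expect no genuine obstacle beyond this bookkeeping.
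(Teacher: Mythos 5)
Your proof is correct. The paper itself gives no argument for this fact: it simply remarks that it is well known and cites a more general theorem of Falconer, so your write-up is a genuine, self-contained alternative rather than a reproduction of the paper's route. What you do is the standard two-sided estimate for homogeneous Moran constructions: the natural level-$k$ covering combined with the identity $L^{-k}=(N^{-k})^{s}$ gives $\mathcal{H}^{s}(K)\le 1$, and the mass distribution principle with the uniformly distributed net measure $\mu(I)=L^{-k}$ gives the matching lower bound. Your geometric counting is right: the level-$k$ intervals are non-overlapping of length $N^{-k}$, so any ball of radius $r<N^{-k}$ meets at most three of them, whence $\mu(B(x,r))\le 3L^{-k}\le 3N^{s}r^{s}$; you correctly note that adjacency of siblings is the reason one cannot just quote a self-similar-set theorem with a separation condition, and that the constant branching and constant scale are exactly what the Frostman estimate needs. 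The only step you wave at is the existence of the extension of $\mu$ from the net of construction intervals to a Borel measure (and the harmless issue that adjacent closed intervals share endpoints), but this is the standard mass distribution construction and does not affect the bound $\mu(I)\le L^{-k}$ used in the estimate. What your approach buys is a completely elementary and verifiable proof; what the paper's citation buys is brevity and access to the more general Moran-type statement.
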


\bigskip

Next we prove the main result of the present subsection.

\begin{proof}[Proof of Theorem~\ref{t:lower}]
Let $d \in [0, 2)$ be arbitrary. First we verify that, for
sufficiently large $N$, if $M = M^{(p,N)}$ is full then $\dim_{tH} M
> d$. The strategy is as follows. We define a collection $\mathcal{G}$ of
disjoint connected subsets of $M$ such that if a set intersects each
member of $\mathcal{G}$ then its Hausdorff dimension is larger than
$d-1$. Then we show that for every countable open basis
$\mathcal{U}$ of $M$ the union of the boundaries, $\bigcup_{U\in
\mathcal{U}}
\partial_M U$ intersects each member of $\mathcal{G}$, which clearly
implies $\dim_{tH} M
> d$.

Let us fix an integer $N$ such that
\begin{equation}
\label{e:Nbig}
N \ge 6 \textrm{ and } \frac {\log (N-2)}{\log N} > d-1,
\end{equation}
and let us assume that $M$ is full.
Using Lemma~\ref{lfull} at each step we can choose
$N-2$ non-overlapping walks of level $1$ in $M_1$, then $N-2$
non-overlapping walks of level $2$ in $M_2$ in each of the above walks, etc.
Let us denote the obtained system at step $k$ by
\[
\mathcal{G}_k = \left\{ \Gamma_{i_1,\dots ,i_k} : (i_1,\dots,i_k)
\in \{1, \dots, N-2\}^{k} \right\},
\]
where $\Gamma_{i_1,\dots ,i_k}$ is the union of the squares of the
corresponding walk. (Set $\mathcal{G}_0 = \{ \Gamma_\emptyset \} =
\{ [0,1]^2 \}$.) Let us also put
\[
C_{k}= \left\{ y \in [0,1] : (0,y) \in \bigcup
\mathcal{G}_{k}\right\}
\]
and define
\[
C = \bigcap_{k\in \mathbb{N}} C_k.
\]
Then clearly $C$ is an $(N-2,N)$-regular compact set, therefore Fact~\ref{LN} yields that $\dim_H C = \frac{\log (N-2)}{\log N} > d-1$. As $C\setminus \mathbb{Q}\neq \emptyset$,
we have $\dim_H (C \setminus \mathbb{Q})=\dim_H C> d-1$.

For every $y\in C \setminus \mathbb{Q}$ and every $k \in \mathbb{N}$
there is a unique $(i_1, \dots, i_k)$ such that $(0,y)\in \Gamma_{
i_1, \dots, i_k }$. (For a $y$ of the form $\frac{i}{N^l}$ there may
be two such $(i_1, \dots, i_k)$, and we would like to avoid this
complication.) Put $\Gamma_k(y) = \Gamma_{ i_1, \dots, i_k }$ and
$\Gamma(y) = \bigcap_{k=1}^{\infty} \Gamma_{k}(y)$. Since
$\Gamma(y)$ is a decreasing intersection of compact connected sets,
it is itself connected (\cite{Eng}). (Actually, it is a continuous
curve, but we will not need this here.) It is also easy to see that
it intersects $\{0\}\times [0,1]$ and $\{1\}\times [0,1]$.

We can now define
\[
\mathcal{G} = \{ \Gamma(y) :  y \in C \setminus \mathbb{Q} \}.
\]

Next we prove that $\mathcal{G}$ consists of disjoint sets. Let
$y,y'\in C\setminus \mathbb{Q}$ be distinct. Pick $l\in \mathbb{N}$
so large such that $|y-y'|>\frac{6}{N^{l}}$. Then there are at least
$5$ intervals of level $l$ between $y$ and $y'$. Since we always
chose $N-2$ intervals out of $N$ along the construction, there can
be at most $4$ consecutive non-selected intervals, therefore there
is a $\Gamma_{ i_1, \dots, i_l }$ separating $y$ and $y'$. But then
this also separates $\Gamma_l(y)$ and $\Gamma_l(y')$, hence
$\Gamma(y)$ and $\Gamma(y')$ are disjoint.

Now we check that for every $y\in C\setminus\mathbb{Q}$ and every
countable open basis $\mathcal{U}$ of $M$ the set $\bigcup_{U\in \mathcal{U}}
\partial_M U$ intersects $\Gamma(y)$. Let $z_0\in \Gamma(y)$ and
$U_0\in \mathcal{U}$ be such that $z_0\in U_0$ and $\Gamma(y)
\nsubseteq U_0$. Then $\partial_M U_0$ must intersect $\Gamma(y)$,
since otherwise $\Gamma(y)=(\Gamma(y)\cap U_0)\cup (\Gamma(y)\cap
\inter_M (M \setminus U_0))$, hence a connected set would be the
union of two non-empty disjoint relatively open sets, a
contradiction.

Thus, as explained in the first paragraph of the proof, it is sufficient to
prove that if a set $Z$ intersects every $\Gamma(y)$ then $\dim_H Z > d-1$.
This is easily seen to hold if we can construct an onto Lipschitz map
\[
\varphi \colon  \bigcup \mathcal{G} \to C \setminus \mathbb{Q}
\]
that is constant on every member of $\mathcal{G}$, since Lipschitz
maps do not increase Hausdorff dimension, and $\dim_H (C \setminus
\mathbb{Q}) > d - 1$. Define
\[
\varphi (z) = y \textrm{ if } z \in \Gamma(y),
\]
which is well-defined by the disjointness of the members of
$\mathcal{G}$.

Let us now prove that this map is Lipschitz. Let $y, y' \in C
\setminus \mathbb{Q}$, $z \in \Gamma(y)$, and $z' \in \Gamma(y')$.
Choose $l \in \mathbb{N}^{+}$ such that $\frac{1}{N^l} < |y-y'| \leq
\frac{1}{N^{l-1}}$. Then using $N \ge 6$ we obtain $|y-y'| >
\frac{6}{N^{l+1}}$, thus, as above, there is a walk of level $l+1$
separating $z$ and $z'$. Therefore $|z - z'| \geq \frac
{1}{N^{l+1}}$, and hence
\[
|\varphi(z) - \varphi(z')| = |y-y'| \leq \frac{1}{N^{l-1}} = N^{2} \frac
{1}{N^{l+1}} \leq N^{2} |z - z'|,
\]
therefore  $\varphi$ is Lipschitz with Lipschitz constant at most $N^{2}$.

To finish the proof, let $n$ be given as in Theorem~\ref{t:lower}
and pick $k\in \mathbb{N}$ so large that $N = n^{2^{k}}$ satisfies
(\ref{e:Nbig}). If $p > p^{(N)}$ then using Lemma~\ref{lfull} we
deduce that
\[
P \left( \dim_{tH} M^{(p,N)} > d \right)
\geq P \left( M^{(p,N)} \textrm{ is full } \right) > 0,
\]
which implies $p_c^{(d,N)} < 1$. Iterating  $k$ times  Lemma~\ref{ln2} we
infer $p_c^{(d,n)} < 1$.

Now, if $p > p_c^{(d,n)}$ then
\[
P \left( \dim_{tH} M^{(p,n)} > d \, \Big| \,
M^{(p,n)} \neq \emptyset \right) \geq P (\dim_{tH} M^{(p,n)} > d )>0.
\]
Combining this with Lemma~\ref{thomog} we deduce that
\[
P\left(\dim_{tH} M^{(p,n)} > d \, \Big| \, M^{(p,n)} \neq \emptyset \right) = 1,
\]
which completes the proof of the theorem.
\end{proof}

\begin{remark}
It is well-known and not difficult to see that $\mathop{\lim}\limits_{p\to 1} P(
M^{(p,n)} = \emptyset ) = 0$. Using this it is an easy consequence of the
previous theorem that for every integer $n > 1$, $d < 2$ and $\varepsilon > 0$
there exists a $\delta = \delta^{(n, d, \varepsilon)} > 0$ such that for all $p
> 1 - \delta$
\[
P\left(\dim_{tH} M^{(p,n)} > d \right)>1-\varepsilon.
\]
\end{remark}

\subsection{The upper estimate of $\dim_{tH} M$}

The argument of this subsection will rely on some ideas from \cite{CH}.

\begin{theorem}
\label{upper}
If $p> \frac{1}{\sqrt{n}}$ then almost surely
\[
\dim_{tH} M \leq 2 + 2 \frac{\log p}{\log n}.
\]
\end{theorem}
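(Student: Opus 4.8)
The plan is to apply the characterisation of the topological Hausdorff dimension in Theorem \ref{t:tHdecomp2}: since $M$ is compact, it suffices to exhibit a set $A \subseteq M$ with $M \setminus A$ totally disconnected and $\dim_H A \le 1 + 2\frac{\log p}{\log n}$ almost surely, for then $\dim_{tH}M \le \dim_H A + 1 \le 2 + 2\frac{\log p}{\log n}$. Throughout write $s = \frac{\log p}{\log n}$, so that the hypothesis $p > \frac{1}{\sqrt n}$ is exactly $1 + 2s > 0$, keeping us in the regime where the exponent $1+2s$ below is a legitimate Hausdorff dimension. For a vertical grid line $L = \{x = c\}$ with $c = i/n^j$ an $n$-adic rational, let
\[
A_L = L \cap \cl\!\big(M \cap \{x<c\}\big) \cap \cl\!\big(M \cap \{x>c\}\big),
\]
the points of $M$ on $L$ approached by $M$ from \emph{both} sides, and define $A_L$ analogously for horizontal grid lines. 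Set $A = \bigcup_L A_L$, the union over the countably many $n$-adic grid lines; clearly $A \subseteq M$.

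First I would show $M \setminus A$ is totally disconnected. Let $K \subseteq M \setminus A$ be connected with at least two points and pick $z_1,z_2 \in K$ differing, say, in their $x$-coordinate; choose an $n$-adic $c$ strictly between $x(z_1)$ and $x(z_2)$, with grid line $L = \{x=c\}$. Then $K$ meets both $\{x<c\}$ and $\{x>c\}$, so connectedness forces $K\cap L \neq \emptyset$ and in fact forces some $w \in K \cap L$ to lie simultaneously in the closures of $K \cap \{x<c\}$ and $K \cap \{x>c\}$. As $K \subseteq M$, this $w$ lies in $A_L \subseteq A$, contradicting $K \subseteq M \setminus A$. The one delicate point is excluding that $K$ runs \emph{along} $L$; this is where I would invoke that $M \cap L$ is a one-dimensional fractal percolation set and hence almost surely contains no nondegenerate interval, so that assigning each remaining point of $(M\cap L)\setminus A_L$ to the unique side from which it is approached produces a genuine relatively clopen separation of $M \setminus A$.

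The heart of the matter is the estimate $\dim_H A_L \le 1 + 2s$. Fix a vertical grid line $L$ of level $j$ and, for each $m$, cover $A_L$ at scale $n^{-m}$ by those level-$m$ subintervals $I$ of $L$ whose two neighbouring level-$m$ squares both belong to $M_m$; any point approached from both sides lies in such an interval. The key computation is that the left and right neighbours of $I$ share their ancestry only down to level $j-1$ and then evolve independently, so both lie in $M_m$ with probability $p^{\,j-1}\cdot p^{\,2(m-j+1)} = p^{\,2m-j+1}$. As $L$ carries $n^m$ such intervals, the number $Y_m$ of doubly surviving ones has expectation $E[Y_m] = p^{\,1-j}(np^2)^m$, so for any $t > 1+2s$,
\[
\sum_{m} E\big[\,Y_m\, n^{-mt}\,\big] = p^{\,1-j}\sum_{m} n^{\,m(1+2s-t)} < \infty .
\]
Hence $Y_m n^{-mt} \to 0$ almost surely, and since $\mathcal{H}^t_{n^{-m}}(A_L) \le Y_m n^{-mt}$ we obtain $\mathcal{H}^t(A_L) = 0$ almost surely. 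Letting $t \downarrow 1+2s$ through $t = 1+2s+\tfrac1r$ and then taking the union over the countably many grid lines, countable stability of the Hausdorff dimension yields $\dim_H A \le 1 + 2s$ almost surely. Combined with the total disconnectedness of $M \setminus A$ and Theorem \ref{t:tHdecomp2}, this gives the claim.

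I expect the main obstacle to be the bookkeeping in the dimension estimate — precisely, recognising that \emph{along a fixed grid line} the two flanking columns split at the level of that line and are independent below it, which is what upgrades the naive slice exponent $1+s$ to $1+2s$ by producing an extra factor $p$ per level. The topological step is conceptually routine but genuinely needs the stated care with points approached from only one side; the entire quantitative gain over the trivial bound $\dim_{tH}M \le \dim_H M = 2+s$ comes from the independence of the two sides of a grid line.
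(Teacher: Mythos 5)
Your proof is correct, and it reaches the bound by a genuinely different route from the paper. The paper works with the local (basis) definition: for every basic grid segment $S$ it constructs an arc $\gamma$ near $S$ joining its endpoints so that $\gamma\cap M$ is exactly the limit set of the induced one-dimensional percolation with parameter $p^2$ (arcs are routed around the ``good'' subsegments through the erased squares), and these arcs are assembled into a basis of approximate squares with boundaries of dimension at most $1+2\frac{\log p}{\log n}$. You instead invoke the global decomposition of Theorem \ref{t:tHdecomp2}, taking $A$ to be the set of grid-line points approached by $M$ from both sides. The probabilistic heart is identical in the two arguments --- the two columns flanking a grid line of level $j$ branch independently below level $j-1$, so survival of both costs $p^2$ per level and the exponent $1+2\frac{\log p}{\log n}$ appears either as the dimension of a $p^2$-percolation limit set (paper) or via your first-moment computation $E[Y_m]=p^{1-j}(np^2)^m$. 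What your route buys is the elimination of the arc construction; what it costs is that the totally-disconnectedness of $M\setminus A$ must be argued, and this is slightly more delicate than ``assign each one-sided point to its side'': a connected $K\subseteq M\setminus A$ meeting both open half-planes decomposes as $\cl_K(K\cap U)\cup\cl_K(K\cap V)\cup W'$, where $W'$ consists of points of $K\cap L$ approached by $M$ from \emph{neither} side, and if $W'\neq\emptyset$ one must use that $M\cap L$ is almost surely compact and zero-dimensional (it is covered by the two one-sided traces, each a one-dimensional percolation of dimension $1+\frac{\log p}{\log n}<1$, hence contains no interval) to extract a relatively clopen piece of $K$ inside $W'$ and contradict connectedness. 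You correctly identified this as the delicate point and named the right tool, so I regard it as fillable detail rather than a gap; with that step written out, the argument is a clean alternative proof of Theorem \ref{upper}.
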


\begin{proof}
A segment is called a \emph{basic segment} if it is of the form
$\left[\frac{i-1}{n^{k}},\frac{i}{n^{k}}\right] \times
\{\frac{j}{n^k}\}$ or $\{\frac{j}{n^k} \} \times
\left[\frac{i-1}{n^{k}},\frac{i}{n^{k}}\right]$, where $k\in
\mathbb{N}^{+}$, $i \in\{ 1,...,n^{k} \}$ and $j \in\{ 1,...,n^{k} -
1 \}$.

It suffices to show that for every basic segment $S$ and for every
$\varepsilon>0$ there exists (almost surely, a random)  arc $\gamma
\subseteq [0,1]^2$ connecting the endpoints of $S$ in the
$\varepsilon$-neighborhood of $S$ such that $\dim_H \left( M \cap
\gamma \right) \le 1+2\frac{\log p}{\log n}$. Indeed, we can almost
surely construct the analogous arcs for all basic segments, and
hence obtain a basis of $M$ consisting of `approximate squares'
whose boundaries are of Hausdorff dimension at most $1+2\frac{\log
p}{\log n}$, therefore $\dim_{tH} M \leq 2+2\frac{\log p}{\log n}$
almost surely.

\placedrawing[ht]{
\begin{picture}(100,42)
\thinlines
\drawpath{410.0}{58.0}{446.0}{58.0}
\drawpath{418.0}{58.0}{418.0}{94.0}
\drawpath{390.0}{94.0}{354.0}{94.0}
\drawpath{328.0}{94.0}{328.0}{58.0}
\thicklines
\drawpath{302.0}{26.0}{316.0}{26.0}
\thinlines
\drawshadebox{278.0}{280.0}{282.0}{28.0}{30.0}{32.0}{}{0.47}
\drawshadebox{258.0}{260.0}{262.0}{28.0}{30.0}{32.0}{}{0.47}
\thicklines
\drawthickdot{34.0}{26.0}
\thinlines
\drawpath{34.0}{46.0}{34.0}{4.0}
\drawpath{70.0}{46.0}{70.0}{4.0}
\drawpath{70.0}{26.0}{34.0}{26.0}
\drawcenteredtext{78.0}{26.0}{$(\frac{i}{n^k},\frac{j}{n^k})$}
\drawshadebox{34.0}{36.0}{38.0}{26.0}{28.0}{30.0}{}{0.47}
\drawshadebox{38.0}{40.0}{42.0}{26.0}{28.0}{30.0}{}{0.47}
\drawshadebox{38.0}{40.0}{42.0}{22.0}{24.0}{26.0}{}{0.47}
\drawshadebox{58.0}{60.0}{62.0}{26.0}{28.0}{30.0}{}{0.47}
\drawshadebox{58.0}{60.0}{62.0}{22.0}{24.0}{26.0}{}{0.47}
\thicklines
\path(58.0,26.0)(58.0,26.0)(57.86,26.06)(57.75,26.15)(57.62,26.22)(57.5,26.29)(57.4,26.38)(57.26,26.43)(57.16,26.52)(57.04,26.58)
\path(57.04,26.58)(56.91,26.65)(56.8,26.72)(56.68,26.77)(56.55,26.83)(56.44,26.9)(56.3,26.95)(56.19,27.02)(56.08,27.06)(55.94,27.11)
\path(55.94,27.11)(55.83,27.18)(55.72,27.22)(55.58,27.27)(55.47,27.31)(55.36,27.36)(55.22,27.4)(55.11,27.43)(55.0,27.5)(54.86,27.52)
\path(54.86,27.52)(54.75,27.56)(54.62,27.61)(54.5,27.63)(54.4,27.68)(54.26,27.7)(54.16,27.74)(54.04,27.75)(53.9,27.79)(53.79,27.79)
\path(53.79,27.79)(53.68,27.83)(53.54,27.86)(53.43,27.88)(53.29,27.9)(53.18,27.9)(53.08,27.93)(52.93,27.93)(52.83,27.95)(52.72,27.97)
\path(52.72,27.97)(52.58,27.97)(52.47,27.97)(52.36,27.99)(52.24,27.99)(52.11,27.99)(52.0,28.0)(51.88,27.99)(51.75,27.99)(51.63,27.99)
\path(51.63,27.99)(51.52,27.97)(51.4,27.97)(51.27,27.97)(51.15,27.95)(51.04,27.93)(50.9,27.93)(50.79,27.9)(50.68,27.9)(50.54,27.88)
\path(50.54,27.88)(50.43,27.86)(50.31,27.83)(50.18,27.79)(50.08,27.79)(49.95,27.75)(49.83,27.74)(49.72,27.7)(49.59,27.68)(49.47,27.63)
\path(49.47,27.63)(49.36,27.61)(49.24,27.56)(49.11,27.52)(49.0,27.5)(48.88,27.43)(48.75,27.4)(48.63,27.36)(48.52,27.31)(48.4,27.27)
\path(48.4,27.27)(48.27,27.22)(48.15,27.18)(48.04,27.11)(47.9,27.06)(47.79,27.02)(47.68,26.95)(47.54,26.9)(47.43,26.83)(47.31,26.77)
\path(47.31,26.77)(47.18,26.72)(47.08,26.65)(46.95,26.58)(46.83,26.52)(46.72,26.43)(46.59,26.38)(46.47,26.29)(46.36,26.22)(46.22,26.15)
\path(46.22,26.15)(46.11,26.06)(46.0,26.0)(46.0,26.0)
\path(42.0,26.0)(42.0,26.0)(42.04,25.95)(42.08,25.9)(42.11,25.88)(42.15,25.84)(42.18,25.81)(42.22,25.77)(42.27,25.72)(42.29,25.7)
\path(42.29,25.7)(42.36,25.65)(42.4,25.63)(42.43,25.59)(42.47,25.56)(42.52,25.54)(42.54,25.5)(42.58,25.49)(42.61,25.45)(42.68,25.43)
\path(42.68,25.43)(42.72,25.4)(42.75,25.38)(42.79,25.36)(42.83,25.33)(42.86,25.31)(42.9,25.29)(42.93,25.27)(43.0,25.25)(43.04,25.22)
\path(43.04,25.22)(43.08,25.2)(43.11,25.18)(43.15,25.15)(43.18,25.15)(43.22,25.13)(43.27,25.11)(43.29,25.11)(43.36,25.09)(43.4,25.09)
\path(43.4,25.09)(43.43,25.06)(43.47,25.06)(43.52,25.04)(43.54,25.04)(43.58,25.04)(43.61,25.02)(43.68,25.02)(43.72,25.0)(43.75,25.0)
\path(43.75,25.0)(43.79,25.0)(43.83,25.0)(43.86,25.0)(43.9,25.0)(43.93,25.0)(44.0,25.0)(44.04,25.0)(44.06,25.0)(44.11,25.0)
\path(44.11,25.0)(44.15,25.0)(44.18,25.0)(44.22,25.0)(44.27,25.0)(44.29,25.02)(44.36,25.02)(44.38,25.04)(44.43,25.04)(44.47,25.04)
\path(44.47,25.04)(44.52,25.06)(44.54,25.06)(44.58,25.09)(44.61,25.09)(44.68,25.11)(44.7,25.11)(44.75,25.13)(44.79,25.15)(44.83,25.15)
\path(44.83,25.15)(44.86,25.18)(44.9,25.2)(44.93,25.22)(45.0,25.25)(45.02,25.27)(45.06,25.29)(45.11,25.31)(45.15,25.33)(45.18,25.34)
\path(45.18,25.34)(45.22,25.38)(45.27,25.4)(45.29,25.43)(45.34,25.45)(45.38,25.49)(45.43,25.5)(45.47,25.54)(45.5,25.56)(45.54,25.59)
\path(45.54,25.59)(45.58,25.63)(45.61,25.65)(45.65,25.7)(45.7,25.72)(45.75,25.77)(45.79,25.79)(45.83,25.84)(45.86,25.88)(45.9,25.9)
\path(45.9,25.9)(45.93,25.95)(45.97,26.0)(46.0,26.0)
\path(34.0,26.0)(34.0,26.0)(34.04,25.95)(34.08,25.9)(34.11,25.88)(34.15,25.84)(34.18,25.81)(34.22,25.77)(34.27,25.72)(34.29,25.7)
\path(34.29,25.7)(34.36,25.65)(34.4,25.63)(34.43,25.59)(34.47,25.56)(34.52,25.54)(34.54,25.5)(34.58,25.49)(34.61,25.45)(34.68,25.43)
\path(34.68,25.43)(34.72,25.4)(34.75,25.38)(34.79,25.36)(34.83,25.33)(34.86,25.31)(34.9,25.29)(34.93,25.27)(35.0,25.25)(35.04,25.22)
\path(35.04,25.22)(35.08,25.2)(35.11,25.18)(35.15,25.15)(35.18,25.15)(35.22,25.13)(35.27,25.11)(35.29,25.11)(35.36,25.09)(35.4,25.09)
\path(35.4,25.09)(35.43,25.06)(35.47,25.06)(35.52,25.04)(35.54,25.04)(35.58,25.04)(35.61,25.02)(35.68,25.02)(35.72,25.0)(35.75,25.0)
\path(35.75,25.0)(35.79,25.0)(35.83,25.0)(35.86,25.0)(35.9,25.0)(35.93,25.0)(36.0,25.0)(36.04,25.0)(36.06,25.0)(36.11,25.0)
\path(36.11,25.0)(36.15,25.0)(36.18,25.0)(36.22,25.0)(36.27,25.0)(36.29,25.02)(36.36,25.02)(36.38,25.04)(36.43,25.04)(36.47,25.04)
\path(36.47,25.04)(36.52,25.06)(36.54,25.06)(36.58,25.09)(36.61,25.09)(36.68,25.11)(36.7,25.11)(36.75,25.13)(36.79,25.15)(36.83,25.15)
\path(36.83,25.15)(36.86,25.18)(36.9,25.2)(36.93,25.22)(37.0,25.25)(37.02,25.27)(37.06,25.29)(37.11,25.31)(37.15,25.33)(37.18,25.34)
\path(37.18,25.34)(37.22,25.38)(37.27,25.4)(37.29,25.43)(37.34,25.45)(37.38,25.49)(37.43,25.5)(37.47,25.54)(37.5,25.56)(37.54,25.59)
\path(37.54,25.59)(37.58,25.63)(37.61,25.65)(37.65,25.7)(37.7,25.72)(37.75,25.77)(37.79,25.79)(37.83,25.84)(37.86,25.88)(37.9,25.9)
\path(37.9,25.9)(37.93,25.95)(37.97,26.0)(38.0,26.0)
\thinlines
\drawcenteredtext{26.0}{26.0}{$(\frac{i-1}{n^k},\frac{j}{n^k})$}
\drawthickdot{70.0}{26.0}
\drawshadebox{62.0}{64.0}{66.0}{26.0}{28.0}{30.0}{}{0.47}
\drawshadebox{66.0}{68.0}{70.0}{22.0}{24.0}{26.0}{}{0.47}
\thicklines
\path(62.0,26.0)(62.0,26.0)(62.04,25.95)(62.08,25.9)(62.11,25.88)(62.16,25.84)(62.19,25.81)(62.22,25.77)(62.26,25.72)(62.3,25.7)
\path(62.3,25.7)(62.36,25.65)(62.37,25.63)(62.44,25.59)(62.47,25.56)(62.5,25.54)(62.55,25.5)(62.58,25.49)(62.62,25.45)(62.66,25.43)
\path(62.66,25.43)(62.72,25.4)(62.75,25.38)(62.8,25.36)(62.83,25.33)(62.86,25.31)(62.91,25.29)(62.94,25.27)(63.0,25.25)(63.04,25.22)
\path(63.04,25.22)(63.08,25.2)(63.11,25.18)(63.16,25.15)(63.19,25.15)(63.22,25.13)(63.26,25.11)(63.3,25.11)(63.36,25.09)(63.37,25.09)
\path(63.37,25.09)(63.44,25.06)(63.47,25.06)(63.5,25.04)(63.55,25.04)(63.58,25.04)(63.62,25.02)(63.66,25.02)(63.72,25.0)(63.75,25.0)
\path(63.75,25.0)(63.8,25.0)(63.83,25.0)(63.86,25.0)(63.91,25.0)(63.94,25.0)(64.0,25.0)(64.04,25.0)(64.08,25.0)(64.11,25.0)
\path(64.11,25.0)(64.12,25.0)(64.19,25.0)(64.22,25.0)(64.26,25.0)(64.3,25.02)(64.36,25.02)(64.37,25.04)(64.44,25.04)(64.47,25.04)
\path(64.47,25.04)(64.5,25.06)(64.55,25.06)(64.58,25.09)(64.62,25.09)(64.66,25.11)(64.72,25.11)(64.75,25.13)(64.79,25.15)(64.83,25.15)
\path(64.83,25.15)(64.86,25.18)(64.91,25.2)(64.94,25.22)(65.0,25.25)(65.04,25.27)(65.08,25.29)(65.11,25.31)(65.12,25.33)(65.19,25.34)
\path(65.19,25.34)(65.22,25.38)(65.26,25.4)(65.3,25.43)(65.36,25.45)(65.37,25.49)(65.41,25.5)(65.47,25.54)(65.5,25.56)(65.55,25.59)
\path(65.55,25.59)(65.58,25.63)(65.62,25.65)(65.66,25.7)(65.72,25.72)(65.75,25.77)(65.79,25.79)(65.83,25.84)(65.86,25.88)(65.91,25.9)
\path(65.91,25.9)(65.94,25.95)(66.0,26.0)(66.0,26.0)
\path(66.0,26.0)(66.0,26.0)(66.04,26.02)(66.08,26.06)(66.11,26.11)(66.16,26.15)(66.19,26.18)(66.22,26.22)(66.26,26.25)(66.3,26.29)
\path(66.3,26.29)(66.36,26.31)(66.37,26.36)(66.44,26.38)(66.47,26.4)(66.5,26.43)(66.55,26.47)(66.58,26.5)(66.62,26.52)(66.66,26.54)
\path(66.66,26.54)(66.72,26.58)(66.75,26.61)(66.8,26.63)(66.83,26.65)(66.86,26.68)(66.91,26.68)(66.94,26.72)(67.0,26.75)(67.04,26.75)
\path(67.04,26.75)(67.08,26.77)(67.11,26.79)(67.16,26.81)(67.19,26.83)(67.22,26.84)(67.26,26.86)(67.3,26.88)(67.36,26.88)(67.37,26.9)
\path(67.37,26.9)(67.44,26.9)(67.47,26.93)(67.5,26.93)(67.55,26.93)(67.58,26.93)(67.62,26.95)(67.66,26.97)(67.72,26.97)(67.75,26.97)
\path(67.75,26.97)(67.8,26.99)(67.83,26.99)(67.86,26.99)(67.91,26.99)(67.94,26.99)(68.0,27.0)(68.04,26.99)(68.08,26.99)(68.11,26.99)
\path(68.11,26.99)(68.12,26.99)(68.19,26.99)(68.22,26.97)(68.26,26.97)(68.3,26.97)(68.36,26.95)(68.37,26.93)(68.44,26.93)(68.47,26.93)
\path(68.47,26.93)(68.5,26.93)(68.55,26.9)(68.58,26.9)(68.62,26.88)(68.66,26.88)(68.72,26.86)(68.75,26.84)(68.79,26.83)(68.83,26.81)
\path(68.83,26.81)(68.86,26.79)(68.91,26.77)(68.94,26.75)(69.0,26.75)(69.04,26.72)(69.08,26.68)(69.11,26.68)(69.12,26.65)(69.19,26.63)
\path(69.19,26.63)(69.22,26.61)(69.26,26.58)(69.3,26.54)(69.36,26.52)(69.37,26.5)(69.41,26.47)(69.47,26.43)(69.5,26.4)(69.55,26.38)
\path(69.55,26.38)(69.58,26.36)(69.62,26.31)(69.66,26.29)(69.72,26.25)(69.75,26.22)(69.79,26.18)(69.83,26.15)(69.86,26.11)(69.91,26.06)
\path(69.91,26.06)(69.94,26.02)(70.0,26.0)(70.0,26.0)
\drawpath{38.0}{26.0}{42.0}{26.0}
\drawpath{58.0}{26.0}{62.0}{26.0}
\drawpath{38.0}{26.0}{42.0}{26.0}
\drawshadebox{34.0}{36.0}{38.0}{30.0}{32.0}{34.0}{}{0.2}
\drawshadebox{34.0}{36.0}{38.0}{34.0}{36.0}{38.0}{}{0.2}
\drawshadebox{38.0}{40.0}{42.0}{34.0}{36.0}{38.0}{}{0.2}
\drawshadebox{38.0}{40.0}{42.0}{30.0}{32.0}{34.0}{}{0.2}
\drawshadebox{42.0}{44.0}{46.0}{34.0}{36.0}{38.0}{}{0.2}
\drawshadebox{42.0}{44.0}{46.0}{30.0}{32.0}{34.0}{}{0.2}
\drawshadebox{42.0}{44.0}{46.0}{26.0}{28.0}{30.0}{}{0.2}
\drawshadebox{34.0}{36.0}{38.0}{22.0}{24.0}{26.0}{}{0.2}
\drawshadebox{42.0}{44.0}{46.0}{22.0}{24.0}{26.0}{}{0.2}
\drawshadebox{34.0}{36.0}{38.0}{18.0}{20.0}{22.0}{}{0.2}
\drawshadebox{38.0}{40.0}{42.0}{18.0}{20.0}{22.0}{}{0.2}
\drawshadebox{42.0}{44.0}{46.0}{18.0}{20.0}{22.0}{}{0.2}
\drawshadebox{34.0}{36.0}{38.0}{14.0}{16.0}{18.0}{}{0.2}
\drawshadebox{38.0}{40.0}{42.0}{14.0}{16.0}{18.0}{}{0.2}
\drawshadebox{42.0}{44.0}{46.0}{14.0}{16.0}{18.0}{}{0.2}
\drawshadebox{58.0}{60.0}{62.0}{30.0}{32.0}{34.0}{}{0.2}
\drawshadebox{58.0}{60.0}{62.0}{34.0}{36.0}{38.0}{}{0.2}
\drawshadebox{62.0}{64.0}{66.0}{34.0}{36.0}{38.0}{}{0.2}
\drawshadebox{62.0}{64.0}{66.0}{30.0}{32.0}{34.0}{}{0.2}
\drawshadebox{66.0}{68.0}{70.0}{34.0}{36.0}{38.0}{}{0.2}
\drawshadebox{66.0}{68.0}{70.0}{30.0}{32.0}{34.0}{}{0.2}
\drawshadebox{66.0}{68.0}{70.0}{26.0}{28.0}{30.0}{}{0.2}
\drawshadebox{62.0}{64.0}{66.0}{22.0}{24.0}{26.0}{}{0.2}
\drawshadebox{58.0}{60.0}{62.0}{18.0}{20.0}{22.0}{}{0.2}
\drawshadebox{62.0}{64.0}{66.0}{18.0}{20.0}{22.0}{}{0.2}
\drawshadebox{66.0}{68.0}{70.0}{18.0}{20.0}{22.0}{}{0.2}
\drawshadebox{58.0}{60.0}{62.0}{14.0}{16.0}{18.0}{}{0.2}
\drawshadebox{62.0}{64.0}{66.0}{14.0}{16.0}{18.0}{}{0.2}
\drawshadebox{66.0}{68.0}{70.0}{14.0}{16.0}{18.0}{}{0.2}
\drawthickdot{34.0}{26.0}
\drawthickdot{70.0}{26.0}
\path(34.0,26.0)(34.0,26.0)(34.04,25.95)(34.08,25.9)(34.11,25.88)(34.15,25.84)(34.18,25.81)(34.22,25.77)(34.27,25.72)(34.31,25.7)
\path(34.31,25.7)(34.36,25.65)(34.4,25.63)(34.43,25.59)(34.47,25.56)(34.5,25.54)(34.54,25.5)(34.59,25.49)(34.63,25.45)(34.68,25.43)
\path(34.68,25.43)(34.72,25.4)(34.75,25.38)(34.79,25.36)(34.83,25.33)(34.86,25.31)(34.9,25.29)(34.95,25.27)(35.0,25.25)(35.04,25.22)
\path(35.04,25.22)(35.08,25.2)(35.11,25.18)(35.15,25.15)(35.18,25.15)(35.22,25.13)(35.27,25.11)(35.31,25.11)(35.36,25.09)(35.4,25.09)
\path(35.4,25.09)(35.43,25.06)(35.47,25.06)(35.5,25.04)(35.54,25.04)(35.59,25.04)(35.63,25.02)(35.68,25.02)(35.72,25.0)(35.75,25.0)
\path(35.75,25.0)(35.79,25.0)(35.83,25.0)(35.86,25.0)(35.9,25.0)(35.95,25.0)(36.0,25.0)(36.04,25.0)(36.08,25.0)(36.11,25.0)
\path(36.11,25.0)(36.15,25.0)(36.18,25.0)(36.22,25.0)(36.27,25.0)(36.31,25.02)(36.36,25.02)(36.4,25.04)(36.43,25.04)(36.47,25.04)
\path(36.47,25.04)(36.5,25.06)(36.54,25.06)(36.59,25.09)(36.63,25.09)(36.68,25.11)(36.72,25.11)(36.75,25.13)(36.79,25.15)(36.83,25.15)
\path(36.83,25.15)(36.86,25.18)(36.9,25.2)(36.95,25.22)(37.0,25.25)(37.04,25.27)(37.08,25.29)(37.11,25.31)(37.15,25.33)(37.18,25.34)
\path(37.18,25.34)(37.22,25.38)(37.27,25.4)(37.31,25.43)(37.36,25.45)(37.4,25.49)(37.43,25.5)(37.47,25.54)(37.5,25.56)(37.54,25.59)
\path(37.54,25.59)(37.59,25.63)(37.63,25.65)(37.68,25.7)(37.72,25.72)(37.75,25.77)(37.79,25.79)(37.83,25.84)(37.86,25.88)(37.9,25.9)
\path(37.9,25.9)(37.95,25.95)(38.0,26.0)(38.0,26.0)
\path(42.0,26.0)(42.0,26.0)(42.04,26.02)(42.08,26.06)(42.11,26.11)(42.15,26.15)(42.18,26.18)(42.22,26.22)(42.27,26.25)(42.31,26.29)
\path(42.31,26.29)(42.36,26.31)(42.4,26.36)(42.43,26.38)(42.47,26.4)(42.5,26.45)(42.54,26.47)(42.59,26.5)(42.63,26.52)(42.68,26.56)
\path(42.68,26.56)(42.72,26.59)(42.75,26.61)(42.79,26.63)(42.83,26.65)(42.86,26.68)(42.9,26.7)(42.95,26.72)(43.0,26.75)(43.04,26.75)
\path(43.04,26.75)(43.08,26.77)(43.11,26.79)(43.15,26.81)(43.18,26.84)(43.22,26.84)(43.27,26.86)(43.31,26.88)(43.36,26.88)(43.4,26.9)
\path(43.4,26.9)(43.43,26.9)(43.47,26.93)(43.5,26.93)(43.54,26.95)(43.59,26.95)(43.63,26.95)(43.68,26.97)(43.72,26.97)(43.75,26.97)
\path(43.75,26.97)(43.79,26.99)(43.83,26.99)(43.86,26.99)(43.9,26.99)(43.95,26.99)(44.0,27.0)(44.04,26.99)(44.08,26.99)(44.11,26.99)
\path(44.11,26.99)(44.15,26.99)(44.18,26.99)(44.22,26.97)(44.27,26.97)(44.31,26.97)(44.36,26.95)(44.4,26.95)(44.43,26.95)(44.47,26.93)
\path(44.47,26.93)(44.5,26.93)(44.54,26.9)(44.59,26.9)(44.63,26.88)(44.68,26.88)(44.72,26.86)(44.75,26.84)(44.79,26.84)(44.83,26.81)
\path(44.83,26.81)(44.86,26.79)(44.9,26.77)(44.95,26.75)(45.0,26.75)(45.04,26.72)(45.08,26.7)(45.11,26.68)(45.15,26.65)(45.18,26.63)
\path(45.18,26.63)(45.22,26.61)(45.27,26.59)(45.31,26.56)(45.36,26.52)(45.4,26.5)(45.43,26.47)(45.47,26.45)(45.5,26.4)(45.54,26.38)
\path(45.54,26.38)(45.59,26.36)(45.63,26.31)(45.68,26.29)(45.72,26.25)(45.75,26.22)(45.79,26.18)(45.83,26.15)(45.86,26.11)(45.9,26.06)
\path(45.9,26.06)(45.95,26.02)(46.0,26.0)(46.0,26.0)
\path(62.0,26.0)(62.0,26.0)(62.01,25.95)(62.08,25.9)(62.11,25.88)(62.16,25.84)(62.19,25.81)(62.23,25.77)(62.26,25.72)(62.3,25.7)
\path(62.3,25.7)(62.36,25.65)(62.37,25.63)(62.44,25.59)(62.47,25.56)(62.51,25.54)(62.55,25.5)(62.58,25.49)(62.62,25.45)(62.66,25.43)
\path(62.66,25.43)(62.72,25.4)(62.75,25.38)(62.8,25.36)(62.83,25.33)(62.87,25.31)(62.91,25.29)(62.94,25.27)(63.0,25.25)(63.01,25.22)
\path(63.01,25.22)(63.08,25.2)(63.11,25.18)(63.16,25.15)(63.19,25.15)(63.23,25.13)(63.26,25.11)(63.3,25.11)(63.36,25.09)(63.37,25.09)
\path(63.37,25.09)(63.44,25.06)(63.47,25.06)(63.51,25.04)(63.55,25.04)(63.58,25.04)(63.62,25.02)(63.66,25.02)(63.72,25.0)(63.75,25.0)
\path(63.75,25.0)(63.8,25.0)(63.83,25.0)(63.87,25.0)(63.91,25.0)(63.94,25.0)(64.0,25.0)(64.01,25.0)(64.08,25.0)(64.11,25.0)
\path(64.11,25.0)(64.16,25.0)(64.19,25.0)(64.23,25.0)(64.26,25.0)(64.3,25.02)(64.36,25.02)(64.37,25.04)(64.44,25.04)(64.47,25.04)
\path(64.47,25.04)(64.51,25.06)(64.55,25.06)(64.58,25.09)(64.62,25.09)(64.66,25.11)(64.72,25.11)(64.75,25.13)(64.8,25.15)(64.83,25.15)
\path(64.83,25.15)(64.87,25.18)(64.91,25.2)(64.94,25.22)(65.0,25.25)(65.01,25.27)(65.08,25.29)(65.11,25.31)(65.16,25.33)(65.19,25.34)
\path(65.19,25.34)(65.23,25.38)(65.26,25.4)(65.3,25.43)(65.36,25.45)(65.37,25.49)(65.44,25.5)(65.47,25.54)(65.51,25.56)(65.55,25.59)
\path(65.55,25.59)(65.58,25.63)(65.62,25.65)(65.66,25.7)(65.72,25.72)(65.75,25.77)(65.8,25.79)(65.83,25.84)(65.87,25.88)(65.91,25.9)
\path(65.91,25.9)(65.94,25.95)(66.0,26.0)(66.0,26.0)
\path(66.0,26.0)(66.0,26.0)(66.05,26.02)(66.15,26.06)(66.23,26.11)(66.3,26.15)(66.37,26.18)(66.44,26.22)(66.54,26.25)(66.61,26.29)
\path(66.61,26.29)(66.68,26.31)(66.76,26.36)(66.83,26.38)(66.9,26.4)(66.97,26.45)(67.04,26.47)(67.11,26.5)(67.16,26.52)(67.23,26.56)
\path(67.23,26.56)(67.3,26.59)(67.37,26.61)(67.44,26.63)(67.5,26.65)(67.55,26.68)(67.62,26.7)(67.68,26.72)(67.75,26.75)(67.8,26.75)
\path(67.8,26.75)(67.86,26.77)(67.91,26.79)(67.98,26.81)(68.04,26.84)(68.08,26.84)(68.15,26.86)(68.19,26.88)(68.25,26.88)(68.3,26.9)
\path(68.3,26.9)(68.36,26.9)(68.41,26.93)(68.44,26.93)(68.51,26.95)(68.55,26.95)(68.58,26.95)(68.65,26.97)(68.69,26.97)(68.73,26.97)
\path(68.73,26.97)(68.79,26.99)(68.83,26.99)(68.87,26.99)(68.91,26.99)(68.94,26.99)(69.0,27.0)(69.01,26.99)(69.05,26.99)(69.11,26.99)
\path(69.11,26.99)(69.15,26.99)(69.18,26.99)(69.22,26.97)(69.26,26.97)(69.29,26.97)(69.3,26.95)(69.36,26.95)(69.37,26.95)(69.41,26.93)
\path(69.41,26.93)(69.44,26.93)(69.48,26.9)(69.51,26.9)(69.51,26.88)(69.55,26.88)(69.58,26.86)(69.61,26.84)(69.62,26.84)(69.66,26.81)
\path(69.66,26.81)(69.68,26.79)(69.69,26.77)(69.72,26.75)(69.75,26.75)(69.76,26.72)(69.76,26.7)(69.8,26.68)(69.8,26.65)(69.83,26.63)
\path(69.83,26.63)(69.83,26.61)(69.87,26.59)(69.87,26.56)(69.87,26.52)(69.91,26.5)(69.91,26.47)(69.93,26.45)(69.94,26.4)(69.94,26.38)
\path(69.94,26.38)(69.94,26.36)(69.94,26.31)(69.97,26.29)(69.98,26.25)(69.98,26.22)(69.98,26.18)(69.98,26.15)(69.98,26.11)(69.98,26.06)
\path(69.98,26.06)(69.98,26.02)(70.0,26.0)(70.0,26.0)
\drawpath{58.0}{26.0}{62.0}{26.0}
\drawpath{62.0}{26.0}{58.0}{26.0}
\end{picture}
}
{Construction of the arc $\gamma$
connecting the endpoints of $S$}{fig:percol2}

Let us now construct such an arc $\gamma$ for $S$ and $\varepsilon >
0$. We may assume that $S$ is horizontal, hence it is of the form $S
= \left[\frac{i-1}{n^{k}},\frac{i}{n^{k}}\right] \times
\{\frac{j}{n^k}\}$ for some $k\in \mathbb{N}^{+}$, $i \in\{
1,...,n^{k} \}$ and $j \in\{ 1,...,n^{k} - 1 \}$.

We divide $S$ into $n$ subsegments of length $\frac{1}{n^{k+1}}$,
and we call a subsegment
$\left[\frac{m-1}{n^{k+1}},\frac{m}{n^{k+1}}\right] \times
\{\frac{j}{n^k} \}$ \emph{bad} if both the adjacent squares
$\left[\frac{m-1}{n^{k+1}},\frac{m}{n^{k+1}}\right] \times [
\frac{j}{n^k} - \frac{1}{n^{k+1}}, \frac{j}{n^k}]$ and
$\left[\frac{m-1}{n^{k+1}},\frac{m}{n^{k+1}}\right] \times
[\frac{j}{n^k}, \frac{j}{n^k} + \frac{1}{n^{k+1}}]$ are in
$M_{k+1}$. Otherwise we say that the subsegment is \emph{good}. Let
$B_1$ denote the union of the bad segments. Then inside every bad
segment we repeat the same procedure, and obtain $B_2$ and so on. It
is easy to see that this process is (a scaled copy of) the
1-dimensional fractal percolation with $p$ replaced by $p^2$. Let $B
= \bigcap_l B_l$ be its limit set. Then by Remark~\ref{r:ketto}
(note that $p^2 > \frac1n$) we obtain $\dim_H B = 1 + \frac{\log
p^2}{\log n} = 1 + 2 \frac{\log p}{\log n}$ or $B = \emptyset$
almost surely. So it suffices to construct a $\gamma$ connecting the
endpoints of $S$ in the $\varepsilon$-neighborhood of $S$ such that
$\gamma \cap M = B$ (except perhaps some endpoints, but all the
endpoints form a countable set, hence a set of Hausdorff dimension
0). (See Figure~\ref{fig:percol2}.)

But this is easily done. Indeed, for every good subsegment $I$ let
$\gamma_I$ be an arc connecting the endpoints of $I$ in a small neighborhood
of $I$ such that $\gamma$ is
disjoint from $M$ apart from the endpoints (this is possible, since either the
top or the bottom square was erased from $M$). Then $\gamma = \left( \bigcup_{I \textrm{
 is good}} \ \gamma_I \right) \cup B$ works.
\end{proof}

Using Remarks~\ref{r:egy} and \ref{r:ketto} this easily implies

\begin{corollary}
\label{c:tH<H}
Almost surely
\[
\dim_{tH} M < \dim_{H} M \textrm{ or } M = \emptyset.
\]
\end{corollary}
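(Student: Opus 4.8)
The plan is to split into cases according to the value of $p$ and combine the three facts already established: Remark \ref{r:egy} (which tells us when $M=\emptyset$ almost surely and that $\dim_{tH} M = 0$ in a low range of $p$), Remark \ref{r:ketto} (the almost sure value of $\dim_H M$), and the upper bound of Theorem \ref{upper}. The observation used throughout is that, since $p \in (0,1)$ and $n \ge 2$, we have $\log p < 0 < \log n$, so $\frac{\log p}{\log n} < 0$; this single sign condition is what makes every comparison below strict.

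First I would dispose of the range $p \le \frac{1}{n^2}$: here Remark \ref{r:egy} gives $M = \emptyset$ almost surely, so the disjunct $M = \emptyset$ holds and there is nothing to prove. Next, for $\frac{1}{n^2} < p \le \frac{1}{\sqrt n}$, Remark \ref{r:egy} yields $\dim_{tH} M = 0$ almost surely, while Remark \ref{r:ketto} gives $\dim_H M = 2 + \frac{\log p}{\log n}$ almost surely on $\{M \neq \emptyset\}$. Since $p > \frac{1}{n^2}$ forces $\frac{\log p}{\log n} > -2$, we get $\dim_H M > 0 = \dim_{tH} M$ on this event, which is exactly the desired conclusion.

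The main case is $p > \frac{1}{\sqrt n}$. Here Theorem \ref{upper} applies and gives $\dim_{tH} M \le 2 + 2\frac{\log p}{\log n}$ almost surely, while Remark \ref{r:ketto} still gives $\dim_H M = 2 + \frac{\log p}{\log n}$ almost surely on $\{M \neq \emptyset\}$. Because $\frac{\log p}{\log n} < 0$, we have $2\frac{\log p}{\log n} < \frac{\log p}{\log n}$, hence
\[
\dim_{tH} M \le 2 + 2\frac{\log p}{\log n} < 2 + \frac{\log p}{\log n} = \dim_H M
\]
almost surely on $\{M \neq \emptyset\}$. As $p$ is fixed, exactly one of the three cases applies, and in each case we have produced the claimed almost sure statement, so the corollary follows.

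I do not expect a genuine obstacle: all the substance is already contained in Theorem \ref{upper} and Remark \ref{r:ketto}, and what remains is a clean case analysis together with the elementary inequality $2\frac{\log p}{\log n} < \frac{\log p}{\log n}$. The only point needing a little care is that the strictness of the final inequality must survive even though Theorem \ref{upper} provides merely an \emph{upper} bound on $\dim_{tH} M$; this is fine precisely because the two affine expressions in $\frac{\log p}{\log n}$ are strictly separated whenever $\frac{\log p}{\log n} \neq 0$, i.e. whenever $p \neq 1$.
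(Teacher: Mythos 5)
Your proof is correct and follows exactly the route the paper intends: the paper derives Corollary \ref{c:tH<H} in one line from Theorem \ref{upper} together with Remarks \ref{r:egy} and \ref{r:ketto}, and your case analysis with the strict inequality $2\frac{\log p}{\log n} < \frac{\log p}{\log n}$ (valid since $\log p<0$) is precisely the intended computation spelled out.
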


\begin{remark}
Calculating the exact value of $\dim_{tH} M$ seems to be difficult, since it would provide
the value of the critical probability $p_c$ of Chayes, Chayes and
Durrett (where the phase transition occurs, see above), and this is a
long-standing open problem.
\end{remark}

\section{Application II: The Hausdorff dimension of the level sets of
the generic continuous function}\label{s:application}

Now we return to Problem \ref{p:Buczo}.  The main goal is to find
analogues to Kirchheim's theorem, that is, to determine the Hausdorff
dimension of the level sets of the generic continuous function defined on a
compact metric space $K$.

Let us first note that the case $\dim_t K =0$, that is, when there is a basis
consisting of clopen sets is trivial because of the following well-known and easy
fact. For a short proof see \cite[Lemma~2.6]{BBE2}.

\begin{fact} \label{fact} If $K$ is a  compact metric space with $\dim_t K =0$
  then the generic continuous function is one-to-one on $K$.
\end{fact}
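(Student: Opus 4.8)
The plan is to exhibit the set $\mathrm{Inj}=\{f\in C(K): f \text{ is one-to-one}\}$ as a dense $G_\delta$ in the complete metric space $C(K)$, which by Baire's Theorem is co-meager, i.e. generic. First I would decompose injectivity scale by scale: for $n\in\mathbb{N}^+$ let $d$ denote the metric of $K$ and set
\[
A_n=\{f\in C(K): f(x)\neq f(y)\ \text{whenever}\ d(x,y)\geq 1/n\}.
\]
Since any two distinct points of $K$ lie at distance $\geq 1/n$ for some $n$, we have $\mathrm{Inj}=\bigcap_{n} A_n$, so it suffices to prove that each $A_n$ is open and dense.

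Openness is the easy half and uses only compactness. The set $D_n=\{(x,y)\in K\times K: d(x,y)\geq 1/n\}$ is compact, so for $f\in A_n$ the continuous function $(x,y)\mapsto|f(x)-f(y)|$ attains a positive minimum $\rho$ on $D_n$; then every $g$ with $\|g-f\|_\infty<\rho/2$ satisfies $|g(x)-g(y)|>0$ on $D_n$ and hence lies in $A_n$.

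The substantive step is density, and this is where I would invoke $\dim_t K=0$. Given $f$ and $\varepsilon>0$, uniform continuity yields $\delta>0$ with $d(x,y)<\delta\Rightarrow|f(x)-f(y)|<\varepsilon/4$. Because $K$ has a basis of clopen sets and is compact, I can cover it by clopen sets of diameter $<\min(\delta,1/n)$ and disjointify them into a finite clopen partition $K=K_1\sqcup\dots\sqcup K_m$. On each piece I choose $x_i\in K_i$, put $c_i=f(x_i)$, so that $f(K_i)\subseteq[c_i-\varepsilon/4,\,c_i+\varepsilon/4]$, and define the perturbation piecewise — which is continuous precisely because the $K_i$ are clopen — by the affine maps $g|_{K_i}(x)=c_i'+\lambda\bigl(f(x)-c_i\bigr)$. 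Here the new centres $c_i'$ are taken distinct with $|c_i'-c_i|<\varepsilon/2$, and $\lambda\in(0,1)$ is chosen so small that $\lambda\varepsilon/2<\min_{i\neq j}|c_i'-c_j'|$.

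The key idea, and the part I expect to be the only genuine obstacle, is that one cannot separate the images $f(K_i)$ by translation alone: if the centres $c_i$ coincide and $m$ is large, there is simply no room to place $m$ disjoint images inside a window of radius $\varepsilon$. The scaling factor $\lambda$ resolves this, since it shrinks each $g(K_i)$ into an interval of radius $\lambda\varepsilon/4$ about $c_i'$, and the choice of $\lambda$ forces these intervals to be pairwise disjoint, while the estimate $|g(x)-f(x)|\leq|c_i'-c_i|+(1-\lambda)|f(x)-c_i|<\varepsilon$ keeps $g$ within $\varepsilon$ of $f$. As each $K_i$ has diameter $<1/n$, any pair $(x,y)\in D_n$ lies in two distinct pieces, so $g(x)$ and $g(y)$ sit in disjoint images and $g(x)\neq g(y)$; hence $g\in A_n$. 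This shows $A_n$ is dense, and together with openness it follows that $\mathrm{Inj}=\bigcap_n A_n$ is a dense $G_\delta$, completing the proof.
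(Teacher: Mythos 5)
Your proof is correct and complete: the decomposition $\mathrm{Inj}=\bigcap_n A_n$, the compactness argument for openness, and the clopen-partition-plus-rescaling argument for density all hold up (the scaling by $\lambda$ is indeed the necessary ingredient that translation alone would miss). The paper itself gives no proof of this fact but defers to an external reference, and your argument is essentially the standard one given there, so there is nothing to flag.
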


\begin{corollary}
 If $K$ is a  compact metric space with $\dim_t K = 0$
 then every non-empty level set of the generic continuous function is of Hausdorff dimension $0$.
\end{corollary}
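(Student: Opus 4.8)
The plan is to read this off directly from Fact \ref{fact}, which is the substantial input; the corollary is a short deduction from it. First I would recall that Fact \ref{fact} asserts that for the generic $f \in C(K)$ the map $f$ is one-to-one on $K$, i.e. that the set of injective functions is co-meager in $C(K)$. Since the statement to be proved is itself a property of the generic $f$, and a countable (here finite) intersection of co-meager sets is co-meager, it suffices to verify the conclusion for every one-to-one $f \in C(K)$.

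So the next step is to fix an injective $f \in C(K)$ and an arbitrary $y \in \RR$ with $f^{-1}(y) \neq \emptyset$, and to observe that injectivity forces the fibre $f^{-1}(y)$ to contain exactly one point, hence $f^{-1}(y) = \{x\}$ for some $x \in K$. A singleton has Hausdorff dimension $0$: indeed $\iH^s(\{x\}) = 0$ for every $s > 0$ straight from the definition of the Hausdorff measure, so $\dim_H \{x\} = 0$ (alternatively one may invoke Theorem \ref{<}, since $\dim_t \{x\} = 0$ and $\dim_H \{x\} \le \dim_t\{x\} + \dots$, but the direct computation is cleaner). Therefore every non-empty level set of such an $f$ has Hausdorff dimension $0$.

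Combining the two steps gives the claim: for the generic $f \in C(K)$ — which is one-to-one by Fact \ref{fact} — every non-empty level set is a single point and hence of Hausdorff dimension $0$. I do not expect any genuine obstacle here, as the content is entirely carried by Fact \ref{fact}. The only points worth flagging are bookkeeping ones: first, that the phrase \emph{non-empty} in the statement is exactly what is needed under the paper's convention $\dim_H \emptyset = -1$, so that empty fibres are harmless; and second, that one-to-oneness is precisely the property collapsing each level set to a singleton, which is what makes the Hausdorff dimension trivially $0$.
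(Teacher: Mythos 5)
Your proposal is correct and is exactly the argument the paper intends: the corollary is stated as an immediate consequence of Fact \ref{fact}, with the generic $f$ being one-to-one, so every non-empty fibre is a singleton and hence of Hausdorff dimension $0$. Nothing further is needed.
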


Hence from now on we can restrict our attention to the case of positive
topological dimension.

In the first part of this section we prove Theorem~\ref{ft} and Corollary~\ref{c:ft},
 our main theorems concerning level
sets of the generic function defined on an arbitrary compact metric space,
then we use this to derive conclusions about homogeneous and self-similar
spaces in Theorem~\ref{t:hom} and Corollary~\ref{c:self}.

\subsection{Arbitrary compact metric spaces}

The goal of this subsection is to prove Theorem~\ref{ft}. In order to do this we
will need two equivalent definitions of the topological Hausdorff
dimension.

Let us fix a compact metric space $K$ with $\dim_t K > 0$, and let
$C(K)$ denote the space of continuous real-valued functions
equipped with the supremum norm. Since this is a complete
metric space, we can use Baire category arguments.

\begin{definition} \label{diml} Define
\begin{align*} P_{l}=\{&d\geq 1: \exists G\subseteq K \textrm{ such that } \dim_H G\leq d-1 \textrm{ and}  \\
&\textrm{the generic } f\in C(K) \textrm{ is one-to-one on } K\setminus G\}.
\end{align*}
\end{definition}

\begin{definition} \label{dimn} We say that a continuous function $f$ is \emph{$d$-level
narrow}, if there exists a dense set $S_{f}\subseteq \mathbb{R}$
such that $\dim_{H} f^{-1}(y)\leq d-1$ for every $y \in S_f$. Let
$\mathcal{N}_{d}$ be the set of $d$-level narrow functions. Define
%%%%%%%%%%%%%%%%%%%5
$$P_{n}=\left\{ d: \mathcal{N}_{d} \textrm{ is somewhere dense in }C(K)\right\}.$$
\end{definition}

Now we repeat the definition of the topological Hausdorff dimension.

\begin{definition} \label{PtH} Let $\dim_{tH}K=\inf P_{tH}$, where
%%%%%%%%%%%%%%%%%%%%%
$$P_{tH}=\left\{d: K \textrm{ has a
basis } \mathcal{U} \textrm{ such that }  \dim_{H} \partial {U} \leq
d-1 \textrm{ for every } U\in \mathcal{U}   \right\}.$$
%%%%%%%%%%%%%%%%%%%%%%%%%%%%%
We assume that by definition $\infty \in P_n, P_l, P_{tH}$.
\end{definition}

Now we show the following theorem.

\begin{theorem} \label{t:3P}
If $K$ is a compact metric space with $\dim_{t}K>0$ then
$$P_{tH}=P_{l}=P_{n}.$$
\end{theorem}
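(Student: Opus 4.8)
The strategy is to prove a cycle of inclusions, namely $P_{tH} \subseteq P_l \subseteq P_n \subseteq P_{tH}$, which together yield the three-way equality. Since $\infty$ belongs to all three sets by convention, in each inclusion I may restrict to finite $d$. I would order the work as follows: first the easy set-theoretic inclusion $P_{tH} \subseteq P_l$, then $P_l \subseteq P_n$ as a direct Baire-category unwinding, and finally the substantial inclusion $P_n \subseteq P_{tH}$, which I expect to be the main obstacle.

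For $P_{tH} \subseteq P_l$, suppose $d \in P_{tH}$ with $d < \infty$. By Corollary \ref{c:inf=min} the infimum is attained, so $K$ has a countable basis $\mathcal{U}$ with $\dim_H \partial U \leq d-1$ for every $U \in \mathcal{U}$. Set $G = \bigcup_{U \in \mathcal{U}} \partial U$; countable stability of Hausdorff dimension gives $\dim_H G \leq d-1$. The complement $K \setminus G$ has a basis of relatively clopen sets, so $\dim_t(K \setminus G) \leq 0$, and by Fact \ref{fact} (applied on $K \setminus G$, or rather its completion inside $K$) the generic $f$ is one-to-one there; I would need to check that ``one-to-one off a fixed $G$'' is a co-meager condition on $C(K)$, which follows because it is expressed as a countable intersection of dense open conditions separating the (countably many) basic pieces of $K \setminus G$. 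Thus $d \in P_l$. For $P_l \subseteq P_n$, take $d \in P_l$ witnessed by $G$ with $\dim_H G \leq d-1$ and a co-meager set of $f$'s that are injective on $K \setminus G$. For such an $f$, each level set $f^{-1}(y)$ meets $K \setminus G$ in at most one point, so $f^{-1}(y) \setminus G$ is a single point or empty, giving $\dim_H f^{-1}(y) \leq \max\{\dim_H G, 0\} \leq d-1$ for \emph{every} $y$. Hence every such $f$ is trivially $d$-level narrow (take $S_f = \mathbb{R}$), so $\mathcal{N}_d$ is co-meager, in particular somewhere dense, and $d \in P_n$.

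The crux is $P_n \subseteq P_{tH}$. Here I would take $d \in P_n$, $d < \infty$, so that $\mathcal{N}_d$ is dense in some nonempty open ball $B(f_0, r) \subseteq C(K)$. The goal is to manufacture, for each point $x \in K$ and each neighborhood, an open set $U$ with small boundary $\partial U$ of Hausdorff dimension $\leq d-1$; the natural candidates are the superlevel/sublevel sets $\{g < c\}$ or $\{g > c\}$ of a well-chosen $g \in \mathcal{N}_d$, whose boundaries lie in the level set $g^{-1}(c)$. The plan is: given $x$ and a small ball around it, perturb $f_0$ inside the ball to a $g \in \mathcal{N}_d$ that is ``large'' at $x$ and ``small'' on the boundary sphere of the ball (a standard bump construction, using that $\mathcal{N}_d$ is dense in $B(f_0,r)$ — one first builds an explicit function with the desired geometry, then approximates it by an element of $\mathcal{N}_d$, which is possible because $\mathcal{N}_d$ is dense there and the level-narrowness is inherited under uniform approximation at values in the dense set $S_g$). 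Then for a value $c$ chosen in $S_g$ strictly between the boundary maximum and $g(x)$, the set $U = \{g > c\}$ is an open neighborhood of $x$ contained in the original ball, and $\partial U \subseteq g^{-1}(c)$ with $\dim_H g^{-1}(c) \leq d-1$ since $c \in S_g$. Collecting such $U$ over a countable dense set of centers and radii produces a basis witnessing $d \in P_{tH}$.

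The main obstacle, and the step requiring the most care, is this last construction: I must ensure the perturbation stays inside the prescribed ball so the resulting $U$ is genuinely small, must choose the threshold $c$ in the dense set $S_g$ (so that the level set is controlled) while simultaneously separating $g(x)$ from the supremum of $g$ on the boundary sphere, and must verify that level-narrowness of $g$ survives the approximation and that $\partial U$ is actually captured inside the thin level set rather than leaking. The interplay between the \emph{local} nature of the basis required by $P_{tH}$ and the \emph{global}, density-type information supplied by $P_n$ is exactly where the argument is delicate; compactness of $K$ and the freedom to choose $c$ from the dense set $S_g$ are the tools that make it go through.
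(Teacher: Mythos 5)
Your plan reproduces the paper's proof: the same cycle $P_{tH}\subseteq P_l\subseteq P_n\subseteq P_{tH}$, with the same witnesses (the union of the boundaries of a countable basis for the first inclusion; the observation that each level set is contained in $G$ plus at most one point for the second; a bump perturbation of $f_0$ followed by a sublevel set cut at a value in $S_g$ for the third). However, two steps would fail as literally written, and both are exactly the places where the paper inserts a repair.

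First, in $P_{tH}\subseteq P_l$ your $G=\bigcup_{U\in\mathcal{U}}\partial U$ is an $F_\sigma$ set, so $K\setminus G$ is a zero-dimensional $G_\delta$ set; it has no ``countably many basic pieces'' to separate, and Fact \ref{fact} applies only to compact spaces. The paper first enlarges $G$ to a $G_\delta$ hull of the same Hausdorff dimension, so that $K\setminus G$ becomes $\sigma$-compact, writes it as an increasing union of compacta $K_n$ with $\dim_t K_n\le 0$, applies Fact \ref{fact} to each $K_n$, and transfers co-meagerness back to $C(K)$ via the continuous open restriction maps $R_n(f)=f|_{K_n}$ (Lemma \ref{lem1}); without the hull and the restriction lemma, the co-meagerness of ``injective off $G$'' is not established. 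Second, in $P_n\subseteq P_{tH}$ the set $\{g>c\}$ is in general \emph{not} contained in $U(x,r)$: $g$ is only controlled at $x$ and on the sphere, and $\{g>c\}$ may have components far away. The paper takes $U=g^{-1}\left((-\infty,s)\right)\cap U(x_0,r)$ (opposite sign convention, same idea) and uses the strict inequalities $g(x_0)<s<\min g\left(\partial U(x_0,r)\right)$ with $s\in S_g$ to conclude that $\partial U$ is disjoint from $\partial U(x_0,r)$ and hence contained in $g^{-1}(s)$; this is precisely the point you flag as delicate but do not carry out, and one must also dispose of the degenerate case $\partial U(x_0,r)=\emptyset$, where the ball is clopen and nothing needs to be done. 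With these two repairs your argument coincides with the paper's.
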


Theorem~\ref{t:3P} and Corollary~\ref{c:inf=min} immediately yield two new equivalent
definitions for the topological Hausdorff dimension.

\begin{theorem} \label{t:3=}
If $K$ is a compact metric space with $\dim_{t}K>0$ then
$$\dim_{tH}K=\min P_{l}=\min P_{n}.$$
\end{theorem}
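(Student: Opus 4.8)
The plan is to deduce Theorem \ref{t:3=} from the triple equality $P_{tH}=P_{l}=P_{n}$ of Theorem \ref{t:3P} together with Corollary \ref{c:inf=min}. Indeed, Corollary \ref{c:inf=min} (applied to the separable space $K$) says that the infimum defining the topological Hausdorff dimension is attained, i.e. $\dim_{tH}K=\min P_{tH}$; once the three sets coincide they share this minimum, so $\dim_{tH}K=\min P_{l}=\min P_{n}$, which is exactly the claim. Hence everything rests on proving $P_{tH}=P_{l}=P_{n}$, and I would establish this through the cycle $P_{tH}\subseteq P_{l}\subseteq P_{n}\subseteq P_{tH}$. Throughout I use $P_{tH}=P_{dH}$ from Lemma \ref{l:P=P}, so that $d\in P_{tH}$ means exactly that there is a set $A\subseteq K$ with $\dim_{H}A\leq d-1$ and $\dim_{t}(K\setminus A)\leq 0$; the value $d=\infty$ is trivial by the standing convention, and since $\dim_{t}K>0$ the case $d<1$ is vacuous (it would force $A=\emptyset$, contradicting $\dim_{t}(K\setminus A)=\dim_{t}K>0$), so I may assume $1\leq d<\infty$. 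The inclusion $P_{l}\subseteq P_{n}$ is the easy one: if $G$ witnesses $d\in P_{l}$, then for the generic $f$, which is one-to-one on $K\setminus G$, every level set satisfies $f^{-1}(y)\subseteq G\cup\{\mathrm{pt}\}$, whence $\dim_{H}f^{-1}(y)\leq\max\{\dim_{H}G,0\}\leq d-1$ for all $y$; thus the generic $f$ is $d$-level narrow, $\mathcal{N}_{d}$ is co-meager and a fortiori somewhere dense, giving $d\in P_{n}$.

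For $P_{tH}\subseteq P_{l}$, take $d\in P_{tH}=P_{dH}$ with witness $A$; replacing $A$ by a $G_{\delta}$ hull of the same Hausdorff dimension, I may assume $K\setminus A=\bigcup_{m}F_{m}$ is $\sigma$-compact with each $F_{m}$ compact and zero-dimensional. Setting $G=A$, it suffices to show that the generic $f\in C(K)$ is one-to-one on $K\setminus A$. I would write the injective functions as $\bigcap_{m,n}D_{m,n}$, where $D_{m,n}=\{f\in C(K): f(x)\neq f(y)\text{ whenever }x,y\in F_{m}\text{ and }d(x,y)\geq 1/n\}$. Each $D_{m,n}$ is open (the relevant set of pairs is compact), and the intersection is precisely the set of $f$ injective on $\bigcup_{m}F_{m}=K\setminus A$. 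Density of $D_{m,n}$ is the crux: given $g$ and $\varepsilon>0$, partition the compact zero-dimensional set $F_{m}$ into finitely many clopen pieces $V_{1},\dots,V_{k}$ of diameter $<1/n$, fix disjoint open $O_{i}\supseteq V_{i}$ in $K$ with bump functions $\phi_{i}$ equal to $1$ on $V_{i}$ and supported in $O_{i}$, and set $f=g+\sum_{i}c_{i}\phi_{i}$, choosing $|c_{i}|<\varepsilon$ so that the finitely many compact images $g(V_{i})+c_{i}$ become pairwise disjoint. Then $f\in D_{m,n}$ and $\|f-g\|<\varepsilon$, so $\bigcap_{m,n}D_{m,n}$ is co-meager and $d\in P_{l}$.

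For $P_{n}\subseteq P_{tH}=P_{dH}$, suppose $\mathcal{N}_{d}$ is dense in a ball $B(g_{0},\rho)$ and pick a countable dense $\{f_{k}\}\subseteq\mathcal{N}_{d}\cap B(g_{0},\rho)$. For each pair $x\neq y$ the functions separating $x$ and $y$ form a nonempty open subset of the ball, so the dense family $\{f_{k}\}$ separates the points of $K$. Choose countable dense $T_{f_{k}}\subseteq S_{f_{k}}$ and put $A=\bigcup_{k}f_{k}^{-1}(T_{f_{k}})$; by countable stability $\dim_{H}A\leq d-1$, and passing to a $G_{\delta}$ hull $A'$ of the same dimension makes $K\setminus A'=\bigcup_{m}F_{m}$ $\sigma$-compact. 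For $x\in K\setminus A'$ one has $f_{k}(x)\notin T_{f_{k}}$, so whenever $f_{k}(x)\neq f_{k}(y)$ a value $a\in T_{f_{k}}$ strictly between them gives the set $f_{k}^{-1}((-\infty,a))\cap(K\setminus A')$, which is clopen in $K\setminus A'$ since its boundary lies in $f_{k}^{-1}(a)\subseteq A'$. These countably many clopen sets separate the points of each compact $F_{m}$, so $F_{m}$ embeds in the Cantor cube and is zero-dimensional; by countable stability of topological dimension zero for closed subsets of an $F_{\sigma}$ set, $\dim_{t}(K\setminus A')\leq 0$, whence $d\in P_{dH}=P_{tH}$, closing the cycle.

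I expect the main obstacle to be precisely the non-compactness of $K\setminus A$ in the two genericity/decomposition steps. The unifying device is the passage to a $G_{\delta}$ hull of $A$, which turns $K\setminus A$ into a $\sigma$-compact set $\bigcup_{m}F_{m}$: this is what both lets me keep each Baire-category set $D_{m,n}$ open by working piece-by-piece on the compact $F_{m}$, and lets me upgrade zero-dimensionality of the compact pieces to $\dim_{t}(K\setminus A')\leq 0$ via countable stability. The secondary technical point is the perturbation over a clopen partition producing pairwise disjoint image intervals; it is elementary but is the step that genuinely uses compactness and zero-dimensionality of the $F_{m}$, and it is where one must be careful that the finitely many shifts $c_{i}$ can simultaneously be made small and separating.
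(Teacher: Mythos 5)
Your overall architecture coincides with the paper's: Theorem \ref{t:3=} is deduced from Theorem \ref{t:3P} together with Corollary \ref{c:inf=min}, and Theorem \ref{t:3P} is proved via the cycle $P_{tH}\subseteq P_{l}\subseteq P_{n}\subseteq P_{tH}$; your $P_{l}\subseteq P_{n}$ step is identical to the paper's. There is, however, a genuine gap in your proof of $P_{tH}\subseteq P_{l}$, in the density of $D_{m,n}$. You partition the compact zero-dimensional set $F_m$ into clopen pieces $V_1,\dots,V_k$ of diameter $<1/n$ and claim that constants $c_i$ with $|c_i|<\varepsilon$ can be chosen so that the sets $g(V_i)+c_i$ become pairwise disjoint. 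This fails in general: the images $g(V_i)$ are only known to be compact, and if, say, $g(V_1)=g(V_2)=[0,1]$ (perfectly possible when the $V_i$ are Cantor sets), then no shifts of size $<\varepsilon<1/2$ can separate them; controlling $\diam V_i$ in $K$ gives no control on $\diam g(V_i)$. The standard repair is to refine the partition, using uniform continuity of $g$, until $g$ oscillates by less than $\varepsilon/2$ on each piece, replace $g$ on each piece by a constant, and only then perturb the finitely many \emph{values} to be pairwise distinct --- singletons, unlike large overlapping compacta, can always be separated by arbitrarily small shifts. The paper sidesteps this entirely by quoting Fact \ref{fact} (generic injectivity on a zero-dimensional compact space, proved in \cite{BBE2}) together with the restriction Lemma \ref{lem1}; you chose to reprove that fact inline, and the gap sits exactly there. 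Everything else in this direction (the $G_\delta$ hull, the $\sigma$-compact exhaustion, openness of $D_{m,n}$, the countable intersection) is fine.

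Your proof of $P_{n}\subseteq P_{tH}$ is correct and genuinely different from the paper's. The paper works locally: it fixes $x_0$ and $r$, uses Tietze's theorem to manufacture a function that is small at $x_0$ and large on $\partial U(x_0,r)$, approximates it by a $d$-level narrow $g$, and takes $U=g^{-1}((-\infty,s))\cap U(x_0,r)$ for a suitable $s\in S_g$, thereby exhibiting a basis element with thin boundary directly from Definition \ref{PtH}. You instead work globally through $P_{dH}$: a countable dense subfamily $\{f_k\}$ of $\mathcal{N}_d$ in the ball separates the points of $K$, the union $A$ of the countably many thin level sets over countable dense value sets $T_{f_k}\subseteq S_{f_k}$ has $\dim_H A\le d-1$ by countable stability, and the sublevel sets $f_k^{-1}\left((-\infty,a)\right)$ restricted to the complement of $A$ form a countable separating family of relatively clopen sets, forcing $\dim_t(K\setminus A)\le 0$; Lemma \ref{l:P=P} then gives $d\in P_{tH}$. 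This buys a proof that never has to construct individual basis elements, at the cost of leaning on the decomposition characterization; both routes are of comparable length and difficulty. With the injectivity density argument repaired as above, your proof is complete.
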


Before proving Theorem~\ref{t:3P} we need the following well-known lemma.
For the readers convenience we give its short proof.

\begin{lemma} \label{lem1}

Let $K_1\subseteq K_2$ be compact metric spaces and
%%%%%%%%%%%%%
$$R \colon C(K_2)\rightarrow
 C(K_1), \quad R(f)=f|_{K_1}.$$
 %%%%%%%%%%%%5
If $\mathcal{F}\subseteq C(K_1)$ is co-meager then so is
$R^{-1}(\mathcal{F})\subseteq C(K_2)$.
\end{lemma}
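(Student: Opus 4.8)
The plan is to show that the restriction operator $R$ is a continuous \emph{open} surjection, and then to invoke the elementary fact that the preimage of a co-meager set under a continuous open map is co-meager. The only substantive ingredient will be the openness of $R$, which rests on Tietze's extension theorem; everything else is routine.

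First I would record the easy facts. Since $K_1 \subseteq K_2$ is a compact subset of a metric space, it is closed in $K_2$, so Tietze's theorem applies to functions on $K_1$. The map $R$ is clearly linear, and since $\|R(f)\|_\infty = \sup_{x \in K_1} |f(x)| \le \sup_{x \in K_2} |f(x)| = \|f\|_\infty$, it is $1$-Lipschitz, hence continuous.

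The main step is the openness of $R$, and the key tool is the norm-preserving form of Tietze's theorem: every $g \in C(K_1)$ admits an extension $\tilde g \in C(K_2)$ with $\|\tilde g\|_\infty = \|g\|_\infty$. Using this I would show that for every $f \in C(K_2)$ and every $\varepsilon > 0$ the image $R\big(U(f,\varepsilon)\big)$ contains the ball $U(R(f),\varepsilon) \subseteq C(K_1)$. Indeed, given $g \in C(K_1)$ with $\|g - f|_{K_1}\|_\infty < \varepsilon$, extend the difference $g - f|_{K_1}$ to some $d \in C(K_2)$ with $\|d\|_\infty = \|g - f|_{K_1}\|_\infty < \varepsilon$; then $h := f + d$ satisfies $\|h - f\|_\infty < \varepsilon$ and $R(h) = h|_{K_1} = g$. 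Thus $R$ is open. (Alternatively one may cite the Open Mapping Theorem, as $R$ is a bounded linear surjection of Banach spaces, surjectivity being supplied by Tietze; but the direct argument is cleaner and self-contained.) This openness, together with Tietze-surjectivity, is the part that carries the real content.

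Finally I would combine openness with continuity. A co-meager $\mathcal{F} \subseteq C(K_1)$ contains a dense $G_\delta$ set $\bigcap_{n} U_n$ with each $U_n$ open and dense. For each $n$, the set $R^{-1}(U_n)$ is open by continuity of $R$, and it is dense: for any non-empty open $W \subseteq C(K_2)$ the image $R(W)$ is open and non-empty by openness of $R$, so it meets the dense set $U_n$, yielding a point of $W \cap R^{-1}(U_n)$. Hence $\bigcap_{n} R^{-1}(U_n) \subseteq R^{-1}(\mathcal{F})$ is a dense $G_\delta$, so $R^{-1}(\mathcal{F})$ is co-meager, as required.
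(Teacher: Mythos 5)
Your proposal is correct and follows essentially the same route as the paper: continuity of $R$, openness of $R$ via the (norm-preserving) Tietze extension theorem, and then the standard Baire category argument reducing to a dense $G_\delta$ and using that images of non-empty open sets under the open map $R$ meet dense sets. You merely spell out the openness step, which the paper leaves as ``not difficult to see,'' and run the density argument on each $U_n$ rather than on $\mathcal{F}$ directly; these are inessential variations.
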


\begin{proof}
The map $R$ is clearly continuous. Using the Tietze Extension
Theorem it is not difficult to see  that it is also open. We may
assume that $\mathcal{F}$ is a dense $G_{\delta}$ set in $C(K_1)$.
The continuity of $R$ implies that $R^{-1}(\mathcal{F})$ is also
$G_{\delta}$, thus it is enough to prove that $R^{-1}(\mathcal{F})$
is dense in $C(K_2)$. Let $\mathcal{U} \subseteq C(K_2)$ be
non-empty open, then $R(\mathcal{U}) \subseteq C(K_1)$ is also
non-empty open, hence $R(\mathcal{U}) \cap \mathcal{F} \neq
\emptyset$, and therefore $\mathcal{U} \cap R^{-1}(\mathcal{F}) \neq
\emptyset$.
\end{proof}

Next we prove Theorem~\ref{t:3P}. The proof will consist of three lemmas.

\begin{lemma} $P_{tH}\subseteq P_{l}$.
\end{lemma}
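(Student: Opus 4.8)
The goal is to show $P_{tH}\subseteq P_l$, where $P_{tH}$ is the set of $d$ for which $K$ has a basis with boundaries of Hausdorff dimension at most $d-1$, and $P_l$ is the set of $d\geq 1$ for which there exists $G\subseteq K$ with $\dim_H G\leq d-1$ such that the generic $f\in C(K)$ is one-to-one on $K\setminus G$.

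Here is my plan. Let $d\in P_{tH}$; we may assume $d<\infty$. The key observation is that Lemma \ref{l:P=P} (applied to $K$, which is separable) shows $P_{tH}=P_{dH}$, so $d\in P_{dH}$: there is a set $A\subseteq K$ with $\dim_H A\leq d-1$ and $\dim_t(K\setminus A)\leq 0$. I would like to take $G=A$ and verify that the generic $f\in C(K)$ is injective on $K\setminus A$. Since $\dim_t(K\setminus A)\leq 0$, the subspace $K\setminus A$ is a separable (indeed zero-dimensional) metric space. The natural strategy is to invoke Fact \ref{fact}, which says that if a \emph{compact} metric space has topological dimension zero then the generic continuous function on it is one-to-one, and then transfer this genericity statement from the subspace to $C(K)$ via the restriction map $R\colon C(K)\to C(K\setminus A)$ using Lemma \ref{lem1}.

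The obstacle is that $K\setminus A$ need not be compact, so Fact \ref{fact} does not apply directly, and moreover Lemma \ref{lem1} is stated for a compact subspace $K_1\subseteq K_2$. The remedy is to enlarge $A$ slightly so that its complement becomes well-behaved. Since every set is contained in a $G_\delta$ set of the same Hausdorff dimension, and by \cite[1.2.14.]{Eng} the set $K\setminus A$ (being at most zero-dimensional) is contained in a $G_\delta$ set $B$ with $\dim_t B\leq 0$, I would instead arrange to write $K$ as a countable union of compact pieces on which the argument runs. Concretely: replace $A$ by a $G_\delta$ hull of the same Hausdorff dimension, so that $K\setminus A$ becomes an $F_\sigma$ set $\bigcup_n F_n$ with each $F_n$ compact and $\dim_t F_n\leq 0$ (using countable stability / the $F_\sigma$ structure). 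Each $F_n$ is a compact zero-dimensional metric space, so by Fact \ref{fact} the generic $g\in C(F_n)$ is one-to-one on $F_n$; by Lemma \ref{lem1} applied to $F_n\subseteq K$ the set $\{f\in C(K): f|_{F_n}\text{ is one-to-one}\}$ is co-meager in $C(K)$.

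Finally I intersect over $n$: the countable intersection of these co-meager sets is co-meager, so for the generic $f\in C(K)$ the restriction $f|_{F_n}$ is injective for every $n$. A single $f$ injective on each $F_n$ need not be injective on $\bigcup_n F_n$, so to conclude one must be a little more careful — the standard fix is to also require, generically, that $f$ separate the distinct pieces, e.g.\ by noting that the countably many pairwise ``collision'' conditions $\{f: f(x)=f(y)\text{ for some }x\in F_m,\,y\in F_{n},\, x\neq y\}$ can each be shown to be meager by a small-perturbation argument (adding a generic function that is constant on a neighborhood of $F_m$ and takes a different value near $F_n$). Granting this, the generic $f\in C(K)$ is one-to-one on $K\setminus A=\bigcup_n F_n$, so with $G=A$ we obtain $d\in P_l$, completing the proof. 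The main difficulty I anticipate is exactly this last step of upgrading injectivity on each compact piece to global injectivity on the $F_\sigma$ complement; everything else is a routine transfer of genericity via Lemma \ref{lem1} and Fact \ref{fact}.
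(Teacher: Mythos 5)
Your overall route is the same as the paper's: produce a $G_\delta$ set $G$ with $\dim_H G\le d-1$ whose complement is zero-dimensional and $F_\sigma$, exhaust $K\setminus G$ by compact zero-dimensional pieces, apply Fact \ref{fact} on each piece and pull the genericity back to $C(K)$ via Lemma \ref{lem1}. (Routing the first step through Lemma \ref{l:P=P} instead of directly taking $F=\bigcup_{U\in\iU}\partial U$ for a countable basis is an immaterial difference; do also record that $d\ge 1$, as required by the definition of $P_l$, which follows from $\dim_t K>0$ and Theorem \ref{<}.) The one genuine gap is exactly the step you flag and then leave unproved: passing from ``$f$ is injective on each compact piece'' to ``$f$ is injective on the union.'' The patch you sketch does not work as stated: the compact pieces $F_m$ and $F_n$ of an $F_\sigma$ decomposition need not be disjoint, let alone separated by neighborhoods, so there is in general no continuous function that is constant on a neighborhood of $F_m$ and takes a different value near $F_n$; and even for separated pieces, exhibiting one perturbation does not by itself show the collision set is meager.

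The missing idea is much simpler, and it is what the paper does: since $K\setminus G$ is $F_\sigma$, you may choose the compact sets to be \emph{increasing}, $K_n\subseteq K_{n+1}$ with $\bigcup_n K_n=K\setminus G$ (replace $F_n$ by $F_1\cup\dots\cup F_n$, which is still compact and still has topological dimension at most $0$). Then any two points of $K\setminus G$ lie in a common $K_n$, so injectivity on every $K_n$ immediately gives injectivity on $K\setminus G$, and no separation of pieces is needed. Alternatively, if you keep a non-nested decomposition, the correct way to handle a pair $(m,n)$ is to apply Fact \ref{fact} and Lemma \ref{lem1} to the single compact zero-dimensional space $F_m\cup F_n$ rather than to perturb; but the nested exhaustion makes the whole issue disappear.
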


\begin{proof} Assume $d\in P_{tH}$ and $d<\infty$. Let $\mathcal{U}$ be a countable basis of $K$ such that $\dim_H \partial U\leq d-1$ for all $U\in \mathcal{U}$. Now the assumption $\dim_t K \ge 1$  and Theorem~\ref{<} yield $d\geq \dim_{tH} K\geq 1$.
Let $F=\bigcup_{U\in \mathcal{U}} \partial U$. The countable stability of Hausdorff dimension implies $\dim_H F\leq d-1$.
Then there exists a $G_{\delta}$ set $G\subseteq K$ such that $F\subseteq G$ and $\dim_H G=\dim_H F\leq d-1$. The above definitions clearly imply $\dim_t(K\setminus G)\leq \dim_t(K\setminus F)\leq 0$.

As $K\setminus G$ is $F_{\sigma}$, we can choose compact sets $K_n$ such that $K\setminus G=\bigcup_{n=1}^{\infty} K_n$ and $K_n\subseteq K_{n+1}$ for all $n\in \mathbb{N}^+$. Let $\mathcal{F}_n=\{f\in C(K_n): f \textrm{ is one-to-one}\}$ and let us define $R_n\colon C(K)\to C(K_n)$ as $R_n(f)=f|_{K_n}$ for all $n\in \mathbb{N}^+$. Since $\dim_t K_n\leq \dim_t(K\setminus G)\leq 0$, Fact~\ref{fact} implies that the sets $\mathcal{F}_n\subseteq C(K_n)$ are co-meager. Lemma~\ref{lem1} yields that $R_n^{-1}(\mathcal{F}_n)\subseteq C(K)$ are co-meager, too. As a countable intersection of co-meager sets $\mathcal{F}=\bigcap_{n=1}^{\infty} R_n^{-1}(\mathcal{F}_n)\subseteq C(K)$ is also co-meager. Clearly, every $f\in \mathcal{F}$ is one-to-one on all $K_n$, so $K_n\subseteq K_{n+1}$ $(n\in \mathbb{N}^+)$ yields that $f$ is one-to-one on $\bigcup_{n=1}^{\infty} K_n=K\setminus G$. Hence $d\in P_l$.
\end{proof}

\begin{lemma} $P_{l}\subseteq P_{n}$.
\end{lemma}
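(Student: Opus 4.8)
The plan is to prove something marginally stronger than somewhere density: the very same co-meager family of functions that witnesses $d \in P_l$ is already contained in $\mathcal{N}_d$. First I would dispose of the case $d = \infty$, which belongs to both sets by convention, so assume $d < \infty$; since $d \in P_l$ we also have $d \ge 1$, hence $d-1 \ge 0$. By the definition of $P_l$ there is a set $G \subseteq K$ with $\dim_H G \le d-1$ together with a co-meager set $\mathcal{F} \subseteq C(K)$ such that every $f \in \mathcal{F}$ is one-to-one on $K \setminus G$.

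The key observation is that injectivity on $K \setminus G$ forces \emph{every} level set to be small, not merely a dense family of them. Indeed, fix $f \in \mathcal{F}$ and $y \in \mathbb{R}$. Since $f$ is one-to-one on $K \setminus G$, the intersection $f^{-1}(y) \cap (K \setminus G)$ contains at most one point, so
\[
f^{-1}(y) \subseteq \left(f^{-1}(y) \cap G\right) \cup \{p\}
\]
for some $p \in K$ (the singleton being omitted when no such point exists). As $f^{-1}(y) \cap G \subseteq G$ and a single point has Hausdorff dimension $0$, the finite stability of the Hausdorff dimension yields
\[
\dim_H f^{-1}(y) \le \max\{\dim_H G,\, 0\} \le d-1,
\]
where the last inequality combines $\dim_H G \le d-1$ with $d-1 \ge 0$.

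Since this bound holds for all $y \in \mathbb{R}$, each $f \in \mathcal{F}$ is $d$-level narrow with witness $S_f = \mathbb{R}$, which is certainly dense; thus $\mathcal{F} \subseteq \mathcal{N}_d$. As $\mathcal{F}$ is co-meager, so is $\mathcal{N}_d$, and in particular $\mathcal{N}_d$ is somewhere dense in $C(K)$. Therefore $d \in P_n$, and the inclusion follows. I do not anticipate a genuine obstacle in this direction: the only points requiring care are bookkeeping with the Hausdorff-dimension conventions, namely invoking $d \ge 1$ so that the dimension of the stray single point cannot exceed $d-1$, and noting that an empty level set ($\dim_H \emptyset = -1$) causes no trouble.
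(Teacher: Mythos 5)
Your proposal is correct and follows essentially the same route as the paper: both arguments observe that injectivity on $K\setminus G$ forces every level set to lie in $G$ up to one point, so $\dim_H f^{-1}(y)\le d-1$ for \emph{all} $y$, making $\mathcal{N}_d$ co-meager and hence dense. The only difference is that you spell out the bookkeeping (the case $d=\infty$, the role of $d\ge 1$, and the stray singleton) which the paper leaves implicit.
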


\begin{proof}  Assume $d\in P_{l}$ and $d<\infty$.
By the definition of $P_{l}$, there exists $G\subseteq K$ such that $\dim_H G\leq d-1$ and for the generic $f\in C(K)$ for all $y\in \mathbb{R}$
we have $\#(f^{-1}(y)\setminus G)\leq 1$. Then $\dim_H G\leq d-1$ and $d\geq 1$ yield $\dim_{H}f^{-1}(y)\leq d-1$, so
$\mathcal{N}_{d}$ is co-meager, thus (everywhere) dense. Hence $d\in P_{n}$.
\end{proof}

\begin{lemma}
$P_{n}\subseteq P_{tH}$.
\end{lemma}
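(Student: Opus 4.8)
The plan is to reach $P_{tH}$ through its decomposition form. By Lemma \ref{l:P=P} applied to $X=K$ (which is non-empty separable metric since $\dim_t K>0$) we have $P_{tH}=P_{dH}$, so to show $d\in P_{tH}$ for a fixed $d\in P_n$ it is enough to exhibit a set $A\subseteq K$ with $\dim_{H}A\le d-1$ and $\dim_{t}(K\setminus A)\le 0$. We may assume $d<\infty$, and also $d\ge 1$: if $d<1$ then each $f\in\mathcal{N}_d$ satisfies $\dim_{H}f^{-1}(y)\le d-1<0$, hence $f^{-1}(y)=\emptyset$, for every $y\in S_f$, so $f$ omits a dense set of values and is therefore constant on every connected component of $K$. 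Since $\dim_t K>0$, the space $K$ is not totally disconnected and has a non-degenerate component, on which no such $f$ separates points; this will contradict the extraction below, so in that case $\mathcal{N}_d$ is nowhere dense and the inclusion is vacuous.

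First I would extract a \emph{countable point-separating} family from the somewhere-density hypothesis. Fix a ball $U(f_0,r)$ in which $\mathcal{N}_d$ is dense and let $\{g_n\}_{n}\subseteq \mathcal{N}_d\cap U(f_0,r)$ be a countable dense subset (available since $C(K)$ is separable). For any two distinct $p,q\in K$ the set $\{g\in U(f_0,r):g(p)\neq g(q)\}$ is open and dense in $U(f_0,r)$ — density because one may add to any $g$ a small Urysohn bump equal to $1$ at $p$ and $0$ at $q$, rescaled to stay inside the ball — so it contains some $g_n$. Hence $\{g_n\}$ separates the points of $K$. For each $n$ fix, by the definition of $\mathcal{N}_d$, a dense $S_{g_n}\subseteq\mathbb{R}$ with $\dim_{H}g_n^{-1}(y)\le d-1$ for all $y\in S_{g_n}$, and a countable dense subset $D_{g_n}\subseteq S_{g_n}$.

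Now put
\[
A=\bigcup_{n}\bigcup_{t\in D_{g_n}}g_n^{-1}(t).
\]
Countable stability of the Hausdorff dimension gives $\dim_{H}A\le d-1$. To obtain $\dim_{t}(K\setminus A)\le 0$ I would verify that $Y:=K\setminus A$ has a basis of relatively clopen sets. Each $\{g_n<t\}\cap Y$ and $\{g_n>t\}\cap Y$ with $t\in D_{g_n}$ is clopen in $Y$, since its relative boundary is contained in $g_n^{-1}(t)\subseteq A$. Given $p\in Y$ and a relatively open $W=V\cap Y$ with $V$ open in $K$, for each $z\in K\setminus V$ choose $g_{n(z)}$ with $g_{n(z)}(p)\neq g_{n(z)}(z)$ and a value $t_z\in D_{g_{n(z)}}$ strictly between them; the open set $O_z=\{x\in K:\,g_{n(z)}(x)\text{ lies on the }z\text{-side of }t_z\}$ contains $z$ but not $p$. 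By compactness of $K\setminus V$ finitely many $O_{z_1},\dots,O_{z_k}$ cover it, and the intersection of the corresponding $p$-side clopen sets of $Y$ is a clopen neighbourhood of $p$ contained in $W$. Thus $Y$ has a clopen basis, so $\dim_t Y\le 0$, and therefore $d\in P_{dH}=P_{tH}$.

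The step I expect to require the most care is the last one: checking that a point-separating family yields a genuine clopen \emph{basis} (not merely a separation of pairs) of $K\setminus A$, which is exactly where compactness of $K$ is used. The conceptual point making somewhere-density sufficient is that separating a single pair of points is a perturbation of norm $<r$, hence already present inside $U(f_0,r)$; by contrast, separating prescribed disjoint closed sets into ordered halves would \emph{not} be available from density in a single finite-radius ball, so it is essential that the construction only needs the former.
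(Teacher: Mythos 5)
Your proof is correct, but it takes a genuinely different route from the paper's. The paper argues locally and stays with the original definition of $P_{tH}$: it fixes $x_0$ and $r$, uses Tietze's theorem to produce $f$ close to $f_0$ whose value at $x_0$ lies strictly below its values on $\partial U(x_0,r)$, approximates $f$ by a single $g\in\mathcal{N}_d$, picks one level $s\in S_g$ between $g(x_0)$ and $\min g(\partial U(x_0,r))$, and takes $U=g^{-1}((-\infty,s))\cap U(x_0,r)$, whose boundary sits inside $g^{-1}(s)$; one function and one level value per basis element. You instead go global through the decomposition characterization $P_{tH}=P_{dH}$ of Lemma \ref{l:P=P}: you extract a countable point-separating subfamily of $\mathcal{N}_d$ from the somewhere-density hypothesis (which is legitimate, since for each pair the separating functions form an open dense set that must meet a countable dense subset of $\mathcal{N}_d\cap U(f_0,r)$), pool countably many low-dimensional level sets into a single set $A$ with $\dim_H A\le d-1$, and verify $\dim_t(K\setminus A)\le 0$ by building a clopen basis via compactness of $K\setminus V$. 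All the steps check out, including the degenerate case $d<1$ and the containment $\partial\{g_n<t\}\subseteq g_n^{-1}(t)\subseteq A$. The trade-off: your argument leans on the previously established equivalence $P_{tH}=P_{dH}$, on separability of $C(K)$, and on compactness of $K$ (in the finite-subcover step), but in return produces an explicit global decomposition witness in one stroke; the paper's argument is self-contained, needs only one function from $\mathcal{N}_d$ at a time, and lands directly on the basis formulation. Your closing observation --- that separating a single pair of points is a small perturbation available inside any ball, which is exactly why somewhere-density suffices --- is the same mechanism that powers the paper's Tietze step, just used pointwise-on-pairs rather than on $\{x_0\}$ versus $\partial U(x_0,r)$.
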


\begin{proof}  Assume $d\in P_{n}$ and $d<\infty$.
Let us fix $x_{0}\in K$ and $r>0$. To verify
$d\in P_{tH}$ we need to find an open set $U$ such that
 $x_{0}\in U \subseteq
U(x_{0},r)$ and $\dim_{H} \partial U\leq d-1$. We may assume $\partial
U(x_{0},r) \neq \emptyset$, otherwise we are done.

By $d\in P_{n}$ we obtain that $\mathcal{N}_{d}$ is dense in a ball
$B(f_{0},6\varepsilon)$, $\varepsilon>0$. By decreasing $r$ if
necessary, we may assume that $\diam f_{0}(U(x_{0},r))\leq 3\varepsilon$. Then Tietze's
Extension Theorem provides an $f\in B(f_{0},6\varepsilon)$ such that
$f(x_{0})=f_{0}(x_{0})$ and $f|_{\partial U(x_{0},r)} (x) =
f_{0}(x_{0})+3\varepsilon$ for every $x \in
\partial U(x_{0},r)$. Since
$\mathcal{N}_{d}$ is dense in $B(f_{0},6\varepsilon)$, we can choose
$g\in \mathcal{N}_{d}$ such that $||f-g||\leq \varepsilon$.  By the
construction of $g$ it follows that $g(x_{0})<\min\{ g(\partial
U(x_{0},r))\}$. Hence  in the dense set $S_{g}$ (see Definition~\ref{dimn}) there is an $s\in S_{g}$ such that
 %%%%%%%%%%%%%%%%%%%
 \begin{equation} \label{sdef} g(x_{0})<s<\min\left\{ g\left(\partial
U(x_{0},r)\right)\right\}. \end{equation}
%%%%%%%%%%%%%%%%%%%%%%%%%%
Let
\[
U=g^{-1}\left((-\infty,s)\right)\cap U(x_{0},r),
\]
then clearly $x_{0}\in U \subseteq U(x_{0},r)$. By \eqref{sdef}
we have $\partial g^{-1}\left((-\infty,s)\right) \cap \partial U(x_{0},r) =
\emptyset$, therefore
$\partial U\subseteq \partial g^{-1}\left((-\infty,s)\right) \subseteq
g^{-1}(s)$. Using $s\in S_{g}$ we infer that $\dim_{H}\partial U\leq
\dim_{H}g^{-1}(s)\leq d-1$. \end{proof}

This concludes the proof of Theorem~\ref{t:3P}.

\bigskip

Now we are ready to describe the Hausdorff dimension of
the level sets of generic continuous functions.

As already mentioned above, if $\dim_{t}K=0$ then every level set of a
generic continuous function on $K$ consists of at most one point.

\begin{theorem} \label{ft} Let $K$ be a compact metric space with $\dim_{t}
K>0$. Then for the generic $f\in C(K)$
\begin{enumerate}[(i)]
\item $\dim_{H} f^{-1} (y)\leq \dim_{tH} K-1$ for every $y\in \mathbb{R}$,
\item for every $d<\dim_{tH} K$ there exists a non-degenerate
interval $I_{f,d}$ such that $\dim_{H} f^{-1} (y)\geq
d- 1 $ for every $y\in I_{f,d}$.
\end{enumerate}
\end{theorem}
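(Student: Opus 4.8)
The plan is to derive both statements from the two equivalent descriptions of the topological Hausdorff dimension obtained in Theorem~\ref{t:3P} and recorded in Theorem~\ref{t:3=}, namely $\dim_{tH} K = \min P_l = \min P_n$. Throughout I may assume $\dim_{tH} K < \infty$, since otherwise (i) is vacuous and (ii) is handled by the sequential argument below applied to an arbitrary sequence $d_k \to \infty$. Each of the two halves produces a co-meager set of functions, and the generic $f$ is then taken in their intersection, which is again co-meager; so it suffices to treat the two parts independently.

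For part (i) I would use the description via $P_l$. Since $\dim_{tH} K = \min P_l$ by Theorem~\ref{t:3=}, we have $\dim_{tH} K \in P_l$, so by Definition~\ref{diml} there is a set $G \subseteq K$ with $\dim_H G \le \dim_{tH} K - 1$ such that the generic $f \in C(K)$ is one-to-one on $K \setminus G$. For any such $f$ and any $y \in \mathbb{R}$, the set $f^{-1}(y) \setminus G$ contains at most one point, so $f^{-1}(y)$ is contained in $G$ together with at most one further point, whence $\dim_H f^{-1}(y) \le \max\{\dim_H G, 0\}$. Because $\dim_{tH} K \ge \dim_t K \ge 1$ (by Theorem~\ref{<} and the fact that $\dim_t K$ is a positive integer), we have $\dim_{tH} K - 1 \ge 0$, and therefore $\dim_H f^{-1}(y) \le \dim_{tH} K - 1$, as required.

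For part (ii) I would use the description via $P_n$. Fix $d < \dim_{tH} K$; then $d < \min P_n$, so $d \notin P_n$, which by Definition~\ref{dimn} means that $\mathcal{N}_d$ is nowhere dense in $C(K)$. Let $\mathcal{G}_d$ denote the set of $f \in C(K)$ admitting a non-degenerate interval $I$ with $\dim_H f^{-1}(y) \ge d-1$ for all $y \in I$. The key observation is the inclusion $C(K) \setminus \mathcal{G}_d \subseteq \mathcal{N}_d$: if $f \notin \mathcal{G}_d$, then the set $\{y : \dim_H f^{-1}(y) < d-1\}$ meets every non-degenerate interval, hence is dense, and it witnesses $f \in \mathcal{N}_d$. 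Consequently $C(K) \setminus \mathcal{G}_d$ is nowhere dense, so $\mathcal{G}_d$ contains the dense open set $C(K) \setminus \overline{\mathcal{N}_d}$ and is co-meager. Since $\mathcal{G}_{d'} \subseteq \mathcal{G}_d$ whenever $d \le d'$, choosing a sequence $d_k \nearrow \dim_{tH} K$ and intersecting the co-meager sets $\mathcal{G}_{d_k}$ yields a co-meager set on which, for every $d < \dim_{tH} K$, one picks $d_k \ge d$ and takes $I_{f,d}$ to be the interval provided by $f \in \mathcal{G}_{d_k} \subseteq \mathcal{G}_d$.

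The genuinely substantial work is all packaged into Theorem~\ref{t:3P}; granting the equalities $P_{tH} = P_l = P_n$, the present argument is essentially bookkeeping. The one point demanding care is precisely the inclusion $C(K) \setminus \mathcal{G}_d \subseteq \mathcal{N}_d$ together with the passage from a single $d$ to all $d < \dim_{tH} K$: one must exploit the monotonicity $\mathcal{G}_{d'} \subseteq \mathcal{G}_d$ so that only countably many thresholds $d_k$ are needed, since the quantifier ``for every $d$'' in (ii) cannot be absorbed into the genericity for uncountably many $d$ simultaneously.
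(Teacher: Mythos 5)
Your argument is correct and follows essentially the same route as the paper: part (i) via $\dim_{tH}K=\min P_l$ and a witnessing set $G$ off which the generic $f$ is injective, and part (ii) via $d<\min P_n$ forcing $\mathcal{N}_{d}$ to be nowhere dense, followed by intersecting over a sequence $d_k\nearrow\dim_{tH}K$ and using the monotonicity of the resulting co-meager sets. The only cosmetic difference is that the paper packages part (i) into a slightly more precise statement with a $G_\delta$ hull of $G$; the substance is identical.
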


Note that this theorem is sharp in general, see the last but one paragraph of the Introduction.
Theorem~\ref{ft} actually readily follows from the following more precise, but slightly technical version.

\begin{theorem}  Let $K$ be a compact metric space with $\dim_{t}
K>0$. Then there exists a $G_\delta$ set $G\subseteq K$ with $\dim_H G = \dim_{tH} K-1$ such that for the generic $f\in C(K)$
\begin{enumerate}[(i)]
\item $f$ is one-to-one on $K\setminus G$, hence $\dim_{H} f^{-1} (y)\leq
  \dim_{tH} K-1$ for every $y\in \mathbb{R}$,
\item for every $d<\dim_{tH} K$ there exists a non-degenerate
interval $I_{f,d}$ such that $\dim_{H} f^{-1} (y)\geq
d- 1 $ for every $y\in I_{f,d}$.
\end{enumerate}
\end{theorem}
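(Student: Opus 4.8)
The plan is to read off both assertions from the equivalences $P_{tH}=P_l=P_n$ of Theorem \ref{t:3P}, together with the fact that the infimum defining $\dim_{tH}K$ is attained (Corollary \ref{c:inf=min}), so that $\dim_{tH}K=\min P_{tH}=\min P_l=\min P_n$. Throughout I use that $\dim_{tH}K\ge 1$, which holds by Theorem \ref{<} since $\dim_t K>0$ forces $\dim_t K\ge 1$.

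For part (i) I would run the construction from the proof of $P_{tH}\subseteq P_l$ on an optimal basis. Since $\dim_{tH}K=\min P_{tH}$, there is a countable basis $\mathcal{U}$ of $K$ with $\dim_H\partial U\le\dim_{tH}K-1$ for all $U\in\mathcal{U}$; putting $F=\bigcup_{U\in\mathcal{U}}\partial U$ and taking a $G_\delta$-hull $G\supseteq F$ of the same Hausdorff dimension yields a $G_\delta$ set with $\dim_H G\le\dim_{tH}K-1$ on whose complement the generic $f$ is injective. To see that the dimension is exactly $\dim_{tH}K-1$, note first that $\dim_t K>0$ means no basis of $K$ consists of clopen sets, so $F\ne\emptyset$ and $\dim_H G=\dim_H F\ge 0$; and if one had $\dim_H F<\dim_{tH}K-1$, then the same basis $\mathcal{U}$ would witness $\dim_H F+1\in P_{tH}$ with $\dim_H F+1<\dim_{tH}K=\min P_{tH}$, a contradiction. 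Finally, injectivity of $f$ on $K\setminus G$ forces $f^{-1}(y)$ to be contained in $G$ together with at most one further point, whence $\dim_H f^{-1}(y)\le\dim_H G=\dim_{tH}K-1$ for every $y\in\RR$.

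For part (ii) the key is to unwind Definition \ref{dimn}: a function $f$ is $d$-level narrow exactly when the set $A_f=\{y\in\RR:\dim_H f^{-1}(y)\le d-1\}$ is dense in $\RR$. Consequently $f\notin\mathcal{N}_d$ precisely when $A_f$ fails to be dense, i.e. when some non-degenerate open interval $J$ is disjoint from $A_f$; for every $y\in J$ one then has $\dim_H f^{-1}(y)>d-1$, so such a $J$ serves as the required interval $I_{f,d}$. Now, whenever $d<\dim_{tH}K=\min P_n$ we have $d\notin P_n$, so by Definition \ref{dimn} the set $\mathcal{N}_d$ is nowhere dense; hence $C(K)\setminus\overline{\mathcal{N}_d}$ is open and dense, and each of its elements satisfies (ii) for this particular $d$.

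It remains to secure a single generic $f$ meeting (i) and (ii) for all $d<\dim_{tH}K$ simultaneously. When $\dim_{tH}K>1$ I would fix a sequence $1\le d_k\nearrow\dim_{tH}K$ and intersect the co-meager set from (i) with the open dense sets $C(K)\setminus\overline{\mathcal{N}_{d_k}}$; the intersection is co-meager, and for an arbitrary $d<\dim_{tH}K$ one chooses $d_k\ge d$ and sets $I_{f,d}=I_{f,d_k}$, using $\dim_H f^{-1}(y)>d_k-1\ge d-1$. The case $\dim_{tH}K=1$ is immediate, since then every $d<1$ satisfies $d-1<0$ while the generic $f$ has a non-degenerate range, so any interval in its interior works. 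The genuinely hard part---the three inclusions comprising Theorem \ref{t:3P}---has already been carried out; here the only real subtleties are the clean translation between the density formulation of narrowness and the robust interval in (ii), and the routine Baire-category assembly of the countably many conditions into one co-meager set.
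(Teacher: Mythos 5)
Your proposal is correct and follows essentially the same route as the paper: part (i) is read off from $\dim_{tH}K=\min P_l$ (via the $P_{tH}\subseteq P_l$ construction) and part (ii) from $\dim_{tH}K=\min P_n$ by observing that $\mathcal{N}_{d_k}$ is nowhere dense for a sequence $d_k\nearrow\dim_{tH}K$ and unwinding the definition of $d$-level narrowness into the interval statement. Your explicit justification that $\dim_H G$ equals exactly $\dim_{tH}K-1$ (by minimality of $\dim_{tH}K$ in $P_{tH}$) is a detail the paper states without comment, and your separate treatment of the case $\dim_{tH}K=1$ is harmless.
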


\begin{proof} Let us first prove $(i)$. Theorem~\ref{t:3=} implies $\dim_{tH}K=\min P_l$.
Thus there exists a set $G\subseteq K$ such that
$\dim_H G=\dim_{tH} K-1$ and the generic $f\in C(K)$ is one-to-one on $K\setminus G$. By taking a
$G_\delta$ hull of the same Hausdorff dimension we can assume that $G$ is $G_\delta$.
As $\dim_{tH} K=\min P_l\geq 1$, we have $\dim_H G=\dim_{tH} K-1 \geq 0$.
Hence $\dim_H f^{-1}(y)\leq \dim_H G=\dim_{tH} K-1$ for the generic $f\in C(K)$ and for all $y\in \mathbb{R}$. Thus $(i)$ holds.

Let us now prove $(ii)$. Let us choose a sequence $d_k \nearrow \dim_{tH} K$. Theorem~\ref{t:3=} yields
$d_k < \dim_{tH}K=\min P_{n}$ for every $k\in \mathbb{N}^{+}$.
Hence $\mathcal{N}_{d_k}$ is nowhere dense by the
definition of $P_{n}$. It follows from the definition of
$\mathcal{N}_d$ that for every $f\in C(K)\setminus
\mathcal{N}_{d_k}$ there exists a non-degenerate
interval $I_{f,d_k}$ such that $\dim_{H}f^{-1}(y)\geq d_k-1$ for every $y\in I_{f,d_k}$. But then $(ii)$
holds for every $ f \in C(K)\setminus (\bigcup_{k\in \mathbb{N}^{+}}
\ \mathcal{N}_{d_k})$, and this latter set is
clearly co-meager, which concludes the proof of the theorem.
\end{proof}

This immediately implies

\begin{corollary}
\label{c:ft} If $K$ is a compact metric space with $\dim_t K > 0$ then for the generic $f \in C(K)$
%%%%%
$$\sup \left\{ \dim_{H}f^{-1}(y) : y \in \mathbb{R} \right\} = \dim_{tH} K - 1.$$
\end{corollary}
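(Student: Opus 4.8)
The plan is to deduce the corollary directly from Theorem~\ref{ft}, so that essentially no new work is required beyond combining its two parts. First I would fix a single generic $f\in C(K)$ for which \emph{both} conclusions $(i)$ and $(ii)$ of Theorem~\ref{ft} hold; such an $f$ exists because Theorem~\ref{ft} asserts $(i)$ and $(ii)$ simultaneously for the generic $f$ (equivalently, each holds on a co-meager set, and the intersection of two co-meager sets is again co-meager). It then suffices to establish the two inequalities
$$\sup\left\{\dim_H f^{-1}(y):y\in\mathbb{R}\right\}\le \dim_{tH}K-1 \quad\text{and}\quad \sup\left\{\dim_H f^{-1}(y):y\in\mathbb{R}\right\}\ge \dim_{tH}K-1.$$

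The upper bound is immediate from $(i)$: since $\dim_H f^{-1}(y)\le \dim_{tH}K-1$ holds for \emph{every} $y\in\mathbb{R}$, taking the supremum over $y$ yields the first displayed inequality at once.

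For the lower bound I would invoke $(ii)$. Recall that $\dim_tK>0$ forces $\dim_{tH}K\ge\dim_tK\ge 1$ by Theorem~\ref{<}, so $\dim_{tH}K-1\ge 0$ and I may choose a sequence $d_k\nearrow \dim_{tH}K$ (read in the obvious way when $\dim_{tH}K=\infty$). For each $k$, part $(ii)$ provides a \emph{non-degenerate}, hence non-empty, interval $I_{f,d_k}$ with $\dim_H f^{-1}(y)\ge d_k-1$ for every $y\in I_{f,d_k}$; picking any $y\in I_{f,d_k}$ gives $\sup\{\dim_H f^{-1}(y):y\in\mathbb{R}\}\ge d_k-1$. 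Letting $k\to\infty$ shows the supremum is at least $\dim_{tH}K-1$, and together with the upper bound this proves the claimed equality. The deduction is entirely routine; the only points needing a moment's care are the simultaneity of $(i)$ and $(ii)$ for one generic $f$ and the infinite boundary case, where the convention $\infty-1=\infty$ keeps both sides meaningful. I do not anticipate any genuine obstacle here.
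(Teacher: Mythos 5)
Your proposal is correct and is exactly the argument the paper intends: the corollary is stated as an immediate consequence of Theorem~\ref{ft}, with part $(i)$ giving the upper bound for every $y$ and part $(ii)$, applied along a sequence $d_k\nearrow\dim_{tH}K$, giving the lower bound. Your attention to the simultaneity of $(i)$ and $(ii)$ on a single co-meager set and to the case $\dim_{tH}K=\infty$ is appropriate but routine, just as you say.
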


\subsection{Homogeneous and self-similar compact metric spaces}

In this subsection we show that if the compact metric space is sufficiently
homogeneous, e.g. self-similar (see \cite{F} or \cite{Ma}) then we can say
much more.

\begin{theorem} \label{t:hom}Let $K$ be a compact metric space with $\dim_{t}K>0$
such that $\dim_{tH} B(x,r) = \dim_{tH} K$ for every $x \in K$ and $r>0$. Then
for the generic $f\in C(K)$ for the generic $y\in f(K)$
$$\dim_{H} f^{-1}(y)=\dim_{tH}K-1.$$
\end{theorem}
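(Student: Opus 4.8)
The plan is to combine the two halves of Theorem \ref{ft}: its upper bound $(i)$ immediately gives the inequality $\dim_H f^{-1}(y)\le \dim_{tH}K-1$ for \emph{every} $y$, while the homogeneity hypothesis is what lets me upgrade the ``interval of large level sets'' furnished by part $(ii)$ into a \emph{dense} supply of good values at every scale, which a Baire category argument then converts into a co-meager set of good $y$. Throughout write $s=\dim_{tH}K$ and fix a sequence $d_k\nearrow s$ with $d_k>0$ (possible since $s\ge1$ by Theorem \ref{<} and $\dim_t K>0$).

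First I would fix a countable family of closed balls $B_i=B(x_i,r_i)$, with $x_i$ ranging over a countable dense set $D\subseteq K$ and $r_i\in\QQ^+$, arranged so that the open balls $U(x_i,r_i)$ form a basis of $K$ and so that for every $x_0\in K$ and $r>0$ some member satisfies $x_0\in B_i\subseteq U(x_0,r)$ (choose $x_i\in D$ with $d(x_0,x_i)<r/4$ and $r_i\in(r/4,r/2)\cap\QQ$). By the homogeneity hypothesis $\dim_{tH}B_i=s\ge1$ for every such ball, so $\dim_{tH}B_i\neq0$ forces $\dim_t B_i>0$ by Fact \ref{<=>}; hence Theorem \ref{ft} applies to each compact space $B_i$ as well as to $K$. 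I then assemble a single co-meager set $\mathcal{G}\subseteq C(K)$ of ``good'' functions as follows. Applying Theorem \ref{ft}$(i)$ to $K$ gives a co-meager set on which $\dim_H f^{-1}(y)\le s-1$ for all $y$. For each $i$, Theorem \ref{ft}$(ii)$ applied to $B_i$ shows that the set of $g\in C(B_i)$ for which, for every $k$, there is a non-degenerate interval with $\dim_H g^{-1}(y)\ge d_k-1$ on it is co-meager in $C(B_i)$; pulling this back through the restriction map $R_i\colon C(K)\to C(B_i)$, $R_i(f)=f|_{B_i}$, Lemma \ref{lem1} makes each preimage co-meager in $C(K)$. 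Let $\mathcal{G}$ be the intersection of these countably many co-meager sets. For $f\in\mathcal{G}$ and each pair $i,k$ I thereby obtain a non-degenerate interval $I_{i,k}$ with $\dim_H f^{-1}(y)\ge\dim_H\bigl(f^{-1}(y)\cap B_i\bigr)\ge d_k-1$ for all $y\in I_{i,k}$; since $d_k-1>-1$ this forces $f^{-1}(y)\cap B_i\neq\emptyset$, so $I_{i,k}\subseteq f(B_i)\subseteq f(K)$.

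The key step is then purely a Baire argument inside the compact (hence Baire) space $f(K)$. For fixed $f\in\mathcal{G}$ and fixed $k$, I claim $O_k:=\bigcup_i \inter I_{i,k}$ is a dense open subset of $f(K)$: it is relatively open because $\inter I_{i,k}\subseteq I_{i,k}\subseteq f(K)$, and it is dense because, given $y_0=f(x_0)\in f(K)$ and $\varepsilon>0$, continuity of $f$ together with the choice of the family produces a $B_i$ with $x_0\in B_i$ and $f(B_i)\subseteq(y_0-\varepsilon,y_0+\varepsilon)$, so the non-degenerate $I_{i,k}\subseteq f(B_i)$ places points of $O_k$ within $\varepsilon$ of $y_0$. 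By Baire's theorem $\bigcap_k O_k$ is a dense $G_\delta$, hence co-meager, in $f(K)$, and every $y\in\bigcap_k O_k$ satisfies $\dim_H f^{-1}(y)\ge\sup_k(d_k-1)=s-1$. Combined with the upper bound valid for $f\in\mathcal{G}$, this gives $\dim_H f^{-1}(y)=s-1$ for the generic $y\in f(K)$, for the generic $f\in C(K)$.

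I expect the only genuinely delicate point to be exactly this promotion of \emph{density} of good values (which is all that part $(ii)$ plus homogeneity directly deliver, via the intervals $I_{i,k}$) to \emph{co-meagerness} of good values; passing to the interiors $\inter I_{i,k}$ to obtain relatively open dense sets and then intersecting them over $k$ is what resolves the gap. The role of the homogeneity hypothesis $\dim_{tH}B(x,r)=\dim_{tH}K$ is precisely to guarantee that Theorem \ref{ft}$(ii)$ yields full-strength intervals of level $d_k$ inside arbitrarily small balls, which is what makes $O_k$ dense; without it one could only locate large level sets at the global scale.
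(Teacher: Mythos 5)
Your proposal is correct and follows essentially the same route as the paper: both pull back the co-meager sets furnished by Theorem \ref{ft}(ii) on small closed balls through the restriction maps via Lemma \ref{lem1} (using the homogeneity hypothesis to ensure $\dim_{tH}B=\dim_{tH}K$ and hence $\dim_t B>0$ on each ball), and then run a Baire category argument inside the compact space $f(K)$ to upgrade the dense family of non-degenerate intervals of good levels to a co-meager set of good $y$. The only difference is organizational (you quantify over $k$ inside each ball, the paper fixes $k$ and intersects over the balls), which changes nothing.
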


\begin{remark} In fact, the authors show in \cite{BBE2} that the condition in Theorem~\ref{t:hom} is also necessary:
If $K$ is a compact metric space with $\dim_{t}K>0$ such that $\dim_{H} f^{-1}(y)=\dim_{tH}K-1$
for the generic $f\in C(K)$ and for the generic $y\in f(K)$ then $\dim_{tH} B(x,r) = \dim_{tH} K$ for every $x \in K$ and $r>0$.
\end{remark}

Before turning to the proof of this theorem we formulate a corollary.
Recall that $K$ is \emph{self-similar} if there are injective contractive similitudes
$\varphi_1, \dots , \varphi_k : K \to K$ such that $K = \bigcup_{i=1}^k \varphi_i
(K)$. The sets of the form $\varphi_{i_1} \circ \varphi_{i_2} \circ \dots \circ
\varphi_{i_m} (K)$ are called the \emph{elementary pieces} of $K$. It is easy to see
that every ball in $K$ contains an elementary piece. Moreover, by Corollary~\ref{c:bi-Lip} the topological Hausdorff dimension of every elementary piece
is $\dim_{tH} K$. Hence, using monotonicity as well, we obtain that if $K$ is
self-similar then $\dim_{tH} B(x,r) = \dim_{tH} K$ for every $x \in K$ and
$r>0$. This yields the following.

\begin{corollary} \label{c:self}
Let $K$ be a self-similar compact metric space with
  $\dim_{t}K>0$. Then
for the generic $f\in C(K)$ for the generic $y\in f(K)$
$$\dim_{H} f^{-1}(y)=\dim_{tH}K-1.$$
\end{corollary}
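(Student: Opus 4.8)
The plan is to derive Corollary \ref{c:self} directly from Theorem \ref{t:hom}, so the only thing to verify is that self-similarity of $K$ forces the homogeneity hypothesis of that theorem, namely that $\dim_{tH} B(x,r) = \dim_{tH} K$ for every $x \in K$ and every $r > 0$. Once this is established, Theorem \ref{t:hom} immediately yields that for the generic $f \in C(K)$ and the generic $y \in f(K)$ we have $\dim_H f^{-1}(y) = \dim_{tH} K - 1$, which is exactly the assertion of the corollary.

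First I would recall the relevant structure. By assumption there are injective contractive similitudes $\varphi_1, \dots, \varphi_k \colon K \to K$ with $K = \bigcup_{i=1}^k \varphi_i(K)$, and the elementary pieces are the sets $\varphi_{i_1} \circ \dots \circ \varphi_{i_m}(K)$. The key geometric observation is that every ball $B(x,r)$ contains an elementary piece. To see this, fix $x \in K$ and note that $x$ lies in some $\varphi_{i_1}(K)$; since the $\varphi_i$ are injective, $\varphi_{i_1}^{-1}(x)$ again lies in some $\varphi_{i_2}(K)$, and iterating produces a nested sequence of elementary pieces $\varphi_{i_1} \circ \dots \circ \varphi_{i_m}(K)$ each containing $x$. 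Writing $c = \max_i \Lip(\varphi_i) < 1$ (finite maximum of contraction ratios), the diameter of the $m$-th piece is at most $c^m \diam K \to 0$, so for $m$ large enough this piece is contained in $B(x,r)$.

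The remaining steps are pure dimension bookkeeping. A composition of similitudes is bi-Lipschitz onto its image, so each elementary piece $E$ is a bi-Lipschitz image of $K$, and Corollary \ref{c:bi-Lip} gives $\dim_{tH} E = \dim_{tH} K$. Choosing an elementary piece $E \subseteq B(x,r)$ as above, the inclusions $E \subseteq B(x,r) \subseteq K$ together with monotonicity (Fact \ref{mon}) yield $\dim_{tH} K = \dim_{tH} E \le \dim_{tH} B(x,r) \le \dim_{tH} K$, whence $\dim_{tH} B(x,r) = \dim_{tH} K$, as required.

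The entire nontrivial content of the corollary is carried by Theorem \ref{t:hom}; the only genuinely new ingredient here is the verification that balls contain elementary pieces, and even this is routine once the contraction property is invoked. Consequently I expect no real obstacle at this stage, the main difficulty lying upstream in establishing Theorem \ref{t:hom}, which I am permitted to assume.
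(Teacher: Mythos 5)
Your proposal is correct and follows essentially the same route as the paper: the paper also deduces Corollary \ref{c:self} from Theorem \ref{t:hom} by noting that every ball in a self-similar $K$ contains an elementary piece, that each elementary piece has topological Hausdorff dimension $\dim_{tH} K$ by Corollary \ref{c:bi-Lip}, and that monotonicity then gives $\dim_{tH} B(x,r) = \dim_{tH} K$. The only difference is that you spell out the nested-pieces argument for the claim that every ball contains an elementary piece, which the paper leaves as ``easy to see''.
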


\begin{proof}[Proof of Theorem~\ref{t:hom}]
Theorem~\ref{ft} implies that for the generic $f\in C(K)$ for
\emph{every} $y\in \mathbb{R}$ we have $\dim_{H}f^{-1}(y)\leq
\dim_{tH} K-1$, so we only have to prove the opposite inequality.

Let us consider a sequence $0<d_k \nearrow \dim_{tH} K$. For $f\in C(K)$ and $k\in \mathbb{N}^{+}$ let
\[
L_{f,k} = \left\{y\in f(K): \dim_{H}f^{-1}(y)\geq d_k-1 \right\}.
\]
First we show that it suffices to construct for every $k\in \mathbb{N}^{+}$ a co-meager set
$\mathcal{F}_k \subseteq C(K)$
such that for every $f\in \mathcal{F}_k$ the set
$L_{f,k}$ is co-meager in $f(K)$. Indeed, then the set
$\mathcal{F} = \bigcap_{k \in \mathbb{N}^{+}} \
\mathcal{F}_{k}\subseteq C(K)$ is co-meager, and for every
$f\in \mathcal{F}$ the set $L_f=\bigcap _{k\in \mathbb{N}^{+}}
\ L_{f,k}\subseteq f(K)$ is also co-meager.
 Since
for every $y \in L_f$ clearly $\dim_{H}f^{-1}(y)\geq \dim_{tH} K-1$,
this finishes the proof.

Let us now construct such an $\mathcal{F}_k$ for a fixed
$k \in \mathbb{N}^{+}$. Let $\{ B_n \}_{n \in \mathbb{N}}$ be a
countable basis of $K$ consisting of closed balls, and for all $n
\in \mathbb{N}$ let $R_n \colon C(K)\to C(B_n)$ be defined as
\[
R_n (f)=f|_{B_n}.
\]
 Let us also define
%%%%%%%%%%%%
\[
\mathcal{B}_n =\left\{f\in C( B_n ): \exists I_{f,n}
\textrm{
  ~s.~t.~}
\forall y\in I_{f,n}~ \dim_{H}f^{-1}(y)\geq d_k-
1 \right\},
\]
(where $I_{f,n}$ is understood to be a non-degenerate
interval). Finally, let us define
\[
\mathcal{F}_{k}=\bigcap_{n \in \mathbb{N}} R_n^{-1}
(\mathcal{B}_n).
\]
First we show that $\mathcal{F}_k$ is co-meager. By our
assumption $\dim_{tH} B_n = \dim_{tH} K > d_k$
(which also implies $\dim_t B_n > 0$ by Fact~\ref{<=>}, since
$d_k>0$), thus Theorem~\ref{ft}
yields that $\mathcal{B}_n$ is co-meager in $C(B_n)$.
Lemma~\ref{lem1} implies that $R_n^{-1}(\mathcal{B}_n)$ is co-meager
in $C(K)$ for all $n \in \mathbb{N}$, thus $\mathcal{F}_k$
is also co-meager.

It remains to show that for every $f\in \mathcal{F}_k$ the
set $L_{f,k}$ is co-meager in $f(K)$. Let us fix $f\in
\mathcal{F}_k$. We will actually show that
$L_{f,k}$ contains an open set in $\mathbb{R}$ which is a
dense subset of $f(K)$. So let $U\subseteq \mathbb{R}$ be an open
set in $\mathbb{R}$ such that $f(K)\cap U\neq \emptyset$. It is
enough to prove that $L_{f,k}\cap U$ contains an interval.
Since the sets $B_n$ form a basis, the continuity of $f$ implies that
there exists an $n \in \mathbb{N}$ such that $f(B_n) \subseteq U$.
%Let us denote $\widehat{f}=f|_{B_{i_0,j_0}}$.
It is easy to see using the definition of $\mathcal{F}_k$
that $f|_{B_n} \in \mathcal{B}_n$, so there exists a non-degenerate
interval $I_{f|_{B_n},k}$ such that for all $y\in
I_{f|_{B_n},k}$ we have
\[
\dim _{H}f^{-1}(y)\geq \dim_{H}\left( f|_{B_n}\right)^{-1}(y)\geq
d_k - 1 .
\]
Thus $I_{f|_{B_n},k} \subseteq L_{f,k}$. Then
$d_k-1> -1$ implies $\left( f|_{B_n}\right)^{-1}(y) \neq \emptyset$
for every $y\in I_{f|_{B_n},k}$, thus
$I_{f|_{B_n},k}\subseteq f(B_n)$. But it follows from
$f(B_n)\subseteq U$ that $I_{f|_{B_n},k}\subseteq U$.
Hence $I_{f|_{B_n},k}\subseteq L_{f,k}\cap U$
and this completes the proof.
\end{proof}

\section{Open Problems}
\label{c:problems}

First let us recall the most interesting open problem.

\begin{w}
Determine the almost sure topological Hausdorff
dimension of the range of the $d$-dimensional Brownian motion for $d=2$ and $d=3$. Equivalently, determine the smallest $c \ge 0$ such that the range can be decomposed into a totally disconnected set and a set of Hausdorff dimension at most $c-1$ almost surely.
\end{w}

Now we collect a few more open problems. The first one concerns a certain
Darboux property.

\begin{problem}
Let $B \subseteq \mathbb{R}^d$ be a Borel set and $1 \le c <
\dim_{tH} B$ be arbitrary. Does there exist a Borel set $B' \subseteq
B$ with $\dim_{tH} B' = c$?
\end{problem}

The following problem is motivated by the proof of Theorem~\ref{t:lower}.
The idea is to look for some structural reason behind large
topological Hausdorff dimension.

\begin{problem}
Is it true that a compact metric space $K$ satisfies
$\dim_{tH} K \ge c$ iff it contains a family of \emph{disjoint}
non-degenerate continua such that each set meeting all members
of this family is of Hausdorff dimension at least $c-1$?
\end{problem}

The following remark shows that by dropping disjointness the problem becomes
rather simple.

\begin{remark} If $K$ is a non-empty compact metric space and $\mathcal{S}$ is the collection of subsets of $K$
intersecting every non-degenerate continuum then
$$\dim_{tH} K=\min\{ \dim_H S+1: S\in \mathcal{S}\}.$$
Indeed, first let $S\in \mathcal{S}$ be arbitrary, and we prove that $\dim_{tH} K\leq \dim_{H} S+1$. We may assume that $S$ is $G_{\delta}$, since we can
take $G_{\delta}$ hulls with the same Hausdorff dimension. Then $K\setminus S$ is $\sigma$-compact. A compact subset of $K\setminus S$
does not contain non-degenerate continua by definition, so it is totally disconnected, thus it has topological dimension at most zero. Therefore the countable stability of topological dimension zero for closed sets \cite[1.3.1.]{Eng} yields that $\dim_t (K\setminus S)\leq 0$. Hence Theorem~\ref{t:tHdecomp} implies $\dim_{tH} K\leq \dim_{H} S+1$.

Now we prove that there exists $S\in \mathcal{S}$ with $\dim_{tH} K= \dim_{H} S+1$. Theorem~\ref{t:tHdecomp} yields that there is a set $S\subseteq K$ with $\dim_H S=\dim_{tH}K-1$ and $\dim_{t}(K\setminus S)\leq 0$. Then $K\setminus S$ cannot contain any non-degenerate continuum, so $S\in \mathcal{S}$.
\end{remark}

Notice that the above remark does not apply even to $G_{\delta}$ subspaces of
Euclidean spaces, see Example~\ref{ex:Maz}.

\bigskip

Finally, we consider other notions of dimension.

\begin{problem}
What is the right notion to describe the packing, lower box, or upper box dimension of the level sets of the generic continuous function $f\in C(K)$?
\end{problem}

\begin{remark} We can analogously define topological packing, or lower box, or upper box dimension, respectively. However, one can show that these definitions and some natural modifications of them do not solve the above problem. The reason why these concepts behave differently is that box dimensions are not even countable stable, and packing dimension does not admit $G_{\delta}$ hulls: It is easy to see that
every $G_{\delta}$ hull of $\mathbb{Q}$ has packing dimension $1$.
\end{remark}

\subsection*{Acknowledgments}
We are indebted to U. B. Darji and A. M\'ath\'e for some illuminating
discussions, in particular to U. B. Darji for suggesting ideas that led to the
results of Section~\ref{s:equivalent} and hence to the simplification of
certain proofs. We thank the anonymous referee for several helpful suggestions.

\end{document}